\newcommand{\pdiv}{\mid}
\newcommand{\notdiv}{\nmid}
\DeclareMathOperator{\real}{Re}
\DeclareMathOperator{\ord}{ord}
\DeclareMathOperator{\Lip}{Lip}
\DeclareMathOperator{\id}{id}
\newcommand{\RR}{\mathbb{R}}
\newcommand{\CC}{\mathbb{C}}
\newcommand{\QQ}{\mathbb{Q}}
\newcommand{\ZZ}{\mathbb{Z}}
\newcommand{\NN}{\mathbb{N}}
\newcommand{\PP}{\mathbb{P}}
\newcommand{\ff}{\mathfrak{f}}
\newcommand{\Dop}{\nabla}
\newcommand{\Doph}[2]{\nabla_{\hspace{-0.5ex}#1}^{\hspace{-0.03ex}#2}}
\newcommand{\Dophh}{\Doph{h}{}}
\newcommand{\Diffop}{\mathcal{D}}
\newcommand{\KSop}{\eth}
\newcommand{\binomh}[3]{\binom{#1}{#2}_{\hspace{-0.5ex}#3}}
\newcommand{\valueat}[1]{{\,}_{\big|\, #1}}
\newcommand{\Sp}{S}
\newcommand{\norm}[1]{\left|#1\right|}
\newcommand{\norml}[1]{|#1|}
\newcommand{\normf}[1]{\left\|#1\right\|}
\newcommand{\bracket}[1]{\left\langle #1\right\rangle}
\newcommand{\eulerphi}{\varphi}
\newcommand{\EF}[1]{\varepsilon_{#1}}
\newcommand{\KS}{\mathcal{K}}
\newcommand{\KSS}{\mathcal{K}^\sharp}
\newcommand{\WKS}{\widehat{\mathcal{K}}}
\newcommand{\BKS}{\mathcal{B}}
\newcommand{\LS}{\mathcal{O}}
\newcommand{\IRR}{\Psi^{\mathrm{irr}}}
\newcommand{\EXC}{\Psi^{\mathrm{exc}}}
\newcommand{\CZ}{\mathcal{C}(\ZZ_p)}
\newcommand{\SD}{\mathcal{S}^1}
\newcommand{\SE}{\mathcal{E}}
\newcommand{\SU}{\mathcal{U}}
\newtheorem{prop}{Proposition}[section]
\newtheorem{lemma}[prop]{Lemma}
\newtheorem{theorem}[prop]{Theorem}
\newtheorem{corl}[prop]{Corollary}
\newtheorem{algo}[prop]{Algorithm}
\newtheorem{defin}[prop]{Definition}
\newtheorem{conj}[prop]{Conjecture}
\theoremstyle{remark}
\newtheorem*{remark*}{Remark}
\newtheorem{remark}[prop]{Remark}
\theoremstyle{definition}
\newtheorem{tbl}[prop]{Table}
\newtheorem{expl}[prop]{Example}
\numberwithin{equation}{section}
\newcommand{\textdef}[1]{\textbf{#1}}
\title[Classification of $p$-adic functions satisfying Kummer type congruences]
{Classification of $p$-adic functions satisfying\\ Kummer type congruences}
\author{Bernd C. Kellner}
\date{}
\subjclass[2000]{Primary 11A07; Secondary 11A25, 11B65, 11B68}
\address{Mathematisches Institut, Universit\"at G\"ottingen, Bunsenstr.\ 3–-5,
37073 G\"ottingen, Germany}
\email{bk@bernoulli.org}
\keywords{$p$-adic functions, $p$-adic Kummer spaces,
Kummer congruences, finite differences,
Bernoulli numbers, Euler numbers, Fermat quotients}
\begin{document}

\begin{abstract}
We introduce $p$-adic Kummer spaces of continuous functions on $\mathbb{Z}_p$
that satisfy certain Kummer type congruences. We will classify these spaces and
show their properties, for instance, ring properties and certain
decompositions. As a result, these functions have always a fixed point,
functions of certain subclasses have always a unique simple zero in
$\mathbb{Z}_p$. The fixed points and the zeros are effectively computable by
given algorithms. This theory can be transferred to values of Dirichlet
$L$-functions at negative integer arguments. That leads to a conjecture about
their structure supported by several computations. In particular we give an
application to the classical Bernoulli and Euler numbers. Finally, we present a
link to $p$-adic functions that are related to Fermat quotients.
\end{abstract}

\maketitle

\tableofcontents

\section{Introduction}

Throughout this paper $p$ denotes a prime. The author \cite{Kellner:2007}
showed some special results for $p$-adic zeta functions, introduced by
Kubota and Leopoldt \cite{Kubota&Leopoldt:1964}, see especially Koblitz
\cite{Koblitz:1996}. These functions interpolate values of divided Bernoulli
numbers in certain residue classes, which are values of the Riemann zeta
function at negative integers, modified by an Euler factor.

To avoid confusion, the $p$-adic $L$-function $L_p(s,\chi)$
in context of Iwasawa theory is the second construction of Kubota-Leopoldt,
while we only consider their first construction.

Although there is a vast literature about Kummer congruences and its
generalizations for Bernoulli numbers and other special sequences, commonly
called Kummer type congruences, the results are presented in their contexts.

We will here establish a generalized theory, using new methods, for arbitrary
$p$-adic functions that satisfy certain Kummer type congruences. This is
embedded in the theory of continuous functions on $\ZZ_p$, which always have a
Mahler expansion. Therefore we introduce the $p$-adic Kummer spaces $\KS_{p,1}$
and $\KS_{p,2}$ of such functions and show their relations and properties, for
instance, ring properties, certain decompositions, and that $\KS_{p,2}
\subsetneq \KS_{p,1}$.

As a result, functions of $\KS_{p,1}$ and $\KS_{p,2}$ have always a fixed point
in $\ZZ_p$. Functions of an important subclass $\WKS^0_{p,2} \subset \KS_{p,2}$
have always a unique simple zero in $\ZZ_p$. A product of the latter functions
provides a product of linear terms when viewed in the $p$-adic norm. We
present two algorithms, which can effectively compute a truncated $p$-adic
expansion of the fixed point of a function of $\KS_{p,2}$, resp., of the zero
of a function of $\WKS^0_{p,2}$.

All results of the $p$-adic Kummer spaces can be transferred back to values of
ordinary $L$-functions at negative integer arguments, which are associated with
a real Dirichlet character. Since these functions, modified by an Euler factor,
obey the Kummer type congruences in certain residue classes, we obtain $p$-adic
$L$-functions of $\KS_{p,2}$. In contrast, the construction of Kubota-Leopoldt
yields a $p$-adic $L$-function of $\KS_{p,1}$ in our terminology. As a special
case, we apply these results to the classical Bernoulli and Euler numbers.

At the end, we construct $p$-adic functions using Fermat quotients, which have
a similar behavior as the $p$-adic zeta and $L$-functions mentioned above.

\section{Preliminaries}

Let $\NN$, $\PP$, $\ZZ$, $\QQ$, $\RR$, and $\CC$ be the set of positive
integers, the set of primes, the ring of integers, the field of rational, real,
and complex numbers, respectively. Let $\ZZ_p$ be the ring of $p$-adic integers
and $\QQ_p$ be the field of $p$-adic numbers. The ultrametric absolute value
$\norm{\cdot}_p$ is defined by $\norm{s}_p = p^{-\ord_p s}$ on $\QQ_p$. Let
$\norm{\cdot}_\infty = \norm{\cdot}$ be the usual norm on $\QQ_\infty = \RR$
and $\CC$. Following \cite[Ch.~4--5]{Robert:2000}, we denote $\CZ$,
$\Lip(\ZZ_p)$, and $\SD(\ZZ_p)$ as the space of continuous, Lipschitz, and
strictly differentiable functions $f: \ZZ_p \to \ZZ_p$, respectively.
For continuous functions $f \in \CZ$ define the norm
$\normf{f}_p = \sup_{s \in \ZZ_p} \norm{f(s)}_p = \max_{s \in \ZZ_p} \norm{f(s)}_p$
on the compact space $\ZZ_p$. Let $\LS$ denote the Landau symbol.

\begin{defin} \label{def:diff-oper}
The linear forward difference operator $\Doph{h}{}$ with increment $h$
and its powers are defined by
\[
   \Doph{h}{n} f(s) = \sum_{\nu=0}^n \binom{n}{\nu} (-1)^{n-\nu} f(s+\nu h)
\]
for integers $n \geq 0$, $h \geq 1$, and any function $f: \ZZ_p \to \ZZ_p$.
For brevity we write $\Dop^n$ instead of $\Doph{1}{n}$. In case of ambiguity
we explicitly indicate the variable with a possible start value, for example
$t=0$, by the expression
\[
   \Doph{h}{n} f(s+t) \valueat{t = 0}.
\]
The falling factorials are defined by
\[
    (s)_0 = 1, \quad (s)_n = s(s-1) \cdots (s-n+1) \quad \mbox{for } n \geq 1.
\]
As usual, let $\binom{s}{n} = (s)_n/n!$ for $n \geq 0$ be the binomial
polynomial, which is a function on $\ZZ_p$. A series
\[
   f(s) = \sum_{\nu \geq 0} a_\nu \binom{s}{\nu}
\]
with coefficients $a_\nu \in \ZZ_p$, where $|a_\nu|_p \to 0$, is called a
Mahler series, which defines a continuous function $f: \ZZ_p \to \ZZ_p$.
\end{defin}

\begin{theorem}[Mahler \cite{Mahler:1958}] \label{thm:mahler}
If $f \in \CZ$, then $f$ has a unique Mahler expansion
\[
   f(s) = \sum_{\nu \geq 0} a_\nu \binom{s}{\nu},
\]
where the coefficients $a_\nu \in \ZZ_p$ are given by
$a_\nu = \Dop^\nu f(0)$ with $|a_\nu|_p \to 0$.
\end{theorem}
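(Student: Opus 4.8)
The plan is to split the statement into a formal part---the coefficient formula together with uniqueness---and an analytic part---the decay $\norm{a_\nu}_p \to 0$ and convergence of the series to $f$---and to treat them in that order. For the formal part I would rely on the single identity $\Dop\binom{s}{n} = \binom{s}{n-1}$ (with $\binom{s}{-1}=0$), which gives $\Dop^\nu\binom{s}{k}\valueat{s=0} = \binom{0}{k-\nu} = \delta_{\nu k}$. Consequently, if $f$ admits any representation $f(s)=\sum_{\nu\geq 0} a_\nu\binom{s}{\nu}$ with $\norm{a_\nu}_p\to 0$, then applying the finite operator $\Dop^\nu$ termwise (legitimate by uniform convergence) and evaluating at $s=0$ forces $a_\nu = \Dop^\nu f(0)$. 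This yields both uniqueness and the explicit coefficients; restricted to $k\in\NN\cup\{0\}$ it is just the finite-difference inversion $f(k)=\sum_{\nu=0}^k \Dop^\nu f(0)\binom{k}{\nu}$. Note that each $a_\nu = \Dop^\nu f(0)\in\ZZ_p$ automatically, being a $\ZZ$-linear combination of the values $f(\nu)\in\ZZ_p$.

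The heart of the proof is the decay $\norm{\Dop^n f(0)}_p\to 0$, which I would obtain from the stronger sup-norm statement $\normf{\Dop^n f}_p\to 0$. Since $\ZZ_p$ is compact, $f$ is uniformly continuous: for each $\varepsilon>0$ there is an $m$ with $\norm{f(s+p^m)-f(s)}_p\leq\varepsilon$ for all $s$. Writing $E$ for the shift $s\mapsto s+1$, so that $\Dop = E-\id$, I would expand $E^{p^m}=(\id+\Dop)^{p^m}$ and use $p\mid\binom{p^m}{j}$ for $0<j<p^m$ to get
\[
   \Dop^{p^m} f = (E^{p^m}-\id)f - \sum_{j=1}^{p^m-1}\binom{p^m}{j}\Dop^j f.
\]
Because the norm is ultrametric we have $\normf{\Dop^j f}_p\leq\normf{f}_p$, and $\norm{\binom{p^m}{j}}_p\leq p^{-1}$ in this range, so $\normf{\Dop^{p^m}f}_p\leq\max(\varepsilon,\,p^{-1}\normf{f}_p)$. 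Iterating this contraction---legitimately with the same $m$, since $\Dop$ commutes with $E^{p^m}$ and does not increase the sup norm---gives $\normf{\Dop^{kp^m}f}_p\leq\max(\varepsilon,\,p^{-k}\normf{f}_p)$, and monotonicity of $\normf{\Dop^n f}_p$ in $n$ fills the gaps, so $\limsup_n\normf{\Dop^n f}_p\leq\varepsilon$. As $\varepsilon$ was arbitrary, $\normf{\Dop^n f}_p\to 0$, whence $\norm{a_n}_p\leq\normf{\Dop^n f}_p\to 0$.

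Finally I would assemble the pieces. With $\norm{a_\nu}_p\to 0$ and $\norm{\binom{s}{\nu}}_p\leq 1$ on $\ZZ_p$, the partial sums of $\sum_\nu a_\nu\binom{s}{\nu}$ are uniformly Cauchy, so they converge uniformly to some $g\in\CZ$. By the inversion identity of the first step, $g=f$ on $\NN\cup\{0\}$; since this set is dense in $\ZZ_p$ and both functions are continuous, $g=f$ on all of $\ZZ_p$. The main obstacle is the decay step: everything hinges on converting qualitative uniform continuity into the quantitative contraction via the divisibility $p\mid\binom{p^m}{j}$, and the one genuinely delicate point is the bookkeeping that upgrades the single-step estimate to honest convergence to $0$.
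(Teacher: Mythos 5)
Your argument is correct and complete. The paper itself offers no proof of this statement---it is quoted as a classical result with a citation to Mahler's 1958 article---so there is nothing internal to compare against; what you have written is essentially the standard modern proof (Bojani\'c's simplification of Mahler's original argument, the one found in Robert's book). All three pieces check out: the inversion $\Dop^\nu \binom{s}{k}\valueat{s=0} = \delta_{\nu k}$ gives uniqueness and the coefficient formula; the identity $\Dop^{p^m} = (E^{p^m}-\id) - \sum_{j=1}^{p^m-1}\binom{p^m}{j}\Dop^j$ together with $p \pdiv \binom{p^m}{j}$ and the non-increase of $\normf{\Dop^n f}_p$ correctly converts uniform continuity into $\normf{\Dop^n f}_p \to 0$ (the iteration with a fixed $m$ is justified exactly as you say, since $\Dop$ commutes with the shift and is norm-nonincreasing); and the density of $\NN \cup \{0\}$ in $\ZZ_p$ closes the gap between the Newton interpolation identity on integers and equality of continuous functions on $\ZZ_p$. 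No gaps.
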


\begin{lemma}[{\cite[Ch.~5.1, p.~227]{Robert:2000}}] \label{lem:delta-binom}
Let $k \geq 1$ and $p^j \leq k < p^{j+1}$. We have
\[
   \left| \binom{s}{k} - \binom{t}{k} \right|_p \leq p^j \, |s-t|_p,
     \quad s, t \in \ZZ_p.
\]
\end{lemma}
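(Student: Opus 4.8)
The plan is to reduce the difference of two binomial polynomials to a single factor $s-t$ times a quantity of controlled $p$-adic size, and the cleanest route is through the Vandermonde addition formula. Writing $u = s-t$, the identity $\binom{t+u}{k} = \sum_{i=0}^{k} \binom{u}{i}\binom{t}{k-i}$ is a polynomial identity in two variables, hence valid for all $s,t \in \ZZ_p$; the $i=0$ term is exactly $\binom{t}{k}$, so subtracting it yields
\[
   \binom{s}{k} - \binom{t}{k} = \sum_{i=1}^{k} \binom{u}{i}\binom{t}{k-i}.
\]
Each $\binom{t}{k-i}$ takes values in $\ZZ_p$, so $\left|\binom{t}{k-i}\right|_p \le 1$, and by the ultrametric inequality it suffices to bound every term $\left|\binom{u}{i}\right|_p$ for $1 \le i \le k$.

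Second, I would extract the factor $u$ from each $\binom{u}{i}$. For $i \ge 1$ one has $\binom{u}{i} = \frac{u}{i}\binom{u-1}{i-1}$, and since $u-1 \in \ZZ_p$ the binomial polynomial value $\binom{u-1}{i-1}$ again lies in $\ZZ_p$. Hence
\[
   \left|\binom{u}{i}\right|_p = \frac{|u|_p}{|i|_p}\,\left|\binom{u-1}{i-1}\right|_p \le |u|_p\, p^{\ord_p i},
\]
which isolates the only source of denominator growth, namely the integer $i$.

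Finally, I would control $\ord_p i$ uniformly over the range. The hypothesis $k < p^{j+1}$ forces $i \le k < p^{j+1}$, so $p^{j+1} \nmid i$ and therefore $\ord_p i \le j$ for every $1 \le i \le k$. Combining the three displays gives
\[
   \left|\binom{s}{k} - \binom{t}{k}\right|_p \le \max_{1 \le i \le k} \left|\binom{u}{i}\right|_p \le p^j\,|u|_p = p^j\,|s-t|_p,
\]
which is the asserted bound. The one subtlety I would flag as the main point to get right is precisely this denominator estimate: the factorial $i!$ hidden in $\binom{u}{i}$ could a priori contribute negative powers of $p$, and the whole argument hinges on the observation that after pulling out $u/i$ the remaining factor $\binom{u-1}{i-1}$ stays integral while $\ord_p i$ never exceeds $j$ on the relevant range.
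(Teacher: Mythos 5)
Your proof is correct. The paper does not prove this lemma itself (it only cites Robert), and your argument --- the Vandermonde expansion of $\binom{t+u}{k}-\binom{t}{k}$, extracting the factor $u/i$ from $\binom{u}{i}$ so that $\binom{u-1}{i-1}\in\ZZ_p$, and the observation that $\ord_p i \leq j$ for $1 \leq i \leq k < p^{j+1}$ --- is essentially the standard proof given in that reference.
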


\begin{theorem} \label{thm:delta-binom-ord}
Let $h, m, n$ be positive integers with $m \geq n$. Then
\[
   \ord_p \left( p^m \, \Doph{h}{n} \, \binom{s}{m} \right)
     \geq n ( 1 + \ord_p h ), \quad s \in \ZZ_p.
\]
\end{theorem}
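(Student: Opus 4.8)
The plan is to factor the operator $\Doph{h}{n}$ through the ordinary forward difference $\Dop=\Doph{1}{}$ and read off its action on the binomial basis. On polynomial functions the unit shift $s\mapsto s+1$ is $1+\Dop$ (Pascal's rule $\binom{s+1}{m}=\binom sm+\binom s{m-1}$), so the shift $s\mapsto s+h$ is $(1+\Dop)^h$; since $\binom sm$ is a polynomial this gives the operator identity
\[
  \Doph{h}{n}=\bigl((1+\Dop)^h-1\bigr)^n=\sum_{\ell\ge n}C_\ell\,\Dop^\ell,
  \qquad C_\ell:=[x^\ell]\bigl((1+x)^h-1\bigr)^n\in\ZZ,
\]
the lowest power being $\ell=n$ because $(1+x)^h-1=hx+O(x^2)$. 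Using $\Dop\binom sm=\binom s{m-1}$, hence $\Dop^\ell\binom sm=\binom s{m-\ell}$ (which vanishes once $\ell>m$), I would record the resulting finite expansion
\[
  \Doph{h}{n}\binom sm=\sum_{\ell=n}^{m}C_\ell\,\binom s{m-\ell}.
\]

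The crux is then the coefficient estimate $\ord_p C_\ell\ge n(1+\ord_p h)-\ell$ for $\ell\ge n$. To prove it, write $v=\ord_p h$ and expand $C_\ell=\sum\prod_{i=1}^{n}\binom h{k_i}$ over compositions $k_1+\dots+k_n=\ell$ with all $k_i\ge1$. The identity $k\binom hk=h\binom{h-1}{k-1}$ yields $\ord_p\binom hk\ge v-\ord_p k$ for every $k\ge1$, so each summand satisfies $\ord_p\prod_i\binom h{k_i}\ge nv-\sum_i\ord_p k_i$. Since $\ord_p k\le k-1$ for all integers $k\ge1$ (sharp at $p=2$, $k=2$), I get $\sum_i\ord_p k_i\le\sum_i(k_i-1)=\ell-n$, and therefore every summand, hence $C_\ell$ itself, has valuation at least $nv-(\ell-n)=n(1+v)-\ell$.

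Finally I would combine the pieces: for $s\in\ZZ_p$ and $\ell\le m$ we have $\binom s{m-\ell}\in\ZZ_p$, so
\[
  \ord_p\Bigl(p^m\,C_\ell\,\binom s{m-\ell}\Bigr)\ge m+\ord_p C_\ell\ge m+n(1+v)-\ell\ge n(1+v),
\]
the last step using $\ell\le m$. The ultrametric inequality over the finite sum then gives $\ord_p\bigl(p^m\Doph{h}{n}\binom sm\bigr)\ge n(1+v)=n(1+\ord_p h)$.

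I expect the main obstacle to be the coefficient bound on $C_\ell$: the whole estimate lives or dies on the two elementary inequalities $\ord_p\binom hk\ge\ord_p h-\ord_p k$ and $\ord_p k\le k-1$, and it is precisely the sharpness of the latter at $p=2$, $k=2$ that makes the uniform bound survive in that case. A more computational alternative would be induction on $n$: the base case $n=1$ drops straight out of Lemma~\ref{lem:delta-binom}, which gives $\ord_p(\binom{s+h}m-\binom sm)\ge v-j$ with $p^j\le m<p^{j+1}$, and $m\ge p^j\ge j+1$ closes it; but the inductive step forces one to carry along exactly the coefficient data above, so the generating-function packaging is the cleaner route.
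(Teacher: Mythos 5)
Your proof is correct, but it takes a genuinely different route from the paper's. The paper expands $\Doph{h}{n}\binom{s}{m}$ in terms of the shifted binomials $\binom{s+\nu}{m-n}$ with $h$-nomial coefficients coming from $(1+x+\cdots+x^{h-1})^n$ (Lemma \ref{lem:delta-h-binom}), and then proves $p$-integrality of $h^{-n}p^{m-n}\sum_\nu\binomh{n}{\nu}{h}\binom{s+\nu}{m-n}$ via formal derivatives of $f(x)=1+x+\cdots+x^{h-1}$, the observation $p^l/(l+1)\in\ZZ_p$, and Vandermonde's convolution (Propositions \ref{prop:delta-h-binom-zp} and \ref{prop:delta-h-binom-zp-2}); the payoff is the exact factorization $p^m\Doph{h}{n}\binom{s}{m}=(ph)^n f_n(s)$ with $f_n\in\ZZ_p[s]$, from which the valuation bound is immediate. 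You instead factor the operator as $\Doph{h}{n}=\bigl((1+\Dop)^h-1\bigr)^n=\sum_{\ell\ge n}C_\ell\Dop^\ell$ and bound each coefficient by $\ord_p C_\ell\ge n(1+\ord_p h)-\ell$, using only $k\binom{h}{k}=h\binom{h-1}{k-1}$ and $\ord_p k\le k-1$; all the steps check out (the composition expansion of $C_\ell$, the per-factor estimate, and the final ultrametric combination with $\ell\le m$). Your argument is shorter and more self-contained -- it bypasses the two auxiliary propositions entirely -- while the paper's version yields the slightly stronger structural statement that $h^{-n}p^{m-n}\Doph{h}{n}\binom{s}{m}$ is a $\ZZ_p$-polynomial in $s$, which is not needed for the inequality but records where the divisibility comes from.
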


We will prove this theorem in the end of this section,
since we shall need several preparations.
For basic properties of differences see, for instance,
\cite{Graham&others:1994} and \cite{Robert:2000}.

\begin{lemma}[{\cite[Ch.~3.1, p.~241]{Robert:2000}}] \label{lem:padic-fact}
Let $n$ be a positive integer. We have
\[
   \ord_p n! = \frac{n-\Sp_p(n)}{p-1} \geq 0 \quad \text{and} \quad
     \ord_p \left( \frac{p^n}{n!} \right) = \frac{p-2}{p-1} n
     + \frac{\Sp_p(n)}{p-1} \geq 1,
\]
where $\Sp_p(n)$ is the sum of the digits of the $p$-adic expansion of $n$.
\end{lemma}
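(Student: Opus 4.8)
The plan is to derive both statements from Legendre's formula $\ord_p n! = \sum_{i \geq 1} \lfloor n/p^i \rfloor$, which I would first establish by double counting. Writing $\ord_p n! = \sum_{m=1}^{n} \ord_p m$ and using $\ord_p m = \#\{\, i \geq 1 : p^i \mid m \,\}$, interchanging the two sums gives
\[
   \ord_p n! = \sum_{i \geq 1} \#\{\, m \leq n : p^i \mid m \,\}
     = \sum_{i \geq 1} \left\lfloor \frac{n}{p^i} \right\rfloor ,
\]
which is a finite sum since $\lfloor n/p^i \rfloor = 0$ once $p^i > n$.

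Next I would pass to the base-$p$ expansion $n = \sum_{k \geq 0} d_k p^k$ with digits $0 \leq d_k \leq p-1$, so that $\Sp_p(n) = \sum_{k \geq 0} d_k$. Then $\lfloor n/p^i \rfloor = \sum_{k \geq i} d_k p^{k-i}$, and summing over $i \geq 1$ while interchanging the order of summation yields
\[
   \ord_p n! = \sum_{k \geq 1} d_k \sum_{i=1}^{k} p^{k-i}
     = \sum_{k \geq 1} d_k \, \frac{p^k - 1}{p-1}
     = \frac{1}{p-1} \left( \sum_{k \geq 0} d_k p^k - \sum_{k \geq 0} d_k \right)
     = \frac{n - \Sp_p(n)}{p-1} ,
\]
where I may freely adjoin the vanishing $k=0$ term. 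The inequality $\ord_p n! \geq 0$ is then immediate from $\Sp_p(n) \leq n$.

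For the second identity I would simply write $\ord_p(p^n/n!) = n - \ord_p n!$ and substitute the closed form, giving
\[
   n - \frac{n - \Sp_p(n)}{p-1}
     = \frac{(p-1)n - n + \Sp_p(n)}{p-1}
     = \frac{p-2}{p-1}\, n + \frac{\Sp_p(n)}{p-1} .
\]
Finally, for the lower bound $\geq 1$ I would use that $\Sp_p(n) \geq 1$ for every $n \geq 1$, whence $\ord_p n! = \frac{n - \Sp_p(n)}{p-1} \leq \frac{n-1}{p-1} \leq n-1$, the last step because $p-1 \geq 1$; therefore $\ord_p(p^n/n!) = n - \ord_p n! \geq 1$.

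The argument is entirely elementary; the only step requiring care is the interchange of summation in the digit computation, where one must verify that for fixed $k$ the index $i$ runs precisely from $1$ to $k$, so that the geometric series $\sum_{i=1}^{k} p^{k-i} = (p^k-1)/(p-1)$ appears. Beyond this bookkeeping I do not expect any genuine obstacle.
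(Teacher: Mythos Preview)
Your argument is correct and is the standard elementary derivation via Legendre's formula. Note, however, that the paper does not actually prove this lemma: it is stated with a citation to \cite[Ch.~3.1, p.~241]{Robert:2000} and no proof is given in the text. Your write-up therefore supplies what the paper omits, and the route you take (Legendre's formula by double counting, then evaluation in terms of the base-$p$ digits, then subtraction for the second identity) is exactly the classical one found in Robert and elsewhere.
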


\begin{lemma} \label{lem:delta-poly}
Let $f \in \QQ_p[s]$ a function $f: \ZZ_p \to \ZZ_p$ with
$m = \deg f$ and $a_m \in \QQ_p$ be the highest coefficient of $f$.
For positive integers $h, n$ and $s \in \ZZ_p$ we have
\[
   \Doph{h}{n} \, f(s) = \left\{
     \begin{array}{rl}
       h^n \, g(s), & n < m, \\
       h^m \, m! \, a_m, & n = m, \\
       0, & n > m,  \\
     \end{array} \right.
\]
where $m! \, a_m, h^n \, g(s) \in \ZZ_p$ and in the latter case
$g \in \QQ_p[s]$ with $\deg g = m-n$.
If $f \in \ZZ_p[s]$, then $g \in \ZZ_p[s]$.
\end{lemma}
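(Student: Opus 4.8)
The plan is to separate the purely algebraic computation in $\QQ_p[s]$ from the two $p$-adic integrality assertions. First I would reduce to the single difference $\Doph{h}{1}$ and induct on $n$. For the base case a direct binomial expansion gives
\[
   \Doph{h}{1} s^k = (s+h)^k - s^k = h \sum_{j=1}^{k} \binom{k}{j} h^{j-1} s^{k-j},
\]
so that $\Doph{h}{1} s^k = h\, g_{1}(s)$ with $g_{1} \in \ZZ[s]$ of degree $k-1$ and leading coefficient $k$. By $\QQ_p$-linearity this extends to any $f \in \QQ_p[s]$ of degree $m$: one obtains $\Doph{h}{1} f = h\, g_{1}$ with $\deg g_{1} = m-1$ and leading coefficient $m\, a_m$, and with $g_{1} \in \ZZ_p[s]$ whenever $f \in \ZZ_p[s]$, since every coefficient produced is an integer multiple of a power of the integer $h$.

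Next I would run the induction. Assuming $\Doph{h}{n-1} f = h^{n-1} g_{n-1}$ with $g_{n-1}$ of degree $m-(n-1)$ and leading coefficient $(m)_{n-1}\, a_m$, applying the base case to $g_{n-1}$ yields $\Doph{h}{n} f = \Doph{h}{1}\bigl(h^{n-1} g_{n-1}\bigr) = h^{n-1}\Doph{h}{1} g_{n-1} = h^{n} g_{n}$, where $g_{n}$ has degree $m-n$ and leading coefficient $(m-n+1)(m)_{n-1}\, a_m = (m)_n\, a_m$, and $g_n \in \ZZ_p[s]$ if $g_{n-1} \in \ZZ_p[s]$. This is the first line for $n < m$. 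At $n = m$ the polynomial $g_{m}$ has degree $0$, hence equals its leading coefficient $(m)_m\, a_m = m!\, a_m$, giving the middle line; and for $n > m$ one further application of $\Doph{h}{1}$ to this constant returns $0$, giving the last line.

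It remains to settle the integrality, which I regard as the only non-formal point. For the values I would argue directly from Definition \ref{def:diff-oper}: since $\Doph{h}{n} f(s) = \sum_{\nu=0}^{n} \binom{n}{\nu} (-1)^{n-\nu} f(s+\nu h)$ is an integer combination of the values $f(s+\nu h) \in \ZZ_p$, we get $h^{n} g(s) \in \ZZ_p$ for every $s \in \ZZ_p$. Specializing the case $n=m$ to $h=1$ identifies $m!\, a_m = \Dop^{m} f(0)$ as the top Mahler coefficient of $f$, which lies in $\ZZ_p$ by Theorem \ref{thm:mahler}; this gives $m!\, a_m \in \ZZ_p$. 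The sharper $g \in \ZZ_p[s]$ under $f \in \ZZ_p[s]$ is already built into the induction, each step only combining integer coefficients, powers of $h$, and $\ZZ_p$-scalars.

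The main obstacle, such as it is, is to keep the two kinds of integrality apart: the value $h^{n} g(s)$ always lands in $\ZZ_p$ through the defining sum, but the coefficients of $g$ need not, so in the generic case one can only claim $g \in \QQ_p[s]$ (indeed its leading coefficient $(m)_n\, a_m = m!\,a_m/(m-n)!$ may carry a denominator), the sharper $g \in \ZZ_p[s]$ being available only when $f \in \ZZ_p[s]$. Identifying $m!\, a_m$ with $\Dop^{m} f(0)$ is the single place where I would invoke an external input; the rest is a bookkeeping induction on degree and leading coefficient.
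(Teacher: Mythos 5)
Your proof is correct and follows essentially the same route as the paper: iterate the single difference $\Doph{h}{1}$ on monomials, track degree and leading coefficient, read off integrality of the values $h^n g(s)$ from the defining alternating sum, and specialize to $h=1$ for $m!\,a_m \in \ZZ_p$ (the paper gets this last point directly from $\Dop^m f(0)$ being an integer combination of values of $f$, without needing to invoke Mahler's theorem, but the fact used is the same). The extra bookkeeping of the leading coefficient $(m)_n\,a_m$ via the binomial expansion is a harmless refinement of the paper's $\Dophh\, s^m = h(m s^{m-1} + \LS(s^{m-2}))$.
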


\begin{proof}
We have $\Dophh \, s = h$ and $\Dophh \, s^m = h ( m s^{m-1} + \LS(s^{m-2}) )$
for $m \geq 2$, while constant terms vanish under $\Doph{h}{}$.
Case $n < m$: We get $\Dophh \, f(s) = h \, \tilde{g}(s)$ with
$\deg \tilde{g} = m-1$ and by iteration that
$\Doph{h}{n} \, f(s) = h^n \, g(s)$ with $\deg g = m-n$. Since
$f$ takes only values in $\ZZ_p$, so also its differences. Thus
$h^n \, g(s) \in \ZZ_p$. If $f \in \ZZ_p[s]$, then $\Dophh$ maps coefficients
from $\ZZ_p$ onto $\ZZ_p$ and this provides that $g \in \ZZ_p[s]$.
Case $n=m$: Since lower terms vanish, we obtain a constant term
$\Doph{h}{m} \, f(s) = \Doph{h}{m} \, a_m \, s^m = h^m \, m! \, a_m \in \ZZ_p$.
For $h=1$ this implies $m! \, a_m \in \ZZ_p$.
Case $n>m$: The constant terms vanish.
\end{proof}

\begin{lemma} \label{lem:delta-h-binom}
Let $h, m, n$ be positive integers with $m \geq n$. Then
\[
   \Doph{h}{n} \, \binom{s}{m} = \sum_{\nu=0}^{n(h-1)} \!\! \binomh{n}{\nu}{h}
     \binom{s+\nu}{m-n}, \quad s \in \ZZ_p,
\]
where the $h$-nomial coefficients of order $n$ coincide with
\begin{equation} \label{eq:delta-h-binom}
   (1+x+\cdots+x^{h-1})^n = \sum_{\nu=0}^{n(h-1)} \binomh{n}{\nu}{h} \, x^\nu
     \quad \text{and} \quad \sum_{\nu=0}^{n(h-1)} \binomh{n}{\nu}{h} = h^n.
\end{equation}
\end{lemma}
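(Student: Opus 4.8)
The plan is to recast the whole identity as a statement about operators that are polynomials in the shift operator $E \colon f(s) \mapsto f(s+1)$, where $E^k f(s) = f(s+k)$. First I would note that expanding the definition of $\Doph{h}{n}$ in Definition \ref{def:diff-oper} matches term-by-term the binomial expansion of $(E^h - \id)^n$, so that $\Doph{h}{n} = (E^h-\id)^n$ and in particular $\Dop^n = (E-\id)^n$. Since every operator in sight is a polynomial in $E$, they all commute, which is the feature that makes the argument work.

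The key algebraic step is the factorization $E^h - \id = (E-\id)(\id + E + \cdots + E^{h-1})$. Raising it to the $n$-th power and using commutativity gives
\[
   \Doph{h}{n} = \Dop^n \, (\id + E + \cdots + E^{h-1})^n .
\]
Substituting $x = E$ into \eqref{eq:delta-h-binom}, which I would take as the defining generating function of the coefficients $\binomh{n}{\nu}{h}$, expands the right factor as $\sum_{\nu=0}^{n(h-1)} \binomh{n}{\nu}{h} \, E^\nu$.

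Finally I would apply this operator identity to $\binom{s}{m}$. Because $\Dop^n$ commutes with each $E^\nu$, I can pull it through the sum, and then two elementary facts close the computation: the shift $E^\nu \binom{s}{m} = \binom{s+\nu}{m}$, and the iterated Pascal identity $\Dop^n \binom{s}{m} = \binom{s}{m-n}$, which holds since $m \geq n$ and which I would establish by induction from the single step $\Dop \binom{s}{m} = \binom{s}{m-1}$. This yields exactly $\Doph{h}{n}\binom{s}{m} = \sum_{\nu=0}^{n(h-1)} \binomh{n}{\nu}{h}\binom{s+\nu}{m-n}$, and the normalization $\sum_\nu \binomh{n}{\nu}{h} = h^n$ follows by setting $x=1$ in \eqref{eq:delta-h-binom}. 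The computation is routine once the operator calculus is set up; the only point needing a little care is the Pascal induction, and I do not anticipate any genuine obstacle.
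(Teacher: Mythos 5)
Your proof is correct and is essentially the paper's argument in operator-calculus dress: the paper's single step $\Doph{h}{}\binom{s}{m}=\sum_{\nu=0}^{h-1}\binom{s+\nu}{m-1}$ obtained by iterating Pascal's rule is exactly your factorization $E^h-\id=(E-\id)(\id+E+\cdots+E^{h-1})$ applied to $\binom{s}{m}$, and the paper's iteration tracking coefficients via $(1+x+\cdots+x^{h-1})^{r-1}\mapsto(1+x+\cdots+x^{h-1})^r$ is your $n$-th power of the operator identity. No gaps; the normalization at $x=1$ is handled the same way in both.
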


\begin{proof}
Let $s \in \ZZ_p$. For $h=1$ this gives the usual differences
$\Dop^n \binom{s}{m} = \binom{s}{m-n}$ for $m \geq n \geq 1$. Now let $h > 1$.
Using $\binom{s+1}{m} = \binom{s}{m} + \binom{s}{m-1}$ successively, we get
\[
   \Dophh \, \binom{s}{m} = \sum_{\nu=0}^{h-1} \binom{s+\nu}{m-1}.
\]
Applying the above equation repeatedly yields the result,
whereas the coefficients are mapped, step by step for $r=1,\ldots,n$,
in the same way as $(1+x+\cdots+x^{h-1})^{r-1} \mapsto (1+x+\cdots+x^{h-1})^r$.
Then taking $x=1$ shows that the sum of the $h$-nomial coefficients
of order $n$ equals $h^n$.
\end{proof}

\begin{prop} \label{prop:delta-h-binom-zp}
Let $h, k, n$ be integers with $h,n \geq 1$ and $k \geq 0$. Then
\[
   \theta(n,k) = h^{-n} p^k \sum_{\nu=0}^{n(h-1)} \!\!
     \binomh{n}{\nu}{h} \binom{\nu}{k} \in \ZZ_p.
\]
\end{prop}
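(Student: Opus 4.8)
The plan is to recognize the inner sum as a single coefficient of an explicit power series in a new variable, and then to track $p$-adic orders directly. I deliberately avoid Theorem~\ref{thm:delta-binom-ord}, whose proof is deferred to the end of the section: since the present proposition is plainly one of the preparations for that theorem, invoking it would risk circularity.

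First I would pass to a generating function. Because $\binom{\nu}{k}$ is the coefficient of $y^k$ in $(1+y)^\nu$, the sum $\sum_{\nu}\binomh{n}{\nu}{h}\binom{\nu}{k}$ is the coefficient of $y^k$ in $\sum_{\nu}\binomh{n}{\nu}{h}(1+y)^\nu$. By the defining identity \eqref{eq:delta-h-binom} with $x=1+y$, this series equals $P(1+y)^n$, where $P(x)=1+x+\cdots+x^{h-1}$. Writing $P(x)=(x^h-1)/(x-1)$ and substituting $x=1+y$ gives
\[
   P(1+y)=\frac{(1+y)^h-1}{y}=\sum_{i=0}^{h-1}\binom{h}{i+1}y^i,
\]
whose constant term is $h$. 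Factoring out $h$ and using $\binom{h}{i+1}/h=\binom{h-1}{i}/(i+1)$ yields $P(1+y)=h\,Q(y)$ with
\[
   Q(y)=\sum_{i=0}^{h-1}\frac{1}{i+1}\binom{h-1}{i}y^i, \qquad Q(0)=1.
\]
Hence $h^{-n}\sum_{\nu}\binomh{n}{\nu}{h}\binom{\nu}{k}$ is the coefficient of $y^k$ in $Q(y)^n$, and therefore $\theta(n,k)=p^k\cdot(\text{coefficient of }y^k\text{ in }Q(y)^n)$.

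It then remains to show that this coefficient has $\ord_p\geq -k$. Expanding the $n$-th power, the coefficient of $y^k$ in $Q(y)^n$ is a finite sum, over compositions $i_1+\cdots+i_n=k$ with $0\le i_j\le h-1$, of the products $\prod_{j}\binom{h-1}{i_j}/(i_j+1)$. Each $\binom{h-1}{i_j}$ is an integer, so only the denominators $i_j+1$ can contribute negative order. The crux is the elementary bound $\ord_p(i+1)\le i$ for every integer $i\ge 0$, which follows from $p^{\,\ord_p(i+1)}\le i+1\le 2^i\le p^i$. Summing over $j$ gives $\ord_p\prod_{j}\binom{h-1}{i_j}/(i_j+1)\ge -\sum_j i_j=-k$ for each composition, and the ultrametric inequality upgrades this to the entire sum. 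Consequently $\ord_p\theta(n,k)=k+\ord_p(\text{coefficient of }y^k\text{ in }Q(y)^n)\ge 0$, i.e.\ $\theta(n,k)\in\ZZ_p$.

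The main obstacle is this last step: the factors $1/(i_j+1)$ are genuine denominators, and one must verify that the single power $p^k$ clears all of them simultaneously, uniformly over every composition of $k$. This reduces precisely to the inequality $\ord_p(i+1)\le i$, which is exactly what makes the uniform bound $-k$ hold; everything preceding it is bookkeeping with the generating function, and the passage $x\mapsto 1+y$ together with the factorization $P(1+y)=h\,Q(y)$ is the device that exposes the normalizing factor $h^{-n}$ cleanly.
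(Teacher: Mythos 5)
Your proposal is correct and is essentially the paper's own argument in different clothing: extracting the coefficient of $y^k$ in $P(1+y)^n$ is the same as evaluating the normalized derivative $\Diffop^k f(x)^n$ at $x=1$, your expansion $P(1+y)=\sum_i\binom{h}{i+1}y^i$ with $\binom{h}{i+1}/h=\binom{h-1}{i}/(i+1)$ matches the paper's \eqref{eq:loc-delta-h-binom-1}, and the decisive step in both is the bound $\ord_p(i+1)\le i$ applied factorwise over compositions of $k$. No substantive difference; the proof is sound.
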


\begin{proof}
Note that $\theta(n,k) = 0$ for $k > n(h-1)$ and $\theta(n,0)=1$ by
\eqref{eq:delta-h-binom}. According to Lemma \ref{lem:delta-h-binom} set
$f(x) = 1+x+\cdots+x^{h-1}$. We will evaluate formal derivatives of both sides
of \eqref{eq:delta-h-binom}. Define the differential operator
$\Diffop^r = (d/dx)^r / r!$ for $r \geq 0$. We firstly have
\[
   \Diffop^r f(x) \valueat{x=1}
     = \sum_{\nu=r}^{h-1} \binom{\nu}{r} = \binom{h}{r+1},
\]
which is valid for all $r \geq 0$. Let $k \geq 1$. We then deduce that
\begin{alignat}{1}
   \Diffop^k f(x)^n \valueat{x=1}
     &= \sum_{\nu_1+\cdots+\nu_n=k} \frac{1}{k!} \binom{k}{\nu_1,\ldots,\nu_n}
        f^{(\nu_1)}(x) \cdots f^{(\nu_n)}(x) \valueat{x=1} \nonumber \\
     &= \sum_{\nu_1+\cdots+\nu_n=k} \binom{h}{\nu_1+1} \cdots \binom{h}{\nu_n+1}
        \nonumber \\
     &= \sum_{\nu_1+\cdots+\nu_n=k} \frac{h^n}{(\nu_1+1)\cdots(\nu_n+1)}
        \binom{h-1}{\nu_1} \cdots \binom{h-1}{\nu_n}.
        \label{eq:loc-delta-h-binom-1}
\end{alignat}
Since $p^l/(l+1) \in \ZZ_p$ for $l \geq 0$, we obtain for
$\nu_1+\cdots+\nu_n=k$ that
\begin{equation} \label{eq:loc-delta-h-binom-2}
   \frac{p^k}{(\nu_1+1)\cdots(\nu_n+1)}
     = \frac{p^{\nu_1}}{\nu_1+1} \cdots \frac{p^{\nu_n}}{\nu_n+1} \in \ZZ_p.
\end{equation}
Thus, replacing $h^n$ by $p^k$ in \eqref{eq:loc-delta-h-binom-1} and using
\eqref{eq:loc-delta-h-binom-2}, this shows that
\[
   h^{-n} p^k \, \Diffop^k f(x)^n \valueat{x=1} \in \ZZ_p.
\]
Now, consider the right hand side of \eqref{eq:delta-h-binom}.
We finally achieve that
\[
    h^{-n} p^k \, \Diffop^k f(x)^n \valueat{x=1}
     =  h^{-n} p^k \, \sum_{\nu=0}^{n(h-1)} \!\!
     \binomh{n}{\nu}{h} \binom{\nu}{k} \in \ZZ_p. \qedhere
\]
\end{proof}

\begin{prop} \label{prop:delta-h-binom-zp-2}
Let $h, m, n$ be positive integers. Then
\[
   f_n(s) = h^{-n} p^{m-n} \sum_{\nu=0}^{n(h-1)} \!\!
     \binomh{n}{\nu}{h} \binom{s+\nu}{m-n} \in \ZZ_p[s]
\]
for $n=1,\ldots,m$.
\end{prop}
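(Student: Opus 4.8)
The plan is to reduce the claim to the $p$-integrality statement of Proposition~\ref{prop:delta-h-binom-zp} together with a single clearing-of-denominators estimate. First I would recognize, via Lemma~\ref{lem:delta-h-binom}, that the sum defining $f_n$ is exactly $\Doph{h}{n}\binom{s}{m}$, so that $f_n(s) = h^{-n} p^{m-n} \Doph{h}{n}\binom{s}{m}$ is a polynomial of degree $m-n$. To expose its binomial structure I would expand each summand by the Vandermonde convolution
\[
   \binom{s+\nu}{m-n} = \sum_{k=0}^{m-n} \binom{\nu}{k}\binom{s}{m-n-k},
\]
a polynomial identity in $s$ that holds on $\ZZ_p$ for each nonnegative integer $\nu$.

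Substituting this and interchanging the two finite sums, the coefficient of $\binom{s}{m-n-k}$ becomes $h^{-n} p^{m-n} \sum_{\nu=0}^{n(h-1)}\binomh{n}{\nu}{h}\binom{\nu}{k}$. The inner sum over $\nu$ is precisely the combination controlled by Proposition~\ref{prop:delta-h-binom-zp}: writing $\theta(n,k) = h^{-n} p^{k} \sum_{\nu}\binomh{n}{\nu}{h}\binom{\nu}{k} \in \ZZ_p$, I would arrive at the clean expansion
\[
   f_n(s) = \sum_{k=0}^{m-n} p^{m-n-k}\,\theta(n,k)\,\binom{s}{m-n-k}
          = \sum_{j=0}^{m-n} p^{j}\,\theta(n,m-n-j)\,\binom{s}{j}.
\]
Since every $\theta(n,\cdot) \in \ZZ_p$, it then suffices to show that $p^{j}\binom{s}{j} \in \ZZ_p[s]$ for each $j \geq 0$, for $f_n$ is thereby displayed as a $\ZZ_p$-linear combination of such polynomials.

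The main point, and the only place where genuine arithmetic enters, is this last reduction: the powers of $p$ must be just large enough to clear the denominators $j!$ of the binomial polynomials. I would argue it directly. The falling factorial $(s)_j$ has integer (Stirling) coefficients, so every coefficient of $\binom{s}{j} = (s)_j/j!$ has $p$-adic order at least $-\ord_p(j!)$, whence every coefficient of $p^{j}\binom{s}{j}$ has order at least $\ord_p(p^{j}/j!)$. By Lemma~\ref{lem:padic-fact} this equals $\tfrac{p-2}{p-1}\,j + \tfrac{\Sp_p(j)}{p-1} \geq 0$, so indeed $p^{j}\binom{s}{j} \in \ZZ_p[s]$ (the leading coefficient $1/j!$ is the worst case, and it is exactly the quantity estimated by the lemma). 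I expect no further obstacle: the two nontrivial ingredients, the integrality of $\theta(n,k)$ and this order estimate, are furnished respectively by Proposition~\ref{prop:delta-h-binom-zp} and Lemma~\ref{lem:padic-fact}, and the degenerate case $n=m$ collapses the first display to the constant $\theta(m,0)=1$, consistently.
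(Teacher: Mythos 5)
Your proposal is correct and follows essentially the same route as the paper: expand each $\binom{s+\nu}{m-n}$ by Vandermonde's convolution, interchange the finite sums to isolate the quantities $\theta(n,k)$ of Proposition \ref{prop:delta-h-binom-zp}, and absorb the remaining factors $p^{m-n-k}$ into the binomial polynomials via Lemma \ref{lem:padic-fact}. You merely spell out in more detail the final step that $p^{j}\binom{s}{j} \in \ZZ_p[s]$, which the paper leaves implicit.
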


\begin{proof}
We use the Vandermonde's convolution identity,
cf.~\cite[Ch.~5.1, p.~170]{Graham&others:1994}, that
\[
   \binom{s+a}{n} = \sum_{k=0}^n \binom{a}{k} \binom{s}{n-k}.
\]
Hence
\begin{align*}
   f_n(s)
     &= h^{-n} p^{m-n} \sum_{\nu=0}^{n(h-1)} \!\!
        \binomh{n}{\nu}{h} \binom{s+\nu}{m-n} \\
     &= h^{-n} p^{m-n} \sum_{\nu=0}^{n(h-1)} \!\!
        \binomh{n}{\nu}{h} \sum_{k=0}^{m-n}
        \binom{\nu}{k} \binom{s}{m-n-k} \\
     &= \sum_{k=0}^{m-n} p^{m-n-k} \binom{s}{m-n-k} \,
        h^{-n} p^k \sum_{\nu=0}^{n(h-1)} \binomh{n}{\nu}{h} \binom{\nu}{k}.
\end{align*}
Lemma \ref{lem:padic-fact} and Proposition \ref{prop:delta-h-binom-zp}
provide that $f_n \in \ZZ_p[s]$.
\end{proof}
\smallskip

\begin{proof}[Proof of Theorem \ref{thm:delta-binom-ord}]
Using Lemma \ref{lem:delta-h-binom} and Proposition
\ref{prop:delta-h-binom-zp-2}, we obtain
\[
   \ord_p \left( p^m \, \Doph{h}{n} \, \binom{s}{m} \right)
     = \ord_p \left( (ph)^n \, f_n(s) \right)
   \geq n ( 1 + \ord_p h ). \qedhere
\]
\end{proof}
\medskip

\section{$p$-adic Kummer spaces}

\begin{defin} \label{def:kummer-space}
We introduce the spaces $\KS_{p,1}$, $\KS_{p,2}$, and $\KSS_{p,2}$, which
we call $p$-adic Kummer spaces. Furthermore we define the sets $\WKS_{p,1}$
and $\WKS_{p,2}$. We distinguish between the following congruences of a
function $f: \ZZ_p \to \ZZ_p$ for any $s, t \in \ZZ_p$, $s \neq t$, and any
$n \geq 0$:
\begin{enumerate}
\item \textdef{Kummer congruences}: If $f \in \KS_{p,1}$, then
\[
   s \equiv t \pmod{p^n \ZZ_p} \quad \Longrightarrow \quad
     f(s) \equiv f(t) \pmod{p^{n+1} \ZZ_p}.
\]
If the converse also holds, then $f \in \WKS_{p,1}$.
\item \textdef{Kummer type congruences I}: If $f \in \KS_{p,2}$, then
\[
   \Dop^n f(s) \equiv 0 \pmod{p^n \ZZ_p}.
\]
We write $\Delta_f(n) = \Dop^n f(0)/p^n$, where $\Delta_f(n) \in \ZZ_p$.
Furthermore we write
\[
   \Delta_f \equiv \Delta_f(1) \pmod{p\ZZ_p}, \quad 0 \leq \Delta_f < p.
\]
If $\Delta_f \neq 0$, additionally $2 \pdiv \Delta_f(2)$ in case $p = 2$,
then $f \in \WKS_{p,2}$.
\item \textdef{Kummer type congruences II}: If $f \in \KSS_{p,2}$, then
\[
   \Doph{h}{n} f(s) \equiv 0 \pmod{p^{nr} \ZZ_p}
\]
for any $h \geq 1$, where $r=1+\ord_p h$.
\end{enumerate}
\end{defin}
\smallskip

By definition we have $\WKS_{p,1} \subset \KS_{p,1}$,
$\WKS_{p,2} \subset \KS_{p,2}$, and $\KSS_{p,2} \subset \KS_{p,2}$.
Clearly, a function $f \in \KS_{p,\nu}$, $\nu = 1,2$, resp.,
$f \in \KSS_{p,2}$ is continuous on $\ZZ_p$. For $f \in \KS_{p,1}$ this
follows by the usual $\varepsilon$-$\delta$ criterion of continuity.
For $f \in \KS_{p,2}$, resp., $f \in \KSS_{p,2}$ this is a consequence
of its Mahler expansion. The definition of $\KS_{p,2}$ above can be
weakened as follows.

\begin{lemma} \label{lem:ks2-kummer-congr-0}
Let the function $f: \ZZ_p \to \ZZ_p$ satisfy $\Dop^n f(0) \in p^n \ZZ_p$
for all $n \geq 0$. Then $f \in \KS_{p,2}$.
\end{lemma}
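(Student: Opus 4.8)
The plan is to reduce everything to the Mahler expansion of $f$ and to exploit that the ordinary difference operator $\Dop^n$ acts on binomial polynomials in a particularly transparent way. Reading the hypothesis through Mahler's theorem (Theorem~\ref{thm:mahler}), the Mahler coefficients of $f$ are $a_\nu = \Dop^\nu f(0)$, and the assumption $\Dop^\nu f(0) \in p^\nu \ZZ_p$ says precisely that $a_\nu \in p^\nu \ZZ_p$. In particular $|a_\nu|_p \le p^{-\nu} \to 0$, so $f \in \CZ$ is given by
\[
   f(s) = \sum_{\nu \ge 0} a_\nu \binom{s}{\nu}, \quad s \in \ZZ_p.
\]

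Next I would apply $\Dop^n$ to this expansion termwise. Since $\Dop^n$ is a fixed finite $\ZZ_p$-linear combination of the translations $f \mapsto f(\,\cdot + j)$ for $0 \le j \le n$, and since the Mahler series converges uniformly on the compact space $\ZZ_p$ (because $|a_\nu|_p \to 0$ while $|\binom{s}{\nu}|_p \le 1$), the operator $\Dop^n$ may be interchanged with the infinite sum. Using the case $h = 1$ of Lemma~\ref{lem:delta-h-binom}, namely $\Dop^n \binom{s}{\nu} = \binom{s}{\nu - n}$ for $\nu \ge n$ and $\Dop^n \binom{s}{\nu} = 0$ for $\nu < n$, this yields
\[
   \Dop^n f(s) = \sum_{\nu \ge n} a_\nu \binom{s}{\nu - n}, \quad s \in \ZZ_p.
\]

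It then remains to estimate this series. For every index $\nu \ge n$ one has $a_\nu \in p^\nu \ZZ_p \subseteq p^n \ZZ_p$, while $\binom{s}{\nu - n} \in \ZZ_p$ for all $s \in \ZZ_p$; hence each summand lies in $p^n \ZZ_p$. As $p^n \ZZ_p$ is closed and the series converges, its sum lies in $p^n \ZZ_p$ as well, so $\Dop^n f(s) \equiv 0 \pmod{p^n \ZZ_p}$ for all $s$ and all $n \ge 0$. This is exactly the defining property of $\KS_{p,2}$ in Definition~\ref{def:kummer-space}, giving $f \in \KS_{p,2}$.

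I expect the only genuine obstacle to be the justification of the termwise application of $\Dop^n$; this rests on the uniform convergence of the Mahler series, which the hypothesis secures through $|a_\nu|_p \to 0$. Once the interchange is licensed, the decisive gain is the index shift $\binom{s}{\nu} \mapsto \binom{s}{\nu - n}$, which lowers each coefficient's guaranteed $p$-divisibility from $p^\nu$ to $p^n$ uniformly in $\nu \ge n$ without introducing any denominators, so the congruence at $s = 0$ propagates to arbitrary $s \in \ZZ_p$.
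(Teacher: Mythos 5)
Your proof is correct and follows essentially the same route as the paper: write out the Mahler expansion with coefficients $a_\nu = \Dop^\nu f(0) \in p^\nu\ZZ_p$, apply $\Dop^n$ termwise (licensed by uniform convergence), and use the index shift $\Dop^n\binom{s}{\nu} = \binom{s}{\nu-n}$ to see that every surviving term lies in $p^\nu\ZZ_p \subseteq p^n\ZZ_p$. The paper's proof is just a terser version of exactly this argument.
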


\begin{proof}
The Mahler expansion of $f$ is easily given by
\begin{equation} \label{eq:mahler-exp-ks2}
   f(s) = \sum_{\nu \geq 0} \Delta_f(\nu) \, p^\nu \binom{s}{\nu},
          \quad s \in \ZZ_p.
\end{equation}
Since $\Dop^n \, p^\nu \binom{s}{\nu} \equiv 0 \pmod{p^n\ZZ_p}$
for all $n, \nu \geq 0$, it follows that
$\Dop^n f(s) \equiv 0 \pmod{p^n \ZZ_p}$ for all $n \geq 0$
independently of $s \in \ZZ_p$.
\end{proof}

\begin{lemma}
The space $\KS_{p,2}$ has the basis
\[
   \BKS_p = \left\{ p^\nu \binom{s}{\nu} \right\}_{\nu \geq 0}.
\]
\end{lemma}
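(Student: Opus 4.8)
The plan is to recognize this lemma as essentially a corollary of Mahler's theorem (Theorem \ref{thm:mahler}) combined with the characterization of $\KS_{p,2}$ furnished by Lemma \ref{lem:ks2-kummer-congr-0}. By ``basis'' I read the assertion that $\KS_{p,2}$ is a complete free $\ZZ_p$-module on $\BKS_p$: every $f \in \KS_{p,2}$ admits a unique norm-convergent representation $f(s) = \sum_{\nu \geq 0} c_\nu\, p^\nu \binom{s}{\nu}$ with coefficients $c_\nu \in \ZZ_p$, and conversely each such series lies in $\KS_{p,2}$. My strategy is to transport the unique Mahler expansion through the identity $a_\nu = \Dop^\nu f(0)$ and then read off the membership condition.

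First I would record the equivalence
\[
  f \in \KS_{p,2} \iff \Dop^\nu f(0) \in p^\nu \ZZ_p \ \text{ for all } \nu \geq 0.
\]
The forward implication is the defining congruence of $\KS_{p,2}$ specialized to $s=0$, while the converse is exactly Lemma \ref{lem:ks2-kummer-congr-0}. In particular this already shows each element of $\BKS_p$ lies in $\KS_{p,2}$: since $\Dop^n \binom{s}{\nu} = \binom{s}{\nu-n}$ for $\nu \geq n$ (and vanishes otherwise), its value at $s=0$ is $1$ when $\nu=n$ and $0$ when $\nu>n$, so $\Dop^n(p^\nu \binom{s}{\nu})(0) = p^\nu\,[\nu=n] \in p^n\ZZ_p$.

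Next, for $f \in \KS_{p,2}$ I would invoke Theorem \ref{thm:mahler} to write $f(s) = \sum_{\nu \geq 0} a_\nu \binom{s}{\nu}$ with $a_\nu = \Dop^\nu f(0)$ uniquely determined and $\norm{a_\nu}_p \to 0$. By the equivalence above $a_\nu \in p^\nu \ZZ_p$, hence $a_\nu = c_\nu p^\nu$ with $c_\nu = \Delta_f(\nu) \in \ZZ_p$; this reproduces the expansion \eqref{eq:mahler-exp-ks2} and exhibits $f$ as a $\ZZ_p$-combination of $\BKS_p$. Uniqueness of $(c_\nu)$ is inherited from the uniqueness in Mahler's theorem: the product $c_\nu p^\nu = a_\nu$ is determined by $f$, and since $p^\nu$ is not a zero divisor in $\ZZ_p$, each $c_\nu$ is determined. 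Conversely, given any $c_\nu \in \ZZ_p$, the Mahler coefficients $c_\nu p^\nu$ satisfy $\norm{c_\nu p^\nu}_p \leq p^{-\nu} \to 0$, so the series converges in $\normf{\cdot}_p$ to a function in $\CZ$ whose $\nu$th Mahler coefficient lies in $p^\nu\ZZ_p$, placing it in $\KS_{p,2}$ by the equivalence. Thus $f \mapsto (\Delta_f(\nu))_{\nu \geq 0}$ is a $\ZZ_p$-linear bijection onto $\prod_{\nu \geq 0} \ZZ_p$, which is the asserted basis property.

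The one point that needs care — and the only place this departs from a routine application of Mahler's theorem — is the coefficient space. Because $\normf{p^\nu \binom{s}{\nu}}_p = p^{-\nu} \to 0$, the family $\BKS_p$ is emphatically \emph{not} orthonormal, and the admissible coefficients $c_\nu$ are unconstrained apart from lying in $\ZZ_p$: no decay $c_\nu \to 0$ is imposed, since $\ZZ_p$ is bounded and the convergence of $\sum_\nu c_\nu p^\nu \binom{s}{\nu}$ is already supplied by the factor $p^\nu$. I would state this explicitly, so that ``basis'' is read as a topological $\ZZ_p$-module basis whose coordinate sequences range over all of $\prod_{\nu \geq 0} \ZZ_p$, after which the claim follows immediately from the two ingredients above.
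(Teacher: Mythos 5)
Your proposal is correct and follows essentially the same route as the paper: both rest on Mahler's theorem to read off the coordinates $\Delta_f(\nu)$ from the expansion \eqref{eq:mahler-exp-ks2}, use Lemma \ref{lem:ks2-kummer-congr-0} for the converse inclusion of an arbitrary $\ZZ_p$-coefficient series, and note that the coordinate sequences range over all of $\prod_{\nu\ge 0}\ZZ_p$ with no decay condition. Your extra care in deducing uniqueness of the $c_\nu$ from the injectivity of multiplication by $p^\nu$ simply makes explicit the linear-independence claim the paper states without comment.
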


\begin{proof}
Clearly, $\Dop^n \, p^\nu \binom{s}{\nu} \equiv 0 \pmod{p^n\ZZ_p}$ for all
$n, \nu \geq 0$. The functions of $\BKS_p$ are linearly independent over $\ZZ_p$.
Let $f \in \KS_{p,2}$. The unique Mahler expansion of $f$ is given as in
\eqref{eq:mahler-exp-ks2}. Thus $(\Delta_f(0),\Delta_f(1),\Delta_f(2),\ldots)$
are the coordinates with respect to the basis $\BKS_p$. Conversely, prescribed
values $\Delta_f(\nu) \in \ZZ_p$ define a uniformly convergent Mahler series,
which then equals the Mahler expansion in \eqref{eq:mahler-exp-ks2}.
\end{proof}

\begin{lemma} \label{lem:oper-ks2}
We define the operator $\KSop^r = p^{-r} \Dop^r$ on $\KS_{p,2}$ for $r \geq 0$.
If $f \in \KS_{p,2}$, then $g = \KSop^r f \in \KS_{p,2}$,
where we have a shift of coefficients such that
$\Delta_g(\nu) = \Delta_f(\nu+r)$ for all $\nu \geq 0$.
Moreover, we have the relation $\Delta_f(r) = \KSop^r f(0)$.
\end{lemma}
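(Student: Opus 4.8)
The plan is to work entirely through the Mahler expansions furnished by Lemma \ref{lem:ks2-kummer-congr-0}. Since $f \in \KS_{p,2}$, equation \eqref{eq:mahler-exp-ks2} gives $f(s) = \sum_{\nu \geq 0} \Delta_f(\nu)\, p^\nu \binom{s}{\nu}$. The operator $\KSop^r = p^{-r}\Dop^r$ acts term by term on this uniformly convergent series, so the key computation is to evaluate $\Dop^r\bigl(p^\nu \binom{s}{\nu}\bigr)$. I would invoke the basic identity $\Dop^r \binom{s}{\nu} = \binom{s}{\nu-r}$ for $\nu \geq r$ (the $h=1$ case recorded in the proof of Lemma \ref{lem:delta-h-binom}), while for $\nu < r$ the difference vanishes since a polynomial of degree $\nu$ is annihilated by $\Dop^r$. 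This kills the first $r$ terms and reindexes the rest.

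Concretely, applying $\KSop^r$ and shifting the summation index by setting $\mu = \nu - r$ yields
\[
   g(s) = \KSop^r f(s) = p^{-r} \sum_{\nu \geq r} \Delta_f(\nu)\, p^\nu \binom{s}{\nu-r}
        = \sum_{\mu \geq 0} \Delta_f(\mu+r)\, p^\mu \binom{s}{\mu}.
\]
Reading off the coefficients, this display is exactly a Mahler series of the form \eqref{eq:mahler-exp-ks2} with coefficient sequence $\Delta_g(\mu) = \Delta_f(\mu+r)$, which establishes the claimed shift of coefficients once we confirm this is indeed the Mahler expansion of $g$.

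To justify that $g \in \KS_{p,2}$ I would appeal to the characterization in Lemma \ref{lem:ks2-kummer-congr-0}: since $|\Delta_f(\mu+r)\, p^\mu|_p \to 0$, the series for $g$ converges uniformly and defines a continuous function $g: \ZZ_p \to \ZZ_p$ whose Mahler coefficients are $\Delta_g(\mu)\, p^\mu = \Delta_f(\mu+r)\, p^\mu$, so $\Dop^\mu g(0) = \Delta_f(\mu+r)\, p^\mu \in p^\mu \ZZ_p$ for every $\mu \geq 0$; the lemma then places $g$ in $\KS_{p,2}$. Finally, the relation $\Delta_f(r) = \KSop^r f(0)$ is immediate: by definition $\KSop^r f(0) = p^{-r}\Dop^r f(0) = p^{-r}\bigl(\Delta_f(r)\, p^r\bigr) = \Delta_f(r)$, using the Mahler-coefficient formula $\Dop^r f(0) = \Delta_f(r)\, p^r$ from \eqref{eq:mahler-exp-ks2}. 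The only point requiring a little care is the legitimacy of applying $\Dop^r$ termwise to the infinite Mahler series rather than to a finite polynomial; this is where I would be most careful, but it follows from the uniform convergence together with the continuity of the finite-difference operator on $\CZ$.
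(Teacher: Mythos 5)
Your proposal is correct and follows essentially the same route as the paper: apply $\KSop^r$ termwise to the Mahler expansion \eqref{eq:mahler-exp-ks2}, reindex to read off $\Delta_g(\nu) = \Delta_f(\nu+r)$, and note that $\Delta_f(r) = \Dop^r f(0)/p^r = \KSop^r f(0)$ holds by definition. You merely make explicit the details the paper leaves implicit (the identity $\Dop^r \binom{s}{\nu} = \binom{s}{\nu-r}$, the vanishing of low-degree terms, and the appeal to Lemma \ref{lem:ks2-kummer-congr-0} for membership in $\KS_{p,2}$), all of which are sound.
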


\begin{proof}
Applying $\KSop^r$ to \eqref{eq:mahler-exp-ks2}, this yields
\[
   \KSop^r f(s) = \sum_{\nu \geq 0} \Delta_f(\nu+r) \, p^\nu \binom{s}{\nu}
                = \sum_{\nu \geq 0} \Delta_g(\nu) \, p^\nu \binom{s}{\nu}
                = g(s) \in \KS_{p,2}.
\]
By definition follows that $\Delta_f(r) = \Dop^r f(0)/p^r = \KSop^r f(0)$.
\end{proof}

The next theorem shows among other relations that the Kummer type congruences
imply the Kummer congruences, but the converse does not hold.
We will prove the theorem by the following propositions and corollaries.

\begin{theorem} \label{thm:ks-relations}
We have the following relations:
\begin{enumerate}
\item $\KS_{p,2} \subsetneq \KS_{p,1}$.
\item $\WKS_{p,2} \subsetneq \WKS_{p,1}$.
\item $\KS_{p,2} = \KSS_{p,2}$.
\item $\KS_{p,2} \subset \SD(\ZZ_p)$.
\end{enumerate}
\end{theorem}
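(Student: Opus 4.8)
The plan is to run everything through the Mahler expansion. By the basis lemma, every $f\in\KS_{p,2}$ equals $f(s)=\sum_{\nu\ge0}a_\nu\binom{s}{\nu}$ with $a_\nu=\Delta_f(\nu)\,p^\nu$, so $\ord_p a_\nu\ge\nu$. I would first recast the Kummer congruences as order inequalities: for $s\ne t$ put $n=\ord_p(s-t)$; then $f\in\KS_{p,1}$ is equivalent to $\ord_p(f(s)-f(t))\ge n+1$ for all such pairs, and $f\in\WKS_{p,1}$ to the equality $\ord_p(f(s)-f(t))=n+1$ (the converse congruence, read modulo $p^{n+1}$, forces the reverse inequality). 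In this language Lemma \ref{lem:delta-binom} and Theorem \ref{thm:delta-binom-ord} apply term by term.

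For the inclusion in (1), subtract the expansions and estimate $f(s)-f(t)=\sum_{\nu\ge1}a_\nu(\binom{s}{\nu}-\binom{t}{\nu})$ by Lemma \ref{lem:delta-binom}: with $p^{j}\le\nu$ one has $\ord_p(\binom{s}{\nu}-\binom{t}{\nu})\ge n-j$, so the $\nu$-th term has order $\ge\nu+n-j\ge n+1$ because $\nu\ge p^{j}\ge j+1$; the ultrametric then gives $f\in\KS_{p,1}$. For (3) the only nontrivial inclusion is $\KS_{p,2}\subseteq\KSS_{p,2}$ (the case $h=1$ gives the reverse), and this is exactly where Theorem \ref{thm:delta-binom-ord} is needed: applying $\Doph{h}{n}$ to the expansion annihilates the terms with $\nu<n$, while for $\nu\ge n$ the theorem yields $\ord_p(\Doph{h}{n}\binom{s}{\nu})\ge nr-\nu$ with $r=1+\ord_p h$, so each surviving term has order $\ge\ord_p a_\nu+(nr-\nu)\ge nr$, whence $\Doph{h}{n} f(s)\equiv0\pmod{p^{nr}}$.

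For the inclusion in (2), note that $f\in\WKS_{p,2}$ forces $\ord_p a_1=1$, so in the same estimate the $\nu=1$ term has order exactly $n+1$; for $\nu\ge2$ one checks $\nu\ge j+2$ with the single exception $(\nu,j)=(2,1)$ at $p=2$, which is precisely why the definition requires $2\mid\Delta_f(2)$, i.e.\ $\ord_2 a_2\ge3$, in that case. Every $\nu\ge2$ term then has order $\ge n+2$, the linear term dominates strictly, and $\ord_p(f(s)-f(t))=n+1$, so $f\in\WKS_{p,1}$. Strictness is by explicit Mahler witnesses: $f(s)=p^2\binom{s}{p+1}$ has $\ord_p a_{p+1}=2\ge j+1$ (here $j=1$), hence lies in $\KS_{p,1}$, while $2<p+1$ keeps it out of $\KS_{p,2}$, proving (1). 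For (2) one must preserve an exact isometry, so I take $F(s)=ps+p^{k}\binom{s}{\nu}$ with $\nu$ chosen so that $k=\lfloor\log_p\nu\rfloor+2<\nu$ (for instance $\nu=p^2+1$, $k=4$): the perturbation satisfies $\ord_p(p^{k}(\binom{s}{\nu}-\binom{t}{\nu}))\ge n+2$, so $F$ inherits the exact order $n+1$ from $ps$ and lies in $\WKS_{p,1}$, yet its Mahler coefficient $a_\nu=p^{k}$ with $k<\nu$ shows $F\notin\KS_{p,2}$, hence $F\notin\WKS_{p,2}$.

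For (4), since $\binom{s}{\nu}$ is a polynomial the divided difference $(\binom{s}{\nu}-\binom{t}{\nu})/(s-t)$ extends to a continuous function on $\ZZ_p\times\ZZ_p$ bounded in norm by $p^{j}$; multiplying by $a_\nu$ bounds each term by $p^{j-\nu}\to0$, so the series for $(f(s)-f(t))/(s-t)$ converges uniformly to a continuous extension across the diagonal, which is exactly strict differentiability, $f\in\SD(\ZZ_p)$. I expect the main obstacle to be the strict inclusion in (2): the naive perturbation $ps+p\binom{s}{2}$ fails, since its divided difference $p\,(s+t+1)/2$ drops in norm whenever $s+t\equiv-1$, destroying the isometry; the witness must therefore be strictly subdominant in norm, which is what forces the inequality $\lfloor\log_p\nu\rfloor+2<\nu$, and the borderline pair $(\nu,p)=(2,2)$ is the same phenomenon that makes the extra hypothesis on $\Delta_f(2)$ necessary in the definition of $\WKS_{2,2}$.
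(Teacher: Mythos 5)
Your proof is correct and follows essentially the same route as the paper: both run everything through the Mahler expansion, use Lemma \ref{lem:delta-binom} to bound the terms for the $\KS_{p,1}$/$\WKS_{p,1}$ inclusions (your term-by-term order count $\nu-\lfloor\log_p\nu\rfloor\geq 1$, with the single exception $(\nu,p)=(2,2)$ explaining the extra hypothesis on $\Delta_f(2)$, is exactly the content of the paper's estimate \eqref{eq:frac-binom-delta}--\eqref{eq:frac-binom-delta-2} in Proposition \ref{prop:f-quod-delta}), and use Theorem \ref{thm:delta-binom-ord} for $\KS_{p,2}=\KSS_{p,2}$ precisely as in Proposition \ref{prop:ks2-eq-ks3}. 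The only genuine divergences are minor: for strictness the paper perturbs an arbitrary $f\in\KS_{p,2}$ by dividing one Mahler coefficient at an index $n\geq 5$ by $p$, whereas you exhibit the explicit witnesses $p^2\binom{s}{p+1}$ and $ps+p^4\binom{s}{p^2+1}$ (both constructions are valid, and yours are arguably more concrete); and for (4) you give a direct uniform-convergence argument for the divided-difference series instead of invoking Robert's coefficient criterion $\norm{\Delta_f(\nu)\,p^\nu/\nu}_p\to 0$ as the paper does in Corollary \ref{corl:ks2-strict-diff} --- the two are equivalent, since your bound $\norm{a_\nu}_p\,p^{\lfloor\log_p\nu\rfloor}\to 0$ is exactly what that criterion packages.
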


\begin{prop} \label{prop:ks2-eq-ks3}
We have $\KS_{p,2} = \KSS_{p,2}$.
\end{prop}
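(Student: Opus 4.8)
The plan is to prove the two inclusions separately. One direction, $\KSS_{p,2} \subset \KS_{p,2}$, is essentially free: specializing the defining congruence of $\KSS_{p,2}$ to $h = 1$ gives $r = 1 + \ord_p 1 = 1$, hence $\Dop^n f(s) \equiv 0 \pmod{p^n \ZZ_p}$, which is exactly membership in $\KS_{p,2}$. So the real content is the reverse inclusion $\KS_{p,2} \subset \KSS_{p,2}$, and the whole argument rests on the order estimate already prepared in Theorem \ref{thm:delta-binom-ord}.

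First I would take $f \in \KS_{p,2}$ and write its Mahler expansion in the normalized form \eqref{eq:mahler-exp-ks2}, namely $f(s) = \sum_{\nu \geq 0} \Delta_f(\nu)\, p^\nu \binom{s}{\nu}$ with $\Delta_f(\nu) \in \ZZ_p$. For fixed $h, n$ the operator $\Doph{h}{n}$ is a finite $\ZZ$-linear combination of the evaluation maps $f \mapsto f(s+jh)$, $0 \le j \le n$, and each evaluation of the Mahler series converges $p$-adically (since $\norm{\Delta_f(\nu)\, p^\nu}_p \to 0$ while $\norm{\binom{s+jh}{\nu}}_p \le 1$), so I may interchange the finite combination with the infinite sum to obtain the termwise identity
\[
   \Doph{h}{n} f(s) = \sum_{\nu \geq 0} \Delta_f(\nu)\, p^\nu \, \Doph{h}{n} \binom{s}{\nu}.
\]

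Next I would estimate the terms by splitting the sum at $\nu = n$. For $\nu < n$ the binomial $\binom{s}{\nu}$ is a polynomial of degree $\nu < n$, so $\Doph{h}{n} \binom{s}{\nu} = 0$ by Lemma \ref{lem:delta-poly}, and these terms drop out. For $\nu \ge n$, Theorem \ref{thm:delta-binom-ord} applied with $m = \nu \ge n$ gives $\ord_p\bigl( p^\nu \Doph{h}{n} \binom{s}{\nu} \bigr) \ge n(1 + \ord_p h) = nr$; since $\Delta_f(\nu) \in \ZZ_p$, every surviving term lies in $p^{nr} \ZZ_p$. The subgroup $p^{nr} \ZZ_p$ is closed and the series converges (its terms tend to $0$, because $\norm{\Doph{h}{n} \binom{s}{\nu}}_p \le 1$), so by the ultrametric inequality the whole sum $\Doph{h}{n} f(s)$ lies in $p^{nr} \ZZ_p$. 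As this holds for all $h \ge 1$, $n \ge 0$, and $s \in \ZZ_p$, it is precisely the $\KSS_{p,2}$ congruence, which yields $\KS_{p,2} \subset \KSS_{p,2}$.

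I expect the only genuine subtlety to be the justification of the termwise application of $\Doph{h}{n}$ to the infinite Mahler series; once that interchange is secured, the heavy lifting has already been carried out in the preliminaries by Theorem \ref{thm:delta-binom-ord}, and the remaining argument reduces to the index split $\nu < n$ versus $\nu \ge n$ together with an ultrametric convergence remark. The case $n = 0$ needs no separate treatment, since $\Doph{h}{0} f = f$ takes values in $\ZZ_p = p^0 \ZZ_p$.
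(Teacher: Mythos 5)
Your proposal is correct and follows essentially the same route as the paper: both reduce the problem to the inclusion $\KS_{p,2} \subset \KSS_{p,2}$, apply $\Doph{h}{n}$ termwise to the normalized Mahler expansion, kill the terms with $\nu < n$ via Lemma \ref{lem:delta-poly}, and bound the terms with $\nu \geq n$ by Theorem \ref{thm:delta-binom-ord}. The only differences are cosmetic (your justification of the termwise interchange via finite linear combinations of evaluations, versus the paper's appeal to uniform convergence of the partial sums, and your explicit remark on $n=0$).
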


\begin{proof}
By definition we have $\KSS_{p,2} \subset \KS_{p,2}$.
Now, we have to show that $\KS_{p,2} \subset \KSS_{p,2}$.
Let $h, n$ be positive integers and $f \in \KS_{p,2}$.
Since the sequence $(f_m)_{m \geq 1}$ of the partial sums $f_m$ of
the Mahler expansion of $f$ is uniformly convergent to $f$,
we can apply the operator $\Doph{h}{n}$ term by term:
\[
   \Doph{h}{n} f(s) = \sum_{\nu \geq 0} \Delta_f(\nu) \, p^\nu \,
     \Doph{h}{n} \binom{s}{\nu} .
\]
The lower terms for $\nu < n$ vanish by Lemma \ref{lem:delta-poly},
while Theorem \ref{thm:delta-binom-ord} gives
an estimate for the other terms where $\nu \geq n$:
\[
   \ord_p \left( p^\nu \, \Doph{h}{n} \, \binom{s}{\nu} \right) \geq n r,
     \quad r = 1 + \ord_p h.
\]
Thus $\Doph{h}{n} f(s) \equiv 0 \pmod{p^{nr}}$,
which shows that $f \in \KSS_{p,2}$.
\end{proof}

\begin{remark*}
The fact that $\KS_{p,2} = \KSS_{p,2}$ is only caused by properties of
binomial polynomials given in Theorem \ref{thm:delta-binom-ord}.
These properties ensure that Kummer type congruences I already imply
Kummer type congruences II.
\end{remark*}
\bigskip

\begin{prop} \label{prop:f-quod-delta}
Let $f \in \KS_{p,2}$ and $s,t \in \ZZ_p$, $s \neq t$.
We have the following statements:
\begin{enumerate}
\item
  \begin{equation} \label{eq:frac-f-zero}
     \frac{f(s)-f(t)}{s-t} \equiv 0 \pmod{p \ZZ_p}.
  \end{equation}
\item
  \begin{equation} \label{eq:frac-f-delta}
     \frac{f(s)-f(t)}{p\,(s-t)} \equiv \Delta_f \pmod{p \ZZ_p}.
  \end{equation}
\item
  \[
     f'(s) \equiv p \,\Delta_f \pmod{p^2 \ZZ_p}.
  \]
\end{enumerate}
For the cases (2) and (3) we additionally require that
$2 \pdiv \Delta_f(2)$ when $p=2$.
\end{prop}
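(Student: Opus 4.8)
The plan is to work throughout from the Mahler expansion \eqref{eq:mahler-exp-ks2}, namely $f(s) = \sum_{\nu \geq 0} \Delta_f(\nu)\, p^\nu \binom{s}{\nu}$ with $\Delta_f(\nu) \in \ZZ_p$. Subtracting the expansions at $s$ and $t$, the constant term cancels and leaves
\[
   f(s) - f(t) = \sum_{\nu \geq 1} \Delta_f(\nu)\, p^\nu
     \left( \binom{s}{\nu} - \binom{t}{\nu} \right).
\]
For part (1) I would divide by $s-t$ and estimate each summand with Lemma \ref{lem:delta-binom}: if $p^{j} \leq \nu < p^{j+1}$, then
\[
   \ord_p \left( \Delta_f(\nu)\, p^\nu\, \frac{\binom{s}{\nu}-\binom{t}{\nu}}{s-t} \right)
     \geq \nu - j.
\]
Since $\nu \geq p^{j} \geq 2^{j} \geq j+1$ for all $j \geq 0$, every term lies in $p\ZZ_p$, which gives \eqref{eq:frac-f-zero}.

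For part (2) I would divide instead by $p(s-t)$, so the $\nu$-th summand carries the sharper order bound $\nu - 1 - j$. The term $\nu = 1$ equals $\Delta_f(1)\,\frac{\binom{s}{1}-\binom{t}{1}}{s-t} = \Delta_f(1) \equiv \Delta_f \pmod{p\ZZ_p}$, which is precisely the asserted value. For $\nu \geq 2$ one needs $\nu - 1 - j \geq 1$, i.e.\ $\nu \geq j+2$, which follows from $\nu \geq p^{j}$ in every case except the single boundary case $p = 2$, $\nu = 2$ (there $j=1$ forces $\nu = j+1$). This is exactly where the extra hypothesis $2 \pdiv \Delta_f(2)$ enters: using $\binom{s}{2} - \binom{t}{2} = \tfrac12 (s-t)(s+t-1)$, the offending term reduces to $\Delta_f(2)\,(s+t-1)$, which lies in $2\ZZ_p$. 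Hence all terms with $\nu \geq 2$ fall into $p\ZZ_p$ and \eqref{eq:frac-f-delta} follows.

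For part (3) I would pass to the limit $t \to s$. First, the difference-quotient series converges uniformly in $t$, since its $\nu$-th term has order $\geq \nu - j \to \infty$; as each $\binom{s}{\nu}-\binom{t}{\nu}$ is divisible by $s-t$ in $\ZZ_p[s,t]$, the quotients are polynomials converging to $\tfrac{d}{ds}\binom{s}{\nu}$, so $f'(s) = \lim_{t \to s} \frac{f(s)-f(t)}{s-t}$ exists. Because the sets $p\ZZ_p$ and $\Delta_f + p\ZZ_p$ are closed, passing to the limit in \eqref{eq:frac-f-zero} gives $p \pdiv f'(s)$, and in \eqref{eq:frac-f-delta} gives $f'(s)/p \equiv \Delta_f \pmod{p\ZZ_p}$; combining these yields $f'(s) \equiv p\,\Delta_f \pmod{p^2\ZZ_p}$.

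The main obstacle is the borderline term $p=2$, $\nu=2$ in part (2): there the uniform order estimate $\nu - 1 - j \geq 1$ just fails, and only the explicit factorization of $\binom{s}{2}-\binom{t}{2}$ together with the hypothesis $2\pdiv\Delta_f(2)$ rescues the congruence. Secondarily, part (3) requires the differentiability of $f$, which I would derive from the uniform convergence of the difference quotients above rather than from the not-yet-established inclusion $\KS_{p,2} \subset \SD(\ZZ_p)$, so as to avoid circularity in the proof of Theorem \ref{thm:ks-relations}.
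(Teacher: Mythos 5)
Your proposal is correct and follows essentially the same route as the paper: expand $f$ in its Mahler series, estimate the terms $p^\nu\bigl(\binom{s}{\nu}-\binom{t}{\nu}\bigr)/(s-t)$ via Lemma \ref{lem:delta-binom} (your exponent $\nu-1-j$ is exactly the paper's $r=\nu-1-\lfloor\log_p\nu\rfloor$), isolate the low terms where the hypothesis $2\pdiv\Delta_f(2)$ is needed for $p=2$, and obtain (3) by passing to the limit $t\to s$ in the uniformly convergent difference quotients. The only cosmetic difference is that the paper treats $\nu=1,2$ explicitly and applies the estimate for $\nu\geq 3$, whereas you apply it from $\nu=2$ on and single out the one failing case $p=2$, $\nu=2$.
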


\begin{proof}
Since $f \in \KS_{p,2}$, we make use of the Mahler expansion of $f$
and show for $s \neq t$ that
\begin{equation} \label{eq:frac-binom-delta}
   \frac{\sum_{\nu \geq 0} \Delta_f(\nu) \, p^\nu
     \left[ \binom{s}{\nu} - \binom{t}{\nu} \right] }{p\,(s-t)} =
   \Delta_f(1) + \Delta_f(2) \frac{p}{2}(s+t-1) + \LS(p) .
\end{equation}
The lower terms are easily given.
For the higher terms with $\nu \geq 3$, we have the following estimate by
Lemma \ref{lem:delta-binom}:
\[
   \ord_p \left( \binom{s}{\nu} - \binom{t}{\nu} \right) \geq
     \ord_p ( s-t ) - \lfloor \log_p \nu \rfloor,
\]
where $\log_p$ is the real-valued logarithm to base $p$.
Since $r = \nu - 1 - \lfloor \log_p \nu \rfloor \geq 1$ for $\nu \geq 3$
and all primes $p$, we obtain in these cases that
\begin{equation} \label{eq:frac-binom-delta-2}
   p^\nu \left( \binom{s}{\nu} - \binom{t}{\nu} \right) \Big/ p\,(s-t)
     \in p^r\ZZ_p ,
\end{equation}
where $r \to \infty$ as $\nu \to \infty$.
Therefore \eqref{eq:frac-f-zero} follows by \eqref{eq:frac-binom-delta}.
By definition $\Delta_f(1) \equiv \Delta_f \pmod{p \ZZ_p}$ and
in case $p = 2$ we have $2 \pdiv \Delta_f(2)$,
thus \eqref{eq:frac-f-delta} follows by \eqref{eq:frac-binom-delta}.
Note that \eqref{eq:frac-binom-delta} is responsible for the extra
condition in the case $p = 2$.
Now, taking any sequence $(s_\nu,t_\nu)_{\nu \geq 1} \!\to (s,s)$
where $s_\nu \neq t_\nu$, \eqref{eq:frac-binom-delta} and
\eqref{eq:frac-binom-delta-2} show the existence
of a limit: $f'(s) \equiv p \,\Delta_f \pmod{p^2 \ZZ_p}$.
\end{proof}

\begin{corl} \label{corl:ks2-strong-kummer-congr}
We have $\KS_{p,2} \subsetneq \KS_{p,1}$ and
$\WKS_{p,2} \subsetneq \WKS_{p,1}$. If $f \in \WKS_{p,2}$, then a strong
version of the Kummer congruences holds, that
\[
   \norm{f(s)-f(t)}_p = \norm{p \, (s-t)}_p, \quad s, t \in \ZZ_p.
\]
\end{corl}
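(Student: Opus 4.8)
The plan is to treat the strong congruence as the heart of the corollary and to read off everything else from it. For the final assertion I would invoke Proposition~\ref{prop:f-quod-delta}(2). Since $f \in \WKS_{p,2}$, by definition $\Delta_f \neq 0$, and when $p = 2$ also $2 \pdiv \Delta_f(2)$; this is exactly the extra hypothesis that parts (2) and (3) of that proposition require. From $0 \leq \Delta_f < p$ and $\Delta_f \neq 0$ we get $p \notdiv \Delta_f$, so $\norm{\Delta_f}_p = 1$. Now \eqref{eq:frac-f-delta} states
\[
   \frac{f(s)-f(t)}{p\,(s-t)} \equiv \Delta_f \pmod{p\ZZ_p},
\]
so the left-hand side differs from a unit by an element of $p\ZZ_p$ and hence has $p$-adic absolute value $1$ as well. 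Multiplying by $\norm{p\,(s-t)}_p$ yields $\norm{f(s)-f(t)}_p = \norm{p\,(s-t)}_p$, the claimed strong Kummer congruence.

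I would next dispatch the two inclusions. For $\KS_{p,2} \subseteq \KS_{p,1}$, Proposition~\ref{prop:f-quod-delta}(1) gives $\norm{f(s)-f(t)}_p \leq \tfrac1p \norm{s-t}_p$ for $s \neq t$, which is exactly the Kummer congruence: if $s \equiv t \pmod{p^n\ZZ_p}$ then $\norm{s-t}_p \leq p^{-n}$ and so $\norm{f(s)-f(t)}_p \leq p^{-(n+1)}$. For $\WKS_{p,2} \subseteq \WKS_{p,1}$ the strong congruence just proved gives the exact identity $\ord_p(f(s)-f(t)) = 1 + \ord_p(s-t)$; reading it in both directions produces the equivalence $s \equiv t \pmod{p^n\ZZ_p} \Leftrightarrow f(s) \equiv f(t) \pmod{p^{n+1}\ZZ_p}$ that defines $\WKS_{p,1}$.

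The remaining, and only substantial, task is properness. The structural input, visible from \eqref{eq:mahler-exp-ks2} together with Lemma~\ref{lem:ks2-kummer-congr-0}, is that a Mahler series $f = \sum_\nu a_\nu \binom{s}{\nu}$ lies in $\KS_{p,2}$ if and only if $\ord_p a_\nu \geq \nu$ for every $\nu$. I would then produce witnesses that keep the difference quotients small (via Lemma~\ref{lem:delta-binom}) but violate this coefficient bound. For $\KS_{p,2} \subsetneq \KS_{p,1}$ take $f(s) = p^2\binom{s}{p+1}$: since $p \leq p+1 < p^2$, Lemma~\ref{lem:delta-binom} gives $\norm{f(s)-f(t)}_p \leq p^{-2}\cdot p\,\norm{s-t}_p = \tfrac1p\norm{s-t}_p$, so $f \in \KS_{p,1}$, while $\ord_p a_{p+1} = 2 < p+1$ shows $f \notin \KS_{p,2}$. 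For $\WKS_{p,2} \subsetneq \WKS_{p,1}$ I would perturb the linear function $p\,s$, which is itself in $\WKS_{p,2}$, by a term contracting one further power of $p$, say $f(s) = p\,s + p^4\binom{s}{p^2+1}$: the second summand has difference quotient of norm $\leq p^{-4}\cdot p^2\,\norm{s-t}_p = p^{-2}\norm{s-t}_p$, strictly below $\norm{p\,(s-t)}_p = p^{-1}\norm{s-t}_p$, so the ultrametric forces $\norm{f(s)-f(t)}_p = \norm{p\,(s-t)}_p$ and hence $f \in \WKS_{p,1}$, whereas $\ord_p a_{p^2+1} = 4 < p^2+1$ gives $f \notin \KS_{p,2}$, so $f \notin \WKS_{p,2}$. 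The main obstacle is the prime $p = 2$, where low-index binomials contract too weakly (already $\binom{s}{2}$ sits at $j = 1$ in Lemma~\ref{lem:delta-binom}); this is why the witnesses use the larger indices $p+1$ and $p^2+1$ rather than $2$, and I would check that the estimates $j = 1$, resp.\ $j = 2$, in Lemma~\ref{lem:delta-binom} and the valuation inequalities $2 < p+1$, resp.\ $4 < p^2+1$, hold uniformly in $p$, the case $p = 2$ included.
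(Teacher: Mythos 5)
Your proof is correct, and for the inclusions and the strong congruence it follows the paper exactly: both rest on parts (1) and (2) of Proposition~\ref{prop:f-quod-delta}, with the observation that $\Delta_f \in \ZZ_p^*$ turns the congruence \eqref{eq:frac-f-delta} into the exact norm identity. Where you diverge is in the properness argument. The paper perturbs a \emph{generic} element of $\KS_{p,2}$ (resp.\ $\WKS_{p,2}$): it lowers the power of $p$ on one Mahler coefficient at an index $n \geq 5$ and re-invokes the tail estimate \eqref{eq:frac-binom-delta-2} to see that \eqref{eq:frac-binom-delta} — and hence membership in $\KS_{p,1}$, resp.\ $\WKS_{p,1}$ — survives the perturbation. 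You instead exhibit concrete witnesses, $p^2\binom{s}{p+1}$ and $ps + p^4\binom{s}{p^2+1}$, and control their difference quotients directly with Lemma~\ref{lem:delta-binom}; the exponents are chosen so that the valuation drop below the $\KS_{p,2}$ threshold ($2 < p+1$, $4 < p^2+1$) coexists with a difference-quotient bound strictly below $\norm{p\,(s-t)}_p$ for the perturbing term, so the ultrametric hands you the exact identity for the $\WKS$ witness. Your version is more self-contained and makes the $p=2$ case transparent, at the cost of being less informative about \emph{how much} slack the spaces $\KS_{p,1}$ and $\WKS_{p,1}$ actually allow; the paper's version shows that essentially every element of $\KS_{p,2}$ admits such a deformation. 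Both are valid proofs of the corollary as stated.
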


\begin{proof}
Let $f \in \KS_{p,2}$ and $s, t \in \ZZ_p$.
We omit the trivial case \mbox{$s = t$} and assume that \mbox{$s \neq t$}.
Eq.~\eqref{eq:frac-binom-delta} shows that
$\norm{f(s)-f(t)}_p \leq \norm{p \, (s-t)}_p$.
This implies the Kummer congruences and that $f \in \KS_{p,1}$.
For $f \in \WKS_{p,2}$, we have $\Delta_f \neq 0$
and additionally in case $p=2$ that $2 \pdiv \Delta_f(2)$.
Then \eqref{eq:frac-f-delta} yields $\norm{(f(s)-f(t))/(p\,(s-t))}_p = 1$,
which gives the equation above and shows that $f \in \WKS_{p,1}$.
It remains to show that $\KS_{p,2} \neq \KS_{p,1}$ and
$\WKS_{p,2} \neq \WKS_{p,1}$. We construct a function
$f \in \KS_{p,2}$, resp., $f \in \WKS_{p,2}$ by choosing suitable
coefficients $\Delta_f(\nu)$.
Therefore we can assume that an index $n \geq 5$ exists, where
$\Delta_f(n) \in \ZZ^*_p$. Now define
\[
   \tilde{f}(s) = \sum_{\nu \geq 0} \Delta_f(\nu) \,
     p^{\nu-\delta_{n,\nu}} \binom{s}{\nu}
\]
with $\delta_{n,\nu}$ as Kronecker's delta. Then $\tilde{f}$ also satisfies
\eqref{eq:frac-binom-delta}, since higher terms
do not play a role in view of \eqref{eq:frac-binom-delta-2}.
This implies that $\tilde{f} \in \KS_{p,1}$, resp., $\tilde{f} \in \WKS_{p,1}$.
By construction
\[
   \Dop^n \tilde{f}(s) \equiv \Delta_f(n) \,
     p^{n-1} \not\equiv 0 \pmod{p^n \ZZ_p},
\]
consequently $\tilde{f} \notin \KS_{p,2}$, resp., $\tilde{f} \notin \WKS_{p,2}$.
\end{proof}

\begin{corl} \label{corl:ks2-strict-diff}
We have
\[
   \KS_{p,2} \subset \SD(\ZZ_p) \subset \Lip(\ZZ_p) \subset \CZ .
\]
If $f \in \KS_{p,2}$, then the Volkenborn integral of $f$ is given by
\[
   \int_{\ZZ_p} f(s) \, ds = \sum_{\nu \geq 0} (-1)^\nu
     \frac{\Delta_f(\nu) \, p^\nu}{\nu+1} \in \ZZ_p .
\]
\end{corl}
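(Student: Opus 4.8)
The chain $\SD(\ZZ_p)\subset\Lip(\ZZ_p)\subset\CZ$ is standard, so I would simply cite \cite[Ch.~5]{Robert:2000}; the substantial points are the leftmost inclusion $\KS_{p,2}\subset\SD(\ZZ_p)$ and the integral formula. The plan is to work throughout with the Mahler expansion \eqref{eq:mahler-exp-ks2}, $f(s)=\sum_{\nu\ge0}\Delta_f(\nu)\,p^\nu\binom{s}{\nu}$, and to reduce every claim to a uniform estimate on the individual terms, reusing the bound already exploited in \eqref{eq:frac-binom-delta-2}.

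For strict differentiability I recall that $f\in\SD(\ZZ_p)$ precisely when its difference quotient $(s,t)\mapsto(f(s)-f(t))/(s-t)$, $s\ne t$, extends to a continuous function on $\ZZ_p\times\ZZ_p$. First I would note that for each $\nu$ the polynomial $\binom{s}{\nu}-\binom{t}{\nu}$ is divisible by $s-t$, so the $\nu$-th difference quotient $(\binom{s}{\nu}-\binom{t}{\nu})/(s-t)$ is itself a polynomial, hence continuous on all of $\ZZ_p\times\ZZ_p$. By Lemma \ref{lem:delta-binom} its norm is at most $p^{\lfloor\log_p\nu\rfloor}$, so the $\nu$-th term of the difference quotient of $f$ is bounded by $|\Delta_f(\nu)|_p\,p^{-\nu+\lfloor\log_p\nu\rfloor}\le p^{-\nu+\lfloor\log_p\nu\rfloor}$. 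Since $\nu-\lfloor\log_p\nu\rfloor\to\infty$, this tends to $0$ uniformly in $(s,t)$; the difference-quotient series therefore converges uniformly and its limit, a uniform limit of continuous functions, extends continuously to the diagonal. This gives $f\in\SD(\ZZ_p)$ and upgrades the pointwise derivative of Proposition \ref{prop:f-quod-delta}(3) to a strict one.

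For the Volkenborn integral I would use that it exists for strictly differentiable functions, is a continuous linear form on $\SD(\ZZ_p)$, and evaluates the Mahler basis as $\int_{\ZZ_p}\binom{s}{\nu}\,ds=(-1)^\nu/(\nu+1)$ \cite[Ch.~5]{Robert:2000}. Granting term-by-term integration then yields the asserted series $\sum_{\nu\ge0}(-1)^\nu\Delta_f(\nu)\,p^\nu/(\nu+1)$. To conclude that the value lies in $\ZZ_p$, I would invoke the fact $p^\nu/(\nu+1)\in\ZZ_p$ already used after \eqref{eq:loc-delta-h-binom-1} (equivalently $\ord_p(\nu+1)\le\log_p(\nu+1)\le\nu$), so each summand lies in $\ZZ_p$ and tends to $0$, whence the series converges in $\ZZ_p$.

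The step I expect to be the main obstacle is justifying the interchange of the infinite sum with the Volkenborn integral, since that integral is \emph{not} continuous for the sup-norm. The remedy is that the relevant topology is the finer strict-differentiability topology: I would show the partial sums $f_m(s)=\sum_{\nu=0}^m\Delta_f(\nu)\,p^\nu\binom{s}{\nu}$ converge to $f$ in it, i.e.\ that both $\normf{f-f_m}_p$ and $\sup_{s\ne t}|((f-f_m)(s)-(f-f_m)(t))/(s-t)|_p$ tend to $0$. The first is at most $p^{-(m+1)}$ since $|\Delta_f(\nu)|_p,|\binom{s}{\nu}|_p\le1$, and the second is at most $\sup_{\nu>m}p^{-\nu+\lfloor\log_p\nu\rfloor}$ by the estimate of the previous paragraph; both vanish as $m\to\infty$. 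Continuity of the integral on $\SD(\ZZ_p)$ then gives $\int_{\ZZ_p}f\,ds=\lim_m\int_{\ZZ_p}f_m\,ds$, which is exactly the claimed series.
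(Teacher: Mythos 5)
Your proof is correct and takes essentially the same route as the paper's: the paper simply cites Robert's Mahler-coefficient criterion for membership in $\SD(\ZZ_p)$ and the Volkenborn integral formula on the Mahler basis, plus the $p$-integrality of $p^\nu/(\nu+1)$, whereas you supply self-contained proofs of those cited facts (uniform convergence of the difference-quotient series via Lemma \ref{lem:delta-binom}, and term-by-term integration justified by convergence in the $C^1$-topology). There is nothing to object to.
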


\begin{proof}
As seen in Proposition \ref{prop:f-quod-delta}, a function $f \in \KS_{p,2}$
is Lipschitz and moreover strictly differentiable. The latter is also a
consequence that the coefficients of the Mahler expansion obey that
$\norm{\Delta_f(\nu) \, p^\nu / \nu}_p \to 0$ as $\nu \to \infty$,
see \cite[Ch.~5.1, p.~227]{Robert:2000}. Therefore
$\KS_{p,2} \subset \SD(\ZZ_p) \subset \Lip(\ZZ_p) \subset \CZ$.
As a further consequence that $f \in \SD(\ZZ_p)$, the Volkenborn integral
is given as above, see \cite[Ch.~5.2, p.~265]{Robert:2000}.
Since $p^\nu/(\nu+1)$ is $p$-integral, the sum lies in $\ZZ_p$.
\end{proof}

This proves Theorem \ref{thm:ks-relations}. \qedsymbol \quad
Functions of $\KS_{p,1}$ and $\KS_{p,2}$ obey the Kummer congruences.
Moreover, one can easily calculate any
values$\pmod{p^{n+1} \ZZ_p}$ of functions of $\KS_{p,2}$ by a finite
Mahler expansion. Additionally, we give a
formulation in terms like the Kummer congruences.

\begin{prop} \label{prop:kummer-congr-value}
Let $f \in \KS_{p,2}$. For $n \geq 0$ and $s \in \ZZ_p$ we have
\[
   f(s) \equiv \sum_{\nu = 0}^n \Delta_f(\nu) \, p^\nu \binom{s}{\nu}
     \pmod{p^{n+1} \ZZ_p},
\]
resp.,
\[
   f(s) \equiv \sum_{\nu = 0}^{n} f(\nu) \binom{s}{\nu} \binom{n-s}{n-\nu}
     \pmod{p^{n+1} \ZZ_p}.
\]
\end{prop}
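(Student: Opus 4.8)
The plan is to prove the first congruence directly from the Mahler expansion and then deduce the second from it by a polynomial interpolation argument. For the first congruence I would start from the expansion \eqref{eq:mahler-exp-ks2} and estimate the tail after the $n$-th term. For each $\nu > n$ the summand $\Delta_f(\nu)\,p^\nu\binom{s}{\nu}$ lies in $p^\nu\ZZ_p \subseteq p^{n+1}\ZZ_p$, because $\Delta_f(\nu) \in \ZZ_p$ and $\binom{s}{\nu} \in \ZZ_p$ for $s \in \ZZ_p$. Hence
\[
   f(s) - \sum_{\nu=0}^n \Delta_f(\nu)\,p^\nu\binom{s}{\nu}
     = \sum_{\nu > n} \Delta_f(\nu)\,p^\nu\binom{s}{\nu} \in p^{n+1}\ZZ_p,
\]
which is the first assertion.

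For the second congruence I would set
\[
   P_1(s) = \sum_{\nu=0}^n \Delta_f(\nu)\,p^\nu\binom{s}{\nu}, \qquad
   P_2(s) = \sum_{\nu=0}^n f(\nu)\binom{s}{\nu}\binom{n-s}{n-\nu},
\]
both polynomials of degree at most $n$. The key claim is that $P_1 = P_2$ as polynomials; granting this, the second congruence follows immediately from the first, since $f(s) \equiv P_1(s) = P_2(s) \pmod{p^{n+1}\ZZ_p}$. To prove the claim I would show that both $P_1$ and $P_2$ are the interpolation polynomial of $f$ at the nodes $0,1,\ldots,n$. For $P_2$, evaluation at an integer $k \in \{0,\ldots,n\}$ uses that $\binom{k}{\nu}$ vanishes for $\nu > k$ while $\binom{n-k}{n-\nu}$ vanishes for $\nu < k$, so that $\binom{k}{\nu}\binom{n-k}{n-\nu} = \delta_{\nu k}$ and therefore $P_2(k) = f(k)$; equivalently, Vandermonde's convolution identity (already used above) exhibits $\binom{s}{\nu}\binom{n-s}{n-\nu}$ as the Lagrange basis polynomial for these nodes. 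For $P_1$, the Newton forward difference formula gives, for integer $k \leq n$,
\[
   P_1(k) = \sum_{\nu=0}^k \Dop^\nu f(0)\,\binom{k}{\nu} = f(k),
\]
using $\Delta_f(\nu)\,p^\nu = \Dop^\nu f(0)$, the vanishing $\binom{k}{\nu}=0$ for $\nu > k$, and the terminating Mahler expansion of $f$ at the integer $k$ from Theorem \ref{thm:mahler}.

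Since $P_1$ and $P_2$ are polynomials of degree at most $n$ over the field $\QQ_p$ that agree at the $n+1$ distinct points $0,1,\ldots,n$, uniqueness of polynomial interpolation forces $P_1 = P_2$, and the second congruence follows.

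The only nonroutine step I anticipate is recognizing that $\binom{s}{\nu}\binom{n-s}{n-\nu}$ is exactly the Lagrange basis polynomial for the nodes $0,\ldots,n$, equivalently that $P_2$ interpolates $f$ there; cleanly handling the two complementary vanishing ranges in $\binom{k}{\nu}\binom{n-k}{n-\nu}$ is the main technical point, though it is elementary. Once this identification is in place, the equality $P_1 = P_2$ and hence the congruence are immediate.
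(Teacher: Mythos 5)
Your proof is correct, but it reaches the second congruence by a genuinely different route than the paper. The paper proves the polynomial identity
\[
   \sum_{\nu=0}^{n} \Dop^\nu f(0) \binom{s}{\nu}
     = \sum_{\nu=0}^{n} f(\nu) \binom{s}{\nu}\binom{n-s}{n-\nu}
\]
by direct manipulation: it substitutes $\Dop^j f(0) = \sum_{k}\binom{j}{k}(-1)^{j-k}f(k)$, interchanges the order of summation, and collapses the inner sum using $\binom{s}{j}\binom{j}{k}=\binom{s}{k}\binom{s-k}{j-k}$ together with the alternating partial-sum identity $\sum_{j=0}^{n-k}(-1)^j\binom{s-k}{j}=(-1)^{n-k}\binom{s-k-1}{n-k}=\binom{n-s}{n-k}$. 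You instead observe that both sides are polynomials of degree at most $n$ that interpolate $f$ at the nodes $0,1,\ldots,n$ --- the left side being the Newton forward-difference form (using $\binom{k}{\nu}=0$ for $\nu>k$) and the right side the Lagrange form (using the complementary vanishing of $\binom{k}{\nu}$ and $\binom{n-k}{n-\nu}$ to get $\binom{k}{\nu}\binom{n-k}{n-\nu}=\delta_{\nu k}$) --- and then invoke uniqueness of polynomial interpolation over $\QQ_p$. Your argument is shorter and makes the reason for the identity conceptually transparent; the paper's computation is more explicit and directly exhibits the coefficients $a_\nu=\binom{s}{\nu}\binom{n-s}{n-\nu}$ in a form that the subsequent corollaries (the reflection formula and the formula for $f(-r)$) reuse. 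The treatment of the first congruence is the same in both: the tail of the Mahler expansion beyond index $n$ lies in $p^{n+1}\ZZ_p$.
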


\begin{proof}
For $n \geq 0$ we have a finite Mahler expansion
\[
   f(s) \equiv \sum_{j = 0}^n \Delta_f(j) \, p^j \binom{s}{j}
     \equiv \sum_{j = 0}^n \Dop^j f(0) \, \binom{s}{j}
     \pmod{p^{n+1} \ZZ_p}.
\]
By definition we have
\[
   \Dop^j f(0)
     = \sum_{k=0}^j \binom{j}{k} (-1)^{j-k} f(k)
     = \sum_{k=0}^n \binom{j}{k} (-1)^{j-k} f(k) .
\]
We rearrange the finite sums and omit the vanishing terms. Hence
\[
   f(s) \equiv \sum_{k = 0}^{n} f(k) \sum_{j = k}^n
     (-1)^{j-k} \binom{j}{k} \binom{s}{j}
     \pmod{p^{n+1} \ZZ_p}.
\]
We use the following identities,
cf.~\cite[Ch.~5.1, pp.~164--168]{Graham&others:1994}, for $j \geq k$:
\[
   \binom{s}{j} \binom{j}{k} = \binom{s}{k} \binom{s-k}{j-k}
\]
and
\[
   \sum_{j = k}^n (-1)^{j-k} \binom{s-k}{j-k}
     = \sum_{j = 0}^{n-k} (-1)^j \binom{s-k}{j}
     = (-1)^{n-k} \binom{s-k-1}{n-k} = \binom{n-s}{n-k}.
\]
This gives the result.
\end{proof}

As a consequence we obtain a kind of a reflection formula.

\begin{corl}
Let $f \in \KS_{p,2}$, $n \geq 0$, and $s \in \ZZ_p$. Then there exist
coefficients $a_\nu \in \ZZ_p$ depending on $s$ such that
\begin{alignat*}{2}
   f(s)   &\equiv \sum_{\nu = 0}^{n} a_\nu \, f(\nu)     && \pmod{p^{n+1} \ZZ_p}, \\
   f(n-s) &\equiv \sum_{\nu = 0}^{n} a_{n-\nu} \, f(\nu) && \pmod{p^{n+1} \ZZ_p}.
\end{alignat*}
\end{corl}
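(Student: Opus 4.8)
The plan is to read off both congruences directly from the second form of Proposition \ref{prop:kummer-congr-value}, the only new ingredient being the symmetry of its coefficients under the reflection $\nu \mapsto n-\nu$ combined with $s \mapsto n-s$.

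First I would set
\[
   a_\nu = \binom{s}{\nu}\binom{n-s}{n-\nu}, \quad 0 \leq \nu \leq n.
\]
Since $s \in \ZZ_p$ forces $n-s \in \ZZ_p$, and binomial polynomials map $\ZZ_p$ into $\ZZ_p$, each $a_\nu$ lies in $\ZZ_p$; these are the claimed coefficients, which depend on $s$ (and $n$) but not on the point $\nu$ at which $f$ is sampled. With this notation the first congruence $f(s) \equiv \sum_{\nu=0}^n a_\nu f(\nu) \pmod{p^{n+1}\ZZ_p}$ is literally the second statement of Proposition \ref{prop:kummer-congr-value}.

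For the second congruence I would apply the same proposition with $n-s \in \ZZ_p$ in place of $s$. After simplifying $\binom{n-(n-s)}{n-\nu} = \binom{s}{n-\nu}$, this gives
\[
   f(n-s) \equiv \sum_{\nu=0}^n f(\nu)\,\binom{n-s}{\nu}\binom{s}{n-\nu}
     \pmod{p^{n+1}\ZZ_p}.
\]
It then remains only to identify the coefficient of $f(\nu)$ with $a_{n-\nu}$: by the very definition of $a_\mu$ we have $a_{n-\nu} = \binom{s}{n-\nu}\binom{n-s}{n-(n-\nu)} = \binom{s}{n-\nu}\binom{n-s}{\nu}$, which is exactly the factor $\binom{n-s}{\nu}\binom{s}{n-\nu}$ appearing above.

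There is essentially no obstacle here: the corollary is a formal consequence of the preceding proposition, and its entire content is the observation that the coefficient function $a_\nu = \binom{s}{\nu}\binom{n-s}{n-\nu}$ is carried to $a_{n-\nu}$ under the substitution $s \mapsto n-s$ together with the index reflection $\nu \mapsto n-\nu$. The only points I would state explicitly are that $n-s \in \ZZ_p$, so that Proposition \ref{prop:kummer-congr-value} legitimately applies to the argument $n-s$, and that the $a_\nu$ are genuine $p$-adic integers.
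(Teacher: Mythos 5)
Your proof is correct and follows exactly the paper's route: the paper's entire proof is the single line ``Set $a_\nu = \binom{s}{\nu}\binom{n-s}{n-\nu}$'', leaving the substitution $s \mapsto n-s$ and the index reflection implicit, which you have simply written out in full. The verification that $a_{n-\nu} = \binom{s}{n-\nu}\binom{n-s}{\nu}$ matches the coefficient produced by Proposition \ref{prop:kummer-congr-value} at the argument $n-s$ is exactly the intended argument.
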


\begin{proof}
Set $a_\nu = \binom{s}{\nu} \binom{n-s}{n-\nu}$ for $\nu=0,\ldots,n$.
\end{proof}

As a further consequence, we get an expression via differences for
values at negative integer arguments.

\begin{corl}
Let $f \in \KS_{p,2}$. Let $n, r$ be integers, where $n \geq 0$ and $r > 0$.
Then
\[
   f( -r ) \equiv (-1)^n \, r \binom{n+r}{r} \, \Dop^n \frac{f(s)}{s+r}
     \valueat{s=0} \pmod{p^{n+1} \ZZ_p}.
\]
\end{corl}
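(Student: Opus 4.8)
\section*{Proof proposal}

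The plan is to observe that the right-hand side, although written through the difference operator, is in fact a finite and completely explicit $\ZZ_p$-linear combination of the values $f(0),\ldots,f(n)$, and then to match it term by term against the second expression of Proposition~\ref{prop:kummer-congr-value} evaluated at the point $s=-r \in \ZZ_p$. Since $r>0$, none of the denominators $k+r$ vanish, so $\Dop^n \frac{f(s)}{s+r}\valueat{s=0}$ is well defined in $\QQ_p$, and the membership $f \in \KS_{p,2}$ will enter only at the very end.

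First I would expand the difference operator by its definition,
\[
   \Dop^n \frac{f(s)}{s+r}\valueat{s=0}
     = \sum_{k=0}^n \binom{n}{k} (-1)^{n-k} \frac{f(k)}{k+r},
\]
and then carry out the crucial combinatorial simplification of the prefactor. The claim is the exact identity
\[
   r \binom{n+r}{r} \binom{n}{k} \frac{1}{k+r}
     = \binom{r+k-1}{k} \binom{n+r}{n-k}, \quad 0 \le k \le n,
\]
which follows by writing $r\binom{n+r}{r} = r(r+1)\cdots(r+n)/n!$, cancelling the factor $k+r$, and splitting the surviving product into the $k$ consecutive integers $r,\ldots,r+k-1$ (divisible by $k!$) and the $n-k$ consecutive integers $r+k+1,\ldots,r+n$ (divisible by $(n-k)!$); the two quotients are precisely $\binom{r+k-1}{k}$ and $\binom{n+r}{n-k}$. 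Combining this with the sign collapse $(-1)^n(-1)^{n-k}=(-1)^k$ and the standard identity $(-1)^k \binom{r+k-1}{k} = \binom{-r}{k}$ yields the exact formula
\[
   (-1)^n \, r \binom{n+r}{r} \, \Dop^n \frac{f(s)}{s+r}\valueat{s=0}
     = \sum_{k=0}^n \binom{-r}{k} \binom{n+r}{n-k} f(k).
\]

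To finish, I would recognize the right-hand side above as the second expression of Proposition~\ref{prop:kummer-congr-value} specialized to $s=-r$, where $\binom{s}{\nu}$ becomes $\binom{-r}{\nu}$ and $\binom{n-s}{n-\nu}$ becomes $\binom{n+r}{n-\nu}$. Since $-r \in \ZZ_p$ and $f \in \KS_{p,2}$, that proposition gives directly
\[
   \sum_{k=0}^n \binom{-r}{k}\binom{n+r}{n-k} f(k) \equiv f(-r) \pmod{p^{n+1}\ZZ_p},
\]
which is the assertion. The step that will require genuine care is the combinatorial identity for the prefactor: everything there must collapse to \emph{ordinary integers}, so that the finite linear form has coefficients in $\ZZ_p$ and no $p$-adic precision is lost when the congruence from Proposition~\ref{prop:kummer-congr-value} is invoked. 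I expect this to be the only real obstacle, the remainder being a direct specialization.
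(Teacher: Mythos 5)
Your proposal is correct and follows essentially the same route as the paper: both reduce the claim to Proposition \ref{prop:kummer-congr-value} evaluated at $s=-r$ via the binomial identity $\binom{-r}{\nu}\binom{n+r}{n-\nu} = (-1)^\nu \binom{n}{\nu}\binom{n+r}{r}\frac{r}{\nu+r}$, which you verify by an explicit factorization and the paper states directly. The only difference is the direction of the manipulation (you unfold the difference operator and work toward the proposition, the paper starts from the proposition and transforms the coefficients), which is immaterial.
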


\begin{proof}
From Proposition \ref{prop:kummer-congr-value} we have
\[
   f(-r) \equiv \sum_{\nu = 0}^{n} f(\nu) \binom{-r}{\nu} \binom{n+r}{n-\nu}
     \pmod{p^{n+1} \ZZ_p}.
\]
The result follows by
\[
   \binom{-r}{\nu} \binom{n+r}{n-\nu} = (-1)^\nu \binom{\nu+r-1}{r-1}
     \binom{n+r}{\nu+r}
     = (-1)^\nu \binom{n}{\nu} \binom{n+r}{r} \frac{r}{\nu+r}. \qedhere
\]
\end{proof}

\begin{lemma} \label{lem:f-comp-lin-ks2}
Let $f \in \KS_{p,2}$ and $\lambda(s) = a+bs$, where $a, s \in \ZZ_p$ and
$b \in \ZZ$. Then $f \circ \lambda \in \KS_{p,2}$, where
$(f \circ \lambda)(s) = f(\lambda(s))$. Moreover, if $b \neq 0$ then
$\Dop^n (f \circ \lambda)(s) \equiv 0 \pmod{p^{nr}}$,
where $r = 1 + \ord_p b$.
\end{lemma}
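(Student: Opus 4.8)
The plan is to express the forward difference $\Dop^n(f\circ\lambda)$ as a difference of $f$ with the scaled increment $b$, and then to invoke the identity $\KS_{p,2}=\KSS_{p,2}$ established in Proposition~\ref{prop:ks2-eq-ks3}. To begin, since $a\in\ZZ_p$ and $b\in\ZZ\subset\ZZ_p$, the map $\lambda(s)=a+bs$ sends $\ZZ_p$ into $\ZZ_p$, so $f\circ\lambda\colon\ZZ_p\to\ZZ_p$ is well defined and continuous. The case $b=0$ is immediate: then $f\circ\lambda$ is the constant function $f(a)\in\ZZ_p$, which lies in $\KS_{p,2}$ because $\Dop^n$ annihilates it for $n\geq1$ and $\Dop^0(f\circ\lambda)=f(a)\in\ZZ_p$. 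Hence the only substantial content concerns $b\neq0$, and it suffices to prove the stronger congruence with $r=1+\ord_p b$, since $r\geq1$ then forces $\Dop^n(f\circ\lambda)(s)\equiv0\pmod{p^n\ZZ_p}$ and so $f\circ\lambda\in\KS_{p,2}$ at once.

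For $b\geq1$ the key observation is the purely algebraic identity $\lambda(s+\nu)=\lambda(s)+b\nu$, which yields
\[
   \Dop^n(f\circ\lambda)(s)
   =\sum_{\nu=0}^n\binom{n}{\nu}(-1)^{n-\nu}f(\lambda(s)+b\nu)
   =\Doph{b}{n}f(\lambda(s)).
\]
Now $f\in\KS_{p,2}=\KSS_{p,2}$, so the Kummer type congruences~II of Definition~\ref{def:kummer-space} apply with increment $h=b$: for every $t\in\ZZ_p$ we have $\Doph{b}{n}f(t)\equiv0\pmod{p^{nr}\ZZ_p}$ with $r=1+\ord_p b$. Specializing $t=\lambda(s)$ gives the asserted bound $\Dop^n(f\circ\lambda)(s)\equiv0\pmod{p^{nr}\ZZ_p}$.

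The main obstacle is that the operator $\Doph{h}{}$ is only defined for $h\geq1$, so the case $b<0$ must be rewritten before the same machinery applies. I would reindex the defining sum by $\nu\mapsto n-\nu$; writing $h=-b=\norml{b}\geq1$ and using $\lambda(s)+bn=\lambda(s+n)$, this produces
\[
   \Dop^n(f\circ\lambda)(s)
   =(-1)^n\sum_{\mu=0}^n\binom{n}{\mu}(-1)^{n-\mu}f(\lambda(s+n)+h\mu)
   =(-1)^n\,\Doph{h}{n}f(\lambda(s+n)).
\]
Since $(-1)^n$ is a unit and $\ord_p\norml{b}=\ord_p b$, applying $\KSS_{p,2}$ with the positive increment $h$ once more yields $\Dop^n(f\circ\lambda)(s)\equiv0\pmod{p^{nr}\ZZ_p}$ with the same $r=1+\ord_p b$, which completes the argument in all cases.
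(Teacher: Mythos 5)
Your proof is correct and follows essentially the same route as the paper's: both reduce $\Dop^n(f\circ\lambda)$ to $\Doph{b}{n}f$ (resp.\ $(-1)^n\Doph{|b|}{n}f$ at a shifted argument when $b<0$) and then invoke $\KS_{p,2}=\KSS_{p,2}$ from Proposition~\ref{prop:ks2-eq-ks3}. The only difference is that you spell out the reindexing $\nu\mapsto n-\nu$ explicitly where the paper merely cites the symmetry of the binomial coefficients.
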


\begin{proof}
Case $b=0$: This gives a constant function $f(a) \in \KS_{p,2}$.
Case $b>0$: We have $\lambda(s+\nu) = a + bs + b\nu$.
Since $\KSS_{p,2} = \KS_{p,2}$, we obtain
\[
   \Dop^n (f \circ \lambda)(s) = \Doph{b}{n} f(t)
     \valueat{t = a + bs} \equiv 0 \pmod{p^{nr}}
\]
for all $n \geq 1$, where $r = 1 + \ord_p b$.
Thus $f \circ \lambda \in \KS_{p,2}$.
Case $b<0$: Set $b' = |b|$, then $\lambda(s+\nu) = a - b's - b'\nu$.
Recall Definition \ref{def:diff-oper} and note the symmetry of the
binomial coefficients. We then get
\[
   \Dop^n (f \circ \lambda)(s) = (-1)^n \, \Doph{b'}{n} f(t)
     \valueat{t = a - b's - b'n} \equiv 0 \pmod{p^{nr}}
\]
for all $n \geq 1$, where $r = 1 + \ord_p b' = 1 + \ord_p b$.
Consequently $f \circ \lambda \in \KS_{p,2}$.
\end{proof}
\smallskip

\section{Zeros and fixed points}

\begin{defin} \label{def:func-level}
Let $f \in \CZ$ and $n \geq 0$. We call
\[
   f_n(s) = p^{-n} f( \xi_n + s \, p^n ), \quad s \in \ZZ_p,
\]
a function of level $n$ of $f$, when $f_n$ defines a function
$f_n: \ZZ_p \to \ZZ_p$,
where the $p$-adic expansion of $\xi_n$ is given by
\[
   \xi_0 = 0, \quad \xi_n = s_0 + s_1 \, p + \cdots + s_{n-1} \, p^{n-1}
     \quad \text{for } n \geq 1.
\]
\end{defin}

\begin{prop} \label{prop:f-lip-zero}
Let $f \in \Lip(\ZZ_p)$ satisfy
\[
   \frac{f(s)-f(t)}{s-t} \equiv \Delta \pmod{p \ZZ_p}
     \quad \text{for } s \neq t, \quad s, t \in \ZZ_p,
\]
such that $\Delta$ is a fixed integer where $0 < \Delta < p$.
Then $f$ has a unique simple zero $\xi \in \ZZ_p$ and
\[
   f(s) = (s-\xi) \, f^*(s), \quad \norm{f(s)}_p = \norm{s-\xi}_p,
     \quad s \in \ZZ_p,
\]
where $f^*(s) \equiv \Delta \pmod{p \ZZ_p}$ and $f^* \in \CZ^*$.
\end{prop}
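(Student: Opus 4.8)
The plan is to read the hypothesis as a statement about units, build the zero $\xi$ digit by digit with the level functions of Definition~\ref{def:func-level}, and then factor. First I would observe that since $0<\Delta<p$, the quotient $(f(s)-f(t))/(s-t)$ is congruent mod $p$ to the nonzero residue $\Delta$, hence is a $p$-adic unit. Consequently $f$ is an \emph{isometry}:
\[
   \norm{f(s)-f(t)}_p = \norm{s-t}_p, \qquad s,t \in \ZZ_p .
\]
In particular $f$ is injective, which already yields uniqueness of any zero, and, once $\xi$ is produced, the asserted identity $\norm{f(s)}_p=\norm{f(s)-f(\xi)}_p=\norm{s-\xi}_p$.

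Next I would construct $\xi$. Reducing mod $p$, for $s\not\equiv t \pmod{p}$ the hypothesis reads $f(s)-f(t)\equiv \Delta(s-t)\pmod{p}$, so the induced map $\FF_p\to\FF_p$ is the affine bijection $x\mapsto \Delta x + \overline{f(0)}$; let $s_0$ be its unique root, i.e.\ the unique digit with $f(s_0)\equiv 0\pmod{p}$. Using the isometry, $f(s_0+ps)\equiv f(s_0)\equiv 0\pmod p$, so the level-one function $f_1(s)=p^{-1}f(s_0+ps)$ indeed maps $\ZZ_p\to\ZZ_p$, and its difference quotient equals $(f(s_0+ps)-f(s_0+pt))/(p(s-t))\equiv \Delta\pmod{p}$; thus $f_1$ is again Lipschitz and satisfies the same hypothesis with the same $\Delta$. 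Iterating gives digits $s_0,s_1,\dots$ and $\xi=\sum_{n\ge 0} s_n p^n$, with $f(\xi_{n+1})\equiv 0 \pmod{p^{n+1}}$ at each step. Since $\norm{\xi-\xi_{n+1}}_p\le p^{-(n+1)}$ and $f$ is an isometry, $\norm{f(\xi)}_p\le p^{-(n+1)}$ for all $n$, so $f(\xi)=0$. (Alternatively, an isometry of the compact space $\ZZ_p$ is automatically surjective, giving existence of a zero at one stroke; the level construction is preferable here because it matches the algorithmic spirit of the next section.)

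For the factorization, I would set $f^*(s)=f(s)/(s-\xi)=(f(s)-f(\xi))/(s-\xi)$ for $s\neq\xi$. The hypothesis then gives $f^*(s)\equiv\Delta\pmod{p}$, so $\norm{f^*(s)}_p=1$, and trivially $f(s)=(s-\xi)\,f^*(s)$ with $\norm{f(s)}_p=\norm{s-\xi}_p$. Away from $\xi$ the function $f^*$ is continuous, being a quotient of continuous functions with nonvanishing denominator, so it remains only to extend $f^*$ continuously across $\xi$ and to check that the extended value still lies in $\Delta+p\ZZ_p$, which would then place $f^*\in\CZ^*$.

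The step I expect to be the \emph{main obstacle} is exactly this continuity of $f^*$ at $\xi$, i.e.\ the existence of $\lim_{s\to\xi} f(s)/(s-\xi)$, which is nothing but differentiability of $f$ at its zero. From the bare Lipschitz/isometry estimate one only knows that each quotient is a unit $\equiv\Delta\pmod{p}$, and such a net need not converge a priori; indeed writing $s=\xi+p^k v$ one controls $f^*(s)$ only modulo $p$, not modulo $p^2$. To close this gap I would upgrade the local information by showing that $f$ is strictly differentiable at $\xi$, so that the difference quotients form a Cauchy net as $(s,t)\to(\xi,\xi)$ and $f'(\xi)$ exists as a unit with $f'(\xi)\equiv\Delta\pmod{p}$; setting $f^*(\xi)=f'(\xi)$ then completes $f^*\in\CZ^*$. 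This is precisely the property available for the functions to which the proposition is ultimately applied, since $\KS_{p,2}\subset\SD(\ZZ_p)$ by Corollary~\ref{corl:ks2-strict-diff}, and it is the point at which the hypothesis must be used in its sharpest form.
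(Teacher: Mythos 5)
Your construction is essentially the paper's: you build $\xi$ digit by digit through the level functions of Definition~\ref{def:func-level}, using that the difference quotient is a unit congruent to $\Delta$ to solve for each digit uniquely; you get uniqueness of the zero from the same quotient; and you factor via $f^*(s)=f(s)/(s-\xi)$. Your isometry remark $\norm{f(s)-f(t)}_p=\norm{s-t}_p$ is a compact restatement of what the paper uses implicitly, and your affine-bijection description of the mod-$p$ reduction is the paper's congruence $s_n\equiv -f_n(0)/\Delta \pmod{p\ZZ_p}$.

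The obstacle you single out --- continuity of $f^*$ at $\xi$, i.e.\ existence of $\lim_{s\to\xi}f(s)/(s-\xi)$ --- is exactly the point at which the paper's own proof is thinnest: it simply asserts ``$\lim_{s\to\xi}f^*(s)=f'(\xi)$'' without argument. You are right that this does not follow from the stated hypotheses: for $f(s)=\Delta s+p\,g(s)$ with $g$ any $1$-Lipschitz function, the difference-quotient condition holds, yet if $g$ fails to be differentiable at the zero $\xi$ of $f$ then $f^*$ admits no continuous extension there, so the clause $f^*\in\CZ^*$ is not deducible for a general $f\in\Lip(\ZZ_p)$ satisfying the hypothesis. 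Your proposed repair (strict differentiability at $\xi$) is an additional assumption rather than a consequence; but, as you observe, it holds in every application the paper makes, since by Proposition~\ref{prop:f-quod-delta}(3) and Corollary~\ref{corl:ks2-strict-diff} the functions actually fed into this proposition are strictly differentiable with derivative $\equiv\Delta\pmod{p\ZZ_p}$, so nothing downstream is affected. In short: your argument is as complete as the paper's, and you have correctly located the one step that neither version closes from the hypotheses as literally written.
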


\begin{proof}
We show on induction that there exists a sequence $(f_n)_{n \geq 0}$
of functions of level $n$ of $f$, such that the sequence $(\xi_n)_{n \geq 0}$
is uniquely determined. Basis of induction $n=0$: $f_0(s) = f(s)$.
Inductive step $n \mapsto n+1$: Assume this is true for $n$ prove for $n+1$.
We have
\begin{equation} \label{eq:delta-fn}
\begin{split}
   \Dop f_n(s) &= p^{-n} \, ( f( \xi_n + (s+1) \, p^n )
      - f( \xi_n + s \, p^n ) ) \\
   &= \frac{f( \xi_n + (s+1) \, p^n ) - f( \xi_n + s \, p^n )}
      {( \xi_n + (s+1) \, p^n ) - ( \xi_n + s \, p^n )}.
\end{split}
\end{equation}
By assumption we get $\Dop f_n(s) \equiv \Delta \not\equiv 0 \pmod{p\ZZ_p}$.
Thus, we can uniquely determine the value $s_n$ by
\begin{equation} \label{eq:sn-fn}
   s_n \equiv -f_n(0) / \Delta \pmod{p\ZZ_p}, \quad 0 \leq s_n < p,
\end{equation}
such that $f_n(s_n) \equiv 0 \pmod{p\ZZ_p}$. It also follows that
\[
   f_n(s_n + s \, p ) \equiv 0 \pmod{p\ZZ_p} \quad \text{for } s \in \ZZ_p.
\]
Setting $\xi_{n+1} = \xi_n + s_n \, p^n$, we obtain the function
$f_{n+1}: \ZZ_p \to \ZZ_p$ by
\[
   f_{n+1}( s ) = p^{-1} \, f_n(s_n + s \, p ).
\]
Existence of the zero:
We achieve that $\lim_{n \to \infty} \norm{f(\xi_n)}_p = 0$.
Define $\xi = \lim_{n \to \infty} \xi_n$, then $\xi$ is a zero of $f$,
due to the fact that $f \in \Lip(\ZZ_p) \subset \CZ$.
Uniqueness of the zero:
Assume to the contrary that $\xi$ and $\xi'$ are different zeros of $f$.
Then
\[
   0 = \frac{f(\xi)-f(\xi')}{\xi - \xi'}
     \equiv \Delta \not\equiv 0 \pmod{p\ZZ_p}
\]
yields a contradiction. Representation of $f$: Since $f(\xi)=0$, we obtain
\[
   f^*(s) = \frac{f(s)}{s-\xi} \equiv \Delta \pmod{p\ZZ_p}
     \quad \text{for } s \neq \xi, \quad s \in \ZZ_p.
\]
We get $\lim_{s \to \xi} f^*(s) = f'(\xi)$
where $f^*(\xi) = f'(\xi) \equiv \Delta \pmod{p\ZZ_p}$.
This implies that $f^* \in \CZ$. Since $1/f^* \in \CZ$,
we even have $f^* \in \CZ^*$.
Finally, $f(s) = (s-\xi) \, f^*(s)$ and $\norm{f(s)}_p = \norm{s-\xi}_p$
for $s \in \ZZ_p$.
\end{proof}

\begin{remark*}
This result is similar to Hensel's Lemma, which predicts a zero $\xi$ of
a polynomial $g \in \ZZ_p[s]$, when
$\norm{g(\tilde{s})}_p < \norm{g'(\tilde{s})}^2_p$
for some $\tilde{s} \in \ZZ_p$.
Then $\norm{\xi-\tilde{s}}_p = \norm{g(\tilde{s})/g'(\tilde{s})}_p$,
cf.~\cite[Ch.~2.1, p.~80]{Robert:2000}.
But in this context, a function $f \in \Lip(\ZZ_p)$,
that satisfies the conditions of Proposition \ref{prop:f-lip-zero},
can have an infinite Mahler expansion in view of
Proposition \ref{prop:f-quod-delta}. Moreover this function has only one zero.
Note also that this result cannot be derived by the $p$-adic Weierstrass
Preparation Theorem, cf.~\cite[Thm.~7.3, p.~115]{Washington:1997},
since $s-\xi$ is not a distinguished polynomial when $\xi \in \ZZ^*_p$.
\end{remark*}

\begin{prop} \label{prop:f-lip-fixpnt}
Let $f \in \Lip(\ZZ_p)$ satisfy
\[
   \frac{f(s)-f(t)}{s-t} \equiv 0 \pmod{p \ZZ_p}
     \quad \text{for } s \neq t, \quad s, t \in \ZZ_p.
\]
We have the following statements:
\begin{enumerate}
\item The function $f$ has a fixed point $\tau \in \ZZ_p$.
\item If there exists a $\xi \in \ZZ_p$ such that $f(\xi) \in p^n\ZZ_p$ with
$n \geq 1$, then there exists a function $f_n$ of level $n$
where $\xi_n \equiv \xi \pmod{p^n\ZZ_p}$ and
$f_n(s) \equiv p^{-n} f(\xi) \pmod{p \ZZ_p}$ for $s \in \ZZ_p$.
\end{enumerate}
\end{prop}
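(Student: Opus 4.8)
The plan is to treat the two parts separately and to reduce the first one to the zero-finding result just established. For part (1), I would introduce $g = f - \id$, which again lies in $\Lip(\ZZ_p)$ and maps $\ZZ_p$ to $\ZZ_p$. Its difference quotient computes, for $s \neq t$, to
\[
   \frac{g(s)-g(t)}{s-t} = \frac{f(s)-f(t)}{s-t} - 1 \equiv -1 \equiv p-1 \pmod{p\ZZ_p},
\]
using the hypothesis on $f$. Hence $g$ satisfies the hypotheses of Proposition \ref{prop:f-lip-zero} with the fixed integer $\Delta = p-1$, where $0 < \Delta < p$. That proposition furnishes a unique simple zero $\tau \in \ZZ_p$ of $g$, and $g(\tau) = 0$ means exactly $f(\tau) = \tau$. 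Thus $\tau$ is a fixed point of $f$, which moreover is unique. Alternatively, one could observe that the hypothesis makes $f$ a contraction with factor $p^{-1}$ and invoke the contraction mapping principle on the complete space $\ZZ_p$, but the reduction above is cleaner and reuses the machinery at hand.

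For part (2), I would first extract the Kummer congruence from the hypothesis: since $\norm{f(s)-f(t)}_p \leq \norm{p\,(s-t)}_p$, the relation $s \equiv t \pmod{p^n\ZZ_p}$ forces $f(s) \equiv f(t) \pmod{p^{n+1}\ZZ_p}$. Writing $\xi = \sum_{k \geq 0} s_k \, p^k$ and setting $\xi_n = s_0 + \cdots + s_{n-1} \, p^{n-1}$, we have $\xi_n \equiv \xi \pmod{p^n\ZZ_p}$ by construction. For any $s \in \ZZ_p$ it then holds that $\xi_n + s\,p^n \equiv \xi_n \equiv \xi \pmod{p^n\ZZ_p}$, so the Kummer congruence gives $f(\xi_n + s\,p^n) \equiv f(\xi) \pmod{p^{n+1}\ZZ_p}$.

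It remains to divide by $p^n$. Because $f(\xi) \in p^n\ZZ_p$ by assumption, the last congruence shows $f(\xi_n + s\,p^n) \in p^n\ZZ_p$ as well, so that $f_n(s) = p^{-n} f(\xi_n + s\,p^n)$ indeed takes values in $\ZZ_p$ and is a genuine function of level $n$ with $\xi_n \equiv \xi \pmod{p^n\ZZ_p}$. Dividing the same congruence by $p^n$ yields $f_n(s) \equiv p^{-n} f(\xi) \pmod{p\ZZ_p}$ for every $s \in \ZZ_p$, as claimed. I expect no genuine obstacle in this argument; the only point needing mild care is that passing from a congruence modulo $p^{n+1}$ to one modulo $p$ after dividing by $p^n$ is legitimate precisely because both $f(\xi)$ and $f(\xi_n + s\,p^n)$ lie in $p^n\ZZ_p$. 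The real substance is the reduction in part (1), which incidentally delivers uniqueness of the fixed point even though only existence is asserted.
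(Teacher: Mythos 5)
Your proof is correct, and part (1) takes a genuinely different route from the paper. The paper proves (1) in one line by observing that the hypothesis makes $f$ a contraction, $\norm{f(s)-f(t)}_p \leq p^{-1}\norm{s-t}_p$, and invoking the Banach fixed point theorem on the complete space $\ZZ_p$ — the alternative you mention in passing is in fact the paper's argument. Your primary route instead sets $g = f - \id$, checks that its difference quotient is $\equiv -1 \equiv p-1 \pmod{p\ZZ_p}$, and applies Proposition \ref{prop:f-lip-zero} with $\Delta = p-1$; since $0 < p-1 < p$ for every prime, the hypotheses are met and the unique simple zero of $g$ is the fixed point of $f$. This reduction is valid and buys slightly more: it makes the uniqueness of $\tau$ explicit and, via the digit-by-digit construction inside Proposition \ref{prop:f-lip-zero}, shows that $\tau$ is effectively computable level by level (which is what Algorithm \ref{alg:fixed-point} later exploits), whereas the contraction-mapping argument is shorter but purely existential in flavor. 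For part (2) your argument is essentially the paper's: both extract the Kummer congruence $\norm{f(s)-f(t)}_p \leq \norm{p(s-t)}_p$, truncate $\xi$ to $\xi_n$, and divide the resulting congruence modulo $p^{n+1}$ by $p^n$; your version is marginally more explicit in verifying that $f_n$ is $\ZZ_p$-valued, where the paper routes the constancy claim through $\Dop f_n(s) \equiv 0 \pmod{p\ZZ_p}$.
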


\begin{proof}
(1): Since $(\ZZ_p,\norm{\cdot}_p)$ is a Banach space and the function $f$
defines a contractive mapping by $\norm{f(s)-f(t)}_p \leq p^{-1} \norm{s-t}_p$
for $s \neq t$, the Banach fixed point theorem provides
a $\tau \in \ZZ_p$ such that $f(\tau) = \tau$.
(2): We have $\norm{f(s)-f(t)}_p \leq \norm{p(s-t)}_p$, which implies the
Kummer congruences. Assume that $f(\xi) \in p^n\ZZ_p$.
According to Definition \ref{def:func-level}, we define $f_n$ and determine
$\xi_n \in \ZZ_p$ by $\xi_n \equiv \xi \pmod{p^n\ZZ_p}$.
Then we get $f(\xi_n) \equiv f(\xi) \pmod{p^{n+1}\ZZ_p}$.
Similar to \eqref{eq:delta-fn} we obtain $\Dop f_n(s) \equiv 0 \pmod{p\ZZ_p}$
for $s \in \ZZ_p$.
Consequently, $f_n(s) \equiv p^{-n} f(\xi) \pmod{p \ZZ_p}$ for $s \in \ZZ_p$.
\end{proof}

\begin{defin} \label{def:decomp-spaces-0}
We define for $\mathcal{T} = \KS_{p,\nu}, \WKS_{p,\nu}$, $\nu=1,2$, the
decomposition $\mathcal{T} = \mathcal{T}^0 \cup \mathcal{T}^*$ where
\[
   \mathcal{T}^0 = \{ f \in \mathcal{T} : f(0) \in p\ZZ_p \}, \quad
     \mathcal{T}^* = \{ f \in \mathcal{T} : f(0) \in \ZZ^*_p \}.
\]
\end{defin}

\begin{theorem} \label{thm:ks2-zero-fixpnt}
We have the following statements:
\begin{enumerate}
\item If $f \in \KS_{p,1}$ or $f \in \KS_{p,2}$,
      then $f$ has a fixed point $\tau \in \ZZ_p$.
\item If $f \in \KS^*_{p,1}$ or $f \in \KS^*_{p,2}$,
      then $\norm{f(s)}_p = 1$ for $s \in \ZZ_p$.
\item If $f \in \WKS^0_{p,2}$, then $f$ has a unique simple zero
      $\xi \in \ZZ_p$ and
\[
   f(s) = p \, (s-\xi) \, f^*(s), \quad \norm{f(s)}_p = \norm{p \, (s-\xi)}_p,
     \quad s \in \ZZ_p,
\]
where $f^*(s) \equiv \Delta_f \pmod{p \ZZ_p}$ and $f^* \in \CZ^*$.
\end{enumerate}
\end{theorem}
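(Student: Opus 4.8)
The plan is to treat the three parts in turn, using the quotient estimates of Proposition \ref{prop:f-quod-delta}, the strong congruence of Corollary \ref{corl:ks2-strong-kummer-congr}, and the two Lipschitz criteria of Propositions \ref{prop:f-lip-fixpnt} and \ref{prop:f-lip-zero}. For part~(1) I would first check that every $f$ in either space is a contraction on $\ZZ_p$. If $f \in \KS_{p,2}$, then $f \in \Lip(\ZZ_p)$ by Corollary \ref{corl:ks2-strict-diff} and $(f(s)-f(t))/(s-t) \equiv 0 \pmod{p\ZZ_p}$ by Proposition \ref{prop:f-quod-delta}(1). If $f \in \KS_{p,1}$, then taking $n = \ord_p(s-t)$ in the Kummer congruence yields $\norm{f(s)-f(t)}_p \le p^{-1}\,\norm{s-t}_p$, so $f$ is again Lipschitz and satisfies the same quotient congruence. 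In both cases Proposition \ref{prop:f-lip-fixpnt}(1) produces a fixed point $\tau \in \ZZ_p$. Part~(2) is then immediate from the ultrametric isosceles property: since $f(0) \in \ZZ^*_p$ we have $\norm{f(0)}_p = 1$, while the contraction estimate gives $\norm{f(s)-f(0)}_p \le p^{-1}\,\norm{s}_p \le p^{-1} < 1$ for all $s \in \ZZ_p$, so writing $f(s) = f(0) + (f(s)-f(0))$ forces $\norm{f(s)}_p = 1$.

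For part~(3) I would reduce to Proposition \ref{prop:f-lip-zero} by rescaling. The key observation is that $f$ maps $\ZZ_p$ into $p\ZZ_p$: indeed $f(0) \in p\ZZ_p$ by Definition \ref{def:decomp-spaces-0}, and Corollary \ref{corl:ks2-strong-kummer-congr} gives $\norm{f(s)-f(0)}_p = \norm{ps}_p \le p^{-1}$, whence $f(s) \in p\ZZ_p$ for every $s$. Therefore $g = p^{-1} f$ is a well-defined function $\ZZ_p \to \ZZ_p$; it is Lipschitz, since the same corollary yields $\norm{g(s)-g(t)}_p \le \norm{s-t}_p$, and by Proposition \ref{prop:f-quod-delta}(2) it satisfies $(g(s)-g(t))/(s-t) \equiv \Delta_f \pmod{p\ZZ_p}$ with $0 < \Delta_f < p$, because $f \in \WKS_{p,2}$ guarantees $\Delta_f \neq 0$ together with $2 \pdiv \Delta_f(2)$ when $p = 2$.

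Proposition \ref{prop:f-lip-zero} then supplies a unique simple zero $\xi$ of $g$ with $g(s) = (s-\xi)\,g^*(s)$, $\norm{g(s)}_p = \norm{s-\xi}_p$, $g^*(s) \equiv \Delta_f \pmod{p\ZZ_p}$, and $g^* \in \CZ^*$. Multiplying by $p$ and setting $f^* = g^*$ gives the asserted $f(s) = p\,(s-\xi)\,f^*(s)$ and $\norm{f(s)}_p = \norm{p\,(s-\xi)}_p$; since $g$ and $f$ have the same zero set, $\xi$ is the unique simple zero of $f$. I expect the reduction in part~(3) to be the main obstacle: one must notice that the condition $f(0) \in p\ZZ_p$ combined with the strong congruence forces $f(\ZZ_p) \subset p\ZZ_p$, so that division by $p$ stays in $\ZZ_p$, and that the membership $f \in \WKS_{p,2}$ is precisely what turns the vanishing slope of Proposition \ref{prop:f-quod-delta}(1) into the nonvanishing slope $0 < \Delta < p$ required by Proposition \ref{prop:f-lip-zero}.
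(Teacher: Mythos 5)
Your proposal is correct and follows the same route as the paper: part (1) via the contraction criterion of Proposition \ref{prop:f-lip-fixpnt} (using Proposition \ref{prop:f-quod-delta} for $\KS_{p,2}$ and the Kummer congruence directly for $\KS_{p,1}$), part (2) from $f(s)\equiv f(0)\pmod{p\ZZ_p}$, and part (3) by applying Proposition \ref{prop:f-lip-zero} to $f/p$. The paper's own proof is just a terser version of exactly this argument, so the extra details you supply (the contraction estimate for $\KS_{p,1}$, the verification that $f(\ZZ_p)\subset p\ZZ_p$) are a faithful expansion rather than a different method.
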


\begin{proof}
(1): Case $f \in \KS_{p,1}$: Since $\norm{f(s)-f(t)}_p \leq \norm{p(s-t)}_p$
for $s \neq t$, this shows the existence of a fixed point as argued in the
proof of Proposition \ref{prop:f-lip-fixpnt}.
Case $f \in \KS_{p,2}$: Proposition \ref{prop:f-quod-delta} provides
that $f$ satisfies the condition of Proposition \ref{prop:f-lip-fixpnt}.
(2): Clearly, since $f(0) \equiv f(s) \pmod{p\ZZ_p}$ for $s \in \ZZ_p$.
(3): Proposition \ref{prop:f-quod-delta} shows that $f/p$ satisfies the
conditions of Proposition \ref{prop:f-lip-zero}.
\end{proof}

Functions $f \in \WKS^0_{p,2}$ play a
significant role as we will later see in the next sections.
Another characterization of $\WKS^0_{p,2}$ is given by
\[
   \WKS^0_{p,2} = \left\{ f \in \KS_{p,2}: f \notin \KS^*_{p,2},
     \KSop f \in \KS^*_{p,2} \right\}.
\]
The following algorithm shows how to compute an approximation$\pmod{p^n\ZZ_p}$
of the zero of a function $f \in \WKS^0_{p,2}$. For this task we need the values
\[
   f(0)/p, \ldots, f(n)/p \pmod{p^n\ZZ_p}.
\]
We present two possible methods: The first uses the values of $f$,
the second its Mahler coefficients.

\begin{algo} \label{alg:zero}
Let $f \in \WKS^0_{p,2}$ and $\xi$ be the unique zero of $f$.
Let $n \geq 1$ be fixed.
Initially set $\xi_0 = 0$ and $\delta \equiv -\Delta_f^{-1} \pmod{p\ZZ_p}$.
Further compute for $\nu = 0,\ldots,n$ the values
\[
   \tilde{f}_\nu \equiv f(\nu)/p \pmod{p^n\ZZ_p},
\]
resp.,
\[
   \Delta_f(\nu) \pmod{p^{n-\nu+1}\ZZ_p}.
\]
For each step $r=1,\ldots,n$ proceed as follows. Compute
\[
   \gamma_{r-1} \equiv \sum_{\nu = 0}^{r} \tilde{f}_\nu \binom{\xi_{r-1}}{\nu}
     \binom{r-\xi_{r-1}}{r-\nu} \pmod{p^r \ZZ_p},
\]
resp.,
\begin{equation} \label{eq:algo-sum2}
   \gamma_{r-1} \equiv \sum_{\nu = 0}^{r} \Delta_f(\nu) \, p^{\nu-1}
     \binom{\xi_{r-1}}{\nu} \pmod{p^r \ZZ_p}.
\end{equation}
Then $\gamma_{r-1} \in p^{r-1}\ZZ_p$.
Set $s_{r-1} \equiv \gamma_{r-1} \delta / p^{r-1} \pmod{p\ZZ_p}$
where $0 \leq s_{r-1} < p$. Set $\xi_r = \xi_{r-1} + s_{r-1} \, p^{r-1}$
and go to the next step while $r < n$.
Finally, $\xi_n \equiv \xi \pmod{p^n\ZZ_p}$.
\end{algo}

\begin{proof}
The function $f/p$ satisfies the conditions of Proposition \ref{prop:f-lip-zero}.
We have to adapt the procedure given there to compute the zero of $f/p$.
For each step we use \eqref{eq:sn-fn} to get the next digit of the $p$-adic
expansion of $\xi$; to compute the term $f_{r-1}(0)$ we make use of
Proposition \ref{prop:kummer-congr-value} modified for $f/p$. Note that
$\Delta_f(0) = f(0) \in p\ZZ_p$, so \eqref{eq:algo-sum2} is valid.
\end{proof}

\begin{remark} \label{rem:lucas-binom}
The second method can be further optimized. The binomial coefficient
$\binom{\xi_{r-1}}{\nu}$ can be effectively computed$\pmod{p^{r-\nu+1} \ZZ_p}$,
since we already know the $p$-adic expansion of $\xi_{r-1}$. For the last term
of the sum of \eqref{eq:algo-sum2} we can apply the well known theorem of Lucas:
\[
   \binom{\xi_{r-1}}{r} \equiv \binom{s_0}{r_0} \cdots \binom{s_{r-2}}{r_{r-2}}
     \pmod{p \ZZ_p},
\]
where $r_\nu$ are the digits of the $p$-adic expansion of $r$.
Let $l = \lfloor \log_p r \rfloor$, then
\[
   \Delta_f(r) \, p^{r-1} \binom{\xi_{r-1}}{r} \equiv
     \Delta_f(r) \, p^{r-1} \binom{s_0}{r_0} \cdots \binom{s_l}{r_l}
     \pmod{p^r \ZZ_p}.
\]
Lucas' theorem can be extended to higher prime powers. Davis and Webb
\cite{Davis&Webb:1990} showed a similar formula to compute binomial coefficients
modulo $p^m$, $m \geq 2$, which uses slightly modified binomial coefficients
that are evaluated on blocks of $m$ digits.
\end{remark}

In the next algorithm, which computes an approximation$\pmod{p^n\ZZ_p}$ of
the fixed point of a function $f \in \KS_{p,2}$,
we can apply Lucas' theorem.

\begin{algo} \label{alg:fixed-point}
Let $f \in \KS_{p,2}$ and $\tau$ be the fixed point of $f$.
Let $n \geq 1$ be fixed.
Initially set $\tau_1 = t_0 \equiv f(0) \pmod{p\ZZ_p}$, where $0 \leq t_0 < p$.
Further compute the values
\[
   \Delta_f(\nu) \pmod{p^{n-\nu}\ZZ_p}, \quad \nu = 0,\ldots,n-1.
\]
For each step $r=1,\ldots,n-1$ proceed as follows. Compute
\[
   t_r \equiv p^{-r} \left( \sum_{\nu=0}^{r-1}
     \Delta_f(\nu) \, p^\nu \binom{\tau_r}{\nu} - \tau_r \right)
     + \Delta_f(r) \binom{t_0}{r_0} \cdots \binom{t_l}{r_l} \pmod{p\ZZ_p},
\]
where $0 \leq t_r < p$, $l = \lfloor \log_p r \rfloor$, and $r_\nu$ are the
$p$-adic digits of $r$. Set $\tau_{r+1} = \tau_r + t_r \, p^r$
and go to the next step while $r < n-1$.
Finally, $\tau_n \equiv \tau \pmod{p^n\ZZ_p}$.
\end{algo}

\begin{proof}
By Theorem \ref{thm:ks2-zero-fixpnt} $f$ has a fixed point $\tau$,
which solves simultaneously the congruences
\[
   \tau \equiv \sum_{\nu=0}^{r-1} \Delta_f(\nu) \, p^\nu \binom{\tau}{\nu}
     \pmod{p^r\ZZ_p}, \quad r \geq 1.
\]
Let $\tau_r = t_0 + t_1 \, p + \cdots + t_{r-1} \, p^{r-1}$ be the truncated
$p$-adic expansion of $\tau$ for $r \geq 1$ and set $\tau_0 = 0$.
Using Lemma \ref{lem:padic-fact}, we observe that
\begin{equation} \label{eq:loc-algo-fixpnt-1}
   p^\nu \binom{\tau_r}{\nu} \equiv \frac{p^\nu}{\nu!} \, (\tau_{r-1})_\nu
     \pmod{p^r\ZZ_p}, \quad \nu \geq 0, r \geq 1.
\end{equation}
Thus
\begin{equation} \label{eq:loc-algo-fixpnt-2}
   \tau_r \equiv \sum_{\nu=0}^{r-1} \Delta_f(\nu) \, p^\nu
     \binom{\tau_{r-1}}{\nu} \pmod{p^r\ZZ_p}, \quad r \geq 1.
\end{equation}
Now we use induction on $r$ to compute $t_r$. Basis of induction $r=1$:
By \eqref{eq:loc-algo-fixpnt-2} we get $\tau_1 = t_0 \equiv f(0) \pmod{p\ZZ_p}$.
Inductive step $r \mapsto r+1$: Assume this is true for $r$ prove for $r+1$.
We can use \eqref{eq:loc-algo-fixpnt-2} for $r+1$ to obtain
\[
   \tau_{r+1} = \tau_r + t_r \, p^r
     \equiv \sum_{\nu=0}^{r-1} \Delta_f(\nu) \, p^\nu \binom{\tau_r}{\nu}
     + \Delta_f(r) \, p^r \binom{\tau_r}{r} \pmod{p^{r+1}\ZZ_p}.
\]
Considering Remark \ref{rem:lucas-binom} and using
\eqref{eq:loc-algo-fixpnt-1} and \eqref{eq:loc-algo-fixpnt-2}, this yields
\[
   t_r \, p^r \equiv
     \underbrace{\sum_{\nu=0}^{r-1} \Delta_f(\nu)
     \, p^\nu \binom{\tau_r}{\nu} - \tau_r}_{\in \, p^r\ZZ_p}
     \,+\,\, \Delta_f(r) \, p^r \binom{t_0}{r_0} \cdots \binom{t_l}{r_l}
     \pmod{p^{r+1}\ZZ_p},
\]
where $l = \lfloor \log_p r \rfloor$.
Dividing the above congruence by $p^r$ gives the result.
\end{proof}

We have a close relation between the zero and the fixed point of
$f \in \WKS^0_{p,2}$.

\begin{lemma} \label{lem:rel-zero-fixpnt}
If $f \in \WKS^0_{p,2}$ with $f(0) \neq 0$, then
$\ord_p f(0) = \ord_p \tau = 1 + \ord_p \xi$. More precisely,
\[
   \tau / p \, \xi \equiv - \Delta_f \pmod{p\ZZ_p} \quad \text{and} \quad
   f(0)/\tau \equiv 1 \pmod{p\ZZ_p},
\]
where $\tau$ is the fixed point and $\xi$ is the zero of $f$.
\end{lemma}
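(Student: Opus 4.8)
The plan is to treat the zero $\xi$ and the fixed point $\tau$ separately, comparing each to the value $f(0)$, and then to combine the two comparisons. Since $f \in \WKS^0_{p,2}$, Theorem \ref{thm:ks2-zero-fixpnt}(3) furnishes the factorization $f(s) = p\,(s-\xi)\,f^*(s)$ with $f^*(s) \equiv \Delta_f \pmod{p\ZZ_p}$ and $f^* \in \CZ^*$; and Proposition \ref{prop:f-quod-delta}(1) gives the contraction estimate $\norm{f(s)-f(t)}_p \leq \norm{p\,(s-t)}_p$. I would note first that $f(0) \neq 0$ forces both $\xi \neq 0$ and $\tau \neq 0$: if $\xi = 0$ then $f(0) = 0$ by the factorization, and if $\tau = 0$ then $f(0) = f(\tau) = \tau = 0$.

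For the zero, I would evaluate the factorization at $s = 0$, giving $f(0) = -p\,\xi\,f^*(0)$. Since $f^* \in \CZ^*$, the value $f^*(0)$ is a $p$-adic unit, so taking orders yields $\ord_p f(0) = 1 + \ord_p \xi$, which is the second equality of the claim. Dividing, $f(0)/(p\,\xi) = -f^*(0) \equiv -\Delta_f \pmod{p\ZZ_p}$, an auxiliary congruence I will reuse at the end.

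For the fixed point, the key is the contraction estimate applied to the pair $(\tau,0)$. Writing $\tau - f(0) = f(\tau) - f(0)$ and using $\norm{f(\tau)-f(0)}_p \leq p^{-1}\norm{\tau}_p$ gives $\norm{\tau - f(0)}_p \leq p^{-1}\norm{\tau}_p < \norm{\tau}_p$. By the ultrametric isosceles principle this forces $\norm{f(0)}_p = \norm{\tau}_p$, that is $\ord_p f(0) = \ord_p \tau$, which completes the chain of equalities. The same estimate shows $(\tau - f(0))/\tau \in p\ZZ_p$, i.e.\ $f(0)/\tau \equiv 1 \pmod{p\ZZ_p}$, the second displayed congruence.

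Finally I would combine the two comparisons multiplicatively: writing $\tau/(p\,\xi) = (\tau/f(0))\cdot(f(0)/(p\,\xi))$, the first factor is $\equiv 1$ and the second is $\equiv -\Delta_f$ modulo $p\ZZ_p$, both being units, so $\tau/(p\,\xi) \equiv -\Delta_f \pmod{p\ZZ_p}$. The only point requiring a little care is the ultrametric step linking $\tau$ and $f(0)$ (in particular verifying $\tau \neq 0$ so that the strict inequality applies); everything else is a direct evaluation of the factorization from Theorem \ref{thm:ks2-zero-fixpnt}(3).
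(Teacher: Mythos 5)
Your proof is correct, and it reverses the logical order of the paper's argument in an interesting way. The paper first establishes the congruence $\tau/p\,\xi \equiv -\Delta_f$ by applying the refined difference quotient congruence \eqref{eq:frac-f-delta} of Proposition \ref{prop:f-quod-delta} to the pair $(s,t)=(\tau,\xi)$, which gives $\tau/(p\,(\tau-\xi)) \equiv \Delta_f$ and, after inversion and reduction modulo $p\ZZ_p$, the claimed relation between $\tau$ and $\xi$; only then does it bring in the factorization of Theorem \ref{thm:ks2-zero-fixpnt}(3) to compare $f(0)$ with $p\,\xi$ and deduce $f(0)/\tau \equiv 1$. You instead obtain $f(0)/(p\,\xi) \equiv -\Delta_f$ from the factorization exactly as the paper does, but you get $f(0)/\tau \equiv 1$ independently, using only the weaker contraction estimate \eqref{eq:frac-f-zero} at the pair $(\tau,0)$ together with the ultrametric isosceles principle, and then recover $\tau/p\,\xi \equiv -\Delta_f$ by multiplying the two unit congruences. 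Your route has the small advantage that the $f(0)$--$\tau$ comparison needs only the Kummer congruence (part (1) of Proposition \ref{prop:f-quod-delta}), not the finer $\Delta_f$-congruence of part (2); the $\Delta_f$ information enters solely through the factorization. The paper's route, by contrast, exhibits the direct relation between the two distinguished points $\tau$ and $\xi$ without passing through $f(0)$. Both arguments correctly dispose of the degenerate case by noting that $f(0)\neq 0$ forces $\xi\neq 0$ and $\tau\neq 0$, and your explicit check that $\tau\neq 0$ before invoking the strict ultrametric inequality is exactly the care the step requires.
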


\begin{proof}
The case $f(0)=0$ implies that $\xi=\tau=0$ and vice versa, which we have
excluded. Using \eqref{eq:frac-f-delta} of Proposition \ref{prop:f-quod-delta}
yields that
\[
   \frac{\tau}{p\,(\tau-\xi)} \equiv \Delta_f \pmod{p\ZZ_p}.
\]
Since $\Delta_f \neq 0$, we can invert the congruence such that
\[
   \Delta_f^{-1} \equiv \frac{p\,(\tau-\xi)}{\tau} \equiv -p \, \xi / \tau
     \pmod{p\ZZ_p}.
\]
This shows the claimed congruence and $\norm{\tau}_p = \norm{p \, \xi}_p$.
Furthermore Theorem \ref{thm:ks2-zero-fixpnt} shows that
$\norm{f(0)}_p = \norm{p \, \xi}_p$ and also
$f(0)/p \, \xi \equiv - \Delta_f \pmod{p\ZZ_p}$.
Thus $f(0)/\tau \equiv 1 \pmod{p\ZZ_p}$.
\end{proof}

Now, we revisit Theorem \ref{thm:ks2-zero-fixpnt} to show that one cannot
improve the result as follows.

\begin{prop} \label{prop:ks2-cntexpl}
If $f \in \WKS^0_{p,2}$, then $f$ can be decomposed as
\[
   f(s) = p \, (s-\xi) \, f^*(s), \quad s \in \ZZ_p,
\]
with $\xi \in \ZZ_p$, $f^*(s) \equiv \Delta_f \pmod{p \ZZ_p}$, and
$f^* \in \CZ^*$. But in general $f^* \notin \KS^*_{p,2}$.
\end{prop}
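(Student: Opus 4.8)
The first assertion is nothing new: it is exactly Theorem~\ref{thm:ks2-zero-fixpnt}(3) applied to $f \in \WKS^0_{p,2}$, so I would simply quote it. The substance is the final sentence, and since it only claims that membership $f^* \in \KS^*_{p,2}$ \emph{may} fail, the plan is to exhibit one explicit counterexample; a single prime and a single $f$ suffice. I would look among functions with $f(0)=0$, so that the zero is $\xi=0$ and $f^*(s)=f(s)/(p\,s)$, which makes the quotient easy to write down and to differentiate.

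The point that makes this delicate --- and the step I expect to be the real obstacle --- is that $\KS_{p,2}$ is a ring: by the discrete Leibniz rule together with the order bound $\ord_p \Dop^k g(s) \ge k$ for $g \in \KS_{p,2}$, a product of two $\KS_{p,2}$-functions again satisfies the Kummer type congruences~I. Hence if $f^*$ already lay in $\KS_{p,2}$ the product $f=(p\,s)\,f^*$ would stay there automatically, while conversely $f \in \KS_{p,2}$ carries no free lower bound on the quotient. So I must engineer an $f^*$ lying \emph{just} outside $\KS_{p,2}$ in such a way that multiplication by the single factor $p\,s$ exactly repairs the defect. Naive choices fail: e.g.\ $f^*(s)=1+p\binom{s}{2}$ produces $\Dop^3 f(0)=3p^2$, which escapes $p^3\ZZ_p$ for $p\neq 3$. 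The natural remedy is to place the defect at Mahler level $\nu=p-1$, because the basis element $p^p\binom{s}{p}$ carries a surplus power of $p$ there.

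Concretely, for any odd prime $p$ I would take
\[
   f(s) = p\binom{s}{1} + p^p\binom{s}{p}.
\]
By inspection its Mahler coefficients are $\Delta_f(1)=\Delta_f(p)=1$ and $\Delta_f(\nu)=0$ otherwise, so $f \in \KS_{p,2}$, while $f(0)=0$ and $\Delta_f=1\neq 0$ give $f \in \WKS^0_{p,2}$ with zero $\xi=0$. Using $s\binom{s-1}{p-1}=p\binom{s}{p}$ one finds
\[
   f^*(s) = \frac{f(s)}{p\,s} = 1 + p^{p-2}\binom{s-1}{p-1},
\]
which indeed satisfies $f^*(s)\equiv 1\equiv\Delta_f\pmod{p\ZZ_p}$ and $f^*\in\CZ^*$. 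Finally, since $\Dop^{p-1}\binom{s-1}{p-1}=1$, I get $\Dop^{p-1}f^*(0)=p^{p-2}$; because $\ord_p(p^{p-2})=p-2<p-1$ for $p\ge 3$, the congruence $\Dop^{p-1}f^*(0)\equiv 0\pmod{p^{p-1}\ZZ_p}$ fails. Thus $f^*\notin\KS_{p,2}$, a fortiori $f^*\notin\KS^*_{p,2}$, which proves the claim. The construction needs $p-2\ge 1$, so it is stated for odd $p$; as only a single counterexample is required, the prime $p=2$ may be left aside.
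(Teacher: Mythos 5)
Your construction is correct and is essentially the paper's own argument in truncated form: the paper uses $f_p(s)=(1+p)^s-1=\sum_{\nu\ge 1}p^\nu\binom{s}{\nu}$, whose quotient $f_p(s)/(ps)$ exhibits exactly the defect $p^{p-2}$ at Mahler index $p-1$ that you isolate with the two-term function $p\binom{s}{1}+p^p\binom{s}{p}$. The only divergence is that the paper also treats $p=2$, modifying the example (adding $p^2\binom{s}{2}$ so that $2\pdiv\Delta_f(2)$ holds, which pushes the defect to index $2p-1$); since the proposition is stated for all $p$ and your $f$ fails the condition $2\pdiv\Delta_f(2)$ when $p=2$, you should either supply an analogous modification or state explicitly that your counterexample covers only odd $p$.
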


\begin{proof}
By Theorem \ref{thm:ks2-zero-fixpnt} we have the decomposition of $f$ as
above. We construct the following functions for $p \geq 3$ using the
binomial expansion in $\ZZ_p$, cf.\ Proposition \ref{prop:exp-func} later.
\[
   f_p(s) = (1+p)^s - 1 = \sum_{\nu \geq 1} p^\nu \binom{s}{\nu},
     \quad s \in \ZZ_p.
\]
Clearly, $f_p \in \WKS^0_{p,2}$ with $\Delta_{f_p} = 1$ and $f_p$ has a zero
$\xi = 0$. Set $f^*_p(s) = f_p(s)/ps$. We easily obtain
\[
   f^*_p(s) = 1 + \sum_{\nu \geq 1} \frac{p^\nu}{\nu+1} \binom{s-1}{\nu},
     \quad s \in \ZZ_p.
\]
It follows that $\Dop^{p-1} f^*_p(1) / p^{p-1} = \frac{1}{p}$ and
consequently $f^*_p \notin \KS^*_{p,2}$. Now, we consider the remaining
case $p=2$. We have to modify $f_p$ in the following way:
\[
   \tilde{f}_p(s) = (1+p)^s + p^2 \binom{s}{2} - 1
     = \sum_{\nu \geq 1} (1 + \delta_{2,\nu}) p^\nu \binom{s}{\nu},
     \quad s \in \ZZ_p.
\]
Then $\tilde{f}_p$ has the properties $\Delta_{\tilde{f}_p} = 1$ and
$2 \pdiv \Delta_{\tilde{f}_p}(2)$. Thus $\tilde{f}_p \in \WKS^0_{p,2}$
and $\xi = 0$. Similarly set $\tilde{f}^*_p(s) = \tilde{f}_p(s)/ps$ and
as usual $q=2p$. We derive in this case that
$\Dop^{q-1} \tilde{f}^*_p(1) / p^{q-1} = \frac{1}{q}$
and finally $\tilde{f}^*_p \notin \KS^*_{p,2}$.
\end{proof}

The proof above works with functions $f \in \KS_{p,2}$, that have a zero
at $s=0$. These functions have the following property.

\begin{lemma} \label{lem:ks2-zero0}
Let $f \in \KS_{p,2}$ with $f(0)=0$. Then $f(s) = ps \, g(s)$
for $s \in \ZZ_p$, where $g \in \CZ$ and
\begin{equation} \label{eq:ks2-zero0-g0}
   g(0) = \int_{\ZZ_p} \KSop f(s) \, ds = \sum_{\nu \geq 0} (-1)^\nu
     \frac{\Delta_f(\nu+1) \, p^\nu}{\nu+1}.
\end{equation}
\end{lemma}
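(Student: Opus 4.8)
The plan is to factor $ps$ directly out of the Mahler expansion of $f$. Since $f \in \KS_{p,2}$ with $f(0) = 0$, expansion \eqref{eq:mahler-exp-ks2} together with $\binom{0}{\nu} = \delta_{0,\nu}$ forces $\Delta_f(0) = f(0) = 0$, so that $f(s) = \sum_{\nu \geq 1} \Delta_f(\nu)\, p^\nu \binom{s}{\nu}$. For each $\nu \geq 1$ I would use the identity $\binom{s}{\nu} = \frac{s}{\nu}\binom{s-1}{\nu-1}$ to pull the common factor $s$ (and one power of $p$) out of every term, rewriting the series as $f(s) = ps \sum_{\nu \geq 1} \frac{\Delta_f(\nu)\, p^{\nu-1}}{\nu} \binom{s-1}{\nu-1}$ for each fixed $s$.

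After reindexing $\nu \mapsto \nu+1$, I would define the candidate function
\[
   g(s) = \sum_{\nu \geq 0} \frac{\Delta_f(\nu+1)\, p^\nu}{\nu+1} \binom{s-1}{\nu}.
\]
The next step is to check that $g \in \CZ$: the coefficients satisfy $\ord_p(\Delta_f(\nu+1)\, p^\nu/(\nu+1)) \geq \nu - \ord_p(\nu+1) \to \infty$, so their $p$-adic norms tend to $0$ and the series is a genuine Mahler series in the translated variable $s-1$; since translation by a constant preserves continuity, $g$ defines a continuous function $\ZZ_p \to \ZZ_p$. For $s \neq 0$ the termwise factorization above gives $f(s) = ps\, g(s)$, while at $s = 0$ both sides vanish. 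Because $\ZZ_p$ has no isolated points, the two continuous functions $f(s)$ and $ps\,g(s)$ then agree on a dense set and hence everywhere.

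It then remains to identify $g(0)$. Evaluating the series at $s = 0$ and using $\binom{-1}{\nu} = (-1)^\nu$ immediately yields $g(0) = \sum_{\nu \geq 0} (-1)^\nu \Delta_f(\nu+1)\, p^\nu/(\nu+1)$, which is the right-hand sum in \eqref{eq:ks2-zero0-g0}. To obtain the integral representation I would invoke Lemma \ref{lem:oper-ks2}, which gives $\KSop f \in \KS_{p,2}$ with the shifted coefficients $\Delta_{\KSop f}(\nu) = \Delta_f(\nu+1)$, and then apply the Volkenborn integral formula of Corollary \ref{corl:ks2-strict-diff} to $\KSop f$; the result is exactly $\sum_{\nu \geq 0} (-1)^\nu \Delta_f(\nu+1)\, p^\nu/(\nu+1) = g(0)$.

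The computation is almost entirely bookkeeping; the only point requiring genuine care is the passage from $f$ to $g$ at the removable singularity $s = 0$. I expect the main (and rather mild) obstacle to be justifying that the rearranged series converges and defines a continuous $g$, so that dividing by $ps$ is legitimate and the identity $f = ps\,g$ extends across $s=0$ rather than merely holding on $\ZZ_p \setminus \{0\}$.
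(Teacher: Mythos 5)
Your proposal is correct and follows essentially the same route as the paper: divide the Mahler expansion by $ps$ using $\binom{s}{\nu} = \frac{s}{\nu}\binom{s-1}{\nu-1}$, reindex to get $g(s) = \sum_{\nu \geq 0} \frac{\Delta_f(\nu+1)\,p^\nu}{\nu+1}\binom{s-1}{\nu}$, observe that the coefficients are $p$-integral and tend to $0$ so $g \in \CZ$, and identify $g(0)$ with the Volkenborn integral of $\KSop f$ via Lemma \ref{lem:oper-ks2} and Corollary \ref{corl:ks2-strict-diff}. Your extra care at the removable singularity $s=0$ is a minor refinement the paper glosses over, but it is not a different argument.
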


\begin{proof}
Using the Mahler expansion and shifting the index, we obtain
\begin{equation} \label{eq:ks2-zero0-g}
   g(s) = f(s)/ps = \sum_{\nu > 0} \frac{\Delta_f(\nu) \, p^\nu
          \binom{s}{\nu}}{ps}
        = \sum_{\nu \geq 0} \frac{\Delta_f(\nu+1) \, p^\nu}{\nu+1}
          \binom{s-1}{\nu}.
\end{equation}
Since $p^\nu/(\nu+1) \to 0$ as $\nu \to \infty$ and $p^\nu/(\nu+1)$
is $p$-integral, we deduce that $g \in \CZ$.
Comparing the value of $g(0)$ and the Volkenborn integral
of $\KSop f$ by using Lemma \ref{lem:oper-ks2}
and Corollary \ref{corl:ks2-strict-diff} gives the result.
\end{proof}
\smallskip

\section{Degenerate functions}

Comparing the spaces $\KS_{p,1}$ and $\KS_{p,2}$, a function
$f \in \KS_{p,2}$ obeys a very strong law regarding its Mahler
expansion. We can think of degenerate functions as follows.

\begin{defin} \label{def:deg-func}
We call a function $f: \ZZ_p \to \ZZ_p$ $\delta$-degenerate, if
$f$ has a Mahler series such that
\[
   f(s) = \sum_{\nu \geq 0} \Delta'_{f,\delta}(\nu) \, p^{\delta(\nu)}
          \binom{s}{\nu}, \quad s \in \ZZ_p,
\]
where $\Delta'_{f,\delta}(\nu) \in \ZZ_p$ and
$\delta: \NN_0 \to \NN_0$ is a monotonically increasing function
with $\delta(\nu) \to \infty$ as $\nu \to \infty$.
We further define
\begin{alignat*}
  \eta_{f,\delta}(n)   &= \min_{\nu \geq 0} \{ \nu : \delta(\nu) \geq n \}, \\
  \vartheta_{f,\delta} &= \min_{\nu \geq 0} \{ \nu : \delta(\nu) < \nu \}.
\end{alignat*}
\end{defin}

The parameter $\vartheta_{f,\delta}$ determines the first index, where $f$ has
a \textit{defect} compared to a Mahler expansion of a function of $\KS_{p,2}$.
Note that $\delta$, depending on $f$, is not uniquely defined and has to be
chosen suitably. This has the following reason. Demanding that
$\Delta'_{f,\delta}(\nu) \in \ZZ_p^*$, we possibly obtain a non-monotonically
increasing function $\delta$, which is difficult to handle. Now, we have the
following properties and weaker conditions, where we can adapt some results of
the previous sections.

\begin{prop} \label{prop:deg-func}
Let $f$ be a $\delta$-degenerate function. We have the following statements:
\begin{enumerate}
\item If $\vartheta_{f,\delta} = \infty$, then $f \in \KS_{p,2}$.
\item If $\vartheta_{f,\delta} < \infty$, then
  \[
     f(s) = \sum_{\nu = 0}^{\vartheta_{f,\delta}-1} \Delta_f(\nu) \, p^\nu
            \binom{s}{\nu} + \sum_{\nu \geq \vartheta_{f,\delta}}
            \Delta'_{f,\delta}(\nu) \, p^{\delta(\nu)}
            \binom{s}{\nu}, \quad s \in \ZZ_p.
  \]
\item If $\vartheta_{f,\delta} \geq 3$ and
  $\delta(\nu) \geq 2 + \lfloor \log_p \nu \rfloor$ for $\nu \geq 3$,
  then $f \in \KS_{p,1}$. Moreover, we have for
  $s \neq t$, $s, t \in \ZZ_p$, that
  \[
     \frac{f(s)-f(t)}{s-t} \equiv 0 \pmod{p \ZZ_p}
  \]
  and
  \[
     \frac{f(s)-f(t)}{p\,(s-t)} \equiv \Delta_f \pmod{p \ZZ_p},
  \]
  where in the latter case we additionally require
  that $2 \pdiv \Delta_f(2)$ when $p=2$.
\end{enumerate}
\end{prop}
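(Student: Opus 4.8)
The plan is to handle the three statements in turn, reducing (1) and (2) to the Mahler-coefficient description of $\KS_{p,2}$ already obtained, and then deriving (3) by rerunning the argument of Proposition \ref{prop:f-quod-delta} with the weight $\delta(\nu)$ in place of $\nu$. For (1), note that $\vartheta_{f,\delta} = \infty$ means exactly $\delta(\nu) \geq \nu$ for every $\nu \geq 0$. I would then set $\Delta_f(\nu) = \Delta'_{f,\delta}(\nu)\, p^{\delta(\nu)-\nu} \in \ZZ_p$, so that the given Mahler series reads $f(s) = \sum_{\nu \geq 0} \Delta_f(\nu)\, p^\nu \binom{s}{\nu}$. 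Reading off $\Dop^\nu f(0) = \Delta_f(\nu)\, p^\nu \in p^\nu\ZZ_p$, Lemma \ref{lem:ks2-kummer-congr-0} gives $f \in \KS_{p,2}$ immediately. Statement (2) is then purely formal: when $\vartheta_{f,\delta} < \infty$ the inequality $\delta(\nu) \geq \nu$ holds precisely for $\nu < \vartheta_{f,\delta}$, so I split the series at $\vartheta_{f,\delta}$, absorb the surplus $p^{\delta(\nu)-\nu}$ into $\Delta_f(\nu) := \Delta'_{f,\delta}(\nu)\, p^{\delta(\nu)-\nu} \in \ZZ_p$ on the lower range, and leave the tail in its original form. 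No convergence issue arises, since both pieces are subseries of the convergent Mahler series of $f$.

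The substance lies in (3). Fixing $s \neq t$, I would expand
\[
   \frac{f(s)-f(t)}{p\,(s-t)} = \sum_{\nu \geq 0} \Delta'_{f,\delta}(\nu)\, p^{\delta(\nu)}
     \frac{\binom{s}{\nu} - \binom{t}{\nu}}{p\,(s-t)},
\]
a convergent series because $f$ is continuous. The $\nu = 0$ term vanishes; since $\vartheta_{f,\delta} \geq 3$ forces $\delta(1) \geq 1$ and $\delta(2) \geq 2$, the terms $\nu = 1$ and $\nu = 2$ are $p$-integral and, using $\binom{s}{2}-\binom{t}{2} = \tfrac12 (s-t)(s+t-1)$, evaluate to $\Delta_f(1)$ and $\Delta_f(2)\tfrac{p}{2}(s+t-1)$ respectively, with $\Delta_f(1),\Delta_f(2) \in \ZZ_p$ as defined in (2). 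For $\nu \geq 3$ the key tool is Lemma \ref{lem:delta-binom}, giving $\ord_p(\binom{s}{\nu}-\binom{t}{\nu}) \geq \ord_p(s-t) - \lfloor \log_p \nu \rfloor$; combined with the hypothesis $\delta(\nu) \geq 2 + \lfloor \log_p \nu \rfloor$ this places each such term in $p\ZZ_p$, and as $p\ZZ_p$ is closed and the series converges, the entire tail lies in $p\ZZ_p$. Thus I recover exactly the identity \eqref{eq:frac-binom-delta},
\[
   \frac{f(s)-f(t)}{p\,(s-t)} = \Delta_f(1) + \Delta_f(2)\tfrac{p}{2}(s+t-1) + \LS(p).
\]

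From this identity all three assertions of (3) follow as in Corollary \ref{corl:ks2-strong-kummer-congr}. Multiplying by $p$ shows $\frac{f(s)-f(t)}{s-t} \equiv 0 \pmod{p\ZZ_p}$, whence $\norm{f(s)-f(t)}_p \leq \norm{p\,(s-t)}_p$, which is precisely the Kummer congruences, so $f \in \KS_{p,1}$. Reducing the displayed identity modulo $p$ gives the second congruence $\frac{f(s)-f(t)}{p\,(s-t)} \equiv \Delta_f \pmod{p\ZZ_p}$: for odd $p$ the middle term vanishes automatically because $\tfrac{p}{2} \in p\ZZ_p$, while for $p = 2$ one has $\tfrac{p}{2} = 1$ and the term vanishes exactly under the extra hypothesis $2 \pdiv \Delta_f(2)$.

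The main obstacle, just as in Proposition \ref{prop:f-quod-delta}, is the exact $p$-adic bookkeeping of the tail $\nu \geq 3$: I must confirm that $\delta(\nu) \geq 2 + \lfloor \log_p \nu \rfloor$ is the precise weight needed to absorb both the division by $p$ and the loss of $\lfloor \log_p \nu \rfloor$ supplied by Lemma \ref{lem:delta-binom}. Alongside this, I should verify that $\vartheta_{f,\delta} \geq 3$ is exactly what guarantees $p$-integrality of $\Delta_f(1)$ and $\Delta_f(2)$, so that the extracted linear term and the $p = 2$ condition are well defined.
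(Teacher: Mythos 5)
Your proof is correct and takes essentially the same route as the paper: (1) and (2) are the direct coefficient comparison, and (3) reruns the proof of Proposition \ref{prop:f-quod-delta} with $p^{\delta(\nu)}$ replacing $p^{\nu}$ for $\nu \geq 3$, so that Lemma \ref{lem:delta-binom} gives $\delta(\nu)-1-\lfloor \log_p \nu\rfloor \geq 1$ and the tail of \eqref{eq:frac-binom-delta} stays in $p\ZZ_p$. One harmless imprecision: $\delta(\nu)\geq\nu$ does not hold \emph{precisely} for $\nu<\vartheta_{f,\delta}$ (the minimum only guarantees failure at the single index $\vartheta_{f,\delta}$), but you never use that converse direction.
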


\begin{proof}
(1): Clearly by definition of $\KS_{p,2}$.
(2): This follows by comparing the Mahler expansion of $f$ up to index
$\vartheta_{f,\delta}-1$.
(3): We modify the proof of Proposition \ref{prop:f-quod-delta},
where we have to replace the term $\Delta_f(\nu) \, p^\nu$ by
$\Delta'_{f,\delta}(\nu) \, p^{\delta(\nu)}$ for $\nu \geq 3$.
Since $\vartheta_{f,\delta} \geq 3$ and using (2),
$f$ satisfies \eqref{eq:frac-binom-delta} by the following arguments.
We consider the inequality
\[
   r = \delta(\nu) - 1 - \lfloor \log_p \nu \rfloor \geq 1, \quad \nu \geq 3,
\]
which is satisfied by assumption. This gives the condition in
\eqref{eq:frac-binom-delta-2}, where we only need that $r \geq 1$.
The congruences above follow similarly as in the proof of
Proposition \ref{prop:f-quod-delta}, which imply that $f \in \KS_{p,1}$.
\end{proof}

This gives the notion to define the following classes of $\delta$-degenerate
functions.

\begin{defin}
We define the sets
\begin{align*}
  \KS^{\rm d}_{p,1} &= \{ f \in \CZ : f \text{\ is $\delta$-degenerate},
    \vartheta_{f,\delta} \geq 3, \delta(\nu) \geq 2 + \lfloor \log_p \nu \rfloor
    \text{\ for\ }\, \nu \geq 3 \}, \\
  \WKS^{\rm d}_{p,1} &= \{ f \in \KS^{\rm d}_{p,1} :
    \Delta_f \neq 0, \text{additionally\ } 2 \pdiv \Delta_f(2)
    \text{\ if\ } p=2 \}.
\end{align*}
\end{defin}

\begin{corl}
We have $\KS_{p,2} \subsetneq \KS^{\rm d}_{p,1} \subset \KS_{p,1}$ and
$\WKS_{p,2} \subsetneq \WKS^{\rm d}_{p,1} \subset \WKS_{p,1}$.
\end{corl}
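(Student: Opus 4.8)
The plan is to reduce every assertion to results already proved, treating the two ``$\subset$'' inclusions on the right separately from the two proper inclusions ``$\subsetneq$'' on the left. First I would dispatch $\KS^{\rm d}_{p,1} \subset \KS_{p,1}$ and $\WKS^{\rm d}_{p,1} \subset \WKS_{p,1}$. For $f \in \KS^{\rm d}_{p,1}$ the two defining conditions $\vartheta_{f,\delta} \geq 3$ and $\delta(\nu) \geq 2 + \lfloor \log_p \nu \rfloor$ for $\nu \geq 3$ are precisely the hypotheses of Proposition \ref{prop:deg-func}(3), which delivers $(f(s)-f(t))/(s-t) \equiv 0 \pmod{p\ZZ_p}$ and hence the Kummer congruences, so $f \in \KS_{p,1}$. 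If moreover $f \in \WKS^{\rm d}_{p,1}$, then $\Delta_f \neq 0$ (with $2 \pdiv \Delta_f(2)$ when $p=2$), and the second congruence of Proposition \ref{prop:deg-func}(3) gives $(f(s)-f(t))/(p\,(s-t)) \equiv \Delta_f \pmod{p\ZZ_p}$ with $\Delta_f$ a unit; exactly as in Corollary \ref{corl:ks2-strong-kummer-congr} this upgrades to the strong congruence $\norm{f(s)-f(t)}_p = \norm{p\,(s-t)}_p$, which yields both the Kummer congruences and their converse, so $f \in \WKS_{p,1}$.

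Next I would establish the forward inclusions $\KS_{p,2} \subset \KS^{\rm d}_{p,1}$ and $\WKS_{p,2} \subset \WKS^{\rm d}_{p,1}$. Any $f \in \KS_{p,2}$ has the Mahler expansion \eqref{eq:mahler-exp-ks2}, hence is $\delta$-degenerate with $\delta = \id$ and $\Delta'_{f,\delta}(\nu) = \Delta_f(\nu)$; then $\vartheta_{f,\delta} = \infty \geq 3$, while the inequality $\delta(\nu) = \nu \geq 2 + \lfloor \log_p \nu \rfloor$ for $\nu \geq 3$ is nothing but the bound $r = \nu - 1 - \lfloor \log_p \nu \rfloor \geq 1$ already verified in Proposition \ref{prop:f-quod-delta}. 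Thus $f \in \KS^{\rm d}_{p,1}$, and the extra conditions $\Delta_f \neq 0$ (resp.\ $2 \pdiv \Delta_f(2)$ when $p=2$) transfer verbatim, giving $\WKS_{p,2} \subset \WKS^{\rm d}_{p,1}$.

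Finally, for strictness I would reuse the function $\tilde f$ built in the proof of Corollary \ref{corl:ks2-strong-kummer-congr}: starting from $f \in \WKS_{p,2}$ with $\Delta_f(n) \in \ZZ^*_p$ for some $n \geq 5$, one lowers a single $p$-power, $\tilde f(s) = \sum_\nu \Delta_f(\nu)\, p^{\nu - \delta_{n,\nu}} \binom{s}{\nu}$. This $\tilde f$ is $\delta$-degenerate with the (weakly) monotone $\delta(\nu) = \nu - \delta_{n,\nu} \to \infty$, so $\vartheta_{\tilde f,\delta} = n \geq 3$, and since the coefficients at indices $1$ and $2$ are untouched we retain $\Delta_{\tilde f} = \Delta_f \neq 0$ and $2 \pdiv \Delta_{\tilde f}(2)$ when $p=2$. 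The one genuinely delicate point, and the main obstacle, is verifying the log-inequality at the defect index, namely $\delta(n) = n-1 \geq 2 + \lfloor \log_p n \rfloor$, i.e.\ $n - 3 \geq \lfloor \log_p n \rfloor$; this holds for all $n \geq 5$ and all primes $p$, the tightest case being $p=2$, where already $n=5$ gives $2 \geq \lfloor \log_2 5 \rfloor = 2$. Hence $\tilde f \in \WKS^{\rm d}_{p,1} \subset \KS^{\rm d}_{p,1}$, whereas $\tilde f \notin \KS_{p,2}$ (and a fortiori $\tilde f \notin \WKS_{p,2}$) exactly as shown in Corollary \ref{corl:ks2-strong-kummer-congr}, which proves both proper inclusions.
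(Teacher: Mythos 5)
Your proof is correct and follows essentially the same route as the paper: the two right-hand inclusions come from Proposition \ref{prop:deg-func} exactly as in the paper's proof, and the containments $\KS_{p,2}\subset\KS^{\rm d}_{p,1}$, $\WKS_{p,2}\subset\WKS^{\rm d}_{p,1}$ are obtained by reading the Mahler expansion with $\delta=\id$ and $\vartheta_{f,\delta}=\infty$, again as in the paper. The only divergence is the witness for strictness — the paper takes a function whose tail coefficients have order exactly $p^{2+\lfloor \log_p \nu\rfloor}$ for $\nu\geq 3$, whereas you reuse the single-defect function $\tilde f$ from Corollary \ref{corl:ks2-strong-kummer-congr} and verify the log-inequality $n-1\geq 2+\lfloor\log_p n\rfloor$ at the defect index; both witnesses are valid.
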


\begin{proof}
Proposition \ref{prop:deg-func} shows that
$\KS^{\rm d}_{p,1} \subset \KS_{p,1}$.
Let $f \in \WKS^{\rm d}_{p,1}$. Since $\Delta_f \neq 0$, we obtain
$\norm{(f(s)-f(t))/(p\,(s-t))}_p = 1$. Thus $f \in \WKS_{p,1}$.
Define $\tilde{\delta}(\nu) = 2 + \lfloor \log_p \nu \rfloor$ for $\nu \geq 3$.
Functions of $\KS_{p,2}$ and $\WKS_{p,2}$ are also $\delta$-degenerate with
the strong property $\vartheta_{f,\delta} = \infty$ and
$\delta(\nu) = \nu \geq \tilde{\delta}(\nu)$ for $\nu \geq 3$.
But conversely, $f \in \WKS^{\rm d}_{p,1} \subset \KS^{\rm d}_{p,1}$
with $\delta(\nu) = \tilde{\delta}(\nu)$ for $\nu \geq 3$
lies not in $\KS_{p,2}$.
\end{proof}

\begin{theorem} \label{thm:ks1d-zero}
If $f \in \WKS^{\rm d}_{p,1}$ and $f(0) \in p\ZZ_p$,
then $f$ has a unique simple zero $\xi \in \ZZ_p$ and
\[
   f(s) = p \, (s-\xi) \, f^*(s), \quad \norm{f(s)}_p = \norm{p \, (s-\xi)}_p,
     \quad s \in \ZZ_p,
\]
where $f^*(s) \equiv \Delta_f \pmod{p \ZZ_p}$ and $f^* \in \CZ^*$.
\end{theorem}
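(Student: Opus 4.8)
The plan is to reduce this statement to Proposition~\ref{prop:f-lip-zero} exactly as was done for $\WKS^0_{p,2}$ in Theorem~\ref{thm:ks2-zero-fixpnt}(3), the only difference being that the two slope congruences are now supplied by Proposition~\ref{prop:deg-func}(3) instead of Proposition~\ref{prop:f-quod-delta}. Concretely, I would set $g = f/p$ and verify that $g$ is a well-defined Lipschitz function $\ZZ_p \to \ZZ_p$ satisfying the hypotheses of Proposition~\ref{prop:f-lip-zero} with $\Delta = \Delta_f$; the theorem then follows by multiplying the resulting decomposition of $g$ by $p$ and setting $f^* = g^*$.

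First I would check that $g = f/p$ actually maps into $\ZZ_p$. Since $f \in \WKS^{\rm d}_{p,1} \subset \KS^{\rm d}_{p,1}$, Proposition~\ref{prop:deg-func}(3) applies and gives, for $s \neq t$,
\[
   \frac{f(s)-f(t)}{s-t} \equiv 0 \pmod{p\ZZ_p}.
\]
Taking $t=0$ we obtain $f(s)-f(0) \in p\ZZ_p$ for every $s \neq 0$, and together with the hypothesis $f(0) \in p\ZZ_p$ this yields $f(s) \in p\ZZ_p$ for all $s \in \ZZ_p$. Hence $g(s) = f(s)/p \in \ZZ_p$ is well-defined. This step is the one genuinely new point compared with the $\KS_{p,2}$ case, and it is exactly where the hypothesis $f(0) \in p\ZZ_p$ enters: without it $f/p$ need not be $\ZZ_p$-valued.

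Next I would verify the remaining hypotheses of Proposition~\ref{prop:f-lip-zero}. The second congruence of Proposition~\ref{prop:deg-func}(3) reads
\[
   \frac{g(s)-g(t)}{s-t} = \frac{f(s)-f(t)}{p\,(s-t)} \equiv \Delta_f \pmod{p\ZZ_p},
     \quad s \neq t,
\]
where for $p=2$ the condition $2 \pdiv \Delta_f(2)$ built into the definition of $\WKS^{\rm d}_{p,1}$ is precisely what makes this congruence valid. In particular the slope is $p$-integral, so $\norm{g(s)-g(t)}_p \leq \norm{s-t}_p$ and $g \in \Lip(\ZZ_p)$. Since $f \in \WKS^{\rm d}_{p,1}$ forces $\Delta_f \neq 0$, i.e.\ $0 < \Delta_f < p$, all hypotheses of Proposition~\ref{prop:f-lip-zero} hold with $\Delta = \Delta_f$. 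That proposition then produces a unique simple zero $\xi \in \ZZ_p$ of $g$ together with $g(s) = (s-\xi)\,g^*(s)$, $\norm{g(s)}_p = \norm{s-\xi}_p$, where $g^*(s) \equiv \Delta_f \pmod{p\ZZ_p}$ and $g^* \in \CZ^*$. Setting $f^* = g^*$ and multiplying by $p$ gives $f(s) = p\,(s-\xi)\,f^*(s)$ and $\norm{f(s)}_p = \norm{p\,(s-\xi)}_p$, and $\xi$ is the unique simple zero of $f$ as well. I expect no real obstacle beyond the well-definedness check in the second paragraph; the rest is a direct transcription of the argument already used for Theorem~\ref{thm:ks2-zero-fixpnt}(3).
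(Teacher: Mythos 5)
Your proposal is correct and follows exactly the paper's route: the paper's proof is the one-line reduction ``Proposition~\ref{prop:deg-func} shows that $f/p$ satisfies the conditions of Proposition~\ref{prop:f-lip-zero},'' which is precisely your argument. You have merely made explicit the well-definedness of $f/p$ (via $f(0)\in p\ZZ_p$ and the first congruence of Proposition~\ref{prop:deg-func}(3)) and the role of the $p=2$ condition, both of which the paper leaves implicit.
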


\begin{proof}
Proposition \ref{prop:deg-func} shows that $f/p$ satisfies the
conditions of Proposition \ref{prop:f-lip-zero}.
\end{proof}

To compute any values$\pmod{p^n \ZZ_p}$ of a $\delta$-degenerate function,
we can use again a finite Mahler expansion.

\begin{prop} \label{prop:deg-value}
Let $f$ be a $\delta$-degenerate function. For $n \geq 1$ and $s \in \ZZ_p$,
we have
\[
   f(s) \equiv \sum_{\nu = 0}^{\eta_{f,\delta}(n)-1} \Delta'_{f,\delta}(\nu)
     \, p^{\delta(\nu)} \binom{s}{\nu} \pmod{p^n \ZZ_p}.
\]
\end{prop}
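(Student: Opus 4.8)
The plan is to split the Mahler series of $f$ at the threshold index $m = \eta_{f,\delta}(n)$ and to show that the entire tail $\sum_{\nu \geq m}$ is already divisible by $p^n$, so that only the finite head survives modulo $p^n \ZZ_p$. First I would record that $m$ is well defined: since $\delta(\nu) \to \infty$ as $\nu \to \infty$, there is at least one index with $\delta(\nu) \geq n$, and by its definition as a minimum, $m$ is the least such index. By the monotonicity of $\delta$, this $m$ cleanly separates the head indices, where $\delta(\nu) < n$, from the tail indices $\nu \geq m$, where $\delta(\nu) \geq n$.

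The key step is then the tail estimate. For every $\nu \geq m$, monotonicity gives $\delta(\nu) \geq \delta(m) \geq n$, hence $p^{\delta(\nu)} \in p^n \ZZ_p$. Because $\Delta'_{f,\delta}(\nu) \in \ZZ_p$ and $\binom{s}{\nu} \in \ZZ_p$ for every $s \in \ZZ_p$, each tail term satisfies
\[
   \Delta'_{f,\delta}(\nu) \, p^{\delta(\nu)} \binom{s}{\nu} \equiv 0
     \pmod{p^n \ZZ_p}, \quad \nu \geq m.
\]
Since these terms tend to $0$ (the defining series of a $\delta$-degenerate function is a convergent Mahler series) and $p^n \ZZ_p$ is closed in the complete ring $\ZZ_p$, their sum also lies in $p^n \ZZ_p$.

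Finally, subtracting this vanishing tail from the full expansion $f(s) = \sum_{\nu \geq 0} \Delta'_{f,\delta}(\nu) \, p^{\delta(\nu)} \binom{s}{\nu}$ leaves exactly the head $\sum_{\nu=0}^{m-1}$, which is the claimed congruence. No step here presents a genuine obstacle; the only points requiring care are invoking monotonicity of $\delta$ so that the threshold $m = \eta_{f,\delta}(n)$ correctly bounds all tail exponents below by $n$, and noting that a convergent sum of elements of the closed set $p^n \ZZ_p$ remains in $p^n \ZZ_p$.
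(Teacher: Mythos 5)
Your proof is correct and follows exactly the route of the paper's (one-line) proof: all terms with $\nu \geq \eta_{f,\delta}(n)$ have $\delta(\nu) \geq n$, so the tail of the Mahler series lies in $p^n\ZZ_p$. You merely spell out the details (monotonicity of $\delta$, closedness of $p^n\ZZ_p$ under convergent sums) that the paper leaves implicit.
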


\begin{proof}
This is a consequence that $\delta(\nu) \geq n$ for
$\nu \geq \eta_{f,\delta}(n)$.
\end{proof}

We can modify Algorithm \ref{alg:zero} to compute a zero of functions
$f \in \WKS^{\rm d}_{p,1}$, where $f(0) \in p\ZZ_p$, in the following way.

\begin{algo} \label{alg:deg-zero}
Let $f \in \WKS^{\rm d}_{p,1}$, $f(0) \in p\ZZ_p$, and $\xi$ be the unique
zero of $f$. Let $n \geq 1$ be fixed. Initially set $\xi_0 = 0$ and
$\tilde{\delta} \equiv -\Delta_f^{-1} \pmod{p\ZZ_p}$.
For each step $r=1,\ldots,n$ proceed as follows. Compute
\begin{equation} \label{eq:algo-deg-sum}
   \gamma_{r-1} \equiv \sum_{\nu = 0}^{\eta_{f,\delta}(r+1)-1}
     \Delta'_{f,\delta}(\nu)
     \, p^{\delta(\nu)-1} \binom{\xi_{r-1}}{\nu} \pmod{p^r \ZZ_p}.
\end{equation}
Then $\gamma_{r-1} \in p^{r-1}\ZZ_p$.
Set $s_{r-1} \equiv \gamma_{r-1} \tilde{\delta} / p^{r-1} \pmod{p\ZZ_p}$
where $0 \leq s_{r-1} < p$. Set $\xi_r = \xi_{r-1} + s_{r-1} \, p^{r-1}$
and go to the next step while $r < n$.
Finally, $\xi_n \equiv \xi \pmod{p^n\ZZ_p}$.
\end{algo}

\begin{proof}
The function $f/p$ satisfies the conditions of Proposition
\ref{prop:f-lip-zero}, which we use to compute the zero of $f/p$.
For each step \eqref{eq:sn-fn} provides the next digit of the $p$-adic
expansion of $\xi$; to compute the term $f_{r-1}(0)$ we make use of Proposition
\ref{prop:deg-value} modified for $f/p$. Note that $\vartheta_{f,\delta} \geq 3$
and $\Delta_f(0) = f(0) \in p\ZZ_p$, so \eqref{eq:algo-deg-sum} is valid.
\end{proof}

We have already seen in the proofs of Proposition \ref{prop:ks2-cntexpl} and
Lemma \ref{lem:ks2-zero0} examples of functions, that slightly violate the
conditions to be in $\KS_{p,2}$. This can be described more precisely.
First we need some lemmas.

\begin{lemma} \label{lem:mahler-transl}
If $f \in \CZ$ has the Mahler expansion
\[
   f(s) = \sum_{\nu \geq 0} a_\nu \binom{s}{\nu}, \quad s \in \ZZ_p,
\]
with coefficients $a_\nu \in \ZZ_p$ and $|a_\nu|_p \to 0$, then
\[
   f(s+t) = \sum_{\nu \geq 0} \binom{s}{\nu}
     \sum_{k \geq 0} \binom{t}{k} a_{\nu+k}, \quad s, t \in \ZZ_p.
\]
\end{lemma}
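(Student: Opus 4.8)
The plan is to start from the given Mahler expansion, replace the argument $s$ by $s+t$, and expand each binomial $\binom{s+t}{n}$ by Vandermonde's convolution, exactly as already invoked in the proof of Proposition \ref{prop:delta-h-binom-zp-2}:
\[
   \binom{s+t}{n} = \sum_{j=0}^{n} \binom{s}{j} \binom{t}{n-j}, \quad s, t \in \ZZ_p.
\]
Substituting this into $f(s+t) = \sum_{n \geq 0} a_n \binom{s+t}{n}$ produces a double sum. Reindexing by $\nu = j$ and $k = n-j$, so that $n = \nu+k$ and the pair $(\nu,k)$ ranges over all of $\NN_0 \times \NN_0$, formally gives
\[
   f(s+t) = \sum_{\nu \geq 0} \sum_{k \geq 0} a_{\nu+k} \binom{s}{\nu}\binom{t}{k}
          = \sum_{\nu \geq 0} \binom{s}{\nu} \sum_{k \geq 0} \binom{t}{k} a_{\nu+k},
\]
which is precisely the claimed identity.

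The main obstacle is justifying the interchange of summation, that is, that the reindexed double series has the same sum as the original single series. Here the non-archimedean setting is decisive. Since $\binom{s}{\nu}, \binom{t}{k} \in \ZZ_p$ we have $\norml{\binom{s}{\nu}}_p \leq 1$ and $\norml{\binom{t}{k}}_p \leq 1$, so the general term obeys $\norml{a_{\nu+k}\binom{s}{\nu}\binom{t}{k}}_p \leq \norml{a_{\nu+k}}_p$. By hypothesis $\norml{a_m}_p \to 0$ as $m \to \infty$, hence the terms tend to $0$ as $\nu+k \to \infty$. In a complete non-archimedean field a family whose terms tend to zero is summable and converges unconditionally, so any regrouping or reordering leaves the sum unchanged. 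This legitimizes both the passage from $\sum_n$ to the double sum $\sum_{\nu,k}$ and the separation into the iterated sum; the inner sum converges for the same reason, since $\norml{\binom{t}{k} a_{\nu+k}}_p \leq \norml{a_{\nu+k}}_p \to 0$ as $k \to \infty$.

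Finally, I would note for consistency that the result is again a genuine Mahler expansion in $s$: its $\nu$-th coefficient $\sum_{k \geq 0}\binom{t}{k}a_{\nu+k}$ lies in $\ZZ_p$ and tends to $0$ as $\nu \to \infty$, in accordance with Theorem \ref{thm:mahler}, so the right-hand side defines a continuous function of $s$ coinciding with $f(s+t)$.
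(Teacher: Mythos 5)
Your proposal is correct and follows essentially the same route as the paper: substitute $s+t$, expand $\binom{s+t}{n}$ by Vandermonde's convolution, and rearrange the resulting double series, justified by $\norml{a_m}_p \to 0$ and unconditional convergence in the non-archimedean setting. Your explicit verification of the term-wise bound $\norml{a_{\nu+k}\binom{s}{\nu}\binom{t}{k}}_p \leq \norml{a_{\nu+k}}_p$ merely spells out what the paper leaves implicit.
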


\begin{proof}
Since $|a_\nu|_p \to 0$ and the sequence of the partial sums of the Mahler
expansion of $f$ is uniformly convergent to $f$, we can rearrange the series.
Using Vandermonde's convolution identity again, we obtain
\begin{align*}
  f(s+t) &= \sum_{\nu \geq 0} a_\nu \binom{s+t}{\nu} \\
         &= \sum_{\nu \geq 0} a_\nu \sum_{j+k=\nu}
           \binom{s}{j} \binom{t}{k} \\
         &= \sum_{\nu \geq 0} \binom{s}{\nu}
           \sum_{k \geq 0} \binom{t}{k} a_{\nu+k}
\end{align*}
after rearranging the sums.
\end{proof}

\begin{lemma} \label{lem:deg-transl}
Let $f$ be a $\delta$-degenerate function. Define $\tilde{f}(s) = f(s+t)$ for
$s \in \ZZ_p$, where $t \in \ZZ_p$ is a fixed translation. Then also
$\tilde{f}$ is a $\delta$-degenerate function.
\end{lemma}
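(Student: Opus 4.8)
The plan is to read off the Mahler coefficients of the translate $\tilde f$ from those of $f$ via Lemma \ref{lem:mahler-transl}, and then to factor out the power $p^{\delta(\nu)}$ prescribed by the \emph{same} function $\delta$, thereby exhibiting $\tilde f$ as $\delta$-degenerate. Concretely, I would start from the $\delta$-degenerate expansion of $f$, so its Mahler coefficients are $a_\nu = \Delta'_{f,\delta}(\nu)\,p^{\delta(\nu)}$. Applying Lemma \ref{lem:mahler-transl} then gives the Mahler coefficients of $\tilde f(s)=f(s+t)$ as
\[
   \tilde a_\nu = \sum_{k \geq 0} \binom{t}{k}\, a_{\nu+k}
     = \sum_{k \geq 0} \binom{t}{k}\, \Delta'_{f,\delta}(\nu+k)\, p^{\delta(\nu+k)}.
\]

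The next step uses that $\delta$ is monotonically increasing, so $\delta(\nu+k) \geq \delta(\nu)$ for every $k \geq 0$. This lets me pull out the common factor $p^{\delta(\nu)}$ and define the candidate coefficients of $\tilde f$ by
\[
   \Delta'_{\tilde f,\delta}(\nu)
     = \sum_{k \geq 0} \binom{t}{k}\, \Delta'_{f,\delta}(\nu+k)\,
       p^{\delta(\nu+k)-\delta(\nu)},
\]
so that $\tilde a_\nu = \Delta'_{\tilde f,\delta}(\nu)\, p^{\delta(\nu)}$ with the same $\delta$. It then remains to check that these coefficients lie in $\ZZ_p$: each summand is $p$-integral because $\binom{t}{k} \in \ZZ_p$ (as $t \in \ZZ_p$), $\Delta'_{f,\delta}(\nu+k) \in \ZZ_p$, and the exponent $\delta(\nu+k)-\delta(\nu)$ is a nonnegative integer.

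The one point that needs care — and which I regard as the main (though mild) obstacle — is the convergence of the series defining $\Delta'_{\tilde f,\delta}(\nu)$ and its membership in $\ZZ_p$. Here I would invoke that $\delta(\nu) \to \infty$, whence $\delta(\nu+k)-\delta(\nu) \to \infty$ as $k \to \infty$ for fixed $\nu$; consequently the summands tend to $0$ in the $p$-adic norm, and by completeness of $\ZZ_p$ the series converges to an element of $\ZZ_p$. (The rearrangement of the double sum implicit in Lemma \ref{lem:mahler-transl} is already justified there by the uniform convergence of the Mahler series.)

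Finally I would observe that the admissibility of the new Mahler expansion is automatic, since $\norm{\tilde a_\nu}_p \leq p^{-\delta(\nu)} \to 0$, so $\tilde f$ is indeed continuous with this expansion. As $\delta$ is unchanged — still monotonically increasing with $\delta(\nu)\to\infty$ — the representation $\tilde f(s) = \sum_{\nu \geq 0} \Delta'_{\tilde f,\delta}(\nu)\, p^{\delta(\nu)} \binom{s}{\nu}$ meets all requirements of Definition \ref{def:deg-func}, which shows that $\tilde f$ is $\delta$-degenerate and completes the proof.
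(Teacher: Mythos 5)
Your proposal is correct and follows essentially the same route as the paper: apply Lemma \ref{lem:mahler-transl} to obtain $\tilde a_\nu = \sum_{k \geq 0} \binom{t}{k}\,\Delta'_{f,\delta}(\nu+k)\,p^{\delta(\nu+k)}$, then use the monotonicity of $\delta$ to conclude $\tilde a_\nu \in p^{\delta(\nu)}\ZZ_p$ with the same $\delta$. The extra care you take over the $p$-adic convergence of the series for $\Delta'_{\tilde f,\delta}(\nu)$ is a welcome but minor elaboration of what the paper leaves implicit.
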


\begin{proof}
We have to show that $\tilde{f}$ has a Mahler series, that suffices
Definition \ref{def:deg-func} regarding $\delta$.
We use Lemma \ref{lem:mahler-transl} and set
$a_\nu = \Delta'_{f,\delta}(\nu) \, p^{\delta(\nu)}$. Thus
\[
   \tilde{f}(s) = f(s+t) = \sum_{\nu \geq 0} \tilde{a}_\nu \binom{s}{\nu},
     \quad s \in \ZZ_p,
\]
where
\[
   \tilde{a}_\nu = \sum_{k \geq 0} \binom{t}{k} \Delta'_{f,\delta}(\nu+k)
     \, p^{\delta(\nu+k)}.
\]
Since $\delta$ is a monotonically increasing function, we achieve that
$\tilde{a}_\nu \in p^{\delta(\nu)} \ZZ_p$.
\end{proof}

The lemma above shows the significance, that $\delta$ has to be a
monotonically increasing function. Otherwise, the lemma does not work,
since the coefficients $\tilde{a}_\nu$ take successional values
$p^{\delta(\nu+k)}$ into account.

\begin{prop} \label{prop:deg-transl-delta}
Let $f$ be a $\delta$-degenerate function. Define $\tilde{f}(s) = f(s+t)$
for $s \in \ZZ_p$, where $t \in \ZZ_p$ is a fixed translation.
Then $\vartheta_{\tilde{f},\delta} = \vartheta_{f,\delta}$ and
\[
   \Delta_{\tilde{f}}(\nu) \equiv \Delta_f(\nu) \pmod{p\ZZ_p},
     \quad 0 \leq \nu \leq \vartheta_{f,\delta}-2.
\]
\end{prop}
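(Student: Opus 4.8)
The plan is to reduce both assertions to the explicit coefficient formula for the translate, furnished by Lemma \ref{lem:mahler-transl} and Lemma \ref{lem:deg-transl}, together with the observation that $\vartheta_{f,\delta}$ depends only on $\delta$.

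For the first assertion I would simply invoke Lemma \ref{lem:deg-transl}: it shows that $\tilde{f}$ is again $\delta$-degenerate with respect to \emph{the same} function $\delta$, its Mahler coefficients $\tilde{a}_\nu$ lying in $p^{\delta(\nu)}\ZZ_p$. Since $\vartheta_{f,\delta} = \min_{\nu \geq 0}\{\nu : \delta(\nu) < \nu\}$ is defined purely in terms of $\delta$ and makes no reference to the coefficients, $\vartheta_{\tilde{f},\delta} = \vartheta_{f,\delta}$ follows immediately.

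For the second assertion, write $a_\nu = \Delta'_{f,\delta}(\nu)\,p^{\delta(\nu)}$ for the Mahler coefficients of $f$. By Lemma \ref{lem:mahler-transl} the coefficients of $\tilde{f}$ are $\tilde{a}_\nu = \sum_{k \geq 0}\binom{t}{k}\,a_{\nu+k}$, and by Theorem \ref{thm:mahler} one has $\Delta_{\tilde{f}}(\nu) = \tilde{a}_\nu/p^\nu$, which lies in $\ZZ_p$ for $\nu < \vartheta_{f,\delta}$ since then $\delta(\nu) \geq \nu$. I would isolate the $k=0$ term:
\[
   \Delta_{\tilde{f}}(\nu) = \Delta'_{f,\delta}(\nu)\,p^{\delta(\nu)-\nu}
     + \sum_{k \geq 1}\binom{t}{k}\,\Delta'_{f,\delta}(\nu+k)\,p^{\delta(\nu+k)-\nu}.
\]
The leading term equals $\Delta_f(\nu)$ by Proposition \ref{prop:deg-func}(2), so it remains only to show that the tail lies in $p\ZZ_p$.

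The crux, and the one genuinely load-bearing step, is the estimate of this tail. Since $\binom{t}{k}, \Delta'_{f,\delta}(\nu+k) \in \ZZ_p$, the $k$-th term has order at least $\delta(\nu+k)-\nu$, and by monotonicity of $\delta$ this is $\geq \delta(\nu+1)-\nu$ for every $k \geq 1$. For $\nu \leq \vartheta_{f,\delta}-2$ we have $\nu+1 < \vartheta_{f,\delta}$, so the minimality in the definition of $\vartheta_{f,\delta}$ forces $\delta(\nu+1) \geq \nu+1$, whence $\delta(\nu+1)-\nu \geq 1$. Thus each term of the tail lies in $p\ZZ_p$, the sum does too, and $\Delta_{\tilde{f}}(\nu) \equiv \Delta_f(\nu) \pmod{p\ZZ_p}$ follows. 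The delicate point is the off-by-one bookkeeping: the bound $\nu \leq \vartheta_{f,\delta}-2$ is exactly what secures $\delta(\nu+1) \geq \nu+1$, which is why one cannot push the congruence to $\nu = \vartheta_{f,\delta}-1$, where the first shifted coefficient $a_{\nu+1}=a_{\vartheta_{f,\delta}}$ already carries the defect $\delta(\vartheta_{f,\delta}) < \vartheta_{f,\delta}$ and need not be divisible by $p$ after division by $p^\nu$.
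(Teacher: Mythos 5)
Your proposal is correct and follows essentially the same route as the paper: both reduce the claim to the translated coefficient formula $\tilde{a}_\nu = \sum_{k\geq 0}\binom{t}{k}\,\Delta'_{f,\delta}(\nu+k)\,p^{\delta(\nu+k)}$ from Lemmas \ref{lem:mahler-transl} and \ref{lem:deg-transl}, identify the $k=0$ term with $\Delta_f(\nu)\,p^\nu$ via Proposition \ref{prop:deg-func}, and bound the tail by $p^{\delta(\nu+1)}\ZZ_p \subseteq p^{\nu+1}\ZZ_p$ using $\nu+1 < \vartheta_{f,\delta}$. Your explicit remark on why the range stops at $\vartheta_{f,\delta}-2$ is a welcome clarification of the paper's terser $\LS(p^{\nu+1})$ estimate.
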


\begin{proof}
We use Proposition \ref{prop:deg-func} and Lemma \ref{lem:deg-transl}.
Since $\tilde{f}$ is also a $\delta$-degenerate function,
we have $\vartheta_{f,\delta} = \vartheta_{\tilde{f},\delta}$.
Assume that $\vartheta_{f,\delta} \geq 2$. Note that
$\delta(\nu)=\nu$ for $0 \leq \nu \leq \vartheta_{f,\delta}-1$
and $\delta(\vartheta_{f,\delta})=\delta(\vartheta_{f,\delta}-1)$,
since we have a defect at index $\vartheta_{f,\delta}$ of the Mahler
expansion of $f$. This transfers to $\tilde{f}$, such that
\[
   \tilde{f}(s) \equiv \sum_{\nu = 0}^{\vartheta_{\tilde{f},\delta}-2}
     \Delta_{\tilde{f}}(\nu) \, p^\nu \binom{s}{\nu}
     \pmod{p^{\vartheta_{\tilde{f},\delta}-1}\ZZ_p},
     \quad s \in \ZZ_p.
\]
On the other side, we have for $0 \leq \nu \leq \vartheta_{f,\delta}-2$ that
\[
   \Delta_{\tilde{f}}(\nu) \, p^\nu
     = \sum_{k \geq 0} \binom{t}{k} \Delta'_{f,\delta}(\nu+k)
        \, p^{\delta(\nu+k)} \\
     = \Delta_f(\nu) \, p^\nu + \LS(p^{\nu+1}). \qedhere
\]
\end{proof}

\begin{corl} \label{corl:ks2-transl-delta}
Let $f \in \KS_{p,2}$. Define $\tilde{f}(s) = f(s+t)$
for $s \in \ZZ_p$, where $t \in \ZZ_p$ is a fixed translation.
Then
\[
   \Delta_{\tilde{f}}(\nu) \equiv \Delta_f(\nu) \pmod{p\ZZ_p},
     \quad \nu \geq 0.
\]
\end{corl}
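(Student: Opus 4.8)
The plan is to recognize the corollary as the special case $\vartheta_{f,\delta} = \infty$ of Proposition \ref{prop:deg-transl-delta}. First I would observe that every $f \in \KS_{p,2}$ is $\delta$-degenerate in the sense of Definition \ref{def:deg-func}: its Mahler expansion \eqref{eq:mahler-exp-ks2} reads $f(s) = \sum_{\nu \geq 0} \Delta_f(\nu)\, p^\nu \binom{s}{\nu}$, so one may take $\delta(\nu) = \nu$ and $\Delta'_{f,\delta}(\nu) = \Delta_f(\nu)$. This $\delta$ is monotonically increasing with $\delta(\nu) \to \infty$, and since $\delta(\nu) = \nu$ never satisfies $\delta(\nu) < \nu$, we obtain $\vartheta_{f,\delta} = \infty$. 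Thus $\KS_{p,2}$ sits inside the class of $\delta$-degenerate functions treated by Proposition \ref{prop:deg-transl-delta}.

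Next I would apply Proposition \ref{prop:deg-transl-delta} to $f$ and its translate $\tilde{f}(s) = f(s+t)$. That proposition yields $\Delta_{\tilde{f}}(\nu) \equiv \Delta_f(\nu) \pmod{p\ZZ_p}$ for all $0 \leq \nu \leq \vartheta_{f,\delta} - 2$; since $\vartheta_{f,\delta} = \infty$, this range is simply $\nu \geq 0$, which is exactly the asserted congruence. Concretely, the computation in the proof of Proposition \ref{prop:deg-transl-delta} specializes here to $\Delta_{\tilde{f}}(\nu)\,p^\nu = \sum_{k \geq 0} \binom{t}{k} \Delta_f(\nu+k)\, p^{\nu+k}$, where the $k=0$ term contributes $\Delta_f(\nu)\,p^\nu$ and every $k \geq 1$ term lies in $p^{\nu+1}\ZZ_p$; dividing by $p^\nu$ delivers the result.

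I do not expect any genuine obstacle here. The only point requiring care is the correct reading of the upper index $\vartheta_{f,\delta} - 2$ when $\vartheta_{f,\delta} = \infty$: because no defect occurs in the Mahler expansion of a function of $\KS_{p,2}$, this bound imposes no restriction and the congruence propagates to every $\nu \geq 0$. Alternatively, one could bypass the degenerate machinery entirely and argue directly via Lemma \ref{lem:mahler-transl} with $a_\nu = \Delta_f(\nu)\,p^\nu$, combined with the fact that $\tilde{f} \in \KS_{p,2}$ by Lemma \ref{lem:f-comp-lin-ks2} applied to $\lambda(s) = t + s$; this reaches the same identity for the translated Mahler coefficients and hence the same congruence.
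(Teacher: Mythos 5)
Your proposal is correct and follows exactly the paper's route: the paper also obtains the corollary by taking $\delta = \id_{\NN_0}$, so that $\vartheta_{f,\delta} = \infty$, and invoking Proposition \ref{prop:deg-transl-delta}. Your explicit unpacking of the coefficient identity $\Delta_{\tilde{f}}(\nu)\,p^\nu = \sum_{k \geq 0} \binom{t}{k}\Delta_f(\nu+k)\,p^{\nu+k}$ is a faithful specialization of that proposition's proof and adds useful detail without changing the argument.
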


\begin{proof}
This follows by Proposition \ref{prop:deg-transl-delta} and
choosing $\delta = \id_{\NN_0}$, so that $\vartheta_{f,\delta} = \infty$.
\end{proof}

As a result the coefficients $\Delta_f(\nu)$ of functions
$f \in \KS_{p,2}$ are invariant$\pmod{p\ZZ_p}$ under translation.
If $f$ is a $\delta$-degenerate function, then this property
is valid up to index $\vartheta_{f,\delta}-2$.

\begin{prop} \label{prop:deg-ks2-zero-0}
Assume that $p > 3$. If $f \in \KS_{p,2}$ with $f(0)=0$,
then $f(s) = ps \, g(s)$ for $s \in \ZZ_p$,
where $g \in \KS^{\rm d}_{p,1}$.
\end{prop}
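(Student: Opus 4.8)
The plan is to start from Lemma~\ref{lem:ks2-zero0}, which already gives the factorization $f(s)=ps\,g(s)$ with $g\in\CZ$ together with the explicit series
\[
   g(s)=\sum_{\nu\geq0} a_\nu \binom{s-1}{\nu},\qquad
   a_\nu=\frac{\Delta_f(\nu+1)\,p^\nu}{\nu+1}.
\]
Reading this as $g(s)=h(s-1)$, where $h(s)=\sum_{\nu\geq0}a_\nu\binom{s}{\nu}$ is the Mahler expansion of a continuous function (the coefficients satisfy $\ord_p a_\nu\geq\nu-\ord_p(\nu+1)\to\infty$), it suffices to prove that $h$ is $\delta$-degenerate with $\vartheta_{h,\delta}\geq3$ and $\delta(\nu)\geq2+\lfloor\log_p\nu\rfloor$ for $\nu\geq3$; the translation lemmas then carry everything back to $g$.

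The controlling quantity is $\nu\mapsto\nu-\ord_p(\nu+1)$, a lower bound for $\ord_p a_\nu$. This function is \emph{not} monotone --- it dips whenever $\nu+1$ is a high power of $p$ --- so I cannot take it directly as the exponent $\delta$. Instead I would pass to its largest monotone minorant
\[
   \delta(\nu)=\min_{j\geq\nu}\bigl(\,j-\ord_p(j+1)\,\bigr),
\]
which is monotonically increasing, takes values in $\NN_0$, tends to infinity, and satisfies $\delta(\nu)\leq\nu-\ord_p(\nu+1)\leq\ord_p a_\nu$. Hence $a_\nu\in p^{\delta(\nu)}\ZZ_p$, so $h$ is $\delta$-degenerate with $\Delta'_{h,\delta}(\nu)=a_\nu/p^{\delta(\nu)}\in\ZZ_p$. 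By Lemma~\ref{lem:deg-transl} the translate $g=h(\cdot-1)$ is $\delta$-degenerate for the same $\delta$, and by Proposition~\ref{prop:deg-transl-delta} we obtain $\vartheta_{g,\delta}=\vartheta_{h,\delta}$.

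It remains to verify the two numerical conditions for this $\delta$, and this is where $p>3$ enters. First, for $\nu=0,1,2$ one checks $\delta(\nu)=\nu$: the term $j-\ord_p(j+1)$ at $j=\nu$ equals $\nu$ because $\ord_p(\nu+1)=0$ for $\nu+1\in\{1,2,3\}$ (the case $\nu=2$ uses $p\neq3$), while all later terms exceed $\nu$ since $j-\ord_p(j+1)\geq j-\log_p(j+1)$ and $\log_p 3,\log_p 4<1$ for $p\geq5$. This gives $\vartheta_{g,\delta}\geq3$. Second, since $\lfloor\log_p\nu\rfloor\leq\lfloor\log_p j\rfloor$ for $j\geq\nu$, the estimate $\delta(\nu)\geq2+\lfloor\log_p\nu\rfloor$ for $\nu\geq3$ reduces to the single inequality
\[
   m-1-\ord_p(m)\geq2+\lfloor\log_p(m-1)\rfloor,\qquad m\geq4,
\]
which I would prove by a short case analysis: writing $\ell=\lfloor\log_p(m-1)\rfloor$ one has $m\geq p^\ell+1$ and $\ord_p(m)\leq\ell+1$, whence $m-1-\ord_p(m)-\ell-2\geq p^\ell-2\ell-3\geq0$ for $p\geq5$, the border cases $\ell=0$ (that is, $4\leq m\leq p$) being settled directly. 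Together with $g\in\CZ$ this yields $g\in\KS^{\rm d}_{p,1}$.

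The main obstacle is precisely the passage to the monotone minorant coupled with the comparison of $\ord_p(m)$ and $\lfloor\log_p(m-1)\rfloor$: one must confirm that replacing the non-monotone bound $\nu-\ord_p(\nu+1)$ by $\delta$ does not destroy the lower bound $2+\lfloor\log_p\nu\rfloor$. This is the step in which $p>3$ is indispensable. For $p=3$ the coefficient $a_2=\Delta_f(3)\,p^2/3$ already has $p$-adic valuation $\geq1<2$, so a defect occurs at index $\nu=2$, forcing $\vartheta_{g,\delta}\leq2$ and violating the requirement $\vartheta_{g,\delta}\geq3$ built into $\KS^{\rm d}_{p,1}$.
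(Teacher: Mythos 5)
Your proposal is correct and follows the same route as the paper: factor $f(s)=ps\,g(s)$ via Lemma~\ref{lem:ks2-zero0}, pass to the translate with Mahler coefficients $\Delta_f(\nu+1)p^\nu/(\nu+1)$, and verify the two defining conditions of $\KS^{\rm d}_{p,1}$. The only difference is that you supply in full what the paper dismisses as ``one easily observes'' and ``a simple counting argument'' --- in particular your replacement of the non-monotone valuation bound $\nu-\ord_p(\nu+1)$ by its monotone minorant $\delta(\nu)=\min_{j\geq\nu}(j-\ord_p(j+1))$, and the case analysis on $\ell=\lfloor\log_p(m-1)\rfloor$, are exactly the details needed to make the paper's appeal to Definition~\ref{def:deg-func} (which requires $\delta$ monotonically increasing) rigorous.
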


\begin{proof}
By Lemma \ref{lem:ks2-zero0} and \eqref{eq:ks2-zero0-g} we have
\begin{equation} \label{eq:loc-deg-ks2-zero-0}
   g(s) = \sum_{\nu \geq 0} \frac{\Delta_f(\nu+1) \, p^\nu}{\nu+1}
          \binom{s-1}{\nu}, \quad s \in \ZZ_p.
\end{equation}
Lemma \ref{lem:deg-transl} shows that we can work with $\tilde{g}(s) = g(s+1)$
instead of $g$. Thus
\begin{equation} \label{eq:loc-deg-ks2-zero-1}
   \tilde{g}(s) = \sum_{\nu \geq 0} \Delta'_{\tilde{g},\delta}(\nu) \,
     p^{\delta(\nu)} \binom{s}{\nu}, \quad s \in \ZZ_p,
\end{equation}
where
\[
   \Delta'_{\tilde{g},\delta}(\nu) \, p^{\delta(\nu)}
     = \Delta_f(\nu+1) \, p^\nu / (\nu+1).
\]
One easily observes for $p > 3$ that
$\vartheta_{\tilde{g},\delta} \geq p-1 > 3$
and a simple counting argument shows that
$\delta(\nu) \geq 2 + \lfloor \log_p \nu \rfloor$ for $\nu \geq 3$.
Finally, $\tilde{g} \in \KS^{\rm d}_{p,1}$ and equivalently
$g \in \KS^{\rm d}_{p,1}$.
\end{proof}

\begin{corl} \label{corl:deg-ks2-zero-0}
Assume that $p > 3$. Let $f \in \KS_{p,2}$ having a zero $\xi \in \ZZ_p$.
Then $f(s) = p \, (s-\xi) \, g(s)$ for $s \in \ZZ_p$,
where $g \in \KS^{\rm d}_{p,1}$.
\end{corl}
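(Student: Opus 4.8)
The plan is to reduce the statement to Proposition~\ref{prop:deg-ks2-zero-0}, which already treats the case of a zero located at the origin, by translating the zero $\xi$ to $0$. First I would introduce the shifted function $\tilde{f}(s) = f(s+\xi)$ for $s \in \ZZ_p$. Applying Lemma~\ref{lem:f-comp-lin-ks2} with the affine map $\lambda(s) = \xi + s$ (so $a = \xi$ and $b = 1$) shows that $\tilde{f} = f \circ \lambda \in \KS_{p,2}$, and by construction $\tilde{f}(0) = f(\xi) = 0$.

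Next, since $p > 3$ and $\tilde{f} \in \KS_{p,2}$ vanishes at $0$, Proposition~\ref{prop:deg-ks2-zero-0} applies and yields a decomposition $\tilde{f}(s) = p s \, \tilde{g}(s)$ for $s \in \ZZ_p$ with $\tilde{g} \in \KS^{\rm d}_{p,1}$. Substituting $s \mapsto s - \xi$ then gives $f(s) = p\,(s-\xi)\,\tilde{g}(s-\xi)$, so setting $g(s) = \tilde{g}(s-\xi)$ produces the desired factorization $f(s) = p\,(s-\xi)\,g(s)$. All that remains is to check that this translated cofactor $g$ still lies in $\KS^{\rm d}_{p,1}$.

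For the final verification I would invoke the translation results established earlier. Because $\tilde{g}$ is $\delta$-degenerate with $\vartheta_{\tilde{g},\delta} \geq 3$ and $\delta(\nu) \geq 2 + \lfloor \log_p \nu \rfloor$ for $\nu \geq 3$, Lemma~\ref{lem:deg-transl} shows that $g(s) = \tilde{g}(s-\xi)$ is again $\delta$-degenerate with the same function $\delta$, and Proposition~\ref{prop:deg-transl-delta} guarantees $\vartheta_{g,\delta} = \vartheta_{\tilde{g},\delta}$. Hence $g$ inherits both defining inequalities of $\KS^{\rm d}_{p,1}$, which completes the argument.

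I expect no serious obstacle here: the entire content is a change of variables, and the monotonicity of $\delta$ --- precisely the hypothesis that makes Lemma~\ref{lem:deg-transl} work --- is what ensures the defect index $\vartheta$ and the growth bound on $\delta$ are left undisturbed by the shift. The only point demanding any care is the direction of the translation (by $-\xi$ rather than $+\xi$) when passing from $\tilde{g}$ back to $g$, but since the translation lemmas hold for an arbitrary fixed shift $t \in \ZZ_p$, this causes no difficulty.
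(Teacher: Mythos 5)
Your proposal is correct and follows essentially the same route as the paper: translate the zero to the origin, apply Proposition~\ref{prop:deg-ks2-zero-0}, and translate back using Lemma~\ref{lem:deg-transl} (the paper justifies $\tilde{f} \in \KS_{p,2}$ directly from the definition rather than via Lemma~\ref{lem:f-comp-lin-ks2}, but that is an immaterial difference). No gaps.
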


\begin{proof}
Set $\tilde{f}(s) = f(s + \xi)$ and $\tilde{g}(s) = g(s + \xi)$.
Then $\tilde{f}(s) = ps \, \tilde{g}(s)$. Note that $\tilde{f} \in \KS_{p,2}$.
This already follows by the definition of $\KS_{p,2}$, due to the fact that
$\Dop^n f(s) \equiv 0 \pmod{p^n \ZZ_p}$ for all $s \in \ZZ_p$ and $n \geq 1$.
Applying Proposition \ref{prop:deg-ks2-zero-0}, we get
$\tilde{g} \in \KS^{\rm d}_{p,1}$ and consequently $g \in \KS^{\rm d}_{p,1}$
by Lemma \ref{lem:deg-transl}.
\end{proof}

\begin{remark*}
For $p > 3$ and $f \in \KS_{p,2}$, which has a zero $\xi \in \ZZ_p$,
we have at least that $g(s) = f(s)/p(s-\xi)$ is a function of
$\KS^{\rm d}_{p,1}$. This shows that $g$ obeys at least the Kummer congruences.
Moreover, $f$ can have two roots in $\ZZ_p$ under certain conditions as follows.
\end{remark*}

\begin{defin}
We define the set
\[
   \KS^2_{p,2} = \left\{ f \in \KS_{p,2} : \Delta_f = 0,
     \Delta_f(2) \in \ZZ^*_p, f \text{\ has a zero in\ } \ZZ_p \right\},
     \quad p > 3.
\]
\end{defin}

\begin{theorem} \label{thm:ks2-zero-2}
Assume that $p > 3$. If $f \in \KS^2_{p,2}$,
then $f$ has two zeros $\xi_1, \xi_2 \in \ZZ_p$, such that
\[
   f(s) = p^2 \, (s-\xi_1)(s-\xi_2) \, f^*(s),
     \quad \norm{f(s)}_p = \norm{p^2 \, (s-\xi_1)(s-\xi_2)}_p,
     \quad s \in \ZZ_p,
\]
where $f^*(s) \equiv \Delta_f(2)/2 \pmod{p \ZZ_p}$ and $f^* \in \CZ^*$.
\end{theorem}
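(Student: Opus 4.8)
The plan is to peel off one linear factor with the single-zero machinery already available, and then to reduce the quotient to the degenerate-function setting so that a second zero can be extracted. Since $f$ has a zero $\xi_1 \in \ZZ_p$ by hypothesis, I would first translate it to the origin: setting $\tilde f(s)=f(s+\xi_1)$ gives $\tilde f\in\KS_{p,2}$ with $\tilde f(0)=0$, and Corollary \ref{corl:ks2-transl-delta} preserves the coefficient data mod $p$, so $\Delta_{\tilde f}=0$ and $\Delta_{\tilde f}(2)\in\ZZ^*_p$. It therefore suffices to factor $\tilde f$ and translate back at the end; the translate of an element of $\CZ^*$ remains in $\CZ^*$ and a function constant mod $p\ZZ_p$ stays constant, so nothing about $f^*$ is lost. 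Hence I may assume $f(0)=0$ and drop tildes.

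With $f(0)=0$, Lemma \ref{lem:ks2-zero0} writes $f(s)=ps\,g(s)$ with $g\in\CZ$, and Proposition \ref{prop:deg-ks2-zero-0} shows $g\in\KS^{\rm d}_{p,1}$ (here $p>3$ guarantees $\vartheta_{g,\delta}\geq p-1>3$ and the admissible growth of $\delta$). Its translate $\tilde g(s)=g(s+1)$ has the explicit Mahler coefficients $a_\nu=\Delta_f(\nu+1)\,p^\nu/(\nu+1)$. The heart of the argument is to show $g\in\WKS^{\rm d}_{p,1}$ and that $g$ genuinely possesses a zero. For the first I would read off $\Dop\tilde g(0)=a_1=\Delta_f(2)\,p/2$, so that $\Delta_{\tilde g}(1)=\Delta_f(2)/2$, and Proposition \ref{prop:deg-transl-delta} (valid at index $\nu=1$ since $\vartheta_{g,\delta}\geq 3$) transfers this to $\Delta_g\equiv\Delta_f(2)/2\pmod{p}$; because $\Delta_f(2)\in\ZZ^*_p$ and $2$ is invertible for $p>3$, this is nonzero, so $g\in\WKS^{\rm d}_{p,1}$ with the $p=2$ clause vacuous. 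For the second I would use \eqref{eq:ks2-zero0-g0}: its $\nu=0$ term $\Delta_f(1)$ lies in $p\ZZ_p$ because $\Delta_f=0$, while each term with $\nu\geq 1$ lies in $p\ZZ_p$ since $\ord_p(p^\nu/(\nu+1))\geq 1$ for $p\geq 3$; hence $g(0)\in p\ZZ_p$.

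These two facts are precisely the hypotheses of Theorem \ref{thm:ks1d-zero} applied to $g$, which produces a unique simple zero $\xi_2\in\ZZ_p$ and the factorization $g(s)=p\,(s-\xi_2)\,f^*(s)$ with $f^*(s)\equiv\Delta_g\equiv\Delta_f(2)/2\pmod{p\ZZ_p}$ and $f^*\in\CZ^*$. Substituting back gives $f(s)=p^2\,s\,(s-\xi_2)\,f^*(s)$ for the translated function, and since $f^*\in\CZ^*$ means $\norm{f^*(s)}_p=1$ throughout, the norm identity follows. Undoing the translation $s\mapsto s-\xi_1$ recovers the two zeros $\xi_1$ and $\xi_1+\xi_2$ of the original $f$ and the stated decomposition, with the slope $\Delta_f(2)/2$ unchanged mod $p$.

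The main obstacle is the middle step: certifying $g\in\WKS^{\rm d}_{p,1}$ together with $g(0)\in p\ZZ_p$. This is where all of the defining hypotheses of $\KS^2_{p,2}$ enter in a coordinated way, namely $\Delta_f=0$ makes the quotient $g$ vanish mod $p$ (so the second zero exists), $\Delta_f(2)\in\ZZ^*_p$ makes $\Delta_g$ a unit (so the second factor is genuinely linear with the predicted slope), and $p>3$ simultaneously forces $g$ to be $\delta$-degenerate of the admissible type and makes $2$ invertible. Once these are in place, the remainder is bookkeeping through the already-proven factorization theorem for $\WKS^{\rm d}_{p,1}$.
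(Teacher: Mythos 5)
Your proposal is correct and follows essentially the same route as the paper's proof: translate the known zero $\xi_1$ to the origin, factor off $ps$ via Lemma \ref{lem:ks2-zero0} and Proposition \ref{prop:deg-ks2-zero-0}, read off $\Delta_g(0)\equiv\Delta_f(1)\equiv 0$ and $\Delta_g(1)\equiv\Delta_f(2)/2\not\equiv 0\pmod{p\ZZ_p}$ from the shifted Mahler coefficients via Proposition \ref{prop:deg-transl-delta}, and apply Theorem \ref{thm:ks1d-zero} to extract the second linear factor. The only cosmetic difference is that the paper invokes Corollary \ref{corl:deg-ks2-zero-0} before translating while you translate first, and you certify $g(0)\in p\ZZ_p$ through the explicit series \eqref{eq:ks2-zero0-g0} rather than through the coefficient congruence — both amount to the same computation.
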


\begin{proof}
By assumption $f$ has a zero in $\ZZ_p$, say $\xi_1 \in \ZZ_p$.
By Corollary \ref{corl:deg-ks2-zero-0} we then have the decomposition
$f(s) = p \, (s-\xi_1) \, g(s)$, where $g \in \KS^{\rm d}_{p,1}$.
Again set $\tilde{f}(s) = f(s + \xi_1)$ and $\tilde{g}(s) = g(s + \xi_1)$.
By Corollary \ref{corl:ks2-transl-delta} we have
\[
   \Delta_{\tilde{f}}(\nu) \equiv \Delta_f(\nu) \pmod{p\ZZ_p},
     \quad \nu \geq 0.
\]
According to \eqref{eq:loc-deg-ks2-zero-0} and \eqref{eq:loc-deg-ks2-zero-1}
we obtain
\[
   \hat{g}(s) =
   \tilde{g}(s+1) = \sum_{\nu \geq 0} \frac{\Delta_{\tilde{f}}(\nu+1)
     \, p^\nu}{\nu+1} \binom{s}{\nu}, \quad s \in \ZZ_p.
\]
As a result of Proposition \ref{prop:deg-ks2-zero-0}, we know that
$\hat{g} \in \KS^{\rm d}_{p,1}$ and $\vartheta_{\hat{g},\delta} \geq 3$.
Using Proposition \ref{prop:deg-transl-delta},
the above equation yields that
\begin{alignat*}{4}
  \Delta_g(0) &\equiv \Delta_{\hat{g}}(0) &&\equiv \Delta_{\tilde{f}}(1)
    &&\equiv \Delta_f(1) &&\equiv 0 \pmod{p\ZZ_p}, \\
  \Delta_g(1) &\equiv \Delta_{\hat{g}}(1) &&\equiv \Delta_{\tilde{f}}(2)/2
    &&\equiv \Delta_f(2)/2 &&\not\equiv 0 \pmod{p\ZZ_p}.
\end{alignat*}
Therefore $g$ satisfies the conditions of Theorem \ref{thm:ks1d-zero},
which provides
\[
   g(s) = p \, (s-\xi_2) \, g^*(s), \quad s \in \ZZ_p,
\]
where $g^*(s) \equiv \Delta_g \pmod{p \ZZ_p}$ and $g^* \in \CZ^*$.
We finally set $f^* = g^*$ and observe that
$\Delta_g \equiv \Delta_f(2)/2 \pmod{p \ZZ_p}$. This gives the result.
\end{proof}
\medskip

\section{Ring properties and products}

\begin{theorem}
The $p$-adic Kummer spaces $\KS_{p,1}$ and $\KS_{p,2}$ are commutative rings,
where $\KS^*_{p,1}$ and $\KS^*_{p,2}$, as defined in Definition
\ref{def:decomp-spaces-0}, are their unit groups, respectively.
\end{theorem}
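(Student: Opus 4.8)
The plan is to view both $\KS_{p,1}$ and $\KS_{p,2}$ as subsets of the commutative ring $\CZ$ under pointwise addition and multiplication. Associativity, commutativity and distributivity are then inherited at once from $\ZZ_p$, and the constant function $1$ lies in both spaces (all its higher differences vanish, so $\Dop^n 1(0)\in p^n\ZZ_p$ trivially and the Kummer congruences hold), serving as the multiplicative identity. Hence only two things must be checked: that each $\KS_{p,\nu}$ is closed under the two operations, and that its group of units is exactly $\KS^*_{p,\nu}$ of Definition~\ref{def:decomp-spaces-0}. Closure under addition is immediate from $\Dop^n(f+g)(0)=\Dop^n f(0)+\Dop^n g(0)$ and $(f+g)(s)-(f+g)(t)=(f(s)-f(t))+(g(s)-g(t))$.

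Closure under multiplication I would treat space by space. For $\KS_{p,1}$ the telescoping identity $f(s)g(s)-f(t)g(t)=f(s)\bigl(g(s)-g(t)\bigr)+\bigl(f(s)-f(t)\bigr)g(t)$ shows that if $s\equiv t\pmod{p^n\ZZ_p}$ then both summands lie in $p^{n+1}\ZZ_p$, so $fg\in\KS_{p,1}$. For $\KS_{p,2}$ the decisive tool is the discrete Leibniz rule $\Dop^n(fg)(0)=\sum_{k=0}^{n}\binom{n}{k}\,\Dop^k f(0)\,\Dop^{n-k}g(k)$, valid by \cite{Graham&others:1994}. Here $\ord_p\Dop^k f(0)\ge k$ by the definition of $\KS_{p,2}$, while $\ord_p\Dop^{n-k}g(k)\ge n-k$ holds at every argument by Lemma~\ref{lem:ks2-kummer-congr-0}; thus each summand has order $\ge k+(n-k)=n$, so $\Dop^n(fg)(0)\in p^n\ZZ_p$ and $fg\in\KS_{p,2}$ again by Lemma~\ref{lem:ks2-kummer-congr-0}.

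For the unit groups one inclusion is trivial: if $fg=1$, evaluating at $0$ gives $f(0)g(0)=1$, so $f(0)\in\ZZ_p^*$ and $f\in\KS^*_{p,\nu}$. Conversely let $f\in\KS^*_{p,\nu}$. By Theorem~\ref{thm:ks2-zero-fixpnt}(2), equivalently from $f(s)\equiv f(0)\pmod{p\ZZ_p}$, we get $f(s)\in\ZZ_p^*$ for every $s$, so $1/f$ is a well-defined map $\ZZ_p\to\ZZ_p$. I would realize it inside the space through a geometric series: set $h=1-f/f(0)$, so that $h(s)=(f(0)-f(s))/f(0)$ and $\normf{h}_p\le p^{-1}<1$. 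Every partial sum $\sum_{k=0}^{N}h^k$ belongs to $\KS_{p,\nu}$, since the space is a ring containing the constants, and these partial sums are Cauchy in the sup-norm and converge pointwise to $f(0)/f$.

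The step I expect to be the main obstacle --- and the reason the direct computation that settles $\KS_{p,1}$ does not obviously settle $\KS_{p,2}$ --- is guaranteeing that the limit still obeys the stronger congruences. I would clear this by proving that $\KS_{p,\nu}$ is a \emph{closed} subspace of the Banach space $\CZ$: each defining condition, $f(s)-f(t)\in p^{n+1}\ZZ_p$ for $\KS_{p,1}$ and $\Dop^n f(0)\in p^n\ZZ_p$ for $\KS_{p,2}$, is the preimage of a closed ball in $\ZZ_p$ under a continuous linear functional, so the intersection of all of them is closed, hence complete. The Cauchy series therefore converges within $\KS_{p,\nu}$ to some $S$ with $fS=f(0)$, and $f(0)^{-1}S=1/f\in\KS_{p,\nu}$ is the sought inverse. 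This identifies $\KS^*_{p,\nu}$ as the full unit group and finishes both rings at once.
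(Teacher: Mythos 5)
Your proof is correct, and at the two places where real work is needed it takes a genuinely different route from the paper's. For closure of $\KS_{p,2}$ under multiplication you use the discrete Leibniz rule $\Dop^n(fg)(0)=\sum_{k=0}^{n}\binom{n}{k}\Dop^k f(0)\,\Dop^{n-k}g(k)$ and bound each summand by $\ord_p \geq k+(n-k)=n$; the paper instead multiplies the two Mahler expansions, notes that the coefficient of $p^n$ is a polynomial of degree $n$, and applies Lemma \ref{lem:delta-poly}. Both work; your estimate is more local and needs only the defining congruences at the integer points $0,\dots,n$ (note that $\ord_p\Dop^{n-k}g(k)\ge n-k$ is already part of Definition \ref{def:kummer-space}, so the appeal to Lemma \ref{lem:ks2-kummer-congr-0} at that spot is superfluous, though harmless). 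For the inverse of $f\in\KS^*_{p,2}$ the paper approximates $f^{-1}$ by the powers $f^{\eulerphi(p^n)-1}$ via Euler--Fermat and transfers the congruences $\Dop^r f_n(s)\equiv\Dop^r f^{-1}(s)\pmod{p^r\ZZ_p}$ to the limit by hand; you approximate it by the geometric series in $h=1-f/f(0)$ and justify the passage to the limit once and for all by observing that $\KS_{p,\nu}$ is closed in $(\CZ,\normf{\cdot}_p)$, each defining condition being the preimage of $p^m\ZZ_p$ under a continuous linear functional. The two limit arguments are morally identical, but your formulation isolates a reusable closedness fact and treats $\KS_{p,1}$ and $\KS_{p,2}$ uniformly, whereas the paper handles $\KS_{p,1}$ by directly asserting the Kummer congruences for $f^{-1}$. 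I see no gaps.
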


\begin{proof}
Case $\KS_{p,1}$: Multiplication:
Let $f, g \in \KS_{p,1}$ and $s, t \in \ZZ_p$.
For $s \equiv t \pmod{p^n \ZZ_p}$ we get by Definition \ref{def:kummer-space}
that $f(s)g(s) \equiv f(t)g(t) \pmod{p^{n+1} \ZZ_p}$.
Thus $f \cdot g \in \KS_{p,1}$, where $(f \cdot g)(s)=f(s)g(s)$.
Units: Since $f(0) \equiv f(s) \pmod{p\ZZ_p}$ for $s \in \ZZ_p$, we have
$f^{-1}(s) \in \ZZ^*_p$ if and only if $f(0) \in \ZZ^*_p$.
Let $f \in \KS^*_{p,1}$, then we also have
$f^{-1}(s) \equiv f^{-1}(t) \pmod{p^{n+1} \ZZ_p}$
when $s \equiv t \pmod{p^n \ZZ_p}$. Hence $f^{-1} \in \KS^*_{p,1}$.

Case $\KS_{p,2}$: Multiplication:
Let $f, g \in \KS_{p,2}$ and $w = f \cdot g$, where $w(s)=f(s)g(s)$ for
$s \in \ZZ_p$. The product of the Mahler expansions of $f$ and $g$ yields
\begin{equation} \label{eq:prod-func}
\begin{split}
   w(s) &= \sum_{\nu \geq 0} \Delta_f(\nu) \, p^\nu \binom{s}{\nu} \, \cdot \,
     \sum_{\nu \geq 0} \Delta_g(\nu) \, p^\nu \binom{s}{\nu} \\
     &= \sum_{n \geq 0} p^n \sum_{n = j+k}
        \Delta_f(j) \, \Delta_g(k) \, \binom{s}{j} \binom{s}{k}.
\end{split}
\end{equation}
Now, for a fixed $n$, the polynomials $\binom{s}{j} \binom{s}{k}$ above have
always degree $n$. By Lemma \ref{lem:delta-poly} we obtain that
$\Dop^n \, w(s) \equiv 0 \pmod{p^n \ZZ_p}$. Since this is valid for all
$n \geq 1$, we finally deduce that $w \in \KS_{p,2}$.
Units: Again $f(0) \equiv f(s) \pmod{p\ZZ_p}$ for $s \in \ZZ_p$ and
$f^{-1}(s) \in \ZZ^*_p$ if and only if $f(0) \in \ZZ^*_p$.
Let $f \in \KS^*_{p,2}$. We have to show that $f^{-1} \in \KS^*_{p,2}$
and necessarily that $\Dop^n \, f^{-1}(s) \equiv 0 \pmod{p^n \ZZ_p}$ for all
$n \geq 1$. We will construct a sequence $(f_n)_{n \geq 1}$ of functions,
such that $f_n \equiv f^{-1} \pmod{p^n\ZZ_p}$ and consequently
$\lim_{n \to \infty} f_n = f^{-1}$. Let $\eulerphi$ be Euler's totient
function, then the Euler--Fermat's theorem reads
\begin{equation} \label{eq:ext-fermat}
   a^{\eulerphi(p^r)} \equiv 1 \pmod{p^r\ZZ_p}
     \quad \text{for\ } a \in \ZZ^*_p, r \geq 1.
\end{equation}
Define $f_n = f^{\eulerphi(p^n)-1}$ for $n \geq 1$. Then $f_n \in \KS^*_{p,2}$
and we have
\[
   \normf{f}_p = \normf{f^{-1}}_p = \normf{f_n}_p = 1.
\]
Using \eqref{eq:ext-fermat} we obtain for $n \geq r \geq 1$ and
$s \in \ZZ_p$ that
\[
   f_n(s) \equiv f^{-1}(s) \pmod{p^r\ZZ_p}
\]
and consequently
\[
   0 \equiv \Dop^r f_n(s) \equiv \Dop^r f^{-1}(s) \pmod{p^r\ZZ_p}.
\]
Thus $\normf{f_n - f^{-1}}_p \leq p^{-n}$ and $\normf{f_n - f^{-1}}_p \to 0$
as $n \to \infty$. Finally $f^{-1} \in \KS^*_{p,2}$.
\smallskip

All other ring axioms are also valid, since addition and multiplication
are induced by standard operations.
\end{proof}

\begin{remark*}
Interestingly, the condition $2 \pdiv \Delta_f(2)$, as required in Definition
\ref{def:kummer-space} for functions $f \in \WKS_{p,2}$ in case $p=2$, is
preserved under ring operations among functions having this condition. We have
to show that this is compatible with multiplication and inverse mapping, while
it is trivial for addition. We use the same notation from above and adapt the
proof. We show that the condition transfers to $\Delta_w(2)$ and
$\Delta_{f^{-1}}(2)$, respectively.
Multiplication: We have $\Delta_f(2) \equiv \Delta_g(2) \equiv 0 \pmod{2\ZZ_2}$
by assumption. Evaluating $\Delta_w(2)$ in
\eqref{eq:prod-func} gives
\[
   \Delta_w(2) \equiv 2^{-2} \, \Dop^2 w(0)
     \equiv \Delta_f(0) \Delta_g(2) + 2 \Delta_f(1) \Delta_g(1)
     + \Delta_f(2) \Delta_g(0) \equiv 0 \pmod{2\ZZ_2}.
\]
Inverse mapping:
The condition $2 \pdiv \Delta_f(2)$ is equivalent to
$\Dop^2 f(0) \equiv 0 \pmod{2^3\ZZ_2}$.
Since $f(s) \in \ZZ^*_2$, the values of $f$ modulo $8$ in question are
in $\{ 1, 3, 5, 7\}$, which are inverse to themselves. Thus
$\Dop^2 f^{-1}(0) \equiv \Dop^2 f(0) \equiv 0 \pmod{2^3\ZZ_2}$
and consequently $2 \pdiv \Delta_{f^{-1}}(2)$.
\end{remark*}

\begin{remark*}
Since $\KS_{p,2} \subset \KS_{p,1}$, $\KS^*_{p,2}$ is a subgroup of
$\KS^*_{p,1}$. Moreover $\ZZ^*_p$, identified as the group of constant
functions, is a subgroup of $\KS^*_{p,1}$ and $\KS^*_{p,2}$.
\end{remark*}

\begin{prop} \label{prop:exp-func}
Let $\SU_p = 1 + p\ZZ_p$.
The group of functions
\[
   \SE_p = \{ f_a(s) = a^s : s \in \ZZ_p, \, a \in \SU_p \}
\]
is a subgroup of $\KS^*_{p,2}$.
\end{prop}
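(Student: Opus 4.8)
The plan is to write down the explicit Mahler expansion of each $f_a$, read off its $\KS_{p,2}$-coordinates to get membership, and then check the three subgroup axioms using the exponential law. First I would fix $a \in \SU_p$ and write $a = 1 + pu$ with $u \in \ZZ_p$. The $p$-adic binomial expansion gives
\[
  f_a(s) = a^s = (1+pu)^s = \sum_{\nu \geq 0} (pu)^\nu \binom{s}{\nu}
         = \sum_{\nu \geq 0} u^\nu \, p^\nu \binom{s}{\nu}, \quad s \in \ZZ_p,
\]
which converges since $|u^\nu p^\nu|_p \leq p^{-\nu} \to 0$. Hence the Mahler coefficients are $\Dop^\nu f_a(0) = (pu)^\nu \in p^\nu\ZZ_p$, so Lemma \ref{lem:ks2-kummer-congr-0} yields $f_a \in \KS_{p,2}$, and in the basis $\BKS_p$ the coordinates are $\Delta_{f_a}(\nu) = u^\nu \in \ZZ_p$. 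Because $f_a(0) = a^0 = 1 \in \ZZ^*_p$, Definition \ref{def:decomp-spaces-0} and the preceding ring theorem give $f_a \in \KS^*_{p,2}$.

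Next I would verify that $\SE_p$ is a subgroup of the unit group $\KS^*_{p,2}$ under pointwise multiplication. The identity element is $f_1(s) = 1^s = 1$, the constant function equal to the ring unit. For closure and for inverses I rely on the exponential law: given $a, b \in \SU_p$, the two functions $s \mapsto a^s b^s$ and $s \mapsto (ab)^s$ both lie in $\CZ$ and coincide on the dense subset $\NN_0 \subset \ZZ_p$, where ordinary power rules apply, so by continuity they agree on all of $\ZZ_p$; thus $f_a \cdot f_b = f_{ab}$. In the same way $f_a^{-1} = f_{a^{-1}}$, since $a^{-s} = (a^{-1})^s$. As $\SU_p = 1 + p\ZZ_p$ is the multiplicative group of principal units, both $ab$ and $a^{-1}$ again lie in $\SU_p$, whence $f_{ab}, f_{a^{-1}} \in \SE_p$. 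Therefore $\SE_p$ contains the identity and is closed under products and inverses, which establishes that it is a subgroup of $\KS^*_{p,2}$.

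The computations are entirely routine; the only step that deserves a moment of care is the exponential law $a^s b^s = (ab)^s$ for non-integer $s \in \ZZ_p$, and I expect this to be the main (minor) obstacle. Rather than manipulate the binomial series directly, I would settle it once by the density-and-continuity argument above, which simultaneously handles closure and inverses and keeps the homomorphism $a \mapsto f_a$ from $\SU_p$ to $\KS^*_{p,2}$ transparent.
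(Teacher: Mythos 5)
Your proposal is correct and follows essentially the same route as the paper: the binomial expansion $a^s=\sum_{\nu\geq 0}(a-1)^\nu\binom{s}{\nu}$ with $(a-1)^\nu\in p^\nu\ZZ_p$ gives $f_a\in\KS_{p,2}$, and $f_a(0)=1$ places it in $\KS^*_{p,2}$. The paper simply compresses your subgroup verification into the observation that $\SU_p\cong\SE_p$ as multiplicative groups; your density-and-continuity justification of $f_a\cdot f_b=f_{ab}$ is a reasonable way to make that explicit.
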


\begin{proof}
Obviously, we have $\SU_p \cong \, \SE_p$ as multiplicative groups.
Following \cite[Ch.~4.2, p.~173]{Robert:2000}, the binomial expansion can
be extended to a uniformly convergent Mahler series for $s \in \ZZ_p$:
\[
   f_a(s) = (a-1+1)^s = \sum_{\nu \geq 0} (a-1)^\nu \binom{s}{\nu}.
\]
Observing that $\Dop^\nu f_a(0) = (a-1)^\nu \in p^\nu \ZZ_p$,
we can also write
\[
   f_a(s) = \sum_{\nu \geq 0} \Delta_{f_a}(\nu) \, p^\nu \binom{s}{\nu}.
\]
Since $f_a(0) = 1$, it follows that $f_a \in \KS^*_{p,2}$.
\end{proof}

\begin{corl}
If $a \in \SU_p$, then the equation
\[
   a^\tau = \tau
\]
is uniquely solvable with $\tau \in \SU_p$,
which is effectively computable by Algorithm \ref{alg:fixed-point}.
Moreover, the map
\[
   \kappa: \SU_p \to \SU_p, \quad a \mapsto \tau,
\]
is injective.
\end{corl}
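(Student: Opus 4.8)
The plan is to recognize the fixed-point equation $a^\tau = \tau$ as the fixed-point problem for the exponential function $f_a(s) = a^s$ and to read off everything from the structure already established for $\KS_{p,2}$. First I would invoke Proposition \ref{prop:exp-func}: for $a \in \SU_p$ we have $f_a \in \KS^*_{p,2}$, with Mahler expansion $f_a(s) = \sum_{\nu \geq 0}(a-1)^\nu \binom{s}{\nu}$, so that $\Delta_{f_a}(\nu)\,p^\nu = (a-1)^\nu$. Since $f_a \in \KS_{p,2}$, Proposition \ref{prop:f-quod-delta}(1) gives $\norm{f_a(s)-f_a(t)}_p \leq \norm{p(s-t)}_p$, i.e. $f_a$ is a contraction of ratio $p^{-1}$ on the complete space $(\ZZ_p,\norm{\cdot}_p)$. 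Hence Theorem \ref{thm:ks2-zero-fixpnt}(1) (equivalently the Banach fixed point argument of Proposition \ref{prop:f-lip-fixpnt}) yields a fixed point $\tau \in \ZZ_p$, and the Banach theorem makes it unique. By definition $f_a(\tau)=\tau$ is exactly $a^\tau=\tau$, so the equation is uniquely solvable in $\ZZ_p$.

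Next I would pin down the location of $\tau$. Because $a \in \SU_p$ we have $a-1 \in p\ZZ_p$, so every term with $\nu \geq 1$ in the Mahler expansion above lies in $p\ZZ_p$; thus $f_a(s) \equiv 1 \pmod{p\ZZ_p}$ for all $s \in \ZZ_p$. Evaluating at the fixed point gives $\tau = f_a(\tau) \equiv 1 \pmod{p\ZZ_p}$, that is $\tau \in 1+p\ZZ_p = \SU_p$, as claimed. Effective computability is then immediate: $f_a \in \KS_{p,2}$ with coefficients $\Delta_{f_a}(\nu) = ((a-1)/p)^\nu \in \ZZ_p$, which are trivially computable, so Algorithm \ref{alg:fixed-point} applied to $f_a$ returns $\tau \pmod{p^n\ZZ_p}$.

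The heart of the statement, and the step I expect to be the main obstacle, is the injectivity of $\kappa$. Suppose $\kappa(a)=\kappa(b)=\tau$ for $a,b \in \SU_p$; then $a^\tau = \tau = b^\tau$, so it suffices to show that for a fixed exponent $\tau \in \SU_p \subset \ZZ_p^*$ the power map $c \mapsto c^\tau$ is injective on $\SU_p$, i.e. that $c^\tau = 1$ forces $c=1$ for $c = a\,b^{-1} \in \SU_p$. For odd $p$ this is clean: $\SU_p = 1+p\ZZ_p$ is a torsion-free pro-$p$ group isomorphic to $(\ZZ_p,+)$, on which raising to the unit power $\tau$ corresponds to multiplication by the unit $\tau$, an injection; hence $c^\tau=1 \Rightarrow c=1$.

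The genuinely delicate case is $p=2$, where $\SU_2 = 1+2\ZZ_2 = \ZZ_2^*$ carries the torsion subgroup $\{\pm 1\}$ and the power map is not obviously injective. Here I would argue, via the $2$-adic logarithm (whose kernel on $\ZZ_2^*$ is exactly $\{\pm 1\}$), that $c^\tau = 1$ with $\tau \in \ZZ_2^*$ forces $c \in \{\pm 1\}$, and then rule out $c=-1$ directly: using the Mahler expansion $(-1)^s = \sum_{\nu \geq 0}(-2)^\nu\binom{s}{\nu}$ together with $\tau \equiv 1 \pmod{2\ZZ_2}$, a short computation modulo $4$ gives $(-1)^\tau \equiv -1 \pmod{4\ZZ_2}$, so $(-1)^\tau \neq 1$. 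Therefore $c=1$ and $a=b$, completing injectivity for all primes $p$. Assembling existence, uniqueness, membership in $\SU_p$, computability, and injectivity then proves the corollary.
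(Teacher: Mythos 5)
Your proposal is correct, and its skeleton coincides with the paper's: existence and uniqueness of $\tau$ via the contraction property of $f_a\in\KS_{p,2}$ and Theorem \ref{thm:ks2-zero-fixpnt}, membership $\tau\in\SU_p$ from $\tau=f_a(\tau)\equiv f_a(0)=1\pmod{p\ZZ_p}$, and injectivity by reducing $\kappa(a)=\kappa(b)$ to the implication $c^\tau=1\Rightarrow c=1$ for $c=ab^{-1}\in\SU_p$. The one place where you genuinely diverge is the proof of that last implication. The paper isolates it as Lemma \ref{lem:exp-tau-1} and proves it by a single uniform computation: writing $c=1+p^r\alpha$ with $\alpha\in\ZZ_p^*$, the binomial expansion gives $c^\tau-1\equiv p^r\alpha\tau\pmod{p^{2r}\ZZ_p}$, which cannot vanish since $\tau$ is a unit; this works for all $p$, including $p=2$, with no case distinction and no transcendental machinery. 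You instead invoke the structure of $\SU_p$ via the $p$-adic logarithm (torsion-freeness for odd $p$, kernel $\{\pm1\}$ for $p=2$ followed by the explicit computation $(-1)^\tau\equiv-1\pmod{4\ZZ_2}$). Both routes are valid; yours is conceptually transparent but imports $\log$/$\exp$ and forces a separate treatment of $p=2$, while the paper's estimate is more elementary, shorter, and stays entirely within the binomial-expansion framework the rest of the section is built on. Note also that your $p=2$ endgame for ruling out $c=-1$ is essentially the $r=1$ instance of the paper's lemma, so the two arguments meet there anyway.
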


\begin{proof}
Since the corresponding function $f_a \in \KS_{p,2}$,
Theorem \ref{thm:ks2-zero-fixpnt} asserts that $f_a$ has a fixed point.
Thus, the equation above is uniquely solvable.
From $1 = f_a(0) \equiv f_a(\tau) = \tau \pmod{p\ZZ_p}$,
we deduce that $\tau \in \SU_p$. Now we show that the map $\kappa$
is injective. Let $a,b \in \SU_p$, where $a \neq b$. Assume that
$\kappa(a) = \kappa(b) = \tau$. Thus we get $(ab^{-1})^\tau = 1$, which
implies that $a=b$ by the following Lemma \ref{lem:exp-tau-1}. Contradiction.
\end{proof}

\begin{lemma} \label{lem:exp-tau-1}
Let $a, \tau \in \SU_p$, then
\[
   a^\tau = 1 \quad \Longleftrightarrow \quad a = 1.
\]
\end{lemma}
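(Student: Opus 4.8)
The plan is to deduce the non-trivial implication from the single fact that $\tau$, lying in $\SU_p = 1 + p\ZZ_p \subset \ZZ^*_p$, is a unit of $\ZZ_p$, so that raising to the power $\tau$ is an invertible operation on $\SU_p$. The reverse implication is trivial, since the constant function $1$ has Mahler expansion $1^\tau = 1$.

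First I would record two preliminary facts. Since $\tau \equiv 1 \pmod{p\ZZ_p}$, it is a $p$-adic unit and $\tau^{-1} \in \ZZ_p$. Moreover $a^\tau \in \SU_p$: by Proposition \ref{prop:exp-func} one has $a^\tau = \sum_{\nu \geq 0}(a-1)^\nu \binom{\tau}{\nu}$ with $a-1 \in p\ZZ_p$, whence the constant term is $1$ and every further term lies in $p\ZZ_p$. This keeps all the quantities below inside the group $\SU_p$.

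The crucial ingredient, and the step I expect to require the most care, is the exponentiation law
\[
   (a^\sigma)^\rho = a^{\sigma\rho}, \qquad \sigma, \rho \in \ZZ_p, \quad a \in \SU_p,
\]
for $p$-adic exponents. I would obtain it by continuity and density in two passes. For a fixed integer $n$, the maps $\sigma \mapsto (a^\sigma)^n$ and $\sigma \mapsto a^{n\sigma}$ are both continuous on $\ZZ_p$ (each is a Mahler series of the kind in Proposition \ref{prop:exp-func}) and coincide on the dense subset $\ZZ$, where the identity is the ordinary law for integer powers; hence they agree on all of $\ZZ_p$. Fixing now $\sigma \in \ZZ_p$ and putting $b = a^\sigma \in \SU_p$, the same argument applied to the continuous maps $\rho \mapsto b^\rho$ and $\rho \mapsto a^{\sigma\rho}$, which agree for $\rho \in \ZZ$ by the first pass, extends the identity to arbitrary $\rho \in \ZZ_p$.

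Granting the law, the conclusion is immediate: if $a^\tau = 1$ then
\[
   a = a^{\tau\tau^{-1}} = (a^\tau)^{\tau^{-1}} = 1^{\tau^{-1}} = 1.
\]
I would emphasise that this route treats all primes uniformly; in particular $p = 2$ needs no special handling, because we never appeal to injectivity of the $p$-adic logarithm, which in fact fails on $1 + 2\ZZ_2$ owing to the torsion element $-1$, but only to the invertibility of the unit $\tau$ in $\ZZ_p$.
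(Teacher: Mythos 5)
Your proof is correct, but it takes a genuinely different route from the paper. The paper argues by contradiction with a direct valuation computation on the binomial series: writing $a = 1 + p^r\alpha$ with $\alpha \in \ZZ^*_p$ and $r \geq 1$, it observes that
\[
   a^\tau - 1 = \sum_{\nu \geq 1} p^{\nu r} \alpha^\nu \binom{\tau}{\nu}
     \equiv p^r \alpha \tau \pmod{p^{2r}\ZZ_p},
\]
so $a^\tau = 1$ would force $\alpha \equiv 0 \pmod{p^r\ZZ_p}$, contradicting $\alpha \in \ZZ^*_p$. That argument is short, self-contained, and uses nothing beyond the expansion already recorded in Proposition \ref{prop:exp-func}. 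You instead establish the exponentiation law $(a^\sigma)^\rho = a^{\sigma\rho}$ by a two-pass continuity-and-density argument and then cancel the exponent via $\tau^{-1}$. This is valid: the only points needing care are that $a^\sigma \in \SU_p$ (so the outer exponentiation is defined), that the Mahler-series power agrees with the ordinary power on integer exponents (for negative integers this again follows by density of $\NN_0$), and the continuity of the composed maps — all of which you address or which are routine. What your route buys is the stronger conclusion that $a \mapsto a^\tau$ is an automorphism of $\SU_p$ for any unit exponent, and, as you note, uniformity in $p$ without invoking the $p$-adic logarithm; what it costs is that most of the work goes into the exponent law, which the paper never needs to state. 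Both proofs ultimately use only that $\tau$ is a unit of $\ZZ_p$, not the full hypothesis $\tau \in \SU_p$.
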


\begin{proof}
If $a=1$, then we are ready.
Assume that $a=1+p^r\alpha$ with some $\alpha \in \ZZ_p^*$ and $r \geq 1$.
Then
\[
   0 = a^\tau - 1 = \sum_{\nu \geq 1} p^{\nu r} \alpha^\nu \binom{\tau}{\nu}
     \equiv p^r \alpha \tau \pmod{p^{2r}\ZZ_p}.
\]
Since $\tau \in \SU_p$, we obtain $\alpha \equiv 0 \pmod{p^r\ZZ_p}$.
Contradiction.
\end{proof}

Using the ring properties of $\KS_{p,2}$, we can give the following
applications.

\begin{prop}
Let $f \in \KS_{p,2}$, resp., $f \in \KS^*_{p,2}$
and $r \in \NN$, resp., $r \in \ZZ$. For $s \in \ZZ_p$ and $n \geq 0$ we have
\[
   \left( \sum_{\nu = 0}^{n} f(\nu) \binom{s}{\nu} \binom{n-s}{n-\nu} \right)^r
     \equiv \,\, \sum_{\nu = 0}^{n} f(\nu)^r \binom{s}{\nu} \binom{n-s}{n-\nu}
     \pmod{p^{n+1} \ZZ_p}.
\]
\end{prop}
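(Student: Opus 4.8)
The plan is to read both sides of the claimed congruence through Proposition \ref{prop:kummer-congr-value}, which represents any $f \in \KS_{p,2}$ by its finite Mahler expansion modulo $p^{n+1}\ZZ_p$. The decisive observation is that powers of $f$ remain in a Kummer space: by the ring property of $\KS_{p,2}$ established above, $f^r \in \KS_{p,2}$ for every $r \in \NN$, while for $f \in \KS^*_{p,2}$ the unit group structure gives $f^r \in \KS^*_{p,2}$ for all $r \in \ZZ$, including negative exponents. This reduces the whole statement to applying one known formula twice and chaining the results.

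First I would apply Proposition \ref{prop:kummer-congr-value} to $g = f^r$. Since $g(\nu) = f(\nu)^r$, this directly yields
\[
   f(s)^r = g(s) \equiv \sum_{\nu=0}^n f(\nu)^r \binom{s}{\nu}\binom{n-s}{n-\nu}
     \pmod{p^{n+1}\ZZ_p},
\]
which is exactly the right-hand side of the claim. Next I would apply the same proposition to $f$ itself,
\[
   f(s) \equiv \sum_{\nu=0}^n f(\nu)\binom{s}{\nu}\binom{n-s}{n-\nu} \pmod{p^{n+1}\ZZ_p},
\]
and raise this congruence to the $r$-th power. For $r \in \NN$ this is immediate, since congruences modulo $p^{n+1}\ZZ_p$ are preserved under multiplication; it produces the left-hand side of the claim, again equal to $f(s)^r$. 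Transitivity through the common value $f(s)^r$ then gives the result.

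The only point requiring care is the case $f \in \KS^*_{p,2}$ with negative $r$. Here I would invoke that $f(0) \in \ZZ^*_p$ together with the Kummer congruence $f(s) \equiv f(0) \pmod{p\ZZ_p}$, so that $f(s) \in \ZZ^*_p$ for every $s$, and likewise the finite sum is a unit, being congruent to $f(s)$ modulo $p^{n+1}\ZZ_p$. Inversion then preserves congruences: if $a \equiv b \pmod{p^{n+1}\ZZ_p}$ are units, then $a^{-1} - b^{-1} = (b-a)(ab)^{-1} \in p^{n+1}\ZZ_p$. Hence raising the congruence to any integer power $r$ is legitimate, and the argument carries through verbatim. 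This unit bookkeeping is the main (and only genuine) obstacle; the rest is a direct appeal to the ring structure of $\KS_{p,2}$ and to Proposition \ref{prop:kummer-congr-value}.
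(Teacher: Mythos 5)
Your proposal is correct and follows essentially the same route as the paper: the paper's proof likewise observes that $f^r \in \KS_{p,2}$, resp., $f^r \in \KS^*_{p,2}$ by the ring and unit-group structure, and then notes that both sides are congruent to $f(s)^r$ by Proposition \ref{prop:kummer-congr-value}. Your additional unit bookkeeping for negative $r$ is a valid elaboration of a step the paper leaves implicit.
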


\begin{proof}
Since $\KS_{p,2}$ is a ring, we have $f^r \in \KS_{p,2}$, resp.,
$f^r \in \KS^*_{p,2}$. By Proposition \ref{prop:kummer-congr-value} both
sides of the congruence above are congruent to $f(s)^r$.
\end{proof}

\begin{prop} \label{prop:conv-ks2}
Let $f, g \in \KS_{p,2}$, $s \in \ZZ_p$, and $n \geq 1$. Define the convolution
\[
   \Dop^n (f \star g)(s) = \sum_{\nu=0}^n \binom{n}{\nu}
     (-1)^{n-\nu} f(s+\nu) g(s+n-\nu).
\]
Then $\Dop^n (f \star g)(s) \equiv 0 \pmod{p^n\ZZ_p}$.
\end{prop}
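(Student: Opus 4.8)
The plan is to recognize the convolution sum as the binomial expansion of a single two-variable difference operator, which then decouples into a product of ordinary differences of $f$ and $g$ taken separately. To this end I would introduce the forward shift operators $E_1$ and $E_2$ acting respectively on the first and second argument of a bivariate function, so that $E_1^\nu E_2^{n-\nu} f(x)g(y) = f(x+\nu)\,g(y+n-\nu)$. With this notation the defining sum becomes
\[
   \Dop^n (f \star g)(s)
     = \sum_{\nu=0}^n \binom{n}{\nu}(-1)^{n-\nu}
        E_1^\nu E_2^{n-\nu} f(x)g(y) \, \valueat{x=y=s}
     = (E_1 - E_2)^n \, f(x)g(y) \, \valueat{x=y=s},
\]
since $E_1$ and $E_2$ commute and the displayed sum is exactly the binomial expansion of $(E_1-E_2)^n$.

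Next I would pass from shifts to differences. Writing $D_1 = E_1 - \id$ and $D_2 = E_2 - \id$ for the difference operators in the two variables, the constant $\id$ cancels in the difference, so that $E_1 - E_2 = D_1 - D_2$ and therefore
\[
   \Dop^n (f \star g)(s)
     = (D_1 - D_2)^n \, f(x)g(y) \, \valueat{x=y=s}
     = \sum_{k=0}^n \binom{n}{k}(-1)^{n-k}\,
        \Dop^k f(s)\, \Dop^{n-k} g(s),
\]
where in the last step I use that $D_1$ acts only on $x$ and $D_2$ only on $y$, so that $D_1^k D_2^{n-k} f(x)g(y) = (\Dop^k f)(x)\,(\Dop^{n-k} g)(y)$, evaluated on the diagonal $x=y=s$. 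This is a Leibniz-type identity for the convolution.

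Finally I would invoke the Kummer type congruences I. By the argument already used in the proof of Lemma \ref{lem:ks2-kummer-congr-0}, a function in $\KS_{p,2}$ satisfies $\Dop^k f(s) \equiv 0 \pmod{p^k \ZZ_p}$ for every $s \in \ZZ_p$, and likewise $\Dop^{n-k} g(s) \equiv 0 \pmod{p^{n-k}\ZZ_p}$. Hence each product $\Dop^k f(s)\,\Dop^{n-k} g(s)$ lies in $p^k \cdot p^{n-k}\ZZ_p = p^n\ZZ_p$, and since the binomial factors are integers the whole sum lies in $p^n\ZZ_p$, giving $\Dop^n (f \star g)(s) \equiv 0 \pmod{p^n\ZZ_p}$. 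The only genuine step is spotting the operator identity $(E_1-E_2)^n = (D_1-D_2)^n$ that turns the diagonal convolution into a separated double difference; once that is in place, the divisibility bookkeeping is immediate.
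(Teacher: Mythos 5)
Your proof is correct, but it takes a genuinely different route from the paper. The paper proves the claim by precomposing $g$ with the reflection $\lambda(t) = 2\hat{s}+n-t$, invoking Lemma \ref{lem:f-comp-lin-ks2} to conclude $\hat{g}=g\circ\lambda \in \KS_{p,2}$, and then applying the ring property of $\KS_{p,2}$ to the product $f\cdot\hat{g}$, whose ordinary $n$-th difference at the chosen point coincides with the convolution. Your argument instead rests on the operator identity $(E_1-E_2)^n=(D_1-D_2)^n$, which converts the diagonal convolution into the Leibniz-type sum
$\sum_{k=0}^{n}\binom{n}{k}(-1)^{n-k}\,\Dop^{k}f(s)\,\Dop^{n-k}g(s)$,
after which the divisibility $p^{k}\cdot p^{n-k}=p^{n}$ is immediate from the defining Kummer type congruences I (which, by Definition \ref{def:kummer-space}, already hold at every $s\in\ZZ_p$, so the appeal to Lemma \ref{lem:ks2-kummer-congr-0} is not even needed). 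The identity is easily verified (e.g.\ for $n=1,2$) and holds in general since $E_1,E_2$ commute. What your approach buys is self-containment and transparency: it avoids both the closure of $\KS_{p,2}$ under composition with linear maps and the multiplicativity theorem, and it exhibits exactly where each factor of $p$ comes from. What the paper's approach buys is economy given the machinery already developed: once $\KS_{p,2}$ is known to be a ring stable under $s\mapsto a+bs$, the proposition is a two-line corollary, and the same device generalizes to other convolutions built from compositions with affine maps.
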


\begin{proof}
Define $\hat{g} = g \circ \lambda$, where $\lambda(s) = 2\hat{s} + n - s$
with a fixed $\hat{s} \in \ZZ_p$.
Then $\hat{g}(s + \nu) = g(2\hat{s} + n - s - \nu)$.
By Lemma \ref{lem:f-comp-lin-ks2} we know that $\hat{g} \in \KS_{p,2}$.
Thus $f \cdot \hat{g} \in \KS_{p,2}$. Now, let $s$ be fixed and choose
$\hat{s} = s$. We finally obtain in this case that
\[
   \Dop^n (f \star g)(s) = \Dop^n (f \cdot \hat{g})(s)
     \equiv 0 \pmod{p^n\ZZ_p}. \qedhere
\]
\end{proof}

Now, we shall examine properties of products of functions of $\KS_{p,2}$.

\begin{prop} \label{prop:prod-f-delta}
Let $n \geq 1$ and
\[
   F = \prod_{\nu=1}^n f_\nu , \quad f_\nu \in \KS_{p,2}.
\]
Then
\[
   \Delta_F(0) = \prod_{\nu=1}^n \Delta_{f_\nu}(0)
\]
and
\[
   \Delta_F(k) \equiv \sum_{\nu_1+\cdots+\nu_n=k} \binom{k}{\nu_1,\ldots,\nu_n}
     \Delta_{f_1}(\nu_1) \cdots \Delta_{f_n}(\nu_n) \pmod{p\ZZ_p}
\]
for $k \geq 1$.
\end{prop}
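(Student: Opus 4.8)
The plan is to reduce to the case of two factors and then induct on $n$. The identity for $\Delta_F(0)$ is immediate and exact: since $\Delta_h(0)=h(0)$ for every $h\in\KS_{p,2}$ and $F(0)=\prod_{\nu=1}^n f_\nu(0)$, we obtain $\Delta_F(0)=\prod_{\nu=1}^n\Delta_{f_\nu}(0)$. For $k\geq 1$ I first observe that the right-hand side is precisely the $k$-th coefficient (times $k!$) of the product of the formal exponential generating series $\sum_{j\geq 0}\Delta_{f_\nu}(j)\,x^j/j!$. This makes it natural to prove the two-factor congruence
\[
   \Delta_{fg}(k)\equiv\sum_{i+j=k}\binom{k}{i}\Delta_f(i)\,\Delta_g(j)\pmod{p\ZZ_p},
     \quad f,g\in\KS_{p,2},
\]
and then to iterate it.

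For the two-factor step I would start from the product expansion \eqref{eq:prod-func}, which already sorts $w=fg$ by total degree,
\[
   w(s)=\sum_{N\geq 0}p^N\sum_{j+k=N}\Delta_f(j)\,\Delta_g(k)\,
     \binom{s}{j}\binom{s}{k}.
\]
Each inner term has sup-norm at most $p^{-N}$, so the series converges uniformly and, exactly as in the proof of Proposition \ref{prop:ks2-eq-ks3}, the operator $\Dop^m$ may be applied term by term and evaluated at $s=0$. Here Lemma \ref{lem:delta-poly} does all the work on the polynomial $\binom{s}{j}\binom{s}{k}$ of degree $N=j+k$: for $N<m$ it is annihilated, for $N=m$ the difference returns $m!$ times the leading coefficient $1/(j!\,k!)$, that is $\binom{m}{j}$, and for $N>m$ the value $\Dop^m\binom{s}{j}\binom{s}{k}\valueat{s=0}$ is merely some integer. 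Writing $\Delta_w(m)=p^{-m}\Dop^m w(0)$, every surviving term with $N>m$ then carries a factor $p^{N-m}$ with $N-m\geq 1$ and vanishes modulo $p$, so only the diagonal $j+k=m$ remains and yields the two-factor congruence above. The point worth stressing is that one never needs the full connection coefficients of $\binom{s}{j}\binom{s}{k}$, only the leading-term evaluation supplied by Lemma \ref{lem:delta-poly}.

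With the two-factor identity in hand, the general statement follows by induction on $n$. Putting $G=\prod_{\nu=1}^{n-1}f_\nu$ and applying the two-factor congruence to $F=G\cdot f_n$, I insert the inductive expression for $\Delta_G(i)$, set $\nu_n=k-i$, and collapse the coefficients via
\[
   \binom{k}{\nu_n}\binom{k-\nu_n}{\nu_1,\ldots,\nu_{n-1}}
     =\binom{k}{\nu_1,\ldots,\nu_n}.
\]
The sole obstacle lies in the two-factor case, specifically the degree bookkeeping that isolates the top-degree contribution modulo $p$; once it is clear that only the terms with $j+k=m$ survive, both the leading-coefficient evaluation and the ensuing induction are routine.
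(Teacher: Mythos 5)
Your proof is correct and follows essentially the same route as the paper: the paper applies $\Dop^k$ directly to the $n$-fold product expansion sorted by total degree, reads off the multinomial coefficient as $k!$ times the leading coefficient via Lemma \ref{lem:delta-poly}, and discards the higher-degree terms because they carry a factor $p^{k'-k}$. Your only deviation is organizational --- you establish the two-factor congruence first and then induct on $n$ via the multinomial recursion --- which changes nothing essential.
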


\begin{proof}
In view of \eqref{eq:prod-func}, we obtain more generally that
\[
   F(s) = \sum_{k \geq 0} p^k \sum_{\nu_1+\cdots+\nu_n=k} \!\!\!\!
     \Delta_{f_1}(\nu_1) \cdots \Delta_{f_n}(\nu_n)
     \binom{s}{\nu_1} \cdots \binom{s}{\nu_n}.
\]
Since $F \in \KS_{p,2}$, we have the Mahler expansion
\[
   F(s) = \sum_{k \geq 0} \Delta_F(k) \, p^k \binom{s}{k}.
\]
For $k=0$ we have $\Delta_F(0) = F(0) = f_1(0) \cdots f_n(0)
= \Delta_{f_1}(0) \cdots \Delta_{f_n}(0)$. Let $k \geq 1$ be fixed, then
$\pi(s) = (s)_{\nu_1} \cdots (s)_{\nu_n}$ is a monic polynomial of degree $k$.
Note that $\Dop (s)_m = m \, (s)_{m-1}$ and $\Dop^k \pi(0) = k!$ by Lemma
\ref{lem:delta-poly}, but in general $\Dop^r \pi(0) \neq 0$ for $k > r \geq 1$,
because $\Dop^r \pi(s)$ can have a constant term. Evaluating $\Dop^k F(0)/p^k$
we obtain
\begin{align*}
   \Delta_F(k) = & \sum_{\nu_1+\cdots+\nu_n=k} \binom{k}{\nu_1,\ldots,\nu_n}
     \Delta_{f_1}(\nu_1) \cdots \Delta_{f_n}(\nu_n) \\
     & + \sum_{k' > k} p^{k'-k} \!\!\!\!
     \sum_{\nu_1+\cdots+\nu_n=k'} \!\!\!\!
     \Delta_{f_1}(\nu_1) \cdots \Delta_{f_n}(\nu_n) \,
     \Dop^k \, \binom{s}{\nu_1} \cdots \binom{s}{\nu_n} \valueat{s=0},
\end{align*}
where the second sum converges on $\ZZ_p$ and vanishes$\pmod{p\ZZ_p}$.
\end{proof}

\begin{defin}
We define for functions $f \in \KS_{p,2}$ the following parameters
\begin{alignat*}{2}
   \lambda_f &= \min_{\nu \geq 0} \{ \nu : \Delta_f(\nu) \in \ZZ^*_p \},
     &\quad  \mu_f &= \max_{\nu \geq 0} \{ \nu : f/p^\nu \in \KS_{p,2} \} \\
\intertext{and we set $\lambda_f = \mu_f = \infty$ in case $f=0$.
   We further define}
   \KS^\prime_{p,2} &= \{ f \in \KS_{p,2} : \lambda_f < \infty \},
     &\quad \KS^{\prime\prime}_{p,2} &= \{ f/ p^{\mu_f} : f \in \KS_{p,2} \}.
\end{alignat*}
\end{defin}

\begin{lemma} \label{lem:decomp-ks2}
We have $\KS^\prime_{p,2} = \KS^{\prime\prime}_{p,2}$ and
the decomposition $\KS_{p,2} = \KS^\prime_{p,2} \times p^{\NN_0}$.
\end{lemma}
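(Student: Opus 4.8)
The plan is to reduce both assertions to a single valuation-theoretic identity expressing $\mu_f$ through the Mahler coefficients of $f$; once that is in hand, the rest is bookkeeping with $p$-adic orders. So first I would prove that for every nonzero $f \in \KS_{p,2}$,
\[
   \mu_f = \min_{\nu \geq 0} \ord_p \Delta_f(\nu).
\]
The mechanism is that dividing the expansion $f(s) = \sum_{\nu \geq 0} \Delta_f(\nu) \, p^\nu \binom{s}{\nu}$ by $p^\mu$ turns the Mahler coefficients into $\Delta_f(\nu) \, p^{\nu-\mu}$. Invoking the basis description of $\KS_{p,2}$ (its basis being $\BKS_p$), membership $f/p^\mu \in \KS_{p,2}$ holds exactly when each new coefficient is still divisible by $p^\nu$, that is, when $\ord_p \Delta_f(\nu) \geq \mu$ for all $\nu$. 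Taking the largest admissible $\mu$ yields the displayed formula; it is finite because $f \neq 0$ forces some coefficient to have finite order.

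From this formula I would read off the equivalence $\lambda_f < \infty \Longleftrightarrow \mu_f = 0$, since the minimal order vanishes precisely when some $\Delta_f(\nu)$ is a unit. This identifies $\KS^\prime_{p,2}$ with $\{ f \in \KS_{p,2} : \mu_f = 0 \}$, where I tacitly exclude $f=0$ (for which $\lambda_f = \mu_f = \infty$ and $f/p^{\mu_f}$ is undefined). The equality $\KS^\prime_{p,2} = \KS^{\prime\prime}_{p,2}$ then follows in both directions: if $\mu_f = 0$ then $f = f/p^{\mu_f}$ lies in $\KS^{\prime\prime}_{p,2}$; conversely $g = f/p^{\mu_f}$ has coefficients $\Delta_g(\nu) = \Delta_f(\nu)/p^{\mu_f}$, so $\mu_g = \mu_f - \mu_f = 0$ and $g \in \KS^\prime_{p,2}$.

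For the decomposition I would assign to each nonzero $f$ the pair $(g, p^n)$ with $n = \mu_f$ and $g = f/p^n \in \KS^\prime_{p,2}$, so that $f = p^n g$. Uniqueness rests on the additivity $\mu_{p^n g} = n + \mu_g$, which holds because multiplying by $p^n$ shifts every coefficient order up by $n$; comparing $\mu$-values in any factorization $f = p^n g = p^m h$ with $g,h \in \KS^\prime_{p,2}$ then forces $n = m$ and hence $g = h$. This exhibits the claimed bijective correspondence between $\KS_{p,2}$ (away from $0$) and $\KS^\prime_{p,2} \times p^{\NN_0}$.

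I expect the only real obstacle to be the first step: translating the defining condition ``$f/p^\mu \in \KS_{p,2}$'' into the clean divisibility statement on $\ord_p \Delta_f(\nu)$, which is exactly where the basis characterization of $\KS_{p,2}$ does the work. Everything afterward is elementary arithmetic of $p$-adic valuations, so I would keep those verifications brief.
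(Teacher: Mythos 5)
Your proof is correct and follows essentially the same route as the paper: both arguments rest on the observation that dividing $f$ by $p$ divides every Mahler coefficient $\Delta_f(\nu)$ by $p$, combined with the basis characterization of $\KS_{p,2}$ — the paper phrases this as an iterated splitting-off of the prime $p$, while you package it as the closed-form identity $\mu_f = \min_{\nu \geq 0} \ord_p \Delta_f(\nu)$. Your explicit formula makes the termination of the iteration and the uniqueness of the factorization slightly more transparent, and your exclusion of $f=0$ is an acceptable variant of the paper's $0/0 := 1$ convention.
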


\begin{proof}
We have $\ZZ_p \subset \KS_{p,2}$, since constant functions are in $\KS_{p,2}$.
Note that $1 \in \KS^\prime_{p,2}$ and $0, 1 \in p^{\NN_0}$. Let
$f \in \KS_{p,2}$ where $f \neq 0$. In case $\lambda_f = \infty$ we can split
the prime $p$ from $f$ to get $f/p \cdot p$.
Then $\Delta_{f/p}(\nu) = \Delta_f(\nu)/p$ for all $\nu \geq 1$ and
$f/p \in \KS_{p,2}$. This procedure can be finitely repeated, say $r$ times,
until $f/p^r \in \KS^\prime_{p,2}$ and we are ready. In case
$\lambda_f < \infty$ set $r=0$. By construction $r$ is maximal, so we get
$r = \mu_f$ and $f/p^{\mu_f} \in \KS^\prime_{p,2}$. This shows the
decomposition $\KS_{p,2} = \KS^\prime_{p,2} \times p^{\NN_0}$. Now,
by the same arguments we achieve that
$\KS^\prime_{p,2} = \KS^{\prime\prime}_{p,2}$, since $r = \mu_f$ is maximal.
The case $f=0$ yields the term $0/0$, which we define to be 1.
\end{proof}

\begin{prop} \label{prop:prod-f-prime-delta}
Let $n \geq 1$ and
\[
   F = \prod_{\nu=1}^n f_\nu , \quad f_\nu \in \KS^\prime_{p,2}.
\]
Then
\[
   \Delta_F(m) \equiv \binom{m}{\lambda_{f_1},\ldots,\lambda_{f_n}}
     \prod_{\nu=1}^n \Delta_{f_\nu} ( \lambda_{f_\nu} ) \pmod{p\ZZ_p},
\]
where
\[
   \lambda_F \geq m = \sum_{\nu=1}^n \lambda_{f_\nu}.
\]
Moreover, $\lambda_F > m$ if and only if
\[
   \binom{m}{\lambda_{f_1},\ldots,\lambda_{f_n}} \in p\ZZ_p.
\]
\end{prop}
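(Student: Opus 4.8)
The plan is to deduce the entire statement from Proposition \ref{prop:prod-f-delta}, which already records, for $k \geq 1$, the congruence
\[
   \Delta_F(k) \equiv \sum_{\nu_1+\cdots+\nu_n=k} \binom{k}{\nu_1,\ldots,\nu_n}
     \Delta_{f_1}(\nu_1) \cdots \Delta_{f_n}(\nu_n) \pmod{p\ZZ_p}.
\]
First I would isolate the one elementary fact that drives everything: by definition of $\lambda_{f_j}$ as the least index at which $\Delta_{f_j}$ assumes a unit value, we have $\Delta_{f_j}(i) \in p\ZZ_p$ whenever $i < \lambda_{f_j}$, while $\Delta_{f_j}(\lambda_{f_j}) \in \ZZ^*_p$. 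Consequently a summand above can avoid lying in $p\ZZ_p$ only if every factor $\Delta_{f_j}(\nu_j)$ is a unit, and this forces $\nu_j \geq \lambda_{f_j}$ for all $j = 1,\ldots,n$.

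Summing these constraints yields $k = \nu_1 + \cdots + \nu_n \geq \sum_{\nu=1}^n \lambda_{f_\nu} = m$. Hence for each $k < m$ every summand contains a factor in $p\ZZ_p$, so $\Delta_F(k) \in p\ZZ_p$; as this says that no index below $m$ carries a unit value of $\Delta_F$, I obtain $\lambda_F \geq m$.

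At the critical index $k = m$ the inequalities $\nu_j \geq \lambda_{f_j}$ and the exact total $\sum_j \nu_j = m = \sum_j \lambda_{f_j}$ are compatible only for the single balanced composition $\nu_j = \lambda_{f_j}$, $1 \leq j \leq n$. Every other summand is killed mod $p$ by one of its $\Delta$-factors, so only the balanced term survives and
\[
   \Delta_F(m) \equiv \binom{m}{\lambda_{f_1},\ldots,\lambda_{f_n}}
     \prod_{\nu=1}^n \Delta_{f_\nu}(\lambda_{f_\nu}) \pmod{p\ZZ_p},
\]
as claimed. Since $\prod_{\nu} \Delta_{f_\nu}(\lambda_{f_\nu})$ is a product of units and hence a unit, $\Delta_F(m) \in p\ZZ_p$ holds exactly when $\binom{m}{\lambda_{f_1},\ldots,\lambda_{f_n}} \in p\ZZ_p$; and because $\lambda_F \geq m$ is already established, $\lambda_F > m$ is equivalent to $\Delta_F(m)$ being a non-unit, that is, to $\Delta_F(m) \in p\ZZ_p$. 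This yields the final equivalence.

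I do not expect a genuine obstacle here; the work lies entirely in recognising the single surviving composition, and the rest is a direct read-off from Proposition \ref{prop:prod-f-delta}. The only bookkeeping point is the degenerate case $m = 0$, where all $\lambda_{f_\nu} = 0$ and the $k \geq 1$ formula does not apply: there one falls back on the clause $\Delta_F(0) = \prod_{\nu} \Delta_{f_\nu}(0)$, with each $\Delta_{f_\nu}(0) \in \ZZ^*_p$ and $\binom{0}{0,\ldots,0} = 1$, so that $\lambda_F = 0 = m$ and the equivalence holds trivially. The conceptual heart, worth stating explicitly, is simply that the lower bounds $\nu_j \geq \lambda_{f_j}$ together with the exact sum $m$ pin down a unique admissible composition.
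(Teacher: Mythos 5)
Your proposal is correct and follows essentially the same route as the paper: both read the claim off the congruence of Proposition \ref{prop:prod-f-delta}, observe that a summand can be a unit only if $\nu_j \geq \lambda_{f_j}$ for every $j$, conclude that all terms vanish mod $p$ for $k<m$ and that only the balanced composition $\nu_j=\lambda_{f_j}$ survives at $k=m$, and then deduce the equivalence from the fact that $\prod_\nu \Delta_{f_\nu}(\lambda_{f_\nu})$ is a unit. Your explicit treatment of the degenerate case $m=0$ (where the $k\geq 1$ congruence must be replaced by the exact identity $\Delta_F(0)=\prod_\nu \Delta_{f_\nu}(0)$) is a small point the paper glosses over, but it does not change the substance.
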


\begin{proof}
From Proposition \ref{prop:prod-f-delta} we have for $k \geq 0$ that
\[
   \Delta_F(k) \equiv \sum_{\nu_1+\cdots+\nu_n=k} \binom{k}{\nu_1,\ldots,\nu_n}
     \Delta_{f_1}(\nu_1) \cdots \Delta_{f_n}(\nu_n) \pmod{p\ZZ_p}.
\]
Case $k < m$: Since $\Delta_{f_j}(\nu) \in p\ZZ_p$ for
$\lambda_{f_j} > \nu \geq 0$ and $\Delta_{f_j}(\lambda_{f_j}) \in \ZZ^*_p$ for
$j \in \{ 1,\ldots, n \}$, we observe that
$\Delta_F(k) \in p\ZZ_p$ for $m > k \geq 0$. Case $k=m$:
All terms of the sum, except for the term where
$\nu_1 = \lambda_{f_1}, \ldots, \nu_n = \lambda_{f_n}$, vanish$\pmod{p\ZZ_p}$.
This gives the proposed formula for $\Delta_F(m)$.
Let $b = \binom{m}{\lambda_{f_1},\ldots,\lambda_{f_n}}$.
If $b \in \ZZ^*_p$, then $\lambda_F = m$. Otherwise $b \in p\ZZ_p$
implies that $\lambda_F > m$.
\end{proof}

\begin{corl} \label{corl:lambda-inv}
Let $f \in \KS^\prime_{p,2}$, $u \in \KS^*_{p,2}$, and $g=fu$. The parameter
$\lambda_f$ is invariant under multiplication of $f$ and units $u$:
\[
   \lambda_g = \lambda_f, \quad \Delta_g(\lambda_g)
     \equiv \Delta_f(\lambda_f) \, u(0) \pmod{p\ZZ_p}.
\]
\end{corl}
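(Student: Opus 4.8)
The plan is to derive this directly from Proposition \ref{prop:prod-f-prime-delta} by taking $n=2$ with the two factors $f_1 = f$ and $f_2 = u$. The first thing I would check is that $u$ is admissible as such a factor, i.e.\ that $u \in \KS^\prime_{p,2}$. Since $u \in \KS^*_{p,2}$ we have $u(0) \in \ZZ^*_p$, and from the Mahler expansion \eqref{eq:mahler-exp-ks2} we read off $\Delta_u(0) = u(0) \in \ZZ^*_p$. Hence $\lambda_u = 0 < \infty$, so indeed $u \in \KS^\prime_{p,2}$, and moreover $\Delta_u(\lambda_u) = \Delta_u(0) = u(0)$.

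Next I would invoke Proposition \ref{prop:prod-f-prime-delta} for the product $g = f u$. It gives $\lambda_g \geq m$ with $m = \lambda_f + \lambda_u = \lambda_f$, together with the congruence
\[
   \Delta_g(m) \equiv \binom{m}{\lambda_f,\lambda_u} \Delta_f(\lambda_f)\,\Delta_u(\lambda_u) \pmod{p\ZZ_p}.
\]
The decisive simplification is that the multinomial coefficient collapses: $\binom{m}{\lambda_f,\lambda_u} = \binom{\lambda_f}{\lambda_f,0} = 1$. Since $1 \in \ZZ^*_p$, the equivalence in Proposition \ref{prop:prod-f-prime-delta} (namely $\lambda_F > m$ iff the multinomial coefficient lies in $p\ZZ_p$) forces $\lambda_g = m = \lambda_f$, which establishes the first assertion.

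Finally, substituting this unit coefficient and $\Delta_u(\lambda_u) = u(0)$ into the congruence above yields
\[
   \Delta_g(\lambda_g) = \Delta_g(m) \equiv \Delta_f(\lambda_f)\,u(0) \pmod{p\ZZ_p},
\]
which is the second assertion. I expect no genuine obstacle here: the whole statement is just a specialization of the preceding proposition to the situation in which one factor is a unit, the only points requiring care being the identifications $\lambda_u = 0$ and $\Delta_u(0) = u(0)$, and the evaluation of the multinomial coefficient as $1$.
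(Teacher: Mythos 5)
Your proposal is correct and follows essentially the same route as the paper: both apply Proposition \ref{prop:prod-f-prime-delta} with the two factors $f$ and $u$, note that $\lambda_u = 0$ so $m = \lambda_f$ and the multinomial coefficient is $\binom{m}{m} = 1 \in \ZZ^*_p$, and conclude both assertions at once. Your added verification that $u \in \KS^\prime_{p,2}$ with $\Delta_u(0) = u(0)$ is a detail the paper leaves implicit, but the argument is the same.
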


\begin{proof}
Using Proposition \ref{prop:prod-f-prime-delta},
we get $m = \lambda_f + \lambda_u = \lambda_f$ and
\[
   \Delta_g(m) \equiv \binom{m}{m} \Delta_f(\lambda_f) \Delta_u(\lambda_u)
     \equiv \Delta_f(\lambda_f) \, u(0) \not\equiv 0 \pmod{p\ZZ_p}.
\]
Thus $\lambda_g = m = \lambda_f$.
\end{proof}

\begin{theorem} \label{thm:poly-f-ks2}
Let $n \geq 1$ and
\[
    F = \prod_{\nu=1}^n f_\nu , \quad f_\nu \in \WKS^0_{p,2}.
\]
Then
\[
   F(s) = p^n \prod_{\nu=1}^n (s-\xi_\nu ) \cdot F^*(s),
     \quad s \in \ZZ_p,
\]
where $\xi_\nu \in \ZZ_p$ is the zero of $f_\nu$. Moreover,
\[
   F^*(s) \equiv \prod_{\nu=1}^n \Delta_{f_\nu} \pmod{p\ZZ_p},
     \quad F^* \in \CZ^*, \quad s \in \ZZ_p,
\]
and
\[
   \Delta_F(n) \equiv n! \, \prod_{\nu=1}^n \Delta_{f_\nu} \pmod{p\ZZ_p}.
\]
Moreover, $\ord_p \Delta_F(\nu) \geq n - \nu$ for $\nu=0,\dots,n$.
If $n < p$, then $\lambda_F = n$, otherwise $\lambda_F > n$.
\end{theorem}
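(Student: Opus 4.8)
The plan is to build the result on the two product propositions already established, namely Proposition~\ref{prop:prod-f-delta} (giving $\Delta_F(k)\pmod{p\ZZ_p}$ as a multinomial sum) and Proposition~\ref{prop:prod-f-prime-delta} (locating $\lambda_F$), together with the single-factor decomposition from Theorem~\ref{thm:ks2-zero-fixpnt}(3). First I would apply Theorem~\ref{thm:ks2-zero-fixpnt}(3) to each factor $f_\nu\in\WKS^0_{p,2}$ to write $f_\nu(s)=p\,(s-\xi_\nu)\,f_\nu^*(s)$ with $f_\nu^*(s)\equiv\Delta_{f_\nu}\pmod{p\ZZ_p}$ and $f_\nu^*\in\CZ^*$. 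Multiplying these $n$ decompositions immediately yields
\[
   F(s)=p^n\prod_{\nu=1}^n(s-\xi_\nu)\cdot F^*(s),\quad F^*(s)=\prod_{\nu=1}^n f_\nu^*(s).
\]
Since each $f_\nu^*\in\CZ^*$ (i.e.\ is a unit in $\CZ$, taking values in $\ZZ_p^*$), the product $F^*$ is continuous and nowhere zero, so $F^*\in\CZ^*$; reducing mod $p$ gives $F^*(s)\equiv\prod_{\nu=1}^n\Delta_{f_\nu}\pmod{p\ZZ_p}$. This disposes of the first two displayed assertions.

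Next I would compute $\Delta_F(n)\pmod{p\ZZ_p}$. Each $f_\nu\in\WKS^0_{p,2}$ has $f_\nu(0)\in p\ZZ_p$, so $\Delta_{f_\nu}(0)\equiv 0$, and $\Delta_{f_\nu}(1)\equiv\Delta_{f_\nu}\not\equiv 0\pmod{p\ZZ_p}$; this is exactly the statement that $\lambda_{f_\nu}=1$, so each $f_\nu\in\KS'_{p,2}$. Proposition~\ref{prop:prod-f-prime-delta} with every $\lambda_{f_\nu}=1$ then gives $m=\sum_{\nu=1}^n\lambda_{f_\nu}=n$ and
\[
   \Delta_F(n)\equiv\binom{n}{1,\ldots,1}\prod_{\nu=1}^n\Delta_{f_\nu}(1)
     \equiv n!\prod_{\nu=1}^n\Delta_{f_\nu}\pmod{p\ZZ_p},
\]
using $\binom{n}{1,\ldots,1}=n!$ and $\Delta_{f_\nu}(1)\equiv\Delta_{f_\nu}$. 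This is the claimed formula for $\Delta_F(n)$.

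For the order bound $\ord_p\Delta_F(\nu)\geq n-\nu$ I would argue that $F$ has the factor $p^n$ pulled out: from $F(s)=p^n\prod(s-\xi_\nu)F^*(s)$ and the fact that $\prod(s-\xi_\nu)F^*(s)\in\CZ$, one sees $F/p^n\in\KS_{p,2}$, hence $\mu_F\geq n$. Writing $F=p^n\,G$ with $G\in\KS_{p,2}$ gives $\Delta_F(\nu)=p^n\Delta_G(\nu)/p^{\,?}$; more directly, the Mahler coefficient $\Delta_F(\nu)\,p^\nu=\Dop^\nu F(0)$ carries the factor $p^n$ from $F$, so $p^n\mid p^\nu\Delta_F(\nu)$ as elements of $\ZZ_p$, giving $\ord_p\Delta_F(\nu)\geq n-\nu$ for $\nu=0,\dots,n$. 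Finally, for the dichotomy on $\lambda_F$: Proposition~\ref{prop:prod-f-prime-delta} states $\lambda_F>m=n$ precisely when $\binom{n}{1,\ldots,1}=n!\in p\ZZ_p$, and $\ord_p n!=0$ exactly when $n<p$; thus $\lambda_F=n$ for $n<p$ and $\lambda_F>n$ when $p\mid n!$, i.e.\ when $n\geq p$. The main obstacle is the order-bound step: one must argue cleanly that the common factor $p^n$ survives at the level of Mahler coefficients rather than only at the level of values, which is most safely handled via $\mu_F\geq n$ and Lemma~\ref{lem:decomp-ks2}, since $\Delta_{F/p^n}(\nu)=\Delta_F(\nu)/p^n$ forces $\ord_p\Delta_F(\nu)\geq n$ in fact—so care is needed to reconcile this with the weaker stated bound $n-\nu$, and I would present the bound in the form that matches the intended reading that the $\nu$-th coefficient loses at most $\nu$ of the $n$ powers of $p$ through the difference operator.
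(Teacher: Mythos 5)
Your overall route is the same as the paper's: factor each $f_\nu$ via Theorem \ref{thm:ks2-zero-fixpnt}(3), multiply, read off $\Delta_F(n)$ from Proposition \ref{prop:prod-f-prime-delta} with $\lambda_{f_\nu}=1$, and settle the $\lambda_F$ dichotomy from $\ord_p n!$. Those parts are correct and match the paper.

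The one genuine problem is in your treatment of the order bound. The assertion that $F/p^n\in\KS_{p,2}$, hence $\mu_F\geq n$, is false, and you should excise it: membership in $\KS_{p,2}$ requires the Kummer type congruences $\Dop^m(F/p^n)(s)\equiv 0\pmod{p^m\ZZ_p}$, which do not follow from $F/p^n$ merely being continuous and $\ZZ_p$-valued. Indeed $\mu_F\geq 1$ would force $\Delta_F(\nu)\in p\ZZ_p$ for every $\nu$, i.e.\ $\lambda_F=\infty$, contradicting $\lambda_F=n$ which you have just proved for $1\leq n<p$. This is exactly the tension you flag in your closing sentence but do not resolve; the resolution is that $\tilde F=F/p^n$ lies only in $\CZ$, not in $\KS_{p,2}$, so the quantity $\Delta_{F/p^n}(\nu)=\Delta_F(\nu)/p^n$ need not be $p$-integral. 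What \emph{is} true, and is all that is needed, is your ``more directly'' clause, which is also the paper's argument: since $\tilde F\colon\ZZ_p\to\ZZ_p$, one has $\Dop^\nu\tilde F(0)\in\ZZ_p$, hence
\[
   \Delta_F(\nu)=\Dop^\nu F(0)/p^\nu=p^{n-\nu}\,\Dop^\nu\tilde F(0)
\]
gives $\ord_p\Delta_F(\nu)\geq n-\nu$, and this bound is sharp at $\nu=n$ when $n<p$. Keep that argument, drop the appeal to $\mu_F$ and Lemma \ref{lem:decomp-ks2}, and the proof is complete.
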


\begin{proof}
By Theorem \ref{thm:ks2-zero-fixpnt} we have the following representation for a
function $f_\nu \in \WKS^0_{p,2}$: $f_\nu(s) = p \, (s-\xi_\nu) \, f_\nu^*(s)$,
where $f_\nu^*(s) \equiv \Delta_{f_\nu} \not\equiv 0 \pmod{p \ZZ_p}$.
Thus the product representation of $F$ and $F^*$ follows easily. Since
$\lambda_{f_\nu} = 1$ we get from Proposition \ref{prop:prod-f-prime-delta}
a simplified formula
$\Delta_F(n) \equiv n! \, \prod_{\nu=1}^n \Delta_{f_\nu} \pmod{p\ZZ_p}$.
Hence $\lambda_F = n$ if $n < p$. For $n \geq p$ we obtain
$\Delta_F(n) \equiv 0 \pmod{p\ZZ_p}$, which implies that $\lambda_F > n$
in that case. Now, $\ord_p \Delta_F(\nu) \geq n - \nu$ for $\nu=0,\dots,n$
follows by
$\Delta_F(\nu) = \Dop^\nu F(0)/p^\nu = p^{n-\nu} \, \Dop^\nu \tilde{F}(0)$,
where $\tilde{F} = F/p^n$ is a function on $\ZZ_p$ as seen above.
\end{proof}

\begin{defin}
We define the class of monic polynomial like functions by
\[
   \KS^{\rm s}_{p,2} = \left\{ f \in \KS_{p,2} :
     f = \prod_{\nu=1}^n f_\nu, \, n \geq 1, \, f_\nu \in \WKS^0_{p,2} \right\}
\]
having all roots in $\ZZ_p$.
\end{defin}

We get an analogue to the $p$-adic Weierstrass Preparation Theorem.

\begin{corl} \label{corl:decomp-poly}
If $f \in \KS^{\rm s}_{p,2}$ and $\lambda_f < p$, then
we have the decomposition
\[
   f = p^{\lambda_f} \times \pi_f \times u,
\]
where $\pi_f$ is a monic polynomial of degree $\lambda_f$, that
splits over $\ZZ_p$, and $u \in \CZ^*$.
\end{corl}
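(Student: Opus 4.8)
The plan is to obtain this decomposition as an essentially immediate repackaging of Theorem~\ref{thm:poly-f-ks2} into the shape of the $p$-adic Weierstrass Preparation Theorem. First I would use the very definition of $\KS^{\rm s}_{p,2}$ to fix a factorization $f = \prod_{\nu=1}^n f_\nu$ with each $f_\nu \in \WKS^0_{p,2}$ and all roots in $\ZZ_p$. Applying Theorem~\ref{thm:poly-f-ks2} to this product yields at once the representation
\[
   f(s) = p^n \prod_{\nu=1}^n (s-\xi_\nu) \cdot f^*(s), \quad s \in \ZZ_p,
\]
where $\xi_\nu \in \ZZ_p$ is the zero of $f_\nu$ and $f^* \in \CZ^*$.

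The one point that requires an argument is the identification of the exponent $n$ with $\lambda_f$. For this I would invoke the dichotomy in Theorem~\ref{thm:poly-f-ks2}, namely that $\lambda_F = n$ when $n < p$ and $\lambda_F > n$ when $n \geq p$. The hypothesis $\lambda_f < p$ excludes the second alternative: if we had $n \geq p$, then it would follow that $\lambda_f > n \geq p$, contradicting $\lambda_f < p$. Hence $n < p$, and therefore $\lambda_f = n$.

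With this in hand I would set $\pi_f(s) = \prod_{\nu=1}^{\lambda_f} (s-\xi_\nu)$ and $u = f^*$. By construction $\pi_f$ is monic of degree $\lambda_f$ and splits over $\ZZ_p$, since each $\xi_\nu \in \ZZ_p$, while $u \in \CZ^*$ by Theorem~\ref{thm:poly-f-ks2}; rewriting the displayed factorization then gives $f = p^{\lambda_f} \times \pi_f \times u$, as claimed. I do not anticipate a genuine obstacle here, as the substantive content is entirely carried by Theorem~\ref{thm:poly-f-ks2}. The only step demanding slight care is the contrapositive deduction $n = \lambda_f$ from $\lambda_f < p$, together with the observation that membership in $\KS^{\rm s}_{p,2}$ supplies precisely the data needed to match the hypotheses of that theorem.
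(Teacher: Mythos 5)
Your proposal is correct and follows the same route as the paper, which simply cites Theorem~\ref{thm:poly-f-ks2} and notes that $\lambda_f = n < p$. You merely spell out the contrapositive step identifying $n$ with $\lambda_f$, which the paper leaves implicit.
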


\begin{proof}
This follows from Theorem \ref{thm:poly-f-ks2}, since $\lambda_f = n < p$.
\end{proof}

Again, we have a close relation between the zeros and the fixed point of
$f \in \KS^{\rm s}_{p,2}$, which extends Lemma \ref{lem:rel-zero-fixpnt}.

\begin{corl}
If $f \in \KS^{\rm s}_{p,2}$ where $f(0) \neq 0$ and $n = \lambda_f < p$, then
\[
   (-1)^n \frac{\Delta_f(n)}{n!} \equiv \frac{\tau}{p^n \prod_{\nu=1}^n \xi_\nu}
     \pmod{p\ZZ_p}, \quad f(0)/\tau \equiv 1 \pmod{p\ZZ_p},
\]
and
\[
   \ord_p f(0) = \ord_p \tau = n + \sum_{\nu = 1}^n \ord_p \xi_\nu,
\]
where $\tau$ is the fixed point and $\xi_\nu$ are the zeros of $f$.
\end{corl}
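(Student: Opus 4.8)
The plan is to read off everything from the product decomposition furnished by Theorem \ref{thm:poly-f-ks2}, applied with $f$ in the role of $F$. Writing $f=\prod_{\nu=1}^n f_\nu$ with $f_\nu\in\WKS^0_{p,2}$ and $n=\lambda_f<p$, that theorem gives
\[
   f(s) = p^n \prod_{\nu=1}^n (s-\xi_\nu)\, f^*(s), \quad
   f^*(s) \equiv \prod_{\nu=1}^n \Delta_{f_\nu} \pmod{p\ZZ_p}, \quad
   f^* \in \CZ^*,
\]
together with $\Delta_f(n)\equiv n!\prod_\nu\Delta_{f_\nu}\pmod{p\ZZ_p}$; since $n<p$ we have $n!\in\ZZ_p^*$, so $\Delta_f(n)/n!\equiv\prod_\nu\Delta_{f_\nu}\pmod{p\ZZ_p}$. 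First I would evaluate at $s=0$ to obtain $f(0)=(-1)^n p^n\prod_\nu\xi_\nu\cdot f^*(0)$ with $f^*(0)\in\ZZ_p^*$; as $f(0)\neq0$ forces every $\xi_\nu\neq0$, this immediately yields $\ord_p f(0)=n+\sum_\nu\ord_p\xi_\nu$.

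Next I would bring in the fixed point $\tau$, which exists by Theorem \ref{thm:ks2-zero-fixpnt}. Substituting $s=\tau$ into the decomposition and using $f(\tau)=\tau$ gives $\tau=p^n\prod_\nu(\tau-\xi_\nu)\,f^*(\tau)$ with $f^*(\tau)\in\ZZ_p^*$, hence $\ord_p\tau=n+\sum_\nu\ord_p(\tau-\xi_\nu)$. Here $\tau\neq0$ (else $f(0)=f(\tau)=\tau=0$), and $\tau\neq\xi_\nu$, for $\tau=\xi_\nu$ would make $\tau$ a zero and then $\tau=f(\tau)=0$. Because $n\geq1$ and each $\ord_p(\tau-\xi_\nu)\geq0$, this relation forces $\ord_p\tau>\ord_p(\tau-\xi_\nu)$ for every $\nu$. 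The ultrametric inequality applied to $\xi_\nu=\tau-(\tau-\xi_\nu)$ then delivers $\ord_p\xi_\nu=\ord_p(\tau-\xi_\nu)$, which substituted back gives $\ord_p\tau=n+\sum_\nu\ord_p\xi_\nu=\ord_p f(0)$, the order statement.

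The crux, and the step I expect to require the most care, is upgrading these order equalities to the mod-$p$ identities. Writing $\xi_\nu=p^{v_\nu}a_\nu$ and $\tau-\xi_\nu=p^{v_\nu}b_\nu$ with $a_\nu,b_\nu\in\ZZ_p^*$, where $v_\nu=\ord_p\xi_\nu$, the strict inequality $\ord_p\tau>v_\nu$ applied to $\tau=p^{v_\nu}(a_\nu+b_\nu)$ forces $a_\nu+b_\nu\equiv0\pmod{p\ZZ_p}$, that is $a_\nu\equiv-b_\nu$. Consequently
\[
   \frac{\xi_\nu}{\tau-\xi_\nu} \equiv -1 \pmod{p\ZZ_p}
   \quad\text{and}\quad
   \frac{\tau-\xi_\nu}{\xi_\nu} \equiv -1 \pmod{p\ZZ_p}.
\]

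Finally I would combine these. Dividing $\tau=p^n\prod_\nu(\tau-\xi_\nu)f^*(\tau)$ by $p^n\prod_\nu\xi_\nu$ and invoking $f^*(\tau)\equiv\prod_\nu\Delta_{f_\nu}$ gives
\[
   \frac{\tau}{p^n\prod_\nu\xi_\nu}
     \equiv (-1)^n \prod_\nu \Delta_{f_\nu}
     \equiv (-1)^n \frac{\Delta_f(n)}{n!} \pmod{p\ZZ_p},
\]
which is the first congruence. For the second, forming the quotient $f(0)/\tau$ of the two product expressions, the factor $(-1)^n$ from $f(0)$, the $n$ ratios $\xi_\nu/(\tau-\xi_\nu)\equiv-1$, and $f^*(0)/f^*(\tau)\equiv1\pmod{p\ZZ_p}$ (both values being $\equiv\prod_\nu\Delta_{f_\nu}\in\ZZ_p^*$) multiply to $(-1)^n(-1)^n=1$, giving $f(0)/\tau\equiv1\pmod{p\ZZ_p}$. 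Taking $n=1$ recovers Lemma \ref{lem:rel-zero-fixpnt}, as expected.
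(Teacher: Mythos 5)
Your proof is correct, and its skeleton coincides with the paper's: both apply Theorem \ref{thm:poly-f-ks2} to get the two evaluations $f(0)=(-1)^n f^*(0)\,p^n\prod_{\nu}\xi_\nu$ and $\tau=f^*(\tau)\,p^n\prod_{\nu}(\tau-\xi_\nu)$ and then compare them, using $f^*(s)\equiv\Delta_f(n)/n!\pmod{p\ZZ_p}$. Where you genuinely diverge is in the key congruence relating $\prod_{\nu}(\tau-\xi_\nu)$ to $(-1)^n\prod_{\nu}\xi_\nu$. The paper expands the product and argues globally that every summand carrying a factor $\tau$ becomes divisible by $p$ after multiplying by $p^n$ and dividing by $\tau$, with the valuation identities falling out only at the end. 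You instead establish the valuations first ($\ord_p\tau=n+\sum_\nu\ord_p(\tau-\xi_\nu)$ forces $\ord_p\tau>\ord_p(\tau-\xi_\nu)$, hence $\ord_p\xi_\nu=\ord_p(\tau-\xi_\nu)$ by the ultrametric inequality) and convert the strict inequality into the termwise congruence $(\tau-\xi_\nu)/\xi_\nu\equiv-1\pmod{p\ZZ_p}$, which you then multiply out. Your route is a little longer but pins down exactly why the cross terms are negligible, and it delivers the per-factor relations as a by-product, directly extending the $n=1$ picture of Lemma \ref{lem:rel-zero-fixpnt}; the paper's expansion argument is shorter but leaves the reader to check the divisibility of each cross term. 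All the auxiliary points you flag (that $f(0)\neq0$ forces $\tau\neq0$, $\xi_\nu\neq0$, and $\tau\neq\xi_\nu$, and that $n!\in\ZZ_p^*$ because $n<p$) are handled correctly.
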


\begin{proof}
We have excluded the case $f(0)=0$ which implies $\tau = 0$ and vice versa,
so $\tau \neq 0$ and $f(0) \neq 0$. Since $n < p$ we obtain by
Theorem \ref{thm:poly-f-ks2} that
\[
  f(0) = (-1)^n f^*(0) \, p^n \prod_{\nu=1}^n \xi_\nu, \quad
  \tau = f^*(\tau) \, p^n \prod_{\nu=1}^n (\tau - \xi_\nu),
\]
where $f^*(s) \equiv \Delta_f(n)/n! \not\equiv 0 \pmod{p\ZZ_p}$ for $s \in \ZZ_p$.
Thus
\[
   f^*(\tau)^{-1} \equiv \frac{p^n \prod_{\nu=1}^n (\tau - \xi_\nu)}{\tau}
     \equiv (-1)^n p^n \prod_{\nu=1}^n \xi_\nu \, \Big/ \, \tau \pmod{p\ZZ_p},
\]
where we have used the following argument. Expanding the product above,
we get summands $s_j$ which have a factor $\tau$. For those terms we see that
$p^n s_j / \tau$ vanishes$\pmod{p\ZZ_p}$. Consequently, it only remains the product
over the zeros as given above. The rest follows easily.
\end{proof}

Functions of $\KS^{\rm s}_{p,2}$ have a controlled but unbounded growth when
viewed in the $p$-adic norm via $\norm{\cdot}_p^{-1}$, since all roots lie in
$\ZZ_p$. We can also consider arbitrary monic polynomials, where the roots may
lie in some finite extension of $\QQ_p$.

\begin{defin} \label{def:ks2-monic}
We define the class of monic polynomial like functions by
\begin{align*}
   \KS^{\rm m}_{p,2} = \{ & f \in \KS_{p,2} :
     f = p^n \, \pi_n \, u, \, \text{monic\ } \pi_n \in \ZZ_p[s], \\
     & \deg \pi_n = n \geq 1, \, u \in \KS^*_{p,2} \}.
\end{align*}
\end{defin}

\begin{prop} \label{prop:ks2-monic}
If $f \in \KS^{\rm m}_{p,2}$ where $f = p^n \, \pi_n \, u$,
then $\lambda_f = n$ in case $n < p$, otherwise $\lambda_f > n$.
Moreover, if $n \geq 2$ and $\pi_n$ is irreducible over $\ZZ_p[s]$, then
there exists a lower bound
\[
   \norm{f(s)}_p \geq p^{-c}, \quad s \in \ZZ_p,
\]
with some constant $c$ where $n \leq c < \infty$ depending on $f$.
\end{prop}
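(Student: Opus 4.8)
The plan is to treat the two assertions separately. For the determination of $\lambda_f$ I would first strip off the unit. Since $u\in\KS^*_{p,2}$ lies in the unit group of the ring $\KS_{p,2}$, the function
\[
   g = f\,u^{-1} = p^n\,\pi_n
\]
again belongs to $\KS_{p,2}$ by the ring structure, and I would analyse $g$ and transfer back through $f=g\,u$. Directly from the Mahler coefficients one reads
\[
   \Delta_g(\nu) = \Dop^\nu g(0)/p^\nu = p^{\,n-\nu}\,\Dop^\nu\pi_n(0),
     \quad 0\le\nu\le n,
\]
together with $\Delta_g(\nu)=0$ for $\nu>n$; here Lemma \ref{lem:delta-poly} gives $\Dop^n\pi_n(0)=n!$ because $\pi_n$ is monic, whence $\Delta_g(\nu)\in p\ZZ_p$ for $\nu<n$ and $\Delta_g(n)=n!$. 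By Lemma \ref{lem:padic-fact} we have $\ord_p n!=0$ precisely when $n<p$ and $\ord_p n!\ge 1$ when $n\ge p$.

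Next I would feed $f=g\cdot u$ into the product formula of Proposition \ref{prop:prod-f-delta}. For $k\le n$ every summand $\binom{k}{j}\Delta_g(j)\,\Delta_u(k-j)$ with $j<n$ vanishes$\pmod{p\ZZ_p}$, so $\Delta_f(k)\in p\ZZ_p$ for $k<n$, while for $k=n$ only the term $j=n$ survives, giving
\[
   \Delta_f(n)\equiv n!\,u(0)\pmod{p\ZZ_p}.
\]
If $n<p$ this is a unit (indeed $\lambda_g=n$, so Corollary \ref{corl:lambda-inv} applied to $g$ and the unit $u$ already yields $\lambda_f=\lambda_g=n$); if $n\ge p$ it is $\equiv 0$, and since no earlier coefficient is a unit either, $\lambda_f>n$.

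For the lower bound I would first remove the unit factor again: by Theorem \ref{thm:ks2-zero-fixpnt}(2) a function $u\in\KS^*_{p,2}$ satisfies $\norm{u(s)}_p=1$ for all $s\in\ZZ_p$, so
\[
   \norm{f(s)}_p = p^{-n}\,\norm{\pi_n(s)}_p,\quad s\in\ZZ_p,
\]
and the task reduces to bounding $\ord_p\pi_n(s)$ from above, uniformly in $s$. The clean route is by compactness: $s\mapsto\norm{\pi_n(s)}_p$ is continuous on the compact space $\ZZ_p$ and therefore attains its minimum. As $\pi_n$ is monic and irreducible over $\ZZ_p$ of degree $n\ge 2$, it has no linear factor, hence no root in $\QQ_p$ and a fortiori none in $\ZZ_p$, so $\pi_n(s)\ne 0$ throughout and the minimum is a value $p^{-M}$ with $0\le M<\infty$. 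Then $\norm{f(s)}_p\ge p^{-n-M}$, and $c=n+M$ satisfies $n\le c<\infty$ as required.

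I expect the genuine content to lie in this second part, namely in seeing that \emph{irreducibility} is exactly what keeps $\ord_p\pi_n(s)$ bounded. If an explicit constant is desired, I would instead factor $\pi_n(s)=\prod_{i=1}^n(s-\alpha_i)$ over $\overline{\QQ_p}$: the roots are the Galois conjugates of $\alpha_1$ and are integral, and since the extension of $\ord_p$ to $\overline{\QQ_p}$ is Galois-invariant, for $s\in\QQ_p$ all $\ord_p(s-\alpha_i)$ share a common value $v(s)$. Applying the ultrametric inequality to $\alpha_1-\alpha_j=(s-\alpha_j)-(s-\alpha_1)$ then forces $v(s)\le\ord_p(\alpha_1-\alpha_j)$, so $\ord_p\pi_n(s)=n\,v(s)\le nD$ with $D=\min_{j\ne1}\ord_p(\alpha_1-\alpha_j)<\infty$, giving the explicit bound $c=n(1+D)\ge n$. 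The one point needing care throughout is the degree hypothesis $n\ge2$, which is exactly what excludes a zero of $\pi_n$ in $\ZZ_p$; for $n=1$ the factor $s-a$ vanishes at $a\in\ZZ_p$ and no such uniform bound can hold.
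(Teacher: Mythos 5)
Your proof is correct and takes essentially the same route as the paper: the value of $\lambda_f$ is read off from $\Dop^n \pi_n(0) = n!$ for the unit-free factor $p^n\pi_n$ and transferred through the unit, and the lower bound follows from compactness of $\ZZ_p$ together with the fact that an irreducible $\pi_n$ of degree $n \geq 2$ has no root in $\ZZ_p$. Your extras --- the explicit constant $c = n(1+D)$ via Galois conjugates, and the direct product-formula verification that $\lambda_f > n$ when $n \geq p$ (where the paper's appeal to Corollary \ref{corl:lambda-inv} is slightly loose, since $\lambda_{p^n\pi_n}$ could a priori be infinite and that corollary assumes membership in $\KS^\prime_{p,2}$) --- go beyond what the paper records.
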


\begin{proof}
First we look at $\tilde{f} = p^n \, \pi_n$. Since we have
$\Dop^n \pi_n(0) = n!$, we deduce that $\lambda_{\tilde{f}} = n$ in case
$n < p$, otherwise $\lambda_{\tilde{f}} > n$.
This property transfers to $\lambda_f$ by Corollary \ref{corl:lambda-inv}.
Now, let $n \geq 2$ and $\pi_n$ be irreducible over $\ZZ_p[s]$. We have
$\norm{f(s)}_p = p^{-n} \norm{\pi_n(s)}_p$.
Assume that $\min_{s \in \ZZ_p} \norm{f(s)}_p$ is unbounded.
Since $\ZZ_p$ is compact and has a discrete valuation,
we would get $\norm{f(s')}_p = 0$ for some $s' \in \ZZ_p$.
This implies that $\pi_n(s') = 0$ and $s'$ is a root of $\pi_n$ in $\ZZ_p$.
This gives a contradiction.
\end{proof}

\begin{remark*}
Products of functions $f(s)=p\,(s-\xi) \, u(s)$, where $\xi \in \ZZ_p$
and $u \in \KS^*_{p,2}$, are in $\KS^{\rm s}_{p,2} \cap \, \KS^{\rm m}_{p,2}$.
But we have $\KS^{\rm s}_{p,2} \not\subset \KS^{\rm m}_{p,2}$
as a consequence of Proposition \ref{prop:ks2-cntexpl}.
\end{remark*}

At last, we construct a class of functions that are constant
regarding the $p$-adic norm.

\begin{defin} \label{def:ks2-const}
We define the class of constant functions regarding the $p$-adic norm by
\begin{align*}
   \KS^{\rm c}_{p,2} = \{ & f \in \KS_{p,2} : \ord_p f(0) = n < \lambda_f, \\
     & \ord_p \Delta_f(\nu) > n-\nu \text{\ for\ } \nu = 1,\ldots,n \}.
\end{align*}
\end{defin}

\begin{prop} \label{prop:ks2-const}
If $f \in \KS^{\rm c}_{p,2}$, then $\norm{f(s)}_p = p^{-n}$ for
$s \in \ZZ_p$, where $n = \ord_p f(0) > 0$.
\end{prop}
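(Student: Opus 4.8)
The plan is to read the conclusion off directly from the Mahler expansion \eqref{eq:mahler-exp-ks2}, namely $f(s) = \sum_{\nu \geq 0} \Delta_f(\nu) \, p^\nu \binom{s}{\nu}$, by showing that the constant term ($\nu=0$) strictly dominates every other term in the $p$-adic norm, uniformly in $s \in \ZZ_p$. Since $f(0) = \Delta_f(0)$, the hypothesis $\ord_p f(0) = n$ says precisely that the $\nu=0$ term has order $n$, so the whole task reduces to proving $\ord_p\!\big(\Delta_f(\nu) \, p^\nu \binom{s}{\nu}\big) > n$ for every $\nu \geq 1$ and every $s \in \ZZ_p$.

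First I would invoke that $\binom{s}{\nu}$ is a $p$-adic integer for $s \in \ZZ_p$, so $\ord_p \binom{s}{\nu} \geq 0$ and the $\nu$-th term has order at least $\ord_p \Delta_f(\nu) + \nu$, independently of $s$. For the range $1 \leq \nu \leq n$ this lower bound is controlled by the defining condition of $\KS^{\rm c}_{p,2}$, that $\ord_p \Delta_f(\nu) > n - \nu$, which yields $\ord_p \Delta_f(\nu) + \nu > n$. For the range $\nu > n$ nothing is needed beyond $\Delta_f(\nu) \in \ZZ_p$: then $\ord_p \Delta_f(\nu) + \nu \geq \nu \geq n+1 > n$. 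Hence every term with $\nu \geq 1$ has order strictly exceeding $n$, with a bound that does not depend on $s$.

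Finally I would split $f(s) = \Delta_f(0) + R(s)$, where $R(s) = \sum_{\nu \geq 1} \Delta_f(\nu) \, p^\nu \binom{s}{\nu}$ is the convergent tail of the Mahler series. The uniform estimate above together with the ultrametric inequality gives $\ord_p R(s) \geq n+1$ for all $s$, whereas $\ord_p \Delta_f(0) = n$. The strict (isosceles) triangle inequality then forces $\ord_p f(s) = n$, that is $\norm{f(s)}_p = p^{-n}$, for every $s \in \ZZ_p$.

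I expect no genuine obstacle here. The condition $n < \lambda_f$ merely guarantees consistency, since it ensures $\ord_p \Delta_f(\nu) \geq 1$ for $\nu < \lambda_f$, so the prescribed orders are attainable and in particular $n > 0$ is automatic. The single point deserving care is that the domination of the constant term must hold uniformly in $s$; this is immediate, because the per-term estimate involves no $s$ at all, the only $s$-dependence residing in the factor $\binom{s}{\nu}$, whose nonnegative order only reinforces the inequality.
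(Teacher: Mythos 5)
Your proof is correct and follows essentially the same route as the paper: split off the constant term $f(0)=\Delta_f(0)$ of order $n$, use $\ord_p\Delta_f(\nu)>n-\nu$ for $1\leq\nu\leq n$ and the bare factor $p^\nu$ for $\nu>n$ to bound the tail by $p^{n+1}\ZZ_p$ uniformly in $s$, and conclude by the ultrametric inequality. The paper phrases the same estimate by writing $\Delta_f(\nu)\,p^\nu=\Delta_f'(\nu)\,p^{n+1}$ with $\Delta_f'(\nu)\in\ZZ_p$, which is only a cosmetic difference.
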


\begin{proof}
Let $\ord_p f(0) = n$. Since $\lambda_f > n$ by definition, the case $n=0$
is not possible, so $n \geq 1$. Set
$\Delta_f^\prime(\nu) = \Delta_f(\nu) / p^{n+1-\nu} \in \ZZ_p$
for $\nu = 1,\ldots,n$. Then we get
\[
   f(s) = f(0) + \sum_{\nu =1}^{n} \Delta_f^\prime(\nu) \, p^{n+1}
     \binom{s}{\nu}
     + \sum_{\nu > n} \Delta_f(\nu) \, p^\nu \binom{s}{\nu}.
\]
Thus $f(s) = f(0) + \LS(p^{n+1})$. Since $\ord_p f(0) = n$,
we have $p^{n+1} \notdiv f(s)$ and consequently $\norm{f(s)}_p = p^{-n}$.
\end{proof}

\begin{tbl} Classification of $\KS_{p,2}$.
\begin{center}
\begin{tabular}{|c|c|c|c|} \hline
  $f \in$ & $\lambda_f$ & $\ord_p f(0)$ & $\norm{f(s)}_p$ \\ \hline\hline
  $\KS^*_{p,2}$   & 0 & 0 & 1 \\\hline
  $\WKS^0_{p,2}$ & 1 & $\geq 1$ &
    $\norm{p \, (s-\xi)}_p, \, \xi \in \ZZ_p$ \\\hline
  $\KS^2_{p,2}$ & 2 & $\geq 2$ &
    $\norm{p^2 \, (s-\xi_1)(s-\xi_2)}_p, \, \xi_1,\xi_2 \in \ZZ_p$ \\\hline
  $\KS^{\rm s}_{p,2}$ & $\lambda_f \geq n$ & $\geq n$ &
    $\prod_{\nu=1}^n \norm{p \, (s-\xi_\nu)}_p, \, \xi_\nu \in \ZZ_p$ \\\hline
  $\KS^{\rm m}_{p,2}$ & $\lambda_f \geq n$ & $\geq n$ &
    $\norm{p^n \, \pi_n(s)}_p, \, \deg \pi_n = n$ \\\hline
  $\KS^{\rm c}_{p,2}$ & $\lambda_f > n > 0$ & $n$ & $p^{-n}$ \\\hline
\end{tabular}
\end{center}

\end{tbl}
\medskip

Functions $f \in \KS_{p,2}$, that have the property $\ord_p f(0) = 1$, can be
described as follows. As a result, such functions, not being in $\WKS^0_{p,2}$,
are constant regarding the $p$-adic norm. We exclude the case $p=2$, since the
additional condition $2 \pdiv \Delta_f(2)$ makes some difficulties.

\begin{prop} \label{prop:ks2-f-ord-1}
Let $p \geq 3$. If $f \in \KS_{p,2}$ with $\ord_p f(0) = 1$, then $f$ has
exactly one of the following forms:
\begin{center}
\begin{tabular}{|c|c|c|c|} \hline
  $f \in$ & $\mu_f$ & $\lambda_f$ & $\norm{f(s)}_p$, $s \in \ZZ_p$ \\ \hline\hline
  $\WKS^0_{p,2}$ & $0$ & $1$ & $\norm{p \, (s-\xi)}_p$, $\xi \in \ZZ^*_p$ \\\hline
  $\KS^{\rm c}_{p,2}$ & $0$ & $\geq 2$ & $p^{-1}$ \\\hline
  $p\KS^*_{p,2}$ & $1$ & $\infty$ & $p^{-1}$ \\\hline
\end{tabular}
\end{center}
\end{prop}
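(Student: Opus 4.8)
The plan is to run a trichotomy on the invariant $\lambda_f$, since the three listed forms are pinned down by whether $\lambda_f=1$, $2\le\lambda_f<\infty$, or $\lambda_f=\infty$. The hypothesis $\ord_p f(0)=1$ says $\Delta_f(0)=f(0)\in p\ZZ_p$ with $f(0)/p\in\ZZ^*_p$, so in every case $\Delta_f(0)\notin\ZZ^*_p$ and hence $\lambda_f\ge 1$. First I would record the link between $\lambda_f$ and $\mu_f$: by Lemma \ref{lem:decomp-ks2} one has $f/p\in\KS_{p,2}$ exactly when every $\Delta_f(\nu)\in p\ZZ_p$, i.e.\ exactly when $\lambda_f=\infty$; combined with $f(0)/p^2\notin\ZZ_p$ this forces $\mu_f=1$ when $\lambda_f=\infty$ and $\mu_f=0$ otherwise, which already fills the $\mu_f$ column of each row.

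If $\lambda_f=1$, then $\Delta_f(1)\in\ZZ^*_p$, so $\Delta_f\neq 0$; as $p\ge 3$ the auxiliary condition on $\Delta_f(2)$ in Definition \ref{def:kummer-space} is vacuous, and together with $f(0)\in p\ZZ_p$ this places $f$ in $\WKS^0_{p,2}$. Theorem \ref{thm:ks2-zero-fixpnt}(3) then supplies a unique simple zero $\xi\in\ZZ_p$ with $\norm{f(s)}_p=\norm{p\,(s-\xi)}_p$. The one step carrying genuine content is locating $\xi$ more precisely: evaluating this norm identity at $s=0$ gives $p^{-1}=\norm{f(0)}_p=\norm{p\,\xi}_p=p^{-1}\norm{\xi}_p$, whence $\norm{\xi}_p=1$ and $\xi\in\ZZ^*_p$, as the first row asserts.

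If $\lambda_f=\infty$, then $f/p\in\KS_{p,2}$ and $(f/p)(0)=f(0)/p\in\ZZ^*_p$, so $f/p\in\KS^*_{p,2}$ and thus $f\in p\KS^*_{p,2}$ with $\mu_f=1$; Theorem \ref{thm:ks2-zero-fixpnt}(2) applied to $f/p$ gives $\norm{f(s)}_p=p^{-1}$. If instead $2\le\lambda_f<\infty$, I would check directly that $f$ satisfies Definition \ref{def:ks2-const} with $n=\ord_p f(0)=1$: indeed $n=1<\lambda_f$ by hypothesis, and the single required digit inequality $\ord_p\Delta_f(1)>n-1=0$ holds because $\lambda_f\ge 2$ forces $\Delta_f(1)\in p\ZZ_p$. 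Hence $f\in\KS^{\rm c}_{p,2}$ with $\mu_f=0$, and Proposition \ref{prop:ks2-const} yields $\norm{f(s)}_p=p^{-1}$.

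Finally, since $\lambda_f\ge 1$ always, the three alternatives $\lambda_f=1$, $2\le\lambda_f<\infty$, $\lambda_f=\infty$ are mutually exclusive and exhaustive, so the invariant pair $(\mu_f,\lambda_f)$ matches exactly one row; this is precisely what separates the third row ($\mu_f=1$) from the second ($\mu_f=0$), even though $p\KS^*_{p,2}$ sits inside $\KS^{\rm c}_{p,2}$ at the level of the defining inequalities. I expect the whole argument to be essentially bookkeeping on the Mahler coefficients; the only point needing a small idea is the deduction $\xi\in\ZZ^*_p$ in the case $\lambda_f=1$, where the hypothesis $\ord_p f(0)=1$ enters in an essential way.
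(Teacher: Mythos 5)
Your proposal is correct and follows essentially the same route as the paper: the paper first disposes of $\mu_f>0$ (your $\lambda_f=\infty$ case, giving $p\KS^*_{p,2}$), then splits the remaining case on $\Delta_f(1)\in\ZZ^*_p$ versus $\Delta_f(1)\in p\ZZ_p$ — exactly your dichotomy $\lambda_f=1$ versus $\lambda_f\geq 2$ — invoking Theorem \ref{thm:ks2-zero-fixpnt} and the evaluation $\norm{f(0)}_p=\norm{p\,\xi}_p$ to get $\xi\in\ZZ^*_p$ in the first subcase, and Definition \ref{def:ks2-const} with Proposition \ref{prop:ks2-const} at $n=1$ in the second. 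Your explicit remarks on the $\mu_f$ column and on the disjointness of the three rows are slightly more detailed than the paper's, but the substance is identical.
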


\begin{proof}
First, if we have $\mu_f > 0$, then only $f \in p\KS^*_{p,2}$ is possible,
since $\ord_p f(0) = 1$. Now, we can assume that $\mu_f = 0$ and
$1 \leq \lambda_f < \infty$. The Mahler expansion shows that
\[
   f(s) \equiv f(0) + \Delta_f(1) \, p s \pmod{p^2 \ZZ_p}, \quad s \in \ZZ_p.
\]
We have the cases $\Delta_f(1) \in \ZZ_p^*$ and $\Delta_f(1) \in p\ZZ_p$.
The first case implies that $\lambda_f = 1$ and $f \in \WKS^0_{p,2}$.
By Theorem \ref{thm:ks2-zero-fixpnt} we then get
$\norm{f(s)}_p = \norm{p \, (s-\xi)}_p$ with some $\xi \in \ZZ_p$.
Since $\norm{f(0)}_p = \norm{p \, \xi}_p = p^{-1}$, it even follows that
$\xi \in \ZZ^*_p$. The second case provides that $\lambda_f \geq 2$ and
$f$ suffices the conditions of Definition \ref{def:ks2-const} and
Proposition \ref{prop:ks2-const} with $n=1$.
\end{proof}

We shall also show the more complicated case $\ord_p f(0) = 2$ of functions
$f \in \KS_{p,2}$. Since we have a decomposition of $\KS_{p,2}$ by Lemma
\ref{lem:decomp-ks2}, we only consider those cases where $\mu_f = 0$ to
simplify the results.

\begin{prop} \label{prop:ks2-f-ord-2}
Let $p \geq 3$. If $f \in \KS^\prime_{p,2}$ with $\ord_p f(0) = 2$,
then $f$ has one of the following forms
\begin{center}
\begin{tabular}{|c|c|c|} \hline
  $f \in$ & $\lambda_f$ & $\norm{f(s)}_p$, $s \in \ZZ_p$ \\ \hline\hline
  $\WKS^0_{p,2}$ & $1$ & $\norm{p \, (s-\xi)}_p$, $\xi \in \ZZ_p$,
    $\ord_p \xi = 1$ \\\hline
  $\KS^2_{p,2}, $ & $2$ & $\norm{p^2 \, (s-\xi_1)(s-\xi_2)}_p$,
    $\xi_1,\xi_2 \in \ZZ^*_p$, \\
  $\KS^{\rm s}_{p,2}$ & & $p > 3$ if $f \in \KS^2_{p,2}$  \\\hline
  $\KS^{\rm m}_{p,2}$ & $2$ & $\norm{p^2 \, \pi_2(s)}_p \geq p^{-c}$,
    $\pi_2(0) \in \ZZ^*_p$, \\
    & & $\pi_2$ irreducible, $c \geq 2$. \\ \hline
  $\KS^{\rm c}_{p,2}$ & $\geq 3$ & $p^{-2}$ \\\hline
\end{tabular}
\end{center}
or $f$ has the behavior that $\norm{f(s)}_p \leq p^{-2}$ for $s \in \ZZ_p$.
\end{prop}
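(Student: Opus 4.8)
The plan is to read off only the first three Mahler coefficients of $f$: by Lemma~\ref{lem:padic-fact} every term with $\nu\ge 3$ satisfies $\ord_p\!\left(\Delta_f(\nu)\,p^\nu\binom{s}{\nu}\right)\ge\nu\ge 3$, so that
\[
   f(s)\equiv \Delta_f(0)+\Delta_f(1)\,ps+\Delta_f(2)\,p^2\binom{s}{2}\pmod{p^3},\quad s\in\ZZ_p .
\]
Since $\ord_p f(0)=\ord_p\Delta_f(0)=2$ forces $\lambda_f\ge 1$, I would organise the proof by the value of $\lambda_f$. Whenever $\lambda_f\ge 2$ one has $\Delta_f(\nu)\,p^\nu\binom{s}{\nu}\in p^2\ZZ_p$ for every $\nu$, so $f(s)\in p^2\ZZ_p$ and the escape clause $\norm{f(s)}_p\le p^{-2}$ holds automatically; the substance is then to sharpen this bound into the table rows where the structure permits.

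If $\lambda_f=1$, then $\Delta_f(1)\in\ZZ^*_p$, hence $\Delta_f\neq 0$; as $p\ge 3$ the extra $p=2$ condition is vacuous and $f(0)\in p\ZZ_p$, so $f\in\WKS^0_{p,2}$. Theorem~\ref{thm:ks2-zero-fixpnt}(3) yields a unique simple zero $\xi$ with $\norm{f(s)}_p=\norm{p\,(s-\xi)}_p$, and evaluating at $s=0$ gives $\norm{p\,\xi}_p=\norm{f(0)}_p=p^{-2}$, i.e.\ $\ord_p\xi=1$: the first row. If $\lambda_f\ge 3$, then $\Delta_f(1),\Delta_f(2)\in p\ZZ_p$ and the congruence collapses to $f(s)\equiv\Delta_f(0)+\Delta_f(1)\,ps\pmod{p^3}$. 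When $\ord_p\Delta_f(1)\ge 2$ the linear term also lies in $p^3\ZZ_p$, so $f(s)\equiv f(0)\pmod{p^3}$ and $\norm{f(s)}_p=p^{-2}$ is constant; checking $\ord_p\Delta_f(1)>1$ and $\ord_p\Delta_f(2)>0$ against Definition~\ref{def:ks2-const} places $f$ in $\KS^{\rm c}_{p,2}$ with $n=2$, the fourth row. When $\ord_p\Delta_f(1)=1$, writing $\Delta_f(0)=p^2b$, $\Delta_f(1)=pa$ with $a,b\in\ZZ^*_p$ gives $f(s)\equiv p^2(b+as)\pmod{p^3}$, whose norm drops below $p^{-2}$ at the residue $s\equiv -b/a\pmod p$ and equals $p^{-2}$ otherwise, so $f$ matches no row and only the escape clause survives.

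The heart of the matter is $\lambda_f=2$, where $\Delta_f=0$, $\Delta_f(2)\in\ZZ^*_p$, and $f(s)/p^2\equiv P(s)\pmod p$ with
\[
   P(s)=\frac{\Delta_f(0)}{p^2}+\frac{\Delta_f(1)}{p}\,s+\Delta_f(2)\binom{s}{2}
\]
a genuine quadratic over $\FF_p$ (its leading coefficient $\Delta_f(2)/2$ is a unit since $p\ge 3$). As $f(s)\in p^2\ZZ^*_p$ exactly when $P(s)\not\equiv 0\pmod p$, the zeros of $f$ can only lie over the residues where $P$ vanishes, and I would split on the factorisation of $P$. If $P$ has no root, then $\norm{f(s)}_p=p^{-2}$ for all $s$, $f$ has no zero, and $f$ exhibits the irreducible $\KS^{\rm m}_{p,2}$-type behaviour with $c=2$ (third row). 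If a root of $P$ lifts to a zero $\xi_1\in\ZZ_p$, then for $p>3$ the function lies in $\KS^2_{p,2}$ by definition, Theorem~\ref{thm:ks2-zero-2} produces the second zero and the decomposition $f(s)=p^2(s-\xi_1)(s-\xi_2)f^*(s)$, and $\ord_p f(0)=2$ forces $\xi_1,\xi_2\in\ZZ^*_p$; functions arising as genuine products of two $\WKS^0_{p,2}$ factors are simultaneously in $\KS^{\rm s}_{p,2}$ by Theorem~\ref{thm:poly-f-ks2}, which also settles $p=3$: the second row.

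The main obstacle is precisely the lifting step. Unlike the $\lambda_f=1$ case, Proposition~\ref{prop:f-lip-zero} does not apply to $f$ directly, because its difference quotient is $\equiv 0\pmod p$; passing to $\tilde f=f/p^2$ one finds that $\tilde f'\bmod p$ equals $P'$ perturbed by the $\nu\ge 3$ Mahler coefficients, so a simple root of $P$ need not visibly satisfy a first-order Hensel condition. I would resolve this by a second-order (Newton) refinement of the sequence of level functions of Definition~\ref{def:func-level}, and treat the double-root case of $P$ separately: there $f$ may split into two zeros (again the second row, with both units), be an irreducible $\KS^{\rm m}_{p,2}$-factor of $s^2-p$ type (third row with $c>2$, the lower bound $\norm{f(s)}_p\ge p^{-c}$ following from the compactness of $\ZZ_p$ and the positivity of the continuous map $\norm{f}_p$), or else fall through to the escape clause. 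Verifying that the complementary factor $u=f/(p^2\pi_2)$ genuinely lies in $\KS^*_{p,2}$ in the irreducible case is the last technical point; in any event the catch-all bound $\norm{f(s)}_p\le p^{-2}$, already secured for all $\lambda_f\ge 2$, guarantees that every unresolved configuration remains covered.
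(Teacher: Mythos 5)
Your proof is correct and its skeleton is the same as the paper's: truncate the Mahler expansion modulo $p^3$, split on $\lambda_f$, settle $\lambda_f=1$ via Theorem \ref{thm:ks2-zero-fixpnt} together with the evaluation $\norm{f(0)}_p=\norm{p\,\xi}_p=p^{-2}$, and note that $\lambda_f\geq 2$ already gives the catch-all bound $\norm{f(s)}_p\leq p^{-2}$. Where you diverge is in ambition: the paper reads the remaining table rows as \emph{conditional} statements (if $f$ happens to lie in $\KS^2_{p,2}$, $\KS^{\rm s}_{p,2}$, $\KS^{\rm m}_{p,2}$ or $\KS^{\rm c}_{p,2}$, then Theorem \ref{thm:ks2-zero-2}, Corollary \ref{corl:decomp-poly} and Propositions \ref{prop:ks2-monic} and \ref{prop:ks2-const} specialize to the stated norms once $\ord_p f(0)=2$ pins down the zeros or the constant), whereas you try to \emph{decide} the row from the reduction of $f/p^2$ modulo $p$, i.e.\ from the factorization of the quadratic $P$. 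That extra analysis buys a sharper picture --- in particular your observation that for $\lambda_f\geq 3$ with $\ord_p\Delta_f(1)=1$ the function matches no row of the table and only the escape clause survives is a genuine refinement the paper leaves implicit --- but it also creates the difficulties you flag yourself: the Hensel-type lifting of a root of $P$ is not covered by Proposition \ref{prop:f-lip-zero}, and the cofactor $f/(p^2\pi_2)$ need not lie in $\KS^*_{p,2}$ (cf.\ Proposition \ref{prop:ks2-cntexpl}), so mod-$p$ irreducibility of $P$ alone does not place $f$ in $\KS^{\rm m}_{p,2}$. None of this damages the proof, because the disjunction in the statement is saved by the clause $\norm{f(s)}_p\leq p^{-2}$, which you correctly secure for all $\lambda_f\geq 2$ at the outset; just be aware that the exhaustive mod-$p$ classification you sketch is more than the proposition asserts, and the unresolved lifting step should not be presented as necessary for the result.
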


\begin{proof}
The Mahler expansion provides that
\[
   f(s) \equiv f(0) + \Delta_f(1) \, p s + \Delta_f(2) \, p^2 \binom{s}{2}
     \pmod{p^3 \ZZ_p}, \quad s \in \ZZ_p.
\]
Again, the case $\lambda_f = 1$ yields $\norm{f(s)}_p = \norm{p \, (s-\xi)}_p$
with $\ord_p \xi = 1$, caused by $\norm{f(0)}_p = \norm{p \, \xi}_p = p^{-2}$.
The case $\lambda_f \geq 2$ implies that $\Delta_f(1) \in p\ZZ_p$ and we know
at least that $\norm{f(s)}_p \leq p^{-2}$ for $s \in \ZZ_p$.
Due to $\ord_p f(0) = 2$, we obtain the supplementary classification as above
by Theorem \ref{thm:ks2-zero-2}, Corollary \ref{corl:decomp-poly}, and
Propositions \ref{prop:ks2-monic} and \ref{prop:ks2-const}.
\end{proof}

\section{$p$-adic interpolation of $L$-functions}

\begin{defin}
Let the function $f: \NN_0 \to \ZZ_p$ satisfy the Kummer type congruences
\[
   \Dop^n f(0) \equiv 0 \pmod{p^n \ZZ_p} \quad \text{for all\ } n \geq 0.
\]
Then we call $f$ a Kummer function.
\end{defin}

The standard way to extend a function, which is defined for nonnegative
integer arguments, to the domain $\ZZ_p$ is the following, cf.
\cite[Ch.~2]{Koblitz:1996}. A function $f: \NN_0 \to \ZZ_p$ can be uniquely
extended to a function $\tilde{f}: \ZZ_p \to \ZZ_p$, which interpolates values
$\tilde{f}(n) = f(n)$ for nonnegative integers $n$. Then define for
$s \in \ZZ_p$ that $\tilde{f}(s) = \lim_{t_\nu \to s} f(t_\nu)$ for any sequence
$(t_\nu)_{\nu \geq 1}$ of nonnegative integers which $p$-adically converges to
$s$. Since $\ZZ$ is dense in $\ZZ_p$, there exists at most one function
$\tilde{f}$ with these properties. Finally, we can identify $f = \tilde{f}$.
Note that, for example, $f(-1) = \lim_{n \to \infty} f(p^n-1)$. If the function
$f$ satisfies the Kummer congruences, then $f$ is continuous on $\ZZ_p$ and
$f \in \KS_{p,1}$.

\begin{prop} \label{prop:p-adic-cont}
Let $f$ be a Kummer function. Then $f$ can be uniquely extended to a
continuous $p$-adic function on $\ZZ_p$ such that $f \in \KS_{p,2}$.
\end{prop}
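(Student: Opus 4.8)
The plan is to construct the extension explicitly as a Mahler series built from the finite differences of $f$ at $0$, and then read off the required properties from the lemmas already established.

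First I would set $a_\nu = \Dop^\nu f(0)$ and $\Delta_f(\nu) = a_\nu/p^\nu$ for $\nu \geq 0$. Since $f$ is a Kummer function we have $a_\nu \in p^\nu \ZZ_p$, hence $\Delta_f(\nu) \in \ZZ_p$ and $\norm{a_\nu}_p \leq p^{-\nu} \to 0$. By Definition \ref{def:diff-oper} and Theorem \ref{thm:mahler}, the Mahler series
\[
   \tilde{f}(s) = \sum_{\nu \geq 0} a_\nu \binom{s}{\nu}
                = \sum_{\nu \geq 0} \Delta_f(\nu) \, p^\nu \binom{s}{\nu}
\]
then converges uniformly and defines a continuous function $\tilde{f}: \ZZ_p \to \ZZ_p$.

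Next I would verify that $\tilde{f}$ interpolates $f$ on $\NN_0$. For a nonnegative integer $n$ the terms with $\nu > n$ drop out because $\binom{n}{\nu} = 0$, so $\tilde{f}(n) = \sum_{\nu=0}^n \Dop^\nu f(0) \binom{n}{\nu}$, which is exactly the Newton forward-difference inversion formula recovering $f(n)$ from its differences; hence $\tilde{f}(n) = f(n)$ for all $n \in \NN_0$. This is the \emph{crux} of the argument: it is what certifies that the numbers $a_\nu$, computed from the values of $f$ on $\NN_0$ alone, really are the Mahler coefficients of the function just built. Indeed, once $\tilde{f}$ agrees with $f$ on $\NN_0$, Theorem \ref{thm:mahler} gives $\Dop^\nu \tilde{f}(0) = a_\nu = \Dop^\nu f(0)$, so there is no circularity in the construction.

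Finally, membership in $\KS_{p,2}$ is immediate: since $\Dop^n \tilde{f}(0) = a_n \in p^n \ZZ_p$ for every $n \geq 0$, Lemma \ref{lem:ks2-kummer-congr-0} yields $\tilde{f} \in \KS_{p,2}$. Uniqueness of the extension is the standard density argument already invoked in the text preceding the statement: $\NN_0$ is dense in $\ZZ_p$ and $\tilde{f}$ is continuous, so any two continuous functions on $\ZZ_p$ agreeing with $f$ on $\NN_0$ must coincide. Identifying $f = \tilde{f}$ then completes the proof. The only genuine obstacle is the interpolation identity of the third step; everything else is a direct application of the machinery developed above.
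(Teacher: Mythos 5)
Your proposal is correct and follows essentially the same route as the paper: build the Mahler series from the coefficients $\Delta_f(\nu) = \Dop^\nu f(0)/p^\nu$, observe it interpolates $f$ on $\NN_0$, invoke Lemma \ref{lem:ks2-kummer-congr-0} for membership in $\KS_{p,2}$, and use density of $\NN_0$ in $\ZZ_p$ for uniqueness. You merely make explicit the Newton forward-difference inversion step that the paper compresses into ``by construction $\tilde{f}=f$ restricted on $\NN_0$.''
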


\begin{proof}
According to Definition \ref{def:kummer-space}, define the Mahler expansion
\[
   \tilde{f}(s) = \sum_{\nu \geq 0} \Delta_{f}(\nu) \, p^\nu \binom{s}{\nu},
     \quad s \in \ZZ_p.
\]
By construction $\tilde{f} = f$ restricted on $\NN_0$. Since $\ZZ$ is dense in
$\ZZ_p$, the same arguments from above are valid here. We get a $p$-adic
function $\tilde{f}$ on $\ZZ_p$, such that the Mahler expansions of $\tilde{f}$
and $f$ are equal and $\tilde{f}$ extends $f$ uniquely to $\ZZ_p$. Lemma
\ref{lem:ks2-kummer-congr-0} shows that $\tilde{f} \in \KS_{p,2}$. Finally, we
identify $f = \tilde{f}$ as a function of $\KS_{p,2}$.
\end{proof}

\begin{remark*}
It should be noted that Sun \cite{Sun:2000} introduced so-called $p$-regular
functions which are Kummer functions in this context. He proved some
special congruences, which easily follow here in general and have a full
interpretation in $\KS_{p,2}$. However, his proofs are completely different,
complicated, and lengthy using properties of Stirling numbers and the binomial
inversion theorem.
\end{remark*}

\begin{defin} \label{def:gen-bn}
The generalized Bernoulli numbers $B_{n,\chi}$ are defined by the generating
function
\[
   \sum_{a=1}^{\ff_\chi} \chi(a) \frac{z e^{az}}{e^{\ff_\chi z}-1} =
     \sum_{n \geq 0} B_{n,\chi} \frac{z^n}{n!},
     \quad \norm{z} < \frac{2\pi}{\ff_\chi},
\]
where $\chi$ is a primitive Dirichlet character$\pmod{\ff_\chi}$ and $\ff_\chi$
is a positive integer, which is called the conductor of $\chi$. Choose
$\delta_\chi \in \{0,1\}$ such that $\chi(-1)=(-1)^{\delta_\chi}$ and
$\delta_\chi$ corresponds to even, resp., odd characters. The Dirichlet
$L$-functions are defined by
\[
   L(z,\chi) = \sum_{\nu \geq 1} \chi(\nu) \nu^{-z} ,
     \quad z \in \CC, \, \real z > 1.
\]
\end{defin}

The numbers $B_{n,\chi}$ were introduced and studied by Leopoldt
\cite{Leopoldt:1958} and subsequently examined by Carlitz \cite{Carlitz:1959}
and others. They fulfill the following basic properties,
cf.~\cite[Ch.~4]{Washington:1997}, where we have to exclude the case
$n=1$ when $\chi=1$ since $B_{1,1} = - B_1 = \tfrac12$:
\begin{alignat*}{2}
   \ff_\chi B_{n,\chi}  &\in \ZZ[\chi], &\quad & \chi \neq 1, \\
   B_{n,\chi}  &\neq 0, &&  n \geq 1, \, n \equiv \delta_\chi
     \!\!\!\! \pmod{2}, \\
   B_{n,\chi}  &= 0, &&  n \geq 1, \, n \not\equiv \delta_\chi
     \!\!\!\! \pmod{2}, \\
   L(1-n,\chi) &= -\frac{B_{n,\chi}}{n}, &&  n \geq 1.
\end{alignat*}

The values of $B_{n,\chi}$ are given by the Bernoulli polynomials $B_n(\cdot)$:
\begin{alignat*}{2}
   B_{1,\chi} &= \frac{1}{\ff_\chi} \sum_{a=1}^{\ff_\chi} \chi(a) a,
                 &\quad& \chi \neq 1, \\
   B_{n,\chi} &= \ff_\chi^{n-1} \sum_{a=1}^{\ff_\chi} \chi(a)
                 B_n \!\left( \frac{a}{\ff_\chi} \right), &\quad& n > 1.
\end{alignat*}

Note that $B_{n,\chi}$ reduces to the Bernoulli numbers $B_n$ for $n>1$, if
$\chi = 1$ is the principal character with conductor 1, where $\zeta(z) = L(z,1)$
is the Riemann zeta function. If $\chi = \chi_{-4}$ is the non-principal
character$\pmod{4}$, then $-2B_{n+1,\chi}/(n+1)$ reduces to the Euler numbers $E_n$.

Kubota and Leopoldt \cite{Kubota&Leopoldt:1964} constructed $p$-adic
$L$-functions, that interpolate values of $L$-functions, modified by an Euler
factor, at negative integer arguments. Here we regard their first construction
of $p$-adic $L$-functions, that are defined on certain residue classes,
cf.\ Koblitz \cite{Koblitz:1996}. Note that their second construction is
connected with Iwasawa theory.

Recall Euler's totient function $\eulerphi$ and set $q = p$ for $p \geq 3$
and $q = 4$ for $p=2$, which we use in the following.

\begin{defin} \label{def:mod-l-func}
Define the modified $L$-functions by
\[
   L_p(1-n,\chi) = (1-\chi(p) p^{n-1}) L(1-n,\chi), \quad n \geq 1.
\]
We further define the modified $L$-functions on
residue classes$\pmod{\eulerphi(q)}$ by
\[
   L_{p,l}(s,\chi) = L_p(1-(\delta_\chi+l+\eulerphi(q)s),\chi),
     \quad s \in \NN_0,
\]
where $l$ is a fixed integer and $0 \leq l \leq \eulerphi(q)-2$.
If $l=\delta_\chi=0$, then we exclude the case $s=0$.
We write $\zeta_{p,l}(s)$ for $L_{p,l}(s,1)$.
Define the backward variable substitution
\[
   s_{p,l}(n) = (n-l)/\eulerphi(q),
\]
where we briefly write $s_{p,l}$ in case of no ambiguity.
\end{defin}

Note that $L_{p,l}(\cdot,\chi)$ is defined regardless of the parity of the
character $\chi$, such that $L_{p,l}(\cdot,\chi)$ is the zero function for odd
$l$. The generalized Bernoulli numbers $B_{n,\chi}/n$, resp., the $L$-functions
$L_p(\cdot,\chi)$ at negative integer arguments satisfy the Kummer type
congruences. As usual we have to omit the prime $p$ where $\ff_\chi = p^e$ is a
prime power with $e \geq 1$.

\begin{theorem}[{Carlitz \cite{Carlitz:1959}, Fresnel \cite{Fresnel:1967}}]
\label{thm:lp-congr}
Let $\chi = 1$ or $\chi$ be a primitive non-principal
character$\pmod{\ff_\chi}$. Assume that $p^e \neq \ff_\chi$, $e \geq 1$.
Let $k,n,r$ be positive integers and $h = k \eulerphi(p^r)$ be even. Then
\[
   \Doph{h}{n} \, L_p( 1-s, \chi ) \valueat{s=m} \equiv 0 \pmod{p^{nr}},
     \quad m \in \delta_\chi + 2\NN_0, \, m \geq 1,
\]
where in case $\chi = 1$ additionally suppose that $p>3$ and
$m \not\equiv 0 \pmod{p-1}$.
\end{theorem}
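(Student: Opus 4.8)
The plan is to prove the congruence from an integral representation of the values together with a single finite-difference identity (writing $w$ for the argument of $L_p$, so the operator acts on $w\mapsto L_p(1-w,\chi)$ at $w=m$). On the residue class determined by $m$ the Teichm\"uller twist $\chi\omega^{-w}$ is constant, and after the standard $(c)$-regularization there is a bounded $\ZZ_p$-valued measure $\mu$ on $\ZZ_p^*$ (depending on that class) with
\[
   \bigl(1-\chi(c)\,c^{\,w}\bigr)\,L_p(1-w,\chi)=\int_{\ZZ_p^*} x^{\,w}\,d\mu(x),
\]
the factor $1-\chi(c)\,c^{\,w}$ being there precisely to absorb the pole of $B_{w,\chi}/w$ at $w=0$. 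The key computation is that, for fixed $x$,
\[
   \Doph{h}{j}\bigl(x^{\,w}\bigr)\valueat{w=m}=x^{\,m}\,(x^{\,h}-1)^{j};
\]
since $h=k\,\eulerphi(p^r)$, Euler--Fermat \eqref{eq:ext-fermat} gives $x^{\,h}=(x^{\eulerphi(p^r)})^{k}\equiv 1\pmod{p^r}$ for every $x\in\ZZ_p^*$, hence $(x^{\,h}-1)^{j}\in p^{jr}\ZZ_p$ uniformly in $x$. Integrating against the bounded measure yields
\[
   \Doph{h}{j}\Bigl[\bigl(1-\chi(c)c^{\,w}\bigr)L_p(1-w,\chi)\Bigr]_{\,w=m}\in p^{jr}\ZZ_p,\quad j\ge 0 .
\]

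To pass from the regularized values to $L_p(1-w,\chi)$ itself I would choose $c$ prime to $p\,\ff_\chi$ so that $g(w)=1-\chi(c)c^{\,w}$ takes a unit value at $w=m$; then $g^{-1}$ is again a bounded function whose differences obey $\Doph{h}{j}g^{-1}(m)\in p^{jr}\ZZ_p$, by the same identity applied to the exponentials $c^{\,w}$ and \eqref{eq:ext-fermat}. Writing $L_p(1-w,\chi)=\bigl[(1-\chi(c)c^{\,w})L_p(1-w,\chi)\bigr]\cdot g^{-1}(w)$ and using the discrete Leibniz rule
\[
   \Doph{h}{n}(FG)(m)=\sum_{j=0}^{n}\binom{n}{j}\,\Doph{h}{j}F(m)\,\Doph{h}{n-j}G(m+jh),
\]
every summand lies in $p^{jr+(n-j)r}\ZZ_p=p^{nr}\ZZ_p$, which gives $\Doph{h}{n}L_p(1-w,\chi)\valueat{w=m}\equiv 0\pmod{p^{nr}}$ as claimed. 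This last step is exactly the ring structure of $\KS_{p,2}$ in action: in the reparametrized variable the factor $g$ lies in $\KS_{p,2}$ by Proposition~\ref{prop:exp-func} (as $c^{\eulerphi(q)}\in 1+p\ZZ_p$) and, being a unit, in $\KS^*_{p,2}$, so that inverting and multiplying stays within $\KS_{p,2}$; combined with $\KS_{p,2}=\KSS_{p,2}$ of Proposition~\ref{prop:ks2-eq-ks3}, this gives an alternative, framework-internal derivation for $p\ge 3$, while the direct Leibniz estimate above is what secures the sharp exponent when $p=2$.

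The main obstacle is the analytic input of the first paragraph, not the combinatorics: producing a genuinely bounded $\ZZ_p$-valued measure and handling the $1/w$ in $B_{w,\chi}/w$ cleanly through regularization, while keeping the character constant along the class. The side hypotheses enter exactly here. The parity requirement $m\equiv\delta_\chi\pmod 2$ selects the nonzero values, the exclusion $\ff_\chi\neq p^e$ and, for $\chi=1$, the conditions $p>3$ and $m\not\equiv 0\pmod{p-1}$ are what make $B_{w,\chi}/w$ lie in $\ZZ_p$ (von Staudt--Clausen), so that there is an integer-valued function to integrate at all; the normalization $q=4$ for $p=2$ is used in the Euler--Fermat step. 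Once these are in place, everything reduces to the two displayed difference identities and \eqref{eq:ext-fermat}.
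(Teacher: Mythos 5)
The paper does not prove Theorem \ref{thm:lp-congr} at all: it is imported from Carlitz and Fresnel (with the remark that Carlitz's versions lack Euler factors) and then used as the external input for Propositions \ref{prop:lpl-func-ks2} and \ref{prop:lp0-func-ks2}. So there is no internal argument to compare yours against; what you have written is the standard measure-theoretic proof from the literature. Its combinatorial core is exactly right: $\Doph{h}{j}\,x^{w}\valueat{w=m}=x^{m}(x^{h}-1)^{j}$ together with $x^{\eulerphi(p^{r})}\equiv 1\pmod{p^{r}\ZZ_p}$ for $x\in\ZZ_p^{*}$ forces $(x^{h}-1)^{j}\in p^{jr}\ZZ_p$ uniformly, integration against a bounded integral measure preserves this, and since $(p-1)\pdiv\eulerphi(p^{r})\pdiv h$ (resp.\ $h$ even for $p=2$) all sample points $m+\nu h$ stay in one Teichm\"uller class, so the twist is indeed constant. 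You have also located the hypotheses correctly: $m\not\equiv 0\pmod{p-1}$ for $\chi=1$, and $\ff_\chi\neq p^{e}$, are exactly what make the character $\chi\omega^{m}$ nontrivial so that a regularizing $c$ with $1-\chi(c)c^{m}\in\ZZ_p^{*}$ exists, and what keep the values $p$-integral.

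Two steps are asserted rather than proved. First, the de-regularization: $g^{-1}(w)$ is not an exponential, so ``$\Doph{h}{j}g^{-1}(m)\in p^{jr}\ZZ_p$ by the same identity'' is not literally available. You can get it by writing $g=(1-\beta)\bigl(1-\tfrac{\beta}{1-\beta}(a^{t}-1)\bigr)$ with $\beta=\chi(c)c^{m}$, $a=c^{h}\in 1+p^{r}\ZZ_p$, $t=(w-m)/h$, and expanding $g^{-1}$ as a geometric series: each $(a^{t}-1)^{k}$ is a $\ZZ$-combination of exponentials $(a^{i})^{t}$ with $a^{i}\equiv 1\pmod{p^{r}}$, and the series converges strongly enough to differentiate termwise. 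Your alternative appeal to the ring structure needs the same care: inverting in $\KS^{*}_{p,2}$ in the variable $t$ only returns the exponent $p^{j}$, and to recover $p^{jr}$ from $\KS_{p,2}=\KSS_{p,2}$ you must reparametrize by $\eulerphi(q)$ (so the increment becomes $h/\eulerphi(q)$ with $1+\ord_p(h/\eulerphi(q))\ge r$ for odd $p$), not by $h$; as you note, for $p=2$ this route loses a power and the direct estimate is needed. Second, the unit choice of $c$ genuinely fails for a primitive $\chi$ whose prime-to-$p$ part equals $\omega^{-m}$ (e.g.\ $\chi=\omega^{-m}\chi_{p}$ with $\chi_{p}$ of $p$-power order and conductor prime to $p$): then $1-\chi(c)c^{m}$ is never a unit and the final division degrades the congruence. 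This never happens for $\chi\in\mathfrak{X}_2$ or for $\chi=1$ under the stated hypotheses, which is all the paper uses, but it means your argument does not cover the theorem in the full generality in which it is stated.
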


It should be noted that Carlitz rarely used Euler factors, so his congruences
are restricted to$\pmod{(p^{nr},p^{m-1})}$ here. Generally, since
$L_p(1-m,\chi) \in \QQ(\chi) \subset \overline{\QQ}$, one can also view
$L_p(1-m,\chi)$ in a finite extension of $\QQ_p$ to obtain a $p$-adic
$L$-function, cf.~\cite[Ch.~5]{Washington:1997}. Thereby one has to choose
a fixed embedding of $\overline{\QQ}$ into $\CC_p$, the completion of the
algebraic closure $\overline{\QQ}_p$ of $\QQ_p$. Here we keep the focus on
functions on $\ZZ_p$.

\begin{prop} \label{prop:lpl-func-ks2}
Let $\chi = 1$ or $\chi$ be a primitive quadratic character$\pmod{\ff_\chi}$.
Assume that $p > 3$ in case $\chi = 1$, otherwise $p^e \neq \ff_\chi$, $e \geq 1$.
Let $l \in 2\NN_0$, where $0 \leq l < \eulerphi(q)$ and $l \neq \delta_\chi$.
Then $L_{p,l}(\cdot,\chi)$ can be uniquely extended to $\ZZ_p$ such that
$L_{p,l}(\cdot,\chi) \in \KS_{p,2}$.
\end{prop}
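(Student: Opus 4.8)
The plan is to show that the restriction of $L_{p,l}(\cdot,\chi)$ to $\NN_0$ is a Kummer function in the sense of the definition preceding Proposition \ref{prop:p-adic-cont}, and then to invoke Proposition \ref{prop:p-adic-cont}, which already packages the unique continuous extension to $\ZZ_p$ together with membership in $\KS_{p,2}$. Everything thus reduces to verifying, for $g(s) = L_{p,l}(s,\chi)$ regarded as a function on $\NN_0$, the two conditions: (i) $g(s) \in \ZZ_p$ for all $s \in \NN_0$, and (ii) $\Dop^n g(0) \equiv 0 \pmod{p^n\ZZ_p}$ for all $n \geq 0$.

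First I would write $g(s) = F(m_0 + \eulerphi(q)s)$ with $F(m) = L_p(1-m,\chi)$ and $m_0 = \delta_\chi + l$, using the backward substitution from Definition \ref{def:mod-l-func}. For (i), since $\chi$ is real its values lie in $\ZZ$, so $F(m) \in \QQ$; the $p$-integrality of the modified values $L_p(1-m,\chi)$ at the relevant arguments is exactly what makes the congruences of Theorem \ref{thm:lp-congr} meaningful, and in the principal case it rests on $B_m/m$ being $p$-integral for $(p-1)\nmid m$. Hence $g$ takes values in $\ZZ_p$.

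The heart of the matter is (ii). The key observation is the change of variables turning an increment-one difference in $s$ into an increment-$\eulerphi(q)$ difference in $m$:
\[
   \Dop^n g(0) = \sum_{\nu=0}^n \binom{n}{\nu}(-1)^{n-\nu} F(m_0 + \eulerphi(q)\nu)
     = \Doph{h}{n} L_p(1-s,\chi) \valueat{s = m_0}, \quad h = \eulerphi(q).
\]
I would then apply Theorem \ref{thm:lp-congr} with this $h$ and base point $m = m_0$. To do so I must exhibit $h = \eulerphi(q)$ in the form $k\,\eulerphi(p^r)$ with $h$ even and some $r \geq 1$: for $p \geq 3$ take $r=1$, $k=1$, so $h = p-1$; for $p = 2$ take $r=2$, $k=1$, so $h = \eulerphi(4) = 2$. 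In both cases $h$ is even. The remaining hypotheses are then checked directly: $m_0 = \delta_\chi + l \in \delta_\chi + 2\NN_0$ because $l$ is even, and $m_0 \geq 1$ because if $\delta_\chi = 0$ then the constraint $l \neq \delta_\chi$ forces $l \geq 2$. In the principal case ($\delta_\chi = 0$) the extra hypotheses hold: $p > 3$ by assumption and $m_0 = l \not\equiv 0 \pmod{p-1}$ because $0 < l < \eulerphi(q) = p-1$. The theorem then yields $\Dop^n g(0) \equiv 0 \pmod{p^{nr}\ZZ_p}$, and since $nr \geq n$ this gives (ii); the case $n=0$ is trivial.

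I expect the only genuine obstacle to be the bookkeeping around the hypotheses of Theorem \ref{thm:lp-congr}: getting the change of variables $h = \eulerphi(q)$ exactly right, confirming $m_0 \geq 1$ and $m_0 \not\equiv 0 \pmod{p-1}$ in the principal case from the constraints $l \in 2\NN_0$, $l \neq \delta_\chi$, $0 \leq l < \eulerphi(q)$, and handling the $p=2$ case where these constraints force $\delta_\chi = 1$ and $l = 0$, so that $m_0 = 1$. Once (i) and (ii) are in place, $g$ is a Kummer function and Proposition \ref{prop:p-adic-cont} closes the argument immediately, giving the unique extension to $\ZZ_p$ with $L_{p,l}(\cdot,\chi) \in \KS_{p,2}$.
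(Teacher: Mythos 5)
Your proposal is correct and follows essentially the same route as the paper: identify $\Dop^n L_{p,l}(0,\chi)$ with $\Doph{\eulerphi(q)}{n} L_p(1-s,\chi)\valueat{s=\delta_\chi+l}$, invoke Theorem \ref{thm:lp-congr} (in the weaker form modulo $p^n$) to conclude that $L_{p,l}(\cdot,\chi)$ is a Kummer function, and finish with Proposition \ref{prop:p-adic-cont}. The paper compresses the hypothesis-checking into one sentence, whereas you verify $m_0 \geq 1$, the parity of $h$, and $m_0 \not\equiv 0 \pmod{p-1}$ explicitly; this is just a more detailed write-up of the identical argument.
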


\begin{proof}
The conditions above satisfy Theorem \ref{thm:lp-congr}, which we use in a
weaker form to obtain
\[
   \Dop^n L_{p,l}(0,\chi) = \Doph{\eulerphi(q)}{n} \, L_p(1-s,\chi)
     \valueat{s=\delta_\chi+l} \equiv 0 \pmod{p^n}
\]
for all $n \geq 0$. Thus $L_{p,l}(\cdot,\chi)$ is a Kummer function and
Proposition \ref{prop:p-adic-cont} gives the result.
\end{proof}

It remains the case $l = \delta_\chi = 0$ and $\chi \neq 1$. Here we have
the situation that $L_{p,l}(0,\chi)$ is not defined. In spite of that
$L_{p,l}(s,\chi)$ can be uniquely extended to $\ZZ_p$ by removing
the discontinuity at $s=0$.

\begin{prop} \label{prop:lp0-func-ks2}
Let $\chi$ be an even primitive quadratic character$\pmod{\ff_\chi}$. Assume
that $p^e \neq \ff_\chi$, $e \geq 1$. Then $L_{p,0}(\cdot,\chi)$ can be
uniquely extended to $\ZZ_p$ such that $L_{p,0}(s,\chi) \in \KS_{p,2}$ with
a removable discontinuity at $s=0$.
\end{prop}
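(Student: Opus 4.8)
The plan is to reduce to the situation of Proposition \ref{prop:lpl-func-ks2} by translating the argument so as to avoid the excluded point. Since $\chi$ is even we have $\delta_\chi = 0$, so with $l = 0$ the defining formula reads $L_{p,0}(s,\chi) = L_p(1 - \eulerphi(q) s, \chi)$ for $s \in \NN$. Plugging in $s = 0$ would give $L_p(1,\chi)$, i.e.\ the argument $1 - n$ with $n = 0$: here the Euler factor $1 - \chi(p) p^{n-1}$ degenerates to $1 - \chi(p) p^{-1} \notin \ZZ_p$, so $L_{p,0}(0,\chi)$ fails to be $p$-integral and is excluded in Definition \ref{def:mod-l-func}. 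This is precisely the discontinuity we must remove.

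First I would pass to the shifted function $g \colon \NN_0 \to \ZZ_p$ given by
\[
   g(s) = L_{p,0}(s+1,\chi) = L_p\bigl(1 - \eulerphi(q)(s+1),\chi\bigr), \quad s \in \NN_0,
\]
whose argument now satisfies $\eulerphi(q)(s+1) \geq \eulerphi(q) \geq 2$ and thus stays in the admissible range $n \geq 1$. Since a unit step in $s$ advances the argument by $\eulerphi(q)$, one has
\[
   \Dop^n g(0) = \Doph{\eulerphi(q)}{n} L_p(1-u,\chi)\valueat{u = \eulerphi(q)}.
\]
Because $\chi$ is even the base point $m = \eulerphi(q)$ satisfies $m \in \delta_\chi + 2\NN_0$ with $m \geq 1$ (indeed $\eulerphi(q) = p-1$ for $p \geq 3$ and $\eulerphi(q) = 2$ for $p = 2$, each even), and $\chi$ is primitive non-principal with $p^e \neq \ff_\chi$. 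Hence Theorem \ref{thm:lp-congr}, applied with $h = \eulerphi(q)$ at the base point $m = \eulerphi(q)$, yields $\Dop^n g(0) \equiv 0 \pmod{p^{nr}}$ with $r = 1 + \ord_p \eulerphi(q) \geq 1$, and in particular $\Dop^n g(0) \equiv 0 \pmod{p^n}$ for all $n \geq 0$. Thus $g$ is a Kummer function.

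By Proposition \ref{prop:p-adic-cont} the function $g$ extends uniquely to $\ZZ_p$ with $g \in \KS_{p,2}$. I would then undo the shift by setting $f = g \circ \lambda$ with $\lambda(s) = -1 + s$; since $a = -1 \in \ZZ_p$ and $b = 1 \in \ZZ$, Lemma \ref{lem:f-comp-lin-ks2} gives $f \in \KS_{p,2}$. For every integer $s \geq 1$ we have $f(s) = g(s-1) = L_{p,0}(s,\chi)$, so $f$ restricts to $L_{p,0}(\cdot,\chi)$ on its original domain, and as $\NN$ is dense in $\ZZ_p$ the continuous extension is unique. Finally $f(0) = g(-1) \in \ZZ_p$ supplies a genuine value at the formerly singular point, so the discontinuity at $s = 0$ is removable and $L_{p,0}(\cdot,\chi) \in \KS_{p,2}$.

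The main obstacle is the bookkeeping that places the base point of the difference operator exactly on an admissible even argument $m = \eulerphi(q) \geq 1$: it is the failure of this at $m = 0$ that blocks the direct argument of Proposition \ref{prop:lpl-func-ks2}, and the shift by $1$ is tailored to repair it while keeping every evaluation of $L_p(1-n,\chi)$ within the range $n \geq 1$ where it is $p$-integral. Once this is in place, Theorem \ref{thm:lp-congr} supplies the congruences and the remainder is a routine application of Proposition \ref{prop:p-adic-cont} and Lemma \ref{lem:f-comp-lin-ks2}.
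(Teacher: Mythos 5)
Your proof is correct and follows essentially the same route as the paper: shift by $1$ to the function $g(s)=L_{p,0}(s+1,\chi)$, verify via Theorem \ref{thm:lp-congr} at the base point $m=\eulerphi(q)$ that $g$ is a Kummer function, extend by Proposition \ref{prop:p-adic-cont}, and shift back. Your explicit appeal to Lemma \ref{lem:f-comp-lin-ks2} for the backward translation is a small extra justification that the paper leaves implicit, but the argument is the same.
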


\begin{proof}
By Theorem \ref{thm:lp-congr} we have
\[
   \Dop^n L_{p,0}(1,\chi) = \Doph{\eulerphi(q)}{n} \, L_p(1-s,\chi)
     \valueat{s=\eulerphi(q)} \equiv 0 \pmod{p^n}
\]
for all $n \geq 0$. Define $f(s) = L_{p,l}(s+1,\chi)$ for $s \in \NN_0$.
Then $f$ is a Kummer function and Proposition \ref{prop:p-adic-cont} shows
that $f \in \KS_{p,2}$. Now define $L_{p,l}(s,\chi) = f(s-1)$
for $s \in \ZZ_p$. Consequently we get an extended function
$L_{p,l}(s,\chi) \in  \KS_{p,2}$, which is defined at $s=0$.
\end{proof}

Henceforth we regard the functions $L_{p,l}(\cdot,\chi)$ as $p$-adic
functions lying in $\KS_{p,2}$ as a result of Propositions
\ref{prop:lpl-func-ks2} and \ref{prop:lp0-func-ks2}.
\smallskip

The definition of irregular primes and irregular pairs, which is usually
introduced in the context of Bernoulli numbers and the class number of
cyclotomic fields, was generalized to generalized Bernoulli numbers by several
authors, cf. Ernvall \cite{Ernvall:1979}, Hao and Parry \cite{Hao&Parry:1984},
and Holden \cite{Holden:1998}, who explicitly studied irregular primes over
real quadratic fields.

Our definition of $\chi$-irregular primes differs somewhat, such that we
associate a $\chi$-irregular pair $(p,l)$ with a $p$-adic $L$-function
$L_{p,l}(\cdot,\chi)$, where $l$ is always even. Moreover, we exclude primes
that divide the conductor of $\chi$, which are considered separately.

\begin{defin} \label{def:irr-prime}
Let $\mathfrak{X}_2$ be the set of primitive quadratic
characters$\pmod{\ff_\chi}$ including $\chi = 1$. We define for
$\chi \in \mathfrak{X}_2$ the set of $\chi$-irregular primes by
\[
   \IRR_\chi = \{ (p,l) : L_{p,l}(0,\chi) \in p \ZZ_p, p > 3,
     p \notdiv \ff_\chi, 2 \leq l \leq p-3, 2 \pdiv l \}.
\]
The index of $\chi$-irregularity of $p$ is defined to be
\[
   i_\chi(p) = \# \{ (p,l) \in \IRR_\chi : 2 \leq l \leq p-3, 2 \pdiv l \}.
\]
\end{defin}
\medskip

The aim of this new definition is that we have a correspondence between
\[
   (p,l) \in \IRR_\chi \qquad \longleftrightarrow \qquad
     L_{p,l}(\cdot,\chi) \in \KS^0_{p,2},
\]
which enables us to study the behavior of the $\chi$-irregular prime $p$ and
its powers as divisors of $L_{p,l}(\cdot,\chi)$. Collecting information about
all $(p,l) \in \IRR_\chi$, we then achieve a description of the structure of
the underlying $L$-function at negative integer arguments.

\begin{theorem} \label{thm:decomp-l-func}
Let $\chi \in \mathfrak{X}_2$ and $n \in 2\NN$. Set $s_{p,l} = s_{p,l}(n)$.
Then
\[
   \norm{L(1-(\delta_\chi+n),\chi)}_\infty = \mathfrak{I}(n,\chi) \,
     \mathfrak{S}(n,\chi) \, \mathfrak{D}(n,\chi),
\]
where
\begin{alignat*}{2}
   \mathfrak{I}(n,\chi) &= \prod_{\substack{(p,l) \in \IRR_\chi\\
     l \equiv n \!\!\! \pmod{p-1}}}
     && \hspace*{-3ex} \norm{L_{p,l}(s_{p,l},\chi)}_p^{-1}, \\
   \mathfrak{S}(n,\chi) &= \prod_{\substack{p \pdiv \ff_\chi\\
     l \equiv n \!\!\! \pmod{p-1}}}
     && \hspace*{-3ex} \norm{L_{p,l}(s_{p,l},\chi)}_p^{-1}, \\
   \mathfrak{D}(n,\chi) &= \hspace*{3.5ex}
     \prod_{\substack{p \, \notdiv \, \ff_\chi\\ p-1 \pdiv n}}
     && \hspace*{-3ex} \norm{L_{p,0}(s_{p,0},\chi)}_p^{-1}.
\end{alignat*}
Moreover, if $\chi$ is odd, then
\[
   \mathfrak{D}(n,\chi) = \prod_{\substack{\chi(p) = 1\\ p-1 \pdiv n}}
     \norm{L_{p,0}(s_{p,0},\chi)}_p^{-1}
     \prod_{\substack{\chi(p) = -1\\ p \pdiv 2 B_{1,\chi}\\ p-1 \pdiv n}}
     \hspace*{-0.8ex}
     \norm{L_{p,0}(s_{p,0},\chi)}_p^{-1}.
\]
\end{theorem}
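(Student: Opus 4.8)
The plan is to derive the factorization from the global product formula applied to the nonzero rational number $L(1-m,\chi)$, where $m=\delta_\chi+n$. Because $\chi$ is quadratic we have $B_{m,\chi}\in\QQ$, hence $L(1-m,\chi)=-B_{m,\chi}/m\in\QQ^*$, and it is nonzero since $m\equiv\delta_\chi\pmod 2$. The product formula $\prod_v \norm{L(1-m,\chi)}_v=1$ then reads
\[
   \norm{L(1-(\delta_\chi+n),\chi)}_\infty
     = \prod_p \norm{L(1-m,\chi)}_p^{-1},
\]
so the whole task reduces to evaluating each local factor and sorting the primes into the three announced groups. The organizing principle is that, away from the conductor, each local value coincides with a value of the $p$-adic $L$-function $L_{p,l}(\cdot,\chi)$, which by Propositions \ref{prop:lpl-func-ks2} and \ref{prop:lp0-func-ks2} lies in $\KS_{p,2}$ and is therefore governed by the unit/non-unit dichotomy of Theorem \ref{thm:ks2-zero-fixpnt}.

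First I would fix $p\notdiv\ff_\chi$ and let $l$ be the unique even residue with $l\equiv n\pmod{p-1}$, so that $s_{p,l}(n)=(n-l)/(p-1)$ and $L_{p,l}(s_{p,l},\chi)=L_p(1-m,\chi)$. Since $n\ge 2$ forces $m\ge 2$, the Euler factor $1-\chi(p)p^{m-1}$ is a $p$-adic unit, whence $\norm{L(1-m,\chi)}_p=\norm{L_{p,l}(s_{p,l},\chi)}_p$. If $l\neq 0$ and $(p,l)\notin\IRR_\chi$, then $L_{p,l}(0,\chi)\in\ZZ_p^*$, so $L_{p,l}(\cdot,\chi)\in\KS^*_{p,2}$ and Theorem \ref{thm:ks2-zero-fixpnt}(2) gives $\norm{L_{p,l}(s)}_p=1$ for every $s$; such primes drop out. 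The surviving primes with $l\neq 0$ are exactly the $\chi$-irregular ones, assembling into $\mathfrak{I}(n,\chi)$, while primes $p\pdiv\ff_\chi$ have $\chi(p)=0$ and hence a trivial Euler factor, so that $L_{p,l}(s_{p,l},\chi)=L(1-m,\chi)$ as a bare value and their local factors are collected verbatim into $\mathfrak{S}(n,\chi)$.

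The remaining primes satisfy $p-1\pdiv n$, forcing $l=0$. For even $\chi$ this is the excluded case $l=\delta_\chi=0$, where Proposition \ref{prop:lp0-func-ks2} extends $L_{p,0}(\cdot,\chi)$ over its removable discontinuity; evaluation at $s_{p,0}(n)$ produces $\mathfrak{D}(n,\chi)$. For odd $\chi$ one has $l=0\neq\delta_\chi=1$, so Proposition \ref{prop:lpl-func-ks2} applies and the defining value is $L_{p,0}(0,\chi)=L_p(0,\chi)=(1-\chi(p))\,L(0,\chi)=-(1-\chi(p))\,B_{1,\chi}$. When $\chi(p)=1$ this is a trivial zero, so $L_{p,0}(\cdot,\chi)\in\KS^0_{p,2}$ and the prime contributes unconditionally; when $\chi(p)=-1$ the value equals $-2B_{1,\chi}$, which lies in $p\ZZ_p$ precisely when $p\pdiv 2B_{1,\chi}$. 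This reproduces the two-factor refinement of $\mathfrak{D}(n,\chi)$ in the odd case.

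I expect the main obstacle to be the interplay between the Euler factor and $p$-integrality: one must verify that every local value $L_p(1-m,\chi)$ genuinely lies in $\ZZ_p$, so that the $\KS_{p,2}$ machinery is applicable, and then translate the unit/non-unit alternative of Theorem \ref{thm:ks2-zero-fixpnt} into \emph{contributes}/\emph{does not contribute}. The odd-character analysis at $l=0$ is the most delicate point, since there the Euler factor $1-\chi(p)$ degenerates at $m=1$ and one must cleanly separate a genuine trivial zero ($\chi(p)=1$) from an irregularity controlled by $B_{1,\chi}$ ($\chi(p)=-1$). The small primes $p\in\{2,3\}$ (and, for $\chi=1$, their exclusion from Proposition \ref{prop:lpl-func-ks2}) need separate verification, but they always satisfy $p-1\pdiv n$ and hence land in $\mathfrak{D}$, where their contribution can be read off classically; matching the congruence conditions $l\equiv n\pmod{p-1}$ and $p-1\pdiv n$ against the three index sets then closes the argument.
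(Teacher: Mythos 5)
Your proposal is correct and follows essentially the same route as the paper: the product formula applied to $L(1-(\delta_\chi+n),\chi)\in\QQ^*$, the observation that the Euler factor $1-\chi(p)p^{\delta_\chi+n-1}$ is a unit since $\delta_\chi+n\geq 2$, the partition of primes into $p\pdiv\ff_\chi$, $p\notdiv\ff_\chi$ with $p-1\pdiv n$, and the rest, with non-irregular pairs dropping out via $L_{p,l}(\cdot,\chi)\in\KS^*_{p,2}$, and the odd-$\chi$ refinement via $L_{p,0}(0,\chi)=-(1-\chi(p))B_{1,\chi}$ together with $B_{1,\chi}\neq 0$.
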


\begin{proof}
Note that $L(1-(\delta_\chi+n),\chi) \in \QQ^*$.
The product formula states that
\[
   \prod_{p \, \in \, \PP \, \cup \, \{ \infty \}} \hspace*{-1.6ex}
     \norm{L(1-(\delta_\chi+n),\chi)}_p = 1.
\]
Since $\delta_\chi+n \geq 2$, we have for all primes $p$ that
\[
   \norm{L(1-(\delta_\chi+n),\chi)}_p = \norm{L_p(1-(\delta_\chi+n),\chi)}_p
     = \norm{L_{p,l}(s_{p,l},\chi)}_p,
\]
where $l \equiv n \pmod{p-1}$.
Further we divide $\PP$ into the disjoint sets
$I_1 = \{ p : p \pdiv \ff_\chi \}$,
$I_2 = \{ p : p \notdiv \ff_\chi, p-1 \pdiv n \}$,
and $I_3 = \{ p : p \notdiv \ff_\chi, p-1 \notdiv n \}$. Thus
\[
   \norm{L(1-(\delta_\chi+n),\chi)}_\infty
     = \hspace*{-2ex} \prod_{\substack{p \, \in \, I_1 \cup I_2 \cup I_3\\
       l \equiv n \!\!\! \pmod{p-1}}}
       \hspace*{-3ex} \norm{L_{p,l}(s_{p,l},\chi)}_p^{-1}.
\]
We can split the above product into three products over $I_1$, $I_2$, and
$I_3$. The product over $I_1$, resp., $I_2$ equals $\mathfrak{S}(n,\chi)$,
resp., $\mathfrak{D}(n,\chi)$. It remains the product over $I_3$. We have to
show that
\[
   \prod_{\substack{p \, \notdiv \ff_\chi, \, l \neq 0\\
     l \equiv n \!\!\! \pmod{p-1}}}
       \hspace*{-3ex} \norm{L_{p,l}(s_{p,l},\chi)}_p^{-1}
     = \hspace*{-2ex} \prod_{\substack{(p,l) \in \IRR_\chi\\
       l \equiv n \!\!\! \pmod{p-1}}}
       \hspace*{-3ex} \norm{L_{p,l}(s_{p,l},\chi)}_p^{-1}.
\]
By Proposition \ref{prop:lpl-func-ks2} the left product above consists only of
functions $L_{p,l}(\cdot,\chi) \in \KS_{p,2}$. From Definition
\ref{def:irr-prime} we deduce for these functions that $(p,l) \notin \IRR_\chi$
implies $L_{p,l}(\cdot,\chi) \in \KS^*_{p,2}$.

Now, assume that $\chi$ is odd. Regarding the product of $\mathfrak{D}(n,\chi)$
we can also write $I_2 = I_2^+ \cup I_2^-$, where
$I_2^\pm = \{ p : \chi(p) = \pm 1, p-1 \pdiv n \}$.
If $p \in I_2^-$, then we have
\[
   L_{p,0}(0,\chi) = -(1-\chi(p)) B_{1,\chi} = -2 B_{1,\chi} \neq 0.
\]
Here we use the non-trivial fact that $B_{1,\chi} \neq 0$ for odd $\chi$,
cf.~\cite[Thm.~4.9, p.~38]{Washington:1997}.
Since $L_{p,0}(\cdot,\chi) \in \KS_{p,2}$, it follows that
$L_{p,0}(\cdot,\chi) \in \KS^*_{p,2}$ when $p \notdiv 2 B_{1,\chi}$.
\end{proof}

Our main interest is focused on the product of $\mathfrak{I}(\cdot,\chi)$,
since the $\chi$-irregular primes and their powers are the fundamental elements,
building mainly the values of $L(\cdot,\chi)$ at negative integer arguments.
Theorem \ref{thm:decomp-l-func} shows that
\[
   \mathfrak{I}(n,\chi) = \prod_{\substack{(p,l) \in \IRR_\chi\\
     l \equiv n \!\!\! \pmod{p-1}}}
     \hspace*{-3ex} \norm{L_{p,l}(s_{p,l},\chi)}_p^{-1}, \quad n \in 2\NN,
\]
where the functions, lying in $\KS_{p,2}$, can have a different behavior, such that
\[
   \norm{L_{p,l}(s_{p,l},\chi)}_p^{-1} \text{\ is \ } \left\{
     \begin{array}{l}
       \text{constant,} \\
       \text{bounded,} \\
       \text{unbounded,}
     \end{array} \right.
\]
as a result of the last section. As supported by computations, mainly functions
of $\WKS^0_{p,2}$ have been found. Thus, powers of $\chi$-irregular primes,
unbounded as $n \to \infty$ for $n \in 2\NN$, seem to give contribution to the
values of $L(1-(\delta_\chi+n),\chi)$.

\begin{remark} \label{rem:irr-results}
Buhler et al \cite{Buhler&others:2001} calculated irregular pairs and
cyclotomic invariants for all primes below 12 million. Due to their results,
we deduce for these pairs that
$\zeta_{p,l} \in \WKS^0_{p,2}$, $p^2 \notdiv \zeta_{p,l}(0)$,
and the zero $\xi \in \ZZ_p^*$ by Proposition \ref{prop:ks2-f-ord-1}.
Holden \cite{Holden:1998} showed that there are examples of $\chi$-irregular
pairs, $\chi$ a primitive quadratic character, such that
$p^2 \pdiv L_{p,l}(0,\chi)$. However, we have recalculated these
examples to demonstrate that the functions in question have
always $\lambda_f = 1$ and lie in $\WKS^0_{p,2}$; consequently the zero
$\xi \in p\ZZ_p$ by Proposition \ref{prop:ks2-f-ord-2}.
These and further computational results are given in \cite{Kellner:2009};
see also Example \ref{expl:chi-irr}.
\end{remark}

These aspects lead to the following conjecture about the \textit{irregular part}
of the values of $L(\cdot,\chi)$ at negative integer arguments.

\begin{conj} \label{conj:irr-prod}
Assume the conditions of Theorem \ref{thm:decomp-l-func}. We postulate the
following conjecture, which may hold either in weak or strong form, for a
given $L$-function.

\begin{enumerate}
\item Weak form:
  \[
     \mathfrak{I}(n,\chi) = \prod_{\substack{(p,l) \in \IRR_\chi\\
       l \equiv n \!\!\! \pmod{p-1}}} \hspace*{-3ex}  p^{\lambda_f} \,
       \prod_{\nu = 1}^{\lambda_f} \norm{s_{p,l}-\xi^{(\nu)}_{p,l}}_p^{-1},
       \quad n \in 2\NN,
  \]
  where $\xi^{(\nu)}_{p,l} \in \ZZ_p$ are the roots of the corresponding
  function $f = L_{p,l}(\cdot,\chi) \in \KS^{\rm s}_{p,2}$ with $\lambda_f < p$.
\item Strong form:
  \[
     \mathfrak{I}(n,\chi) = \prod_{\substack{(p,l) \in \IRR_\chi\\
       l \equiv n \!\!\! \pmod{p-1}}} \hspace*{-3ex} p \,
       \norm{s_{p,l}-\xi_{p,l}}_p^{-1},
       \quad n \in 2\NN,
  \]
  where $\xi_{p,l} \in \ZZ_p$ is the root of the corresponding function
  $L_{p,l}(\cdot,\chi) \in \WKS^0_{p,2}$.
\end{enumerate}
\end{conj}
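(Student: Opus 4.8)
The plan is to reduce both forms of the conjecture to a purely structural classification of each local factor, and then to invoke the norm formulas already established for the relevant subclasses of $\KS_{p,2}$. The key observation is that Theorem \ref{thm:decomp-l-func} already expresses
\[
   \mathfrak{I}(n,\chi) = \prod_{\substack{(p,l) \in \IRR_\chi\\ l \equiv n \!\!\! \pmod{p-1}}}
     \norm{L_{p,l}(s_{p,l},\chi)}_p^{-1},
\]
and this product is effectively finite, since only finitely many primes divide the fixed nonzero rational $L(1-(\delta_\chi+n),\chi)$. Hence both the weak and the strong form amount to a factor-by-factor identity, and the entire conjecture collapses to a statement about the membership of each $f = L_{p,l}(\cdot,\chi)$ in a suitable subclass.

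First I would fix an irregular pair $(p,l) \in \IRR_\chi$. By Proposition \ref{prop:lpl-func-ks2} we already know $f \in \KS_{p,2}$, and by Definition \ref{def:irr-prime} the condition $(p,l) \in \IRR_\chi$ forces $f(0) = L_{p,l}(0,\chi) \in p\ZZ_p$, so that $f \in \KS^0_{p,2}$. For the strong form I would then establish $f \in \WKS^0_{p,2}$; Theorem \ref{thm:ks2-zero-fixpnt}(3) yields $\norm{f(s)}_p = \norm{p\,(s-\xi)}_p$ with a unique simple zero $\xi = \xi_{p,l} \in \ZZ_p$, so that evaluation at $s = s_{p,l}$ gives $\norm{f(s_{p,l})}_p^{-1} = p\,\norm{s_{p,l}-\xi_{p,l}}_p^{-1}$, exactly the strong-form factor. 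For the weak form I would instead show $f \in \KS^{\rm s}_{p,2}$ with $\lambda_f < p$; Theorem \ref{thm:poly-f-ks2} then gives $\norm{f(s)}_p = p^{-\lambda_f} \prod_{\nu=1}^{\lambda_f} \norm{s-\xi^{(\nu)}_{p,l}}_p$ with all roots in $\ZZ_p$, and evaluation at $s = s_{p,l}$ produces the weak-form factor.

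The hard part is precisely this structural input, which is where the problem ceases to be formal. Both forms hinge on controlling the parameter $\lambda_f$: the strong form is equivalent to $\lambda_f = 1$, i.e.\ to $\Delta_f(1) = p^{-1}\Dop f(0) \not\equiv 0 \pmod{p\ZZ_p}$, while the weak form requires $\lambda_f < p$ together with all roots lying in $\ZZ_p$. This is genuinely deep: $\lambda_f$ plays the role of an Iwasawa-type invariant, and asserting $\lambda_f = 1$ uniformly over all irregular pairs lies far beyond the formal machinery of the preceding sections (the analogous vanishing of $\mu_f$ is already a Ferrero--Washington type statement). Accordingly I do not expect an unconditional proof; the support is computational. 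As recorded in Remark \ref{rem:irr-results}, every irregular pair in the tables of Buhler et al.\ satisfies $\lambda_f = 1$ with $\xi_{p,l} \in \ZZ^*_p$, and the quadratic-character examples of Holden likewise give $\lambda_f = 1$ (there with $\xi_{p,l} \in p\ZZ_p$), consistent with the strong form; this is what makes the reduction above the natural route and the strong form the expected truth.
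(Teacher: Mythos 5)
The statement you were asked to prove is a conjecture: the paper offers no proof of it, only the reduction via Theorem \ref{thm:decomp-l-func} together with the classification results of the earlier sections and the computational evidence summarized in Remark \ref{rem:irr-results}. Your proposal reconstructs exactly that reduction --- each local factor $\norm{L_{p,l}(s_{p,l},\chi)}_p^{-1}$ becomes the strong-form term via Theorem \ref{thm:ks2-zero-fixpnt}(3) once $f \in \WKS^0_{p,2}$, and the weak-form term via Theorem \ref{thm:poly-f-ks2} once $f \in \KS^{\rm s}_{p,2}$ with $\lambda_f < p$ --- and you correctly identify that the entire content of the conjecture is the unproven structural claim about $\lambda_f$ and the location of the roots, for which only computational support exists. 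Since you neither claim nor supply a proof of that structural input, and the paper does not either, your treatment matches the paper's; there is nothing to fault beyond noting that what you have written is (as you acknowledge) a conditional reduction, not a proof.
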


Clearly, products of $L$-functions provide examples, trivially take
$L(\cdot,\chi)^2$, where we then get a conjectural product representation as
above in \textit{weak form}. Therefore we define a product of $L$-functions,
which only makes sense when the characters have the same parity. Otherwise we
would get a zero function at negative integer arguments.

\begin{defin}
Let $\chi_1, \chi_2 \in \mathfrak{X}_2$. Assume that $\chi_1$ and $\chi_2$
have the same parity. Define the product $L(\cdot,\chi_1 \otimes \chi_2) =
L(\cdot,\chi_1) L(\cdot,\chi_2)$, which transfers to all other definitions.
Further define $\IRR_{\chi_1,\chi_2} = \IRR_{\chi_1} \cap \IRR_{\chi_2}$.
\end{defin}

\begin{prop} \label{prop:prod-l-func}
Let $\chi_1, \chi_2 \in \mathfrak{X}_2$ having the same parity.
If $\IRR_{\chi_1,\chi_2} \neq \emptyset$, then there exist functions
$f = L_{p,l}(\cdot,\chi_1 \otimes \chi_2) \in \KS_{p,2}$ with
$(p,l) \in \IRR_{\chi_1,\chi_2}$ and $\lambda_f \geq 2$.
Assuming Conjecture \ref{conj:irr-prod}, these functions lie in
$\KS^{\rm s}_{p,2}$ and $\mathfrak{I}(n,\chi_1 \otimes \chi_2)$
has a weak form.
\end{prop}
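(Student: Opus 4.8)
The plan is to realize $L_{p,l}(\cdot,\chi_1\otimes\chi_2)$ as a product inside the ring $\KS_{p,2}$ and to read off the two claims from the product formulas for the Mahler coefficients $\Delta_F$ and from the factorization theorem for products of functions in $\WKS^0_{p,2}$. First I would fix a pair $(p,l)\in\IRR_{\chi_1,\chi_2}$ and set $f_1=L_{p,l}(\cdot,\chi_1)$, $f_2=L_{p,l}(\cdot,\chi_2)$; by Propositions \ref{prop:lpl-func-ks2} and \ref{prop:lp0-func-ks2} both lie in $\KS_{p,2}$. Since $\KS_{p,2}$ is a commutative ring, the product $f=f_1 f_2=L_{p,l}(\cdot,\chi_1\otimes\chi_2)$ again lies in $\KS_{p,2}$, and because $\IRR_{\chi_1,\chi_2}\neq\emptyset$ at least one such $f$ exists. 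The membership $(p,l)\in\IRR_{\chi_1}\cap\IRR_{\chi_2}$ means, via the correspondence between $(p,l)\in\IRR_\chi$ and $L_{p,l}(\cdot,\chi)\in\KS^0_{p,2}$, that $f_1,f_2\in\KS^0_{p,2}$, so $\Delta_{f_1}(0),\Delta_{f_2}(0)\in p\ZZ_p$.

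Next I would obtain $\lambda_f\geq 2$ directly from Proposition \ref{prop:prod-f-delta}. Its formula at $k=0$ gives $\Delta_f(0)=\Delta_{f_1}(0)\Delta_{f_2}(0)\in p^2\ZZ_p$, and at $k=1$ gives $\Delta_f(1)\equiv\Delta_{f_1}(1)\Delta_{f_2}(0)+\Delta_{f_1}(0)\Delta_{f_2}(1)\equiv 0\pmod{p\ZZ_p}$, since both $\Delta_{f_j}(0)$ are divisible by $p$. Hence the first two Mahler coefficients of $f$ are nonunits and $\lambda_f\geq 2$. I prefer this coefficientwise computation to invoking Proposition \ref{prop:prod-f-prime-delta}, which would first require verifying $f_1,f_2\in\KS'_{p,2}$, i.e.\ the finiteness of $\lambda_{f_1},\lambda_{f_2}$; the direct argument sidesteps that issue and already settles the unconditional part of the statement.

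For the conditional part I would assume Conjecture \ref{conj:irr-prod} in its strong form for each of $\chi_1$ and $\chi_2$, so that $f_1,f_2\in\WKS^0_{p,2}$ for every $(p,l)\in\IRR_{\chi_1,\chi_2}$. Then $f=f_1 f_2$ is by definition a product of two functions of $\WKS^0_{p,2}$, whence $f\in\KS^{\rm s}_{p,2}$. Applying Theorem \ref{thm:poly-f-ks2} with $n=2$, and using that $\IRR_\chi$ forces $p>3$ so that $n=2<p$, I obtain $\lambda_f=2<p$ together with the splitting $f(s)=p^2(s-\xi_1)(s-\xi_2)\,f^*(s)$, $f^*\in\CZ^*$. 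Taking $p$-adic norms at $s=s_{p,l}$ converts each factor $\norm{L_{p,l}(s_{p,l},\chi_1\otimes\chi_2)}_p^{-1}$ appearing in Theorem \ref{thm:decomp-l-func} into $p^{\lambda_f}\prod_{\nu=1}^{\lambda_f}\norm{s_{p,l}-\xi^{(\nu)}_{p,l}}_p^{-1}$, which is precisely the weak form of Conjecture \ref{conj:irr-prod} for $\mathfrak{I}(n,\chi_1\otimes\chi_2)$.

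The hard part will be the bookkeeping of the final paragraph rather than any genuine estimate: I must enter the class $\KS^{\rm s}_{p,2}$ legitimately (the two zeros $\xi_1,\xi_2$ need not be distinct, but the definition of $\KS^{\rm s}_{p,2}$ permits repeated factors) and confirm that the exponent is exactly $\lambda_f=2$ and not $\lambda_f>2$. This is guaranteed precisely by the inequality $n=2<p$ in Theorem \ref{thm:poly-f-ks2}, which is where the hypothesis $p>3$ built into $\IRR_\chi$ is essential. The unconditional bound $\lambda_f\geq 2$ is then routine once the product formula for $\Delta_f$ is in hand.
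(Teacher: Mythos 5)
Your argument follows the paper's proof in all essentials: realize $f$ as a product in the ring $\KS_{p,2}$, extract $\lambda_f\geq 2$ from the product formula for the $\Delta$-coefficients, and pass to $\KS^{\rm s}_{p,2}$ under the conjecture. Your derivation of $\lambda_f\geq 2$ via Proposition \ref{prop:prod-f-delta} at $k=0,1$ is if anything cleaner than the paper's appeal to Proposition \ref{prop:prod-f-prime-delta}, since the latter is stated for $\KS^\prime_{p,2}$ and would formally require first checking $\lambda_{f_1},\lambda_{f_2}<\infty$. The one place you fall short of the stated proposition is the conditional part: Conjecture \ref{conj:irr-prod} is permitted to hold in either weak or strong form for a given $L$-function, but you assume the strong form for both $\chi_1$ and $\chi_2$. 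If the conjecture holds for, say, $\chi_1$ only in weak form, then $f_1=L_{p,l}(\cdot,\chi_1)$ is a product of several functions of $\WKS^0_{p,2}$ rather than a single one, and your application of Theorem \ref{thm:poly-f-ks2} with $n=2$ no longer applies verbatim. The repair is a one-liner --- $\KS^{\rm s}_{p,2}$ is closed under products, so $f=f_1f_2\in\KS^{\rm s}_{p,2}$ in every case --- but then the exponent is $\lambda_f=\lambda_{f_1}+\lambda_{f_2}$ rather than exactly $2$; the unconditional bound $\lambda_f\geq 2$ is all that is actually needed to force the weak form for $\mathfrak{I}(n,\chi_1\otimes\chi_2)$, and the paper's proof stops at precisely that level of precision.
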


\begin{proof}
We use the notation of Theorem \ref{thm:decomp-l-func}. Let $n \in 2\NN$.
We then obtain
\[
  \mathfrak{I}(n,\chi_1 \otimes \chi_2)
    = \mathfrak{I}(n,\chi_1) \, \mathfrak{I}(n,\chi_2)
    = \mathfrak{I}_1 \, \mathfrak{I}_2 \, \mathfrak{I}_3,
\]
where
\begin{alignat*}{2}
  \mathfrak{I}_\nu &=
    \prod_{\substack{(p,l) \in \IRR_{\chi_\nu} \backslash \IRR_{\chi_1,\chi_2}\\
    l \equiv n \!\!\! \pmod{p-1}}}
    && \hspace*{-3ex} \norm{L_{p,l}(s_{p,l},\chi_\nu)}_p^{-1}, \quad \nu = 1,2,\\
  \mathfrak{I}_3 &= \hspace*{1ex}
    \prod_{\substack{(p,l) \in \IRR_{\chi_1,\chi_2}\\
    l \equiv n \!\!\! \pmod{p-1}}}
    && \hspace*{-3ex} \norm{L_{p,l}(s_{p,l},\chi_1)L_{p,l}(s_{p,l},\chi_2)}_p^{-1}.
\end{alignat*}
If $\IRR_{\chi_1,\chi_2} \neq \emptyset$, then the product of $\mathfrak{I}_3$
cannot be trivial for all $n$. For $(p,l) \in \IRR_{\chi_1,\chi_2}$ we get
\[
   L_{p,l}(\cdot,\chi_1)L_{p,l}(\cdot,\chi_2)
     = L_{p,l}(\cdot,\chi_1 \otimes \chi_2) \in \KS_{p,2}.
\]
Since both functions $L_{p,l}(\cdot,\chi_\nu) \in \KS^0_{p,2}$, the product
$f = L_{p,l}(\cdot,\chi_1 \otimes \chi_2)$ has $\lambda_f \geq 2$
by Proposition \ref{prop:prod-f-prime-delta}.
Assuming Conjecture \ref{conj:irr-prod}, all these functions lie in
$\KS^{\rm s}_{p,2}$ and so do their products. Since we have $\lambda_f \geq 2$
for some functions, $\mathfrak{I}(n,\chi_1 \otimes \chi_2)$ has a weak form.
\end{proof}

Next, we consider the connection with the Dedekind zeta function.

\begin{defin}
Let $K$ be an algebraic number field. The Dedekind zeta function is defined by
\[
   \zeta_K(z) = \sum_{\mathfrak{a}} N(\mathfrak{a})^{-z},
     \quad z \in \CC, \, \real z > 1,
\]
where the sum runs over all nonzero integral ideals of $K$ and
$N(\mathfrak{a})$ denotes the norm of the ideal $\mathfrak{a}$.
\end{defin}

We need the well known decomposition for quadratic fields,
cf. \cite[Thm.~4.3, p.~34]{Washington:1997}.

\begin{theorem}
Let $D$ be the fundamental discriminant of the quadratic field
$K = \QQ(\sqrt{D})$. Then
\[
   \zeta_K(z) = \zeta(z) L(z,\chi_D), \quad z \in \CC,
\]
where $\chi_D(\cdot) = \left( \frac{D}{\cdot} \right)$ is the Kronecker symbol.
\end{theorem}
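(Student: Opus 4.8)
The plan is to verify the identity by comparing Euler products on the half-plane $\real z > 1$, where both sides converge absolutely. First I would write the Dedekind zeta function as a product over the prime ideals of $K$,
\[
   \zeta_K(z) = \prod_{\mathfrak{p}} \left( 1 - N(\mathfrak{p})^{-z} \right)^{-1},
\]
and the right-hand side as a product over the rational primes,
\[
   \zeta(z) L(z,\chi_D) = \prod_{p} \left( 1 - p^{-z} \right)^{-1}
     \left( 1 - \chi_D(p) \, p^{-z} \right)^{-1}.
\]
The strategy is to group the prime ideals $\mathfrak{p}$ appearing in $\zeta_K$ according to the rational prime $p$ lying below them, and then to match the resulting local factor against the corresponding factor on the right, one rational prime at a time.

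The key ingredient I would invoke is the decomposition law for the quadratic field $K = \QQ(\sqrt{D})$: every rational prime $p$ either splits as $p = \mathfrak{p}_1 \mathfrak{p}_2$ with $N(\mathfrak{p}_i) = p$, remains inert with $N(\mathfrak{p}) = p^2$, or ramifies as $p = \mathfrak{p}^2$ with $N(\mathfrak{p}) = p$; and this trichotomy is governed exactly by the Kronecker symbol, namely $p$ splits, is inert, or ramifies according as $\chi_D(p)$ equals $1$, $-1$, or $0$. Computing the local factor of $\zeta_K$ in the three cases then yields $(1-p^{-z})^{-2}$, $(1-p^{-2z})^{-1} = (1-p^{-z})^{-1}(1+p^{-z})^{-1}$, and $(1-p^{-z})^{-1}$, respectively. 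On the right-hand side the factor $(1-p^{-z})^{-1}(1-\chi_D(p)\,p^{-z})^{-1}$ evaluates, in the same three cases $\chi_D(p) = 1, -1, 0$, to precisely these same three expressions. Since the local factors agree at every prime, the two Euler products coincide for $\real z > 1$, and the identity follows by uniqueness of Euler product expansions.

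The main obstacle is the decomposition law itself, that is, the verification that the splitting type of $p$ in $K$ is encoded faithfully by $\chi_D(p)$. For odd primes $p \notdiv D$ this reduces to the Dedekind--Kummer theorem applied to a generator of the ring of integers, together with the identification of $\chi_D(p) = \left( \frac{D}{p} \right)$ with the Legendre symbol, so that $p$ splits exactly when $D$ is a quadratic residue modulo $p$. The delicate cases are $p = 2$ and the ramified primes $p \pdiv D$: here the definitions of the fundamental discriminant $D$ and of the Kronecker symbol at $2$ are precisely what is needed to record the correct local behavior. Rather than reprove the full quadratic decomposition law, I would cite the standard reference \cite[Thm.~4.3, p.~34]{Washington:1997} for this step and treat the Euler factor matching above as the substance of the argument.
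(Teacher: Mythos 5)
The paper does not prove this statement at all: it is quoted as a known result with a citation to \cite[Thm.~4.3, p.~34]{Washington:1997}, so there is no internal proof to compare against. Your Euler-product argument is the standard proof of that cited theorem and is correct as far as it goes: the three local-factor computations (split, inert, ramified) match the factor $\left(1-p^{-z}\right)^{-1}\left(1-\chi_D(p)\,p^{-z}\right)^{-1}$ exactly, and you rightly identify the decomposition law and its encoding by the Kronecker symbol (including $p=2$ and $p\pdiv D$) as the genuine content, which it is reasonable to cite. The one detail you should add is that the Euler-product comparison only establishes the identity on the half-plane $\real z>1$, whereas the statement asserts it for all $z\in\CC$; this follows because both sides admit meromorphic continuations to $\CC$ and agree on an open set, hence agree everywhere by the identity theorem.
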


\begin{corl}
Let $D > 0$ be the fundamental discriminant of the real quadratic field
$K = \QQ(\sqrt{D})$. Then
\[
   \zeta_K(1-n) = L(1-n,1 \otimes \chi_D), \quad n \geq 2.
\]
The irregular part of $\zeta_K(1-n)$ is described by
$\mathfrak{I}(n,1 \otimes \chi_D)$ for $n \in 2\NN$.
\end{corl}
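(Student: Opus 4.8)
The plan is to reduce the statement to the factorization $\zeta_K(z) = \zeta(z) L(z,\chi_D)$ of the preceding theorem together with the definition of the product character. First I would record the two basic facts about $\chi_D$ that make the product construction applicable: since $D > 0$ is a fundamental discriminant, the Kronecker symbol $\chi_D = \left( \frac{D}{\cdot} \right)$ is a primitive quadratic character of conductor $D$, so $\chi_D \in \mathfrak{X}_2$; and because $\chi_D(-1) = \mathrm{sign}(D) = +1$, the character $\chi_D$ is even, hence of the same parity as the principal character $\chi = 1$. This is exactly the hypothesis required so that $L(\cdot, 1 \otimes \chi_D)$ is defined and does not reduce to the zero function at negative integer arguments.

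Evaluating the factorization at $z = 1-n$ for $n \geq 2$ yields $\zeta_K(1-n) = \zeta(1-n) \, L(1-n,\chi_D)$. Recalling that $\zeta(z) = L(z,1)$, this reads $\zeta_K(1-n) = L(1-n,1) \, L(1-n,\chi_D)$, and by the definition $L(\cdot,\chi_1 \otimes \chi_2) = L(\cdot,\chi_1) L(\cdot,\chi_2)$ the right-hand side is precisely $L(1-n, 1 \otimes \chi_D)$, giving the first assertion. The restriction $n \geq 2$ only serves to avoid the special value $B_{1,1}$ and the pole of $\zeta_K$ at $z=1$; it plays no further role.

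For the \emph{irregular part} I would invoke Theorem \ref{thm:decomp-l-func} applied to the product $1 \otimes \chi_D$, noting that all the auxiliary quantities $\mathfrak{I}, \mathfrak{S}, \mathfrak{D}$ transfer by definition to products of characters of the same parity. Since both factors are even, the product has $\delta = 0$, so the theorem applies with $\delta_\chi + n = n$ for $n \in 2\NN$ and produces
\[
   \norm{L(1-n, 1 \otimes \chi_D)}_\infty
     = \mathfrak{I}(n, 1 \otimes \chi_D) \,
       \mathfrak{S}(n, 1 \otimes \chi_D) \,
       \mathfrak{D}(n, 1 \otimes \chi_D).
\]
Combining this with the identification $\zeta_K(1-n) = L(1-n, 1 \otimes \chi_D)$ from the first part, the irregular contribution is exactly the factor $\mathfrak{I}(n, 1 \otimes \chi_D)$, as claimed.

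The only genuine point to verify is the parity bookkeeping: the sign condition $D > 0$ is what forces $\chi_D$ to be even, and this single fact is what simultaneously legitimizes the product $1 \otimes \chi_D$ and lets Theorem \ref{thm:decomp-l-func} run with $\delta = 0$ over even $n$. Once this is in place, the corollary is a direct substitution into results already established, so I expect no further obstacle.
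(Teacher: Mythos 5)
Your proof is correct and follows essentially the same route as the paper: the parity observation ($D>0$ forces $\chi_D$ even, matching $\chi=1$), the factorization $\zeta_K(z)=\zeta(z)L(z,\chi_D)$, and the product decomposition machinery of Theorem \ref{thm:decomp-l-func} via Proposition \ref{prop:prod-l-func}. You spell out the bookkeeping more explicitly than the paper's one-line citation, but there is no substantive difference.
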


\begin{proof}
This follows by Proposition \ref{prop:prod-l-func},
since $\chi = 1$ and $\chi_D$ have the same parity.
\end{proof}

Holden \cite{Holden:1998} studied $\zeta_K$ in case of real quadratic fields in
context of $\chi_D$-irregular primes and their distribution. We use here his
computational results, which we have recalculated and extended for our purpose.
He found an example for $D=77$ such that $(37,32)$ is both an irregular pair
and a $\chi_D$-irregular pair; this implies $\IRR_{1,\chi_D} \neq \emptyset$.
As mentioned earlier in Remark \ref{rem:irr-results}, there are examples where
$p^2 \pdiv L_{p,l}(0,{\chi_D})$, e.g., for $D=5$ and $(p,l)=(443,216)$.
Holden mentions that there are other examples, particularly for $D=5$ and
$p < 50$, but without publishing these data. Therefore we have used
the tables of $\chi_D$-irregular primes for $D=5$ of Hao and Parry
\cite{Hao&Parry:1984}, to find the example for $D=5$ and $(p,l)=(19,8)$,
which is more suitable for our calculations below. The computations are
reported in \cite{Kellner:2009}.

\begin{expl} \label{expl:chi-irr}
Computed zeros $\xi$ and fixed points $\tau$$\pmod{p^{10}}$.
\begin{enumerate}
\item Case $D = 5$, $(p,l)=(19,8)$,
$f = L_{p,l}(\cdot,\chi_D) \in \WKS^0_{p,2}$:
\begin{center}
\begin{tabular}{|c|l|} \hline
  $f$ & values $(s=0, 1)$ / $p$-adic digits $s_0, \ldots, s_9$ \\ \hline\hline
  $\Delta_f, \lambda_f$ & 16, 1 \\ \hline
  $\ord_p f(s)$ & 2, 1 \\ \hline
  $\xi$  & 0, 7, 18, 11, 12, 10, 10, 8, 14, 0 \\ \hline
  $\tau$ & 0, 0, 2, 13, 11, 4, 15, 6, 12, 16 \\ \hline
\end{tabular}
\end{center}

\item Case $D = 77$, $(p,l)=(37,32)$,
$f = L_{p,l}(\cdot,1 \otimes \chi_D) \in \KS^{\rm s}_{p,2}$:
\begin{center}
\begin{tabular}{|c|l|} \hline
  $f$ & values $(s=0, 1)$ / $p$-adic digits $s_0, \ldots, s_9$ \\ \hline\hline
  $\Delta_f, \lambda_f$ & 0, 2 \\ \hline
  $\ord_p f(s)$ & 2, 2 \\ \hline
  $\xi_1$ & 7, 28, 21, 30, 4, 17, 26, 13, 32, 35 \\ \hline
  $\xi_{\chi_D}$ & 9, 36, 26, 31, 25, 30, 21, 36, 30, 33 \\ \hline
  $\tau$  & 0, 0, 14, 35, 13, 27, 30, 3, 22, 29 \\ \hline
\end{tabular}
\end{center}

\end{enumerate}
\end{expl}
\smallskip

\begin{remark*}
Surely, there are several authors, who already computed zeros of $p$-adic
$L$-func\-tions, mostly in context of Iwasawa theory. These calculations
were performed by searching a start solution$\pmod{p^r}$ for some $r \geq 1$
and further using Newton's method. In contrast, we give here a necessary and
sufficient condition ($f(0) \in p\ZZ_p$ and $\Delta_f \neq 0$, or
$\lambda_f = 1$ for both), so that $f \in \WKS^0_{p,2}$ which shows the
existence of a zero and $|f(s)|_p$ reduces to a linear term
(for $p=2$ we also need the condition $2 \pdiv \Delta_f(2)$).
\end{remark*}
\medskip

At the end, we consider the \textit{non-irregular part}
$\mathfrak{D}(\cdot,\chi)$ in case $\chi$ is odd.
Here we have the interesting situation, that the functions
$L_{p,0}(s,\chi)$ have a zero at $s=0$ when $\chi(p) = 1$.

\begin{defin} \label{def:except-prime}
Let $\chi \in \mathfrak{X}_2$ where $\chi$ is odd. We define
\[
   \EXC_\chi = \{ (p,0) : L_{p,0}(1,\chi) \in p^2\ZZ_p, p > 3,
     \chi(p) = 1 \}
\]
as the set of $\chi$-exceptional pairs.
We further define for $(p,0) \in \EXC_\chi$ the functions
\[
   \tilde{L}_{p,0}(s,\chi) = \left\{
     \begin{array}{cc}
       L_{p,0}(s,\chi)/ps, & s \neq 0, \vspace*{1ex}\\
       L'_{p,0}(s,\chi)/p, & s = 0,
     \end{array}
   \right. \quad s \in \ZZ_p.
\]
\end{defin}

\begin{remark*}
As a result of Proposition \ref{prop:deg-ks2-zero-0} and
Lemma \ref{lem:ks2-zero0}, we have for $p > 3$ that
\[
   \tilde{L}_{p,0}(\cdot,\chi) \in \KS^{\rm d}_{p,1}
     \quad \text{and} \quad
     \tilde{L}_{p,0}(0,\chi) = \int_{\ZZ_p} \KSop L_{p,0}(s,\chi) \, ds.
\]
The value of $\tilde{L}_{p,0}(0,\chi)$ is easily computable by
\eqref{eq:ks2-zero0-g0} of Lemma \ref{lem:ks2-zero0}.
\end{remark*}

The next theorem shows that Definition \ref{def:except-prime} is well defined.
The set $\EXC_\chi$ can be seen as an analogue to $\IRR_\chi$, where all
functions $f = L_{p,0}(\cdot,\chi)$ with $(p,0) \in \EXC_\chi$ have the
property that $\lambda_f > 1$.

\begin{theorem} \label{thm:except-l-func}
Let $\chi \in \mathfrak{X}_2$ where $\chi$ is odd.
Let $p > 3$ where $\chi(p) = 1$.
For $f = L_{p,0}(\cdot,\chi)$ we have the following statements:
\begin{enumerate}
\item $\lambda_f = 1 \iff \ord_p L_{p,0}(1,\chi) = 1
       \iff (p,0) \notin \EXC_\chi$.
\item $\lambda_f > 1 \iff \ord_p L_{p,0}(1,\chi) \geq 2
       \iff (p,0) \in \EXC_\chi$.
\item If $\lambda_f = 1$, then $\norm{L_{p,0}(s,\chi)}_p = \norm{ps}_p$
      for $s \in \ZZ_p$.
\item If $\lambda_f = 2$, then $\norm{L_{p,0}(s,\chi)}_p =
      \norm{p^2s(s-\xi_{p,0})}_p$ for $s \in \ZZ_p$,
      where $\xi_{p,0}$ is the unique simple zero of
      $\tilde{L}_{p,0}(\cdot,\chi) \in \KS^{\rm d}_{p,1}$.
\end{enumerate}
\end{theorem}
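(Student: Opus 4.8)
The plan is to express everything through the Mahler coefficients $\Delta_f(\nu)$ of $f = L_{p,0}(\cdot,\chi)$ and then feed the result into the structure theorems of the previous sections. First I would pin down two anchoring facts. Since $\chi$ is odd we have $\delta_\chi = 1$, and as computed in the proof of Theorem \ref{thm:decomp-l-func} one has $L_{p,0}(0,\chi) = -(1-\chi(p))B_{1,\chi}$; the standing hypothesis $\chi(p)=1$ therefore forces $f(0)=0$. Because $l=0\neq 1=\delta_\chi$ and $\chi(p)=1$ gives $p \notdiv \ff_\chi$, Proposition \ref{prop:lpl-func-ks2} applies and yields $f \in \KS_{p,2}$. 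In particular $\Delta_f(0)=f(0)=0\in p\ZZ_p$, so $\lambda_f \geq 1$.

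Next I would read off $f(1)$ from the Mahler expansion $f(s)=\sum_{\nu\ge0}\Delta_f(\nu)\,p^\nu\binom{s}{\nu}$: only the terms $\nu=0,1$ survive at $s=1$, and since $\Delta_f(0)=0$ this gives $L_{p,0}(1,\chi)=f(1)=p\,\Delta_f(1)$, hence $\ord_p L_{p,0}(1,\chi)=1+\ord_p \Delta_f(1)$ (with the convention $\ord_p 0 = \infty$ covering $\Delta_f(1)=0$). Parts (1) and (2) then follow purely formally: by the definition of $\lambda_f$ together with $\Delta_f(0)=0$ we have $\lambda_f = 1 \iff \Delta_f(1)\in\ZZ_p^* \iff \ord_p f(1)=1$, and complementarily $\lambda_f > 1 \iff \Delta_f(1)\in p\ZZ_p \iff \ord_p f(1)\ge 2$. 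Comparing with Definition \ref{def:except-prime}, the condition $\ord_p f(1)\ge 2$ is exactly $(p,0)\in\EXC_\chi$, which closes both chains of equivalences.

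For (3), if $\lambda_f=1$ then $\Delta_f\neq 0$, and since $f(0)=0\in p\ZZ_p$ and $p>3$ makes the extra $p=2$ condition on $\Delta_f(2)$ vacuous, we get $f\in\WKS^0_{p,2}$. Theorem \ref{thm:ks2-zero-fixpnt}(3) supplies a unique simple zero $\xi$ with $\norm{f(s)}_p=\norm{p\,(s-\xi)}_p$; uniqueness forces $\xi=0$ because $f(0)=0$, and the formula becomes $\norm{L_{p,0}(s,\chi)}_p=\norm{ps}_p$. For (4), $\lambda_f=2$ means $\Delta_f=0$ and $\Delta_f(2)\in\ZZ_p^*$; together with the zero at $s=0$ and $p>3$, this places $f$ in $\KS^2_{p,2}$. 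Theorem \ref{thm:ks2-zero-2} then factors $f(s)=p^2(s-\xi_1)(s-\xi_2)f^*(s)$ with $\norm{f(s)}_p=\norm{p^2(s-\xi_1)(s-\xi_2)}_p$; since $f(0)=0$ I take $\xi_1=0$ and set $\xi_{p,0}=\xi_2$, which gives the stated norm $\norm{p^2 s(s-\xi_{p,0})}_p$.

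It then remains to identify the second factor with the concretely defined function. By Lemma \ref{lem:ks2-zero0} the quotient $f(s)/ps$ extends to $g\in\CZ$, which is precisely $\tilde{L}_{p,0}(\cdot,\chi)$: the defining value at $s=0$ matches $g(0)$ via the remark following Definition \ref{def:except-prime}. Proposition \ref{prop:deg-ks2-zero-0} places $g\in\KS^{\rm d}_{p,1}$, and the internal decomposition $f(s)=p(s-\xi_1)g(s)$ used in the proof of Theorem \ref{thm:ks2-zero-2} (with $\xi_1=0$, so $f=ps\cdot g$) shows that $\xi_2$ is the unique simple zero of this same $g$. I expect the main point requiring care to be exactly this last bookkeeping — confirming that the ``second'' zero produced abstractly by Theorem \ref{thm:ks2-zero-2} coincides with the zero of the explicitly constructed $\tilde{L}_{p,0}(\cdot,\chi)$ — which hinges on the compatibility of the factorization $f=ps\cdot g$ with the $\KS^2_{p,2}$ splitting and on the agreement of the two descriptions of $g(0)$.
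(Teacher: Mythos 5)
Your proposal is correct and follows essentially the same route as the paper: the factorization $f = ps\,\tilde{L}_{p,0}(\cdot,\chi)$ from Lemma \ref{lem:ks2-zero0}, Theorem \ref{thm:ks2-zero-2} together with Proposition \ref{prop:deg-ks2-zero-0} for part (4), and the identification of the second zero with the zero of $\tilde{L}_{p,0}(\cdot,\chi)$. Your handling of (1)--(3) by evaluating the Mahler series at $s=1$ (giving $f(1)=p\,\Delta_f(1)$) rather than via the congruence $\tilde{L}_{p,0}(s,\chi)\equiv\Delta_f \pmod{p\ZZ_p}$ is only a cosmetic variation and is equally valid.
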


\begin{proof}
(1)-(3):
Since $\chi$ is odd, we have
\[
   L_{p,0}(0,\chi) = -(1-\chi(p)) B_{1,\chi} = 0.
\]
Therefore $L_{p,0}(\cdot,\chi)$ has a zero at $s=0$.
By Lemma \ref{lem:ks2-zero0} and Definition \ref{def:except-prime} we obtain
\[
   L_{p,0}(s,\chi) = ps \, \tilde{L}_{p,0}(s,\chi), \quad s \in \ZZ_p.
\]
Using \eqref{eq:ks2-zero0-g} of Lemma \ref{lem:ks2-zero0}, we have
\[
   \tilde{L}_{p,0}(s,\chi) \equiv \Delta_f \pmod{p \ZZ_p}.
\]
If $\lambda_f = 1$, then $\norml{\tilde{L}_{p,0}(s,\chi)}_p = 1$
and $\norm{L_{p,0}(s,\chi)}_p = \norm{ps}_p$ for $s \in \ZZ_p$,
thus $\ord_p L_{p,0}(1,\chi) = 1$.
Conversely, $\lambda_f > 1$ implies that $\norml{\tilde{L}_{p,0}(s,\chi)}_p < 1$
for $s \in \ZZ_p$ and $p^2 \pdiv L_{p,0}(1,\chi)$.

(4): Since $L_{p,0}(s,\chi)$ has a zero at $s=0$ and $\lambda_f = 2$,
this function satisfies the conditions of Theorem \ref{thm:ks2-zero-2}.
It follows that $\norm{L_{p,0}(s,\chi)}_p = \norm{p^2s(s-\xi_{p,0})}_p$
for $s \in \ZZ_p$ and $\xi_{p,0}$ is the unique simple zero of
$\tilde{L}_{p,0}(\cdot,\chi)$.
Proposition \ref{prop:deg-ks2-zero-0} shows that
$\tilde{L}_{p,0}(\cdot,\chi) \in \KS^{\rm d}_{p,1}$.
\end{proof}

We achieve a more detailed decomposition of $\mathfrak{D}(n,\chi)$ as follows.

\begin{theorem} \label{thm:decomp-d-prod}
Assume the conditions of Theorem \ref{thm:decomp-l-func} where $\chi$ is odd.
We have
\[
   \mathfrak{D}(n,\chi) = \mathfrak{D}_{2,3}(n,\chi) \, \mathfrak{D}_{+}(n,\chi)
     \, \mathfrak{D}_{-}(n,\chi) \, \mathfrak{D}_{0}(n,\chi),
\]
where
\begin{alignat*}{3}
   \mathfrak{D}_{2,3}(n,\chi) &=
     \prod_{\substack{p \in I_\chi^{2,3}}}
     && \hspace*{-1.1ex} \norm{L_{p,0}(s_{p,0},\chi)}_p^{-1},
     \quad & I_\chi^{2,3} &= \{ p \leq 3 : \chi(p) \neq 0,
     p \pdiv (1-\chi(p))B_{1,\chi} \}, \\
   \mathfrak{D}_+(n,\chi) &=
     \prod_{\substack{p \in I_\chi^+\\ p-1 \pdiv n}}
     && \hspace*{-1.1ex} \norm{p n}_p^{-1},
     \quad & I_\chi^+ &= \{ p > 3 : \chi(p) = 1 \}, \\
   \mathfrak{D}_-(n,\chi) &=
     \prod_{\substack{p \in I_\chi^-\\ p-1 \pdiv n}}
     && \hspace*{-1.1ex} \norm{L_{p,0}(s_{p,0},\chi)}_p^{-1},
     \quad & I_\chi^- &= \{ p > 3 : \chi(p) = -1, p \pdiv B_{1,\chi} \}, \\
   \mathfrak{D}_0(n,\chi) &= \hspace*{-1.5ex}
     \prod_{\substack{(p,0) \in \EXC_\chi\\ p-1 \pdiv n}}
     && \hspace*{-0.8ex} \norml{\tilde{L}_{p,0}(s_{p,0},\chi)}_p^{-1}.
\end{alignat*}
\end{theorem}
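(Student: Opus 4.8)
The plan is to start from the factorization of $\mathfrak{D}(n,\chi)$ for odd $\chi$ already recorded in Theorem \ref{thm:decomp-l-func}, namely
\[
   \mathfrak{D}(n,\chi) = \prod_{\substack{\chi(p) = 1\\ p-1 \pdiv n}}
     \norm{L_{p,0}(s_{p,0},\chi)}_p^{-1}
     \prod_{\substack{\chi(p) = -1\\ p \pdiv 2 B_{1,\chi}\\ p-1 \pdiv n}}
     \norm{L_{p,0}(s_{p,0},\chi)}_p^{-1},
\]
and to regroup the constituent primes according to their size and the value $\chi(p)\in\{-1,1\}$. First I would split off the primes $p\leq 3$. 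Since $n\in 2\NN$, the divisibility $p-1\pdiv n$ is automatic for $p=2$ and $p=3$; moreover the two inclusion conditions $\chi(p)=1$ and $\chi(p)=-1$ with $p\pdiv 2B_{1,\chi}$ collapse into the single condition $p\pdiv(1-\chi(p))B_{1,\chi}$, because $L_{p,0}(0,\chi)=-(1-\chi(p))B_{1,\chi}$. This is exactly the index set $I_\chi^{2,3}$, so the $p\leq 3$ contribution is $\mathfrak{D}_{2,3}(n,\chi)$.

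Next I would dispose of the primes $p>3$ with $\chi(p)=-1$: here $p\pdiv 2B_{1,\chi}$ is equivalent to $p\pdiv B_{1,\chi}$, which is the defining condition of $I_\chi^{-}$, so these terms reassemble verbatim into $\mathfrak{D}_{-}(n,\chi)$ with no further manipulation.

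The substantive step is the primes $p>3$ with $\chi(p)=1$, which form $I_\chi^{+}$. For these Theorem \ref{thm:except-l-func} applies: $L_{p,0}(\cdot,\chi)$ vanishes at $s=0$ and factors as $L_{p,0}(s,\chi)=ps\,\tilde{L}_{p,0}(s,\chi)$. Evaluating at $s=s_{p,0}=n/(p-1)$ and using that $p-1$ is a $p$-adic unit gives $\norm{p\,s_{p,0}}_p=\norm{pn}_p$, whence
\[
   \norm{L_{p,0}(s_{p,0},\chi)}_p
     = \norm{pn}_p\cdot\norm{\tilde{L}_{p,0}(s_{p,0},\chi)}_p .
\]
The linear factors $\norm{pn}_p^{-1}$, taken over all $p\in I_\chi^{+}$ with $p-1\pdiv n$, give $\mathfrak{D}_{+}(n,\chi)$. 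For the remaining factors I would invoke parts (1)--(2) of Theorem \ref{thm:except-l-func}: when $(p,0)\notin\EXC_\chi$ one has $\lambda_f=1$, so $\tilde{L}_{p,0}(\cdot,\chi)\equiv\Delta_f\not\equiv 0\pmod{p\ZZ_p}$ is unit-valued and contributes $1$; only the exceptional primes $(p,0)\in\EXC_\chi$ survive, producing $\mathfrak{D}_{0}(n,\chi)$. Collecting the four pieces yields the claimed identity.

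The main obstacle is purely bookkeeping: one must check that every prime occurring in the Theorem \ref{thm:decomp-l-func} product lands in exactly one of the index sets $I_\chi^{2,3}$, $I_\chi^{+}$, $I_\chi^{-}$ (with $\EXC_\chi$ being the subset of $I_\chi^{+}$ carved out by the correction factor), and that no term is double-counted or dropped; in particular the conditions $\chi(p)\neq 0$, $p-1\pdiv n$, and the various divisibility hypotheses must match across the regrouping. The only genuinely analytic input is the norm splitting $\norm{L_{p,0}(s_{p,0},\chi)}_p=\norm{pn}_p\cdot\norm{\tilde{L}_{p,0}(s_{p,0},\chi)}_p$, which rests entirely on the factorization supplied by Theorem \ref{thm:except-l-func} and the identity $s_{p,0}=n/(p-1)$ from Definition \ref{def:mod-l-func}.
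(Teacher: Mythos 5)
Your proposal is correct and follows essentially the same route as the paper: start from the odd-$\chi$ form of $\mathfrak{D}(n,\chi)$ in Theorem \ref{thm:decomp-l-func}, peel off $p\leq 3$ into $I_\chi^{2,3}$, identify the $\chi(p)=-1$, $p>3$ part with $\mathfrak{D}_{-}$, and use the factorization $L_{p,0}(s,\chi)=ps\,\tilde{L}_{p,0}(s,\chi)$ from Theorem \ref{thm:except-l-func} together with $\norm{p\,s_{p,0}}_p=\norm{pn}_p$ to split the $\chi(p)=1$ part into $\mathfrak{D}_{+}\,\mathfrak{D}_{0}$. The bookkeeping you flag is exactly what the paper's proof carries out, so no gap remains.
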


\begin{proof}
By Theorem \ref{thm:decomp-l-func} we have
\[
   \mathfrak{D}(n,\chi) = \prod_{\substack{\chi(p) = 1\\ p-1 \pdiv n}}
     \norm{L_{p,0}(s_{p,0},\chi)}_p^{-1}
     \prod_{\substack{\chi(p) = -1\\ p \pdiv 2 B_{1,\chi}\\ p-1 \pdiv n}}
     \hspace*{-0.8ex}
     \norm{L_{p,0}(s_{p,0},\chi)}_p^{-1},
\]
where all functions lie in $\KS_{p,2}$.
First we separate the factors for $p=2$ and $p=3$ from the products above.
This defines $\mathfrak{D}_{2,3}(n,\chi)$ and $I_\chi^{2,3}$, where we use
the original condition $p \pdiv L_{p,0}(0,\chi) = -(1-\chi(p)) B_{1,\chi}$,
such that $L_{p,0}(\cdot,\chi) \notin \KS^*_{p,2}$. The second product above
for $p > 3$ defines $\mathfrak{D}_{-}(n,\chi)$, which covers
the case $\chi(p) = -1$ and the modified condition $p \pdiv B_{1,\chi}$.
Now, we consider the remaining case $\chi(p) = 1$ for $p > 3$.
We use Theorem \ref{thm:except-l-func} to obtain
\[
   \prod_{\substack{p > 3\\ \chi(p) = 1\\ p-1 \pdiv n}}
     \norm{L_{p,0}(s_{p,0},\chi)}_p^{-1}
     = \mathfrak{D}_{+}(n,\chi) \, \mathfrak{D}_{0}(n,\chi),
\]
where we have used that
\[
   \norm{L_{p,0}(s_{p,0},\chi)}_p^{-1} = \norm{p s_{p,0}}_p^{-1}
     = \norm{p n/(p-1)}_p^{-1} = \norm{p n}_p^{-1},
     \quad (p,0) \notin \EXC_\chi,
\]
and
\[
   \norm{L_{p,0}(s_{p,0},\chi)}_p^{-1} = \norm{p n}_p^{-1}
     \norml{\tilde{L}_{p,0}(s_{p,0},\chi)}_p^{-1},
     \quad (p,0) \in \EXC_\chi. \qedhere
\]
\end{proof}

We examine now examples of $\chi$-exceptional primes. Let $\chi_{-3}$ be the
non-principal character$\pmod{3}$ associated with the imaginary quadratic field
$\QQ(\sqrt{-3})$. Ernvall \cite{Ernvall:1979} studied the $\chi$-irregular
pairs of the so-called $D$-numbers, which are given by
\[
   D_n = 3 L(-n, \chi_{-3}), \quad n \geq 1.
\]

He states that $p^2 \pdiv D_{p-1}$ occurs only for $p=13$, $181$, and $2521$
below $10^4$, where he also remarks that the primes $p=13$ and $p=181$ were
already found by Ferrero \cite{Ferrero:1978}. We have found exactly two further
primes below $10^6$: $p=76543$ and $p=489061$. The condition $p^2 \pdiv
D_{p-1}$ is equivalent to $L_{p,0}(1,\chi_{-3}) \in p^2\ZZ_p$ and therefore
$(p,0) \in \EXC_{\chi_{-3}}$ for these five primes. Certainly, these
primes $p$ satisfy $\chi_{-3}(p)=1$ or equivalently $p \equiv 1 \pmod{3}$.

Regarding the primes mentioned above, the following table shows that these
functions $f = L_{p,0}(\cdot,\chi_{-3})$ have always $\lambda_f = 2$;
the corresponding functions $\tilde{L}_{p,0}(\cdot,\chi_{-3})$
have each time a unique simple zero by Theorem \ref{thm:except-l-func}.
More results are given in \cite{Kellner:2009}.

\begin{tbl}
Computed parameters of functions $f = L_{p,0}(\cdot,\chi_{-3}) \in \KS^2_{p,2}$:
\begin{center}
\begin{tabular}{|c|c|c|c|} \hline
  $p$ & $\lambda_f$ & $\ord_p f(1)$ & $\ord_p \Delta_f(2)$  \\ \hline\hline
  13   & 2 & 2 & 0 \\ \hline
  181  & 2 & 2 & 0 \\ \hline
  2521 & 2 & 2 & 0 \\ \hline
  76543 & 2 & $\geq$ 2 & 0 \\ \hline
  489061 & 2 & $\geq$ 2 & 0 \\ \hline
\end{tabular}
\end{center}
\end{tbl}

Using Algorithm \ref{alg:deg-zero}, we have computed the zero of the
$\delta$-degenerate function $\tilde{L}_{p,0}(\cdot,\chi_{-3})$ for $p=13$.

\begin{expl}
Computed zero $\xi$$\pmod{p^{10}}$ of
$f = \tilde{L}_{p,0}(\cdot,\chi_{-3}) \in \KS^{\rm d}_{p,1}$
for the case $p=13$:
\begin{center}
\begin{tabular}{|c|l|} \hline
  $f$ & values / $p$-adic digits $s_0, \ldots, s_9$ \\ \hline\hline
  $\Delta_f, \lambda_f$ & 3, 1 \\ \hline
  $\xi$  & 3, 8, 2, 11, 1, 1, 10, 12, 7, 1 \\ \hline
\end{tabular}
\end{center}
\end{expl}

The functions $\tilde{L}_{p,0}(\cdot,\chi)$ seem to behave like the functions
of the irregular part. In contrast, these functions lie in $\KS^{\rm d}_{p,1}$
having a defect in their Mahler expansion. Therefore we raise the following
conjecture about the functions $\tilde{L}_{p,0}(\cdot,\chi)$ in the case of
$\chi$-exceptional pairs.

\begin{conj} \label{conj:except-prod}
Assume the conditions of Theorem \ref{thm:decomp-d-prod}. Then
\[
   \mathfrak{D}_0(n,\chi) = \prod_{\substack{(p,0) \in \EXC_\chi\\
     p-1 \pdiv n}} \hspace*{-1ex}
     \norm{p \, (s_{p,0} - \xi_{p,0})}_p^{-1},
     \quad n \in 2\NN,
\]
where $\xi_{p,0}$ is the zero of
$\tilde{L}_{p,0}(\cdot,\chi) \in \KS^{\rm d}_{p,1}$.
\end{conj}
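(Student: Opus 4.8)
The plan is to reduce the conjectured global product identity to a single local statement per prime and then invoke the norm formula already proved in Theorem~\ref{thm:except-l-func}. First I would unwind the definition of $\mathfrak{D}_0(n,\chi)$ supplied by Theorem~\ref{thm:decomp-d-prod}, namely
\[
   \mathfrak{D}_0(n,\chi) = \prod_{\substack{(p,0) \in \EXC_\chi\\
     p-1 \pdiv n}} \norml{\tilde{L}_{p,0}(s_{p,0},\chi)}_p^{-1}.
\]
Both this product and the one asserted in the conjecture range over the same finite index set $\{(p,0) \in \EXC_\chi : p-1 \pdiv n\}$, so it suffices to match them factor by factor, i.e.\ to establish the local identity
\[
   \norml{\tilde{L}_{p,0}(s_{p,0},\chi)}_p = \norm{p \, (s_{p,0} - \xi_{p,0})}_p
\]
for each fixed $(p,0) \in \EXC_\chi$ with $p-1 \pdiv n$.

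For the local identity I would appeal to Theorem~\ref{thm:except-l-func}. Writing $f = L_{p,0}(\cdot,\chi)$, the defining condition $(p,0) \in \EXC_\chi$ together with part~(2) of that theorem gives $\lambda_f > 1$; moreover $f(0) = 0$ and $\Delta_f = 0$ since $\chi$ is odd. The one extra ingredient required is $\lambda_f = 2$, equivalently $\Delta_f(2) \in \ZZ^*_p$. Granting this, $f \in \KS^2_{p,2}$, and part~(4) of Theorem~\ref{thm:except-l-func} yields a unique simple zero $\xi_{p,0}$ of $\tilde{L}_{p,0}(\cdot,\chi) \in \KS^{\rm d}_{p,1}$ together with
\[
   \norm{L_{p,0}(s,\chi)}_p = \norm{p^2 \, s \, (s-\xi_{p,0})}_p, \quad s \in \ZZ_p.
\]
Dividing through by $\norm{ps}_p$ and using the definition of $\tilde{L}_{p,0}$ in Definition~\ref{def:except-prime} gives $\norml{\tilde{L}_{p,0}(s,\chi)}_p = \norm{p\,(s-\xi_{p,0})}_p$, which at $s = s_{p,0}$ is precisely the local identity.

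The hard part, and the reason this remains a conjecture, is proving $\lambda_f = 2$ for every $(p,0) \in \EXC_\chi$ rather than merely $\lambda_f \geq 2$. Equivalently, one must rule out $\Delta_f(2) \equiv \KSop^2 f(0) \equiv 0 \pmod{p\ZZ_p}$, which would force $\lambda_f \geq 3$ and destroy the single linear term in the norm. I see no mechanism within the abstract theory of $\KS_{p,2}$ that forbids this higher vanishing; Theorem~\ref{thm:lp-congr} only delivers the Kummer type congruences, which are insensitive to the individual Mahler coefficient $\Delta_f(2)$. A genuine proof would therefore require a finer arithmetic input, controlling $\Delta_f(2)$ directly in terms of the generalized Bernoulli numbers attached to $\chi$. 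The tabulated exceptional primes all exhibit $\lambda_f = 2$, which is the empirical evidence supporting the conjecture but does not replace such an argument.
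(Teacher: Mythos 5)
This statement is a \emph{conjecture} in the paper; no proof is given, only computational evidence (the table of exceptional primes for $\chi_{-3}$ and the prime $29789$ for $\chi_{-4}$, all exhibiting $\lambda_f=2$). Your analysis is exactly right and matches the paper's own framing. The reduction to the local identity $\norml{\tilde{L}_{p,0}(s_{p,0},\chi)}_p = \norm{p\,(s_{p,0}-\xi_{p,0})}_p$ is correct: for fixed $n$ the index set is finite (as $p-1 \pdiv n$ forces $p \leq n+1$), and $s_{p,0}=n/(p-1)\neq 0$ so the division by $\norm{ps}_p$ in passing from Theorem~\ref{thm:except-l-func}~(4) to the statement about $\tilde{L}_{p,0}$ is legitimate. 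You have also correctly isolated the genuinely conjectural content: everything is a theorem once one knows $\lambda_f=2$, i.e.\ $\Delta_f(2)\in\ZZ_p^*$, for every $(p,0)\in\EXC_\chi$, and nothing in the abstract theory of $\KS_{p,2}$ or in the Kummer-type congruences of Theorem~\ref{thm:lp-congr} forces a Mahler coefficient to be a unit. (If $\lambda_f\geq 3$ the right-hand side of the conjecture is not even well defined, since no distinguished simple zero of $\tilde{L}_{p,0}(\cdot,\chi)$ is guaranteed.) The only thing you might add is that the paper's final corollary, via Theorem~\ref{thm:tpl-lpl-d2}, converts the condition $\lambda_f=2$ into the checkable criterion $T^3_{p,0}(0,\chi) \not\equiv \ff_\chi^2\, T_{p,p-3}(0,\chi) \pmod{p\ZZ_p}$ on Fermat-quotient sums, which is how the tabulated cases are verified --- but this is a finite test per prime, not a proof for all exceptional pairs, so your conclusion stands.
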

\medskip

\section{Bernoulli and Euler numbers}

The Bernoulli and Euler numbers are defined by
\begin{alignat*}{2}
   \frac{z}{e^z-1} &= \sum_{n=0}^\infty B_n \frac{z^n}{n!},
     &\quad |z| &< 2 \pi, \\
   \frac{2}{e^z+e^{-z}} &= \sum_{n=0}^\infty E_n \frac{z^n}{n!},
     &\quad |z| &< 2 \pi.
\end{alignat*}
The numbers $B_n$ are rational, whereas the numbers $E_n$ are integers.
It easily follows that
\begin{alignat*}{3}
   E_n &= -2 \frac{B_{n+1,\chi_{-4}}}{n+1} &&= 2L(-n,\chi_{-4}),
          &\quad n &\geq 0, \\
   B_n &= B_{n,1} &&= -n\zeta(1-n), & n &\geq 2.
\end{alignat*}
We also have the connection with the Dedekind zeta function of $\QQ(i)$ that
\[
   \zeta_{\QQ(i)}(z) = \zeta(z) L(z,\chi_{-4}), \quad z \in \CC.
\]

In 1850 Kummer introduced congruences about Bernoulli and Euler numbers in
the following form, which have been greatly generalized after that and now
are called Kummer congruences. For the sake of completeness we cite Kummer's
theorem.

\begin{theorem}[{Kummer \cite{Kummer:1851}}] \label{thm:kummer}
Let $n, r$ be positive integers, where $n$ is even.
\begin{enumerate}
\item If $p-1 \notdiv n$ and $n > r$, then
\[
   \sum_{\nu=0}^r \binom{r}{\nu} (-1)^{r-\nu}
     \frac{B_{n+\nu(p-1)}}{n+\nu(p-1)} \equiv 0 \pmod{p^r}.
\]
\item If $n > r$, then
\[
   \sum_{\nu=0}^r \binom{r}{\nu} (-1)^{r-\nu} E_{n+\nu(p-1)}
     \equiv 0 \pmod{p^r}.
\]
\end{enumerate}
\end{theorem}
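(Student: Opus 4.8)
The plan is to read both alternating sums as $r$-fold finite differences and to feed them into the $\KS_{p,2}$ machinery through the $p$-adic $L$-functions of Proposition \ref{prop:lpl-func-ks2}. Put $h=p-1$. In the variable $m$ the sum in (1) is precisely $\Doph{h}{r}(B_m/m)\valueat{m=n}$ and the sum in (2) is $\Doph{h}{r}E_m\valueat{m=n}$. By Definition \ref{def:mod-l-func} a unit step of the interpolation variable $s$ of $L_{p,l}(\cdot,\chi)$ corresponds to a step $h=p-1$ of the classical argument, so $\Dop^r$ in $s$ becomes $\Doph{h}{r}$ in $m$; since any $f\in\KS_{p,2}$ satisfies $\Dop^r f\equiv 0\pmod{p^r\ZZ_p}$, it remains only to relate $B_m/m$ and $E_m$ to the corresponding $L_{p,l}$, which differ from them by an Euler factor.

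For (1) the hypotheses force $p\geq 5$: both $p=2$ (where $p-1\mid n$) and $p=3$ (where $2\mid n$) make the condition $p-1\notdiv n$ vacuous. Choose the even residue $l$ with $0\leq l\leq p-3$ and $l\equiv n\pmod{p-1}$; since $p-1\notdiv n$ we have $l\neq 0=\delta_1$, so $\zeta_{p,l}=L_{p,l}(\cdot,1)\in\KS_{p,2}$. Evaluating at the nonnegative integer $s_0=s_{p,l}(n)$ gives
\[
   0\equiv\Dop^r\zeta_{p,l}(s_0)=\Doph{h}{r}L_p(1-m,1)\valueat{m=n}\pmod{p^r\ZZ_p},
\]
and $L_p(1-m,1)=-(1-p^{m-1})B_m/m$ by Definition \ref{def:mod-l-func} and $\zeta(1-m)=-B_m/m$. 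Splitting off the Euler factor, the principal part is $-\Doph{h}{r}(B_m/m)\valueat{m=n}$, while the correction $\Doph{h}{r}(p^{m-1}B_m/m)\valueat{m=n}$ is a $\ZZ_p$-combination of terms each divisible by $p^{n-1}$, using that $B_m/m\in\ZZ_p$ for $p-1\notdiv m$ (which holds for every $m\equiv n\pmod{p-1}$). As $n>r$ gives $n-1\geq r$, the correction lies in $p^r\ZZ_p$ and (1) follows.

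Part (2) runs in parallel with the odd character $\chi_{-4}$ of conductor $4$, so that Proposition \ref{prop:lpl-func-ks2} applies for every odd $p$ (then $p^e\neq 4$); the case $p=2$ lies outside this scheme and is excluded. From $E_m=2L(-m,\chi_{-4})$ and $L_p(-m,\chi_{-4})=(1-\chi_{-4}(p)p^m)L(-m,\chi_{-4})$, the modified values $2L_p(-m,\chi_{-4})$ reparametrize to a function of $\KS_{p,2}$ (scale invariance by $2\in\ZZ_p$), whose $r$-th difference at the relevant integer vanishes modulo $p^r$. Removing the Euler factor leaves the correction $\chi_{-4}(p)\,\Doph{h}{r}(p^m E_m)\valueat{m=n}$; since the $E_m$ are integers, each summand is divisible by $p^n$, and $n>r$ forces the correction into $p^r\ZZ_p$, giving (2).

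The main obstacle is conceptual rather than computational: the clean Kummer type congruences of Definition \ref{def:kummer-space} hold to all orders only for the Euler-factor-modified $L$-function, which is the element of $\KS_{p,2}$; one must therefore return to the unmodified classical values $B_m/m$ and $E_m$ and absorb the discrepancy. The hypothesis $n>r$ is exactly what renders the $p^{m-1}$- (respectively $p^m$-) weighted correction negligible modulo $p^r$, and the only arithmetic input beyond the paper's framework is the $p$-integrality of $B_m/m$ for $p-1\notdiv m$, itself a consequence of $\zeta_{p,l}(s)\in\ZZ_p$ together with $1-p^{m-1}\in\ZZ^*_p$.
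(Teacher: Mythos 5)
The paper does not prove this theorem: it is quoted from Kummer's 1851 memoir ``for the sake of completeness,'' and the only surrounding comment is that the Euler-factor-modified form of these congruences is what places $\zeta_{p,l}$ and $L_{p,l}(\cdot,\chi_{-4})$ in $\KS_{p,2}$. Your argument runs that comment in reverse, and it is correct. Writing each sum as $\Doph{p-1}{r}$ of the classical values, invoking $\zeta_{p,l},\,L_{p,l}(\cdot,\chi_{-4})\in\KS_{p,2}$ from Proposition \ref{prop:lpl-func-ks2} to kill the modified difference modulo $p^r$, and then stripping the Euler factor --- whose contribution is a sum of terms each equal to $p^{n+\nu(p-1)-1}$ (resp.\ $p^{n+\nu(p-1)}$) times a $p$-integral quantity, hence in $p^{r}\ZZ_p$ precisely because $n>r$ --- is a complete deduction. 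Your handling of small primes is also right: part (1) is vacuous for $p\in\{2,3\}$ since $p-1\notdiv n$ cannot hold there, and part (2) must exclude $p=2$ (for $p=2$, $n=2$, $r=1$ the sum is $-E_2+E_3=1$, so the congruence fails as literally stated; Kummer's statement is for odd $p$).

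Two caveats are worth making explicit. First, this is a derivation rather than an independent proof: Proposition \ref{prop:lpl-func-ks2} rests entirely on the Carlitz--Fresnel congruences of Theorem \ref{thm:lp-congr}, which the paper also cites without proof and which historically generalize Kummer's theorem, so you are deducing the 1851 result from its descendants. Within this paper that is logically sound, since Theorem \ref{thm:lp-congr} is an external input, but the dependence should be stated. Second, the $p$-integrality of $B_m/m$ for $p-1\notdiv m$, which you need to control the Euler-factor correction in (1), is itself a nontrivial classical fact (von Staudt--Clausen together with the Adams-type refinement); obtaining it from ``$\zeta_{p,l}$ takes values in $\ZZ_p$'' again leans on the cited Theorem \ref{thm:lp-congr}, so it deserves more than the parenthetical you give it.
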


Theorem \ref{thm:kummer}, formulated with Euler factors to remove the
restriction $n > r$, would already be sufficient to define Kummer functions,
which, extended to $\ZZ_p$, lie in $\KS_{p,2}$. Now, one can apply all results
about $\KS_{p,2}$ to $B_n/n$ and $E_n$, always modified by an Euler factor,
since the last section has shown that $\zeta_{p,l}$ and
$L_{p,l}(\cdot,\chi_{-4})$ are functions of $\KS_{p,2}$.
Proposition \ref{prop:kummer-congr-value} enables us to compute $B_n/n$ and
$E_n$$\pmod{p^r}$ for arbitrary even integers $n$. Algorithm \ref{alg:zero},
resp., Algorithm \ref{alg:fixed-point} shows how to compute a zero, resp.,
a fixed point$\pmod{p^r}$ of $\zeta_{p,l}$ and $L_{p,l}(\cdot,\chi_{-4})$.
As an application, we can sharpen the usual Kummer congruences for the
Bernoulli numbers for those cases where the converse also holds.

\begin{prop}[Strong Kummer congruences]
Let $p > 3$ and $l \in 2\NN$ where $0 < l < p-1$.
Set $\EF{l} = 1-p$ in case $l=2$, otherwise $\EF{l} = 1$.
Then
\[
   \frac{B_{l+p-1}}{l+p-1} \not\equiv \EF{l} \frac{B_l}{l} \pmod{p^2}
\]
if and only if
\[
   n \equiv m \pmod{\eulerphi(p^r)} \quad \Longleftrightarrow \quad
     ( 1 - p^{n-1} ) \frac{B_n}{n} \equiv ( 1 - p^{m-1} ) \frac{B_m}{m}
     \pmod{p^r}
\]
for $n,m \in 2\NN$ such that $n \equiv m \equiv l \pmod{p-1}$ and
$1 \leq r \leq 1 + \ord_p (n-m)$.
\end{prop}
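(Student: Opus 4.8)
The plan is to translate the whole statement into the language of the single function $f = \zeta_{p,l}$ and then read off the dichotomy from Corollary~\ref{corl:ks2-strong-kummer-congr}. By Proposition~\ref{prop:lpl-func-ks2}, applied with $\chi = 1$, $\delta_\chi = 0$, and $\eulerphi(q) = p-1$, we have $f \in \KS_{p,2}$, and by Definition~\ref{def:mod-l-func} together with $L(1-N,1) = \zeta(1-N) = -B_N/N$ one has
\[
   f(s) = L_p(1-N,1) = -(1-p^{N-1})\frac{B_N}{N}, \quad N = l + (p-1)s.
\]
First I would substitute $n = l + (p-1)s_1$ and $m = l + (p-1)s_2$ with $s_1,s_2 \in \NN_0$; since $p \notdiv (p-1)$ this gives $\ord_p(n-m) = \ord_p(s_1-s_2)$, and the identity $\eulerphi(p^r) = p^{r-1}(p-1)$ turns the left congruence $n \equiv m \pmod{\eulerphi(p^r)}$ into $s_1 \equiv s_2 \pmod{p^{r-1}}$, i.e. into $r \leq 1 + \ord_p(n-m)$. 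The right congruence becomes $f(s_1) \equiv f(s_2) \pmod{p^r}$. Thus the displayed biconditional asserts exactly that the congruence $f(s_1) \equiv f(s_2) \pmod{p^r}$ holds precisely for $r \leq 1 + \ord_p(s_1-s_2)$, equivalently that $\ord_p\bigl(f(s_1)-f(s_2)\bigr) = 1 + \ord_p(s_1-s_2)$ for all integers $s_1 \neq s_2$.

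Next I would identify this equality with the strong Kummer congruence $\norm{f(s)-f(t)}_p = \norm{p\,(s-t)}_p$ of Corollary~\ref{corl:ks2-strong-kummer-congr}. As $f \in \KS_{p,2}$, Proposition~\ref{prop:f-quod-delta} already gives $\ord_p(f(s)-f(t)) \geq 1 + \ord_p(s-t)$, so the direction $r \leq 1 + \ord_p(n-m) \Rightarrow$ congruence always holds. The reverse sharpness, that equality is attained, is exactly Corollary~\ref{corl:ks2-strong-kummer-congr} and is available precisely when $f \in \WKS_{p,2}$, i.e. when $\Delta_f \neq 0$ (recall $p > 3$, so the auxiliary condition $2 \pdiv \Delta_f(2)$ is vacuous). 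Conversely, if $\Delta_f = 0$ then \eqref{eq:frac-f-delta} forces $\ord_p(f(s)-f(t)) \geq 2 + \ord_p(s-t)$ for all $s \neq t$; taking the integer pair $s_1 = 1$, $s_2 = 0$, that is $n = l+p-1$ and $m = l$, yields $f(1) \equiv f(0) \pmod{p^2}$ while $n \not\equiv m \pmod{\eulerphi(p^2)}$, so the biconditional fails for $r=2$. Hence the biconditional holds if and only if $\Delta_f \neq 0$; density of $\NN_0$ in $\ZZ_p$ and continuity of $f$ make the passage between integer arguments and general $s,t \in \ZZ_p$ harmless.

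It remains to match $\Delta_f \neq 0$ with the condition on Bernoulli numbers. Since $\Delta_f \equiv \Delta_f(1) = (f(1)-f(0))/p \pmod{p\ZZ_p}$, we have $\Delta_f \neq 0$ if and only if $f(1) - f(0) \not\equiv 0 \pmod{p^2}$, and here I would evaluate both terms modulo $p^2$ via the Euler factor. For $f(1)$ the exponent is $N-1 = l + p - 2 \geq p > 2$, so $p^{N-1} \equiv 0$ and $f(1) \equiv -B_{l+p-1}/(l+p-1) \pmod{p^2}$. For $f(0)$ the exponent is $l-1$: if $l \geq 4$ then $l-1 \geq 3$ gives $f(0) \equiv -B_l/l \pmod{p^2}$ with $\EF{l}=1$, whereas if $l=2$ the factor $1-p^{l-1} = 1-p = \EF{l}$ survives, so in both cases $f(0) \equiv -\EF{l}\,B_l/l \pmod{p^2}$ (note $B_l/l$ is $p$-integral since $0 < l < p-1$ forces $p-1 \notdiv l$). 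Subtracting,
\[
   f(1) - f(0) \equiv \EF{l}\,\frac{B_l}{l} - \frac{B_{l+p-1}}{l+p-1} \pmod{p^2},
\]
so $\Delta_f \neq 0$ is equivalent to $B_{l+p-1}/(l+p-1) \not\equiv \EF{l}\,B_l/l \pmod{p^2}$, which is the left-hand condition of the theorem. Combining the two equivalences proves the claimed \emph{iff}. The main obstacle I anticipate is precisely this bookkeeping of the Euler factor modulo $p^2$, especially the borderline case $l=2$, where $1-p^{l-1}$ does not collapse to $1$ and is responsible for the correction factor $\EF{l}$; the rest is a direct translation of earlier results.
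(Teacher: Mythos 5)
Your proposal is correct and follows essentially the same route as the paper: it identifies the Bernoulli condition with $\Delta_{\zeta_{p,l}} \neq 0$ via the computation of $\Dop\zeta_{p,l}(0) \bmod p^2$ (including the surviving Euler factor at $l=2$), and then derives the sharpness dichotomy from Corollary \ref{corl:ks2-strong-kummer-congr} in one direction and Proposition \ref{prop:f-quod-delta} (via \eqref{eq:frac-f-delta} with $\Delta_f = 0$) in the other. The only difference is that you spell out the variable substitution $n = l + (p-1)s$ and the Euler-factor bookkeeping in more detail than the paper does.
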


\begin{proof}
We observe that $\Dop \zeta_{p,l}(0) \equiv \EF{l} B_l/l -
B_{l+p-1}/(l+p-1) \pmod{p^2\ZZ_p}$, where the Euler factors vanish except
for $l=2$ and $\zeta_{p,l}(0)$. Therefore the condition above is equivalent to
$\Delta_{\zeta_{p,l}} \neq 0$ to ensure that $\zeta_{p,l} \in \WKS_{p,2}$.
By Corollary \ref{corl:ks2-strong-kummer-congr} we then have
\[
   \norm{p \, (s-t)}_p = \norm{\zeta_{p,l}(s)-\zeta_{p,l}(t)}_p,
     \quad s, t \in \ZZ_p.
\]
Conversely, $\Delta_{\zeta_{p,l}} = 0$ implies that
\[
   \norm{p \, (s-t)}_p > \norm{\zeta_{p,l}(s)-\zeta_{p,l}(t)}_p,
     \quad s, t \in \ZZ_p,
\]
as a result of Proposition \ref{prop:f-quod-delta}.
Transferring this back to the Bernoulli numbers with
$n = s(p-1) + l, m = t(p-1) + l \in \NN$ gives the result.
\end{proof}

\begin{remark*}
One cannot omit the Euler factor for $l=2$ in the condition of the proposition
above. For example, if $p=13$, then $B_{14}/14 = B_2/2$, but
$B_{14}/14 - (1-p)B_2/2 = 13/12 \not\equiv 0 \pmod{p^2}$.
Without the condition, we only have the implication "$\Rightarrow$",
which equals the usual Kummer congruences.
\end{remark*}

\begin{expl} \label{expl:zero-bn-en}
Computed zeros $\xi$ and fixed points $\tau$$\pmod{p^{10}}$ of functions
of $\WKS^0_{p,2}$.
\begin{enumerate}

\item Case $(p,l)=(37,32)$ and $f = \zeta_{p,l} \in \WKS^0_{p,2}$:
\begin{center}
\begin{tabular}{|c|l|} \hline
  $f$ & values / $p$-adic digits $s_0, \ldots, s_9$ \\ \hline\hline
  $\Delta_f, \lambda_f$ & 16, 1 \\ \hline
  $\xi$  & 7, 28, 21, 30, 4, 17, 26, 13, 32, 35 \\ \hline
  $\tau$ & 0, 36, 28, 6, 26, 35, 27, 23, 10, 11 \\ \hline
\end{tabular}
\end{center}
It follows that the smallest indices for $\ord_p(B_{n_\nu}/{n_\nu}) = \nu$
are, e.g., $n_1 = 32$, $n_2 = 284$, $n_3 = 37\,580$,
$n_4 = 1\,072\,544$, and $n_5 = 55\,777\,784$.

\vfill \pagebreak

\item Case $(p,l)=(19,10)$ and $f = L_{p,l}(\cdot,\chi_{-4}) \in \WKS^0_{p,2}$:
\begin{center}
\begin{tabular}{|c|l|} \hline \allowdisplaybreaks
  $f$ & values / $p$-adic digits $s_0, \ldots, s_9$ \\
    \hline\hline
  $\Delta_f,\lambda_f$ & 5, 1 \\ \hline
  $\xi$  & 17, 6, 13, 18, 17, 10, 6, 18, 12, 14 \\ \hline
  $\tau$ & 0, 10, 8, 17, 15, 1, 4, 9, 14, 18 \\ \hline
\end{tabular}
\end{center}
It follows that the smallest indices for $\ord_p E_{n_\nu} = \nu$
are, e.g., $n_1 = 10$, $n_2 = 316$, $n_3 = 2\,368$,
$n_4 = 86\,842$, and $n_5 = 2\,309\,158$.

\end{enumerate}
\end{expl}

Definition \ref{def:irr-prime} of $\chi$-irregular primes for $\chi = 1$,
resp., $\chi=\chi_{-4}$ agrees with the usual definition of irregular primes
regarding $B_n$, resp., $E_n$. The latter are often called $E$-irregular
primes, cf. Carlitz \cite{Carlitz:1954b}, Ernvall and Mets{\"a}nkyl{\"a}
\cite{Ernvall&Met:1979}. As a result of Carlitz \cite{Carlitz:1954b}, there
are infinitely many irregular primes regarding $B_n$ and $E_n$. Equivalently,
Jensen \cite{Jensen:1915} showed a more special result for the Bernoulli
numbers before, that there are infinitely many irregular primes $p \equiv 3
\pmod{4}$. Ernvall \cite{Ernvall:1979} later showed that there are infinitely
many $E$-irregular primes $p \not\equiv \pm 1 \pmod{8}$.
Therefore $\#\IRR_1 = \infty$ and $\#\IRR_{\chi_{-4}} = \infty$.

We will derive a conjectural formula for the structure of the Bernoulli and
Euler numbers. Recall the notations of Theorem \ref{thm:decomp-l-func},
which we use in the following. Considering Remark \ref{rem:irr-results} and
Conjecture \ref{conj:irr-prod}, we may assume that the corresponding products
$\mathfrak{I}(\cdot,1)$ and $\mathfrak{I}(\cdot,\chi_{-4})$ fulfil the strong
form. First, we consider the Bernoulli numbers, where we need the famous fact
about their denominator.

\begin{theorem}[{von Staudt \cite{Staudt:1840}, Clausen \cite{Clausen:1840}}]
\label{thm:staudt-clausen}
Let $n \in 2\NN$. Then
\[
   B_n + \sum_{p-1 \pdiv n} \frac{1}{p} \in \ZZ,
\]
which implies that the denominator of $B_n$ equals $\prod_{p-1 \pdiv n} p$.
\end{theorem}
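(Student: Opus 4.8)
The plan is to prove the von Staudt--Clausen congruence locally at each prime and then globalize. Writing the sum over primes $\ell$ with $\ell-1 \pdiv n$, the number $B_n + \sum_{\ell-1 \pdiv n} 1/\ell$ is rational, so it suffices to show that it lies in $\ZZ_p$ for every prime $p$, because a rational number that is $p$-integral at all primes is an integer. At a fixed prime $p$ every term $1/\ell$ with $\ell \neq p$ already lies in $\ZZ_p$, while the single term $1/p$ occurs precisely when $p-1 \pdiv n$. Hence it is enough to prove, for each prime $p$, that $p\,B_n \in \ZZ_p$ together with
\[
   p\,B_n \equiv -c_p \pmod{p\ZZ_p}, \qquad
   c_p = \begin{cases} 1, & p-1 \pdiv n,\\ 0, & p-1 \notdiv n. \end{cases}
\]

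First I would bridge $B_n$ to a power sum. Faulhaber's formula gives
\[
   S_n(p) := \sum_{k=0}^{p-1} k^n
     = \frac{1}{n+1}\sum_{j=0}^{n} \binom{n+1}{j} B_j\, p^{\,n+1-j}
     = p\,B_n + \sum_{j=0}^{n-1} \binom{n}{j}\frac{B_j}{n+1-j}\, p^{\,n+1-j},
\]
using $\binom{n+1}{j}/(n+1) = \binom{n}{j}/(n+1-j)$. The key step is to show that the error sum lies in $p\ZZ_p$; this I would establish by strong induction on the even index $n$, the induction hypothesis supplying $\ord_p B_j \geq -1$ for every even $j < n$. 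Since $n$ is even, the odd-index numbers $B_j$ with $3 \leq j \leq n-1$ vanish, so only $j=0,1$ and even $j$ with $2 \leq j \leq n-2$ contribute. For these indices each summand has $\ord_p \geq 1$: the terms $j=0,1$ are controlled by the exact values $B_0 = 1$, $B_1 = -\tfrac12$, while for even $j$ the exponent $r = n+1-j$ is an odd integer $\geq 3$, whence $\ord_p\!\big(B_j\,p^{r}/(n+1-j)\big) \geq -1 + r - \ord_p r \geq 1$. This simultaneously shows $p\,B_n \in \ZZ_p$ and $p\,B_n \equiv S_n(p) \pmod{p\ZZ_p}$.

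It then remains to evaluate $S_n(p) \pmod p$, which is elementary. Since $(\ZZ/p\ZZ)^\times$ is cyclic of order $p-1$, the map $k \mapsto k^n$ sends every nonzero residue to $1$ when $p-1 \pdiv n$, so $S_n(p) \equiv p-1 \equiv -1 \pmod p$; when $p-1 \notdiv n$, choosing a generator $g$ and summing the geometric progression $\sum_{j=0}^{p-2} g^{jn} = (g^{(p-1)n}-1)/(g^n-1) \equiv 0 \pmod p$ (the numerator vanishes by Fermat's little theorem while $g^n \not\equiv 1$) gives $S_n(p) \equiv 0 \pmod p$. Combining this with the bridge yields $p\,B_n \equiv S_n(p) \equiv -c_p \pmod{p\ZZ_p}$, which is exactly the required local statement and hence finishes the theorem.

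I expect the main obstacle to be the $p$-integrality of the error sum in the Faulhaber expansion, since a naive reading is circular: the lower Bernoulli numbers $B_j$ carry denominators. The induction on $n$, powered by the inductive bound $\ord_p B_j \geq -1$ and crucially by the vanishing of the odd-index Bernoulli numbers, is what resolves this cleanly; the vanishing removes the single borderline summand $j=n-1$ (where $r=2$ would just fail the estimate), and the base case $n=2$ involves only $B_0$ and $B_1$ and is immediate.
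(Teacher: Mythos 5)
The paper does not prove this statement at all: it is quoted as a classical theorem of von Staudt and Clausen with references \cite{Staudt:1840} and \cite{Clausen:1840}, so there is no internal proof to compare against. Your argument is the standard power-sum proof and it is correct. The local reduction (a rational number lying in $\ZZ_p$ for every $p$ is an integer, and only the term $1/p$ is non-$p$-integral at $p$) is sound; the Faulhaber identity and the rewriting $\binom{n+1}{j}/(n+1)=\binom{n}{j}/(n+1-j)$ are right; and the strong induction with hypothesis $\ord_p B_j\geq -1$ for even $j<n$ does close up, since for even $j\leq n-2$ the exponent $r=n+1-j$ is odd with $r\geq 3$, giving $r-\ord_p r\geq 2$ and hence $\ord_p\bigl(B_j\,p^r/r\bigr)\geq 1$, while the borderline index $j=n-1$ is killed by $B_{n-1}=0$ for $n\geq 4$ and handled by the explicit $B_1=-\tfrac12$ when $n=2$. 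The evaluation $S_n(p)\equiv -1$ or $0\pmod p$ according as $p-1\pdiv n$ or not, via a generator of $(\ZZ/p\ZZ)^\times$, is also correct. One small point you leave implicit: the final sentence of the theorem (the exact denominator $\prod_{p-1\pdiv n}p$) needs the observation that your congruence $pB_n\equiv -1\pmod{p\ZZ_p}$ for $p-1\pdiv n$ gives $\ord_p B_n=-1$ exactly, not merely $\geq -1$, while $\ord_p B_n\geq 0$ otherwise; this is immediate from what you proved but worth stating.
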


\begin{prop} \label{prop:bn-decomp}
We have
\[
   \mathfrak{S}(n,1) = 1 \quad \text{and} \quad
   \mathfrak{D}(n,1) = \prod_{p-1 \pdiv n} \norm{pn}_p, \quad n \in 2\NN.
\]
\end{prop}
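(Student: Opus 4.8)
The plan is to read off both products directly from their definitions in Theorem~\ref{thm:decomp-l-func}, using that the principal character $\chi=1$ has conductor $\ff_1=1$ and $\delta_1=0$, and then to evaluate the $p$-adic norms appearing in $\mathfrak{D}(n,1)$ by von Staudt--Clausen.

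For $\mathfrak{S}(n,1)$ there is nothing to compute: the product defining $\mathfrak{S}(n,\chi)$ ranges over primes $p \pdiv \ff_\chi$, and since $\ff_1 = 1$ this index set is empty, so $\mathfrak{S}(n,1)=1$ as an empty product.

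For $\mathfrak{D}(n,1)$ the condition $p \notdiv \ff_1 = 1$ is vacuous, so the product runs over exactly the primes with $p-1 \pdiv n$. I would evaluate each factor $\norm{L_{p,0}(s_{p,0},1)}_p^{-1}$ separately. By Definition~\ref{def:mod-l-func}, the substitution $s_{p,0}=s_{p,0}(n)$ gives $L_{p,0}(s_{p,0},1)=L_p(1-n,1)$, and since $L(1-n,1)=\zeta(1-n)=-B_n/n$ for $n\geq 2$, the Euler factor yields $L_p(1-n,1)=-(1-p^{n-1})\,B_n/n$. The heart of the argument is the $p$-adic valuation of $B_n$ for a prime with $p-1\pdiv n$: here I invoke Theorem~\ref{thm:staudt-clausen}. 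Since $p-1\pdiv n$ the summand $1/p$ occurs in $\sum_{q-1\pdiv n}1/q$, while every other summand $1/q$ (with $q\neq p$) lies in $\ZZ_p$; hence $B_n+1/p\in\ZZ_p$, giving $\ord_p B_n=-1$, that is $\norm{B_n}_p=p$. The remaining contributions are immediate: as $n\geq 2$ we have $p^{n-1}\equiv 0 \pmod{p\ZZ_p}$, so $\norm{1-p^{n-1}}_p=1$, and $\norm{1/n}_p=\norm{n}_p^{-1}$. Combining these, $\norm{L_{p,0}(s_{p,0},1)}_p = \norm{B_n}_p\,\norm{n}_p^{-1} = p^{\,1+\ord_p n}$, whose reciprocal is exactly $\norm{pn}_p$. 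Multiplying over all primes with $p-1\pdiv n$ then gives $\mathfrak{D}(n,1)=\prod_{p-1\pdiv n}\norm{pn}_p$.

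The only genuine subtlety is the clean application of von Staudt--Clausen to isolate the single simple pole at $p$ and the observation that the Euler factor $1-p^{n-1}$ is a $p$-adic unit; note that this valuation computation uses nothing about the Kummer space $\KS_{p,2}$, so it remains valid for the small primes $p=2,3$ which also appear in the product but are excluded from the $\KS_{p,2}$ theory. Once these two valuations are pinned down, the identity follows by bookkeeping.
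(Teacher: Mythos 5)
Your proposal is correct and follows essentially the same route as the paper: the $\mathfrak{S}$-product is empty since $\ff_1=1$, and each factor of $\mathfrak{D}(n,1)$ is evaluated via Definition~\ref{def:mod-l-func} as $\norm{(1-p^{n-1})B_n/n}_p^{-1}$, with von Staudt--Clausen giving $\ord_p B_n=-1$ for $p-1\pdiv n$. Your explicit remarks that the Euler factor is a unit and that the computation is purely a valuation argument (hence valid also for $p=2,3$) are details the paper leaves implicit, but the argument is the same.
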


\begin{proof}
We make use of  Theorem \ref{thm:decomp-l-func}.
Since the conductor $\ff_1 = 1$, the product of $\mathfrak{S}(n,1)$
is always trivial. By Definition \ref{def:mod-l-func} and
Theorem \ref{thm:staudt-clausen}, we obtain
\[
   \mathfrak{D}(n,1) = \prod_{\substack{p-1 \pdiv n}}
     \; \norm{\zeta_{p,0}(s_{p,0})}_p^{-1}
     = \prod_{\substack{p-1 \pdiv n}} \norm{\frac{B_n}{n}}_p^{-1}
     = \prod_{\substack{p-1 \pdiv n}} \norm{pn}_p. \qedhere
\]
\end{proof}

Combining Theorem \ref{thm:decomp-l-func}, Proposition \ref{prop:bn-decomp},
and Conjecture \ref{conj:irr-prod}, we deduce the following.

\begin{conj}
The structure of the Bernoulli numbers is given by
\[
   \norm{\frac{B_n}{n}}_\infty = \prod_{p-1 \pdiv n} \norm{pn}_p
     \hspace*{-1ex} \prod_{\substack{(p,l) \in
     \IRR_1\\ l \equiv n \!\!\! \pmod{p-1}}}
     \hspace*{-3.5ex} \norm{p \, (s_{p,l} - \xi_{p,l})}_p^{-1},
     \quad n \in 2\NN,
\]
where $\xi_{p,l}$ is the zero of $\zeta_{p,l}$.
\end{conj}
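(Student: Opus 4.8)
The plan is to specialize the general decomposition of Theorem~\ref{thm:decomp-l-func} to the principal character $\chi = 1$ and then substitute the two auxiliary results already established in Proposition~\ref{prop:bn-decomp} together with the \emph{strong form} of Conjecture~\ref{conj:irr-prod}. First I would record that for $\chi = 1$ one has $\delta_\chi = 0$ and conductor $\ff_1 = 1$, so the argument of the $L$-function is simply $1-n$ and $L(1-n,1) = \zeta(1-n) = -B_n/n$ for $n \in 2\NN$. Since $B_n \neq 0$ for even $n \geq 2$, the archimedean value is nonzero and $\norm{L(1-n,1)}_\infty = \norm{B_n/n}_\infty$, which puts us in position to apply the decomposition.

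Applying Theorem~\ref{thm:decomp-l-func} then yields $\norm{B_n/n}_\infty = \mathfrak{I}(n,1)\,\mathfrak{S}(n,1)\,\mathfrak{D}(n,1)$. The tame and denominator factors are handled directly by Proposition~\ref{prop:bn-decomp}: the conductor being $1$ forces $\mathfrak{S}(n,1) = 1$, while the von Staudt--Clausen computation of Theorem~\ref{thm:staudt-clausen} gives $\mathfrak{D}(n,1) = \prod_{p-1 \pdiv n} \norm{pn}_p$, which is exactly the first product in the claimed formula. It remains only to rewrite the irregular factor $\mathfrak{I}(n,1)$.

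For this I would invoke the strong form of Conjecture~\ref{conj:irr-prod}, using that $\zeta_{p,l} = L_{p,l}(\cdot,1)$ and that $\xi_{p,l}$ is its unique zero. This gives $\mathfrak{I}(n,1) = \prod p\,\norm{s_{p,l}-\xi_{p,l}}_p^{-1}$, the product being over $(p,l) \in \IRR_1$ with $l \equiv n \pmod{p-1}$. Since $\norm{p}_p^{-1} = p$, each factor equals $\norm{p\,(s_{p,l}-\xi_{p,l})}_p^{-1}$, and multiplying the three pieces together produces precisely the stated product representation. The combination itself is purely formal.

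The genuine obstacle is not this assembly but the input it relies on: the strong form of Conjecture~\ref{conj:irr-prod} asserts that for every irregular pair the function $\zeta_{p,l}$ lies in $\WKS^0_{p,2}$, i.e.\ that $\lambda_{\zeta_{p,l}} = 1$, so that its $p$-adic valuation degenerates to the single linear term $\norm{p\,(s_{p,l}-\xi_{p,l})}_p$. By Remark~\ref{rem:irr-results} this is confirmed computationally for all irregular pairs below twelve million, but no proof is known that it always holds; a priori Theorem~\ref{thm:poly-f-ks2} together with the classification of the previous section permits $\lambda_{\zeta_{p,l}} \geq 2$ (the weak form), in which case the single linear factor would be replaced by a product of $\lambda_{\zeta_{p,l}}$ linear factors. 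Hence the formula is stated conjecturally, conditional on the strong form holding for $\zeta_{p,l}$ at every irregular pair.
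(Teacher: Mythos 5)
Your derivation is exactly the paper's: it combines Theorem~\ref{thm:decomp-l-func} specialized to $\chi=1$, Proposition~\ref{prop:bn-decomp} for the factors $\mathfrak{S}(n,1)$ and $\mathfrak{D}(n,1)$, and the strong form of Conjecture~\ref{conj:irr-prod} for $\mathfrak{I}(n,1)$, and you correctly identify that the only non-formal ingredient is the conjectural assertion that $\zeta_{p,l}\in\WKS^0_{p,2}$ for every irregular pair. Nothing to add.
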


\begin{remark*}
This conjecture about the Bernoulli numbers was already given by the author,
see \cite[Rem.~4.17, p.~421]{Kellner:2007}, where the numerator of $B_n/n$,
assuming the conjecture, can be described by zeros and the denominator of
$B_n/n$ can be described, without any assumption, by poles of $p$-adic zeta
functions. Since the Bernoulli numbers and the Riemann zeta function can be
viewed as the prototype of the generalized Bernoulli numbers and $L$-functions,
one may speculate, whether this \textit{simple} formula above holds generally.
\end{remark*}

Secondly, we consider the Euler numbers. Here we have the more complicated
behavior of the non-irregular part, since we do not have a denominator as in
the case of the Bernoulli numbers.

\begin{theorem}[{Frobenius \cite{Frobenius:1910}$^1$,
Carlitz \cite{Carlitz:1954a}$^2$, \cite{Carlitz:1959}$^3$}] \label{thm:en-prop}
Let $n \in 2\NN$. Then
\begin{equation} \label{eq:en-prop-1}
   E_n \equiv \left\{
     \begin{array}{cl}
       0, & (p \equiv 1 \pmod{4}), \\
       2, & (p \equiv 3 \pmod{4}),
     \end{array} \right. \pmod{p^r} \quad^{1,2}
\end{equation}
when $\eulerphi(p^r) \pdiv n$. Moreover,
\begin{equation} \label{eq:en-prop-2}
   \frac{E_n}{2} = - \frac{B_{n+1,\chi_{-4}}}{n+1} \equiv \frac12 \pmod{1}.
   \quad^3
\end{equation}
\end{theorem}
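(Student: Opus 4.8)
The plan is to treat the two displayed congruences by entirely different means: \eqref{eq:en-prop-1} lives inside the $\KS_{p,2}$ machinery and is available only for odd $p$, whereas \eqref{eq:en-prop-2} is a statement modulo $2$ that falls outside that framework, since $2 \pdiv \ff_{\chi_{-4}}$ makes Proposition \ref{prop:lpl-func-ks2} unavailable at $p=2$.

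For \eqref{eq:en-prop-1} I would first rewrite everything in terms of the interpolating function. Using $E_n = 2L(-n,\chi_{-4})$ together with the Euler factor of Definition \ref{def:mod-l-func}, one has $L_p(-n,\chi_{-4}) = (1-\chi_{-4}(p)\,p^n)\,\tfrac{E_n}{2}$. Next, since $\eulerphi(p^r) \pdiv n$ forces $p-1 \pdiv n$, the selected residue class is $l=0$, and with $\delta_{\chi_{-4}} = 1$ the backward substitution of Definition \ref{def:mod-l-func} gives $L_p(-n,\chi_{-4}) = L_{p,0}(s_{p,0}(n),\chi_{-4})$ with $s_{p,0}(n) = n/(p-1)$. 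The hypotheses of Proposition \ref{prop:lpl-func-ks2} ($l=0$ even, $0\le l<p-1$, $l\ne\delta_{\chi_{-4}}$, and $p^e\ne\ff_{\chi_{-4}}$) hold for every odd $p$, so $L_{p,0}(\cdot,\chi_{-4})\in\KS_{p,2}\subset\KS_{p,1}$.

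Now I would exploit the Kummer congruence of $\KS_{p,1}$. Because $\eulerphi(p^r)=p^{r-1}(p-1)\pdiv n$, we have $s_{p,0}(n)\equiv 0 \pmod{p^{r-1}\ZZ_p}$, so Definition \ref{def:kummer-space}(1) yields $L_{p,0}(s_{p,0}(n),\chi_{-4})\equiv L_{p,0}(0,\chi_{-4})\pmod{p^r\ZZ_p}$. The boundary value is computed directly: $L_{p,0}(0,\chi_{-4}) = L_p(0,\chi_{-4}) = (1-\chi_{-4}(p))L(0,\chi_{-4})$ and $L(0,\chi_{-4}) = -B_{1,\chi_{-4}} = \tfrac12$, so $L_{p,0}(0,\chi_{-4})$ equals $0$ for $p\equiv 1\pmod 4$ and $1$ for $p\equiv 3\pmod 4$. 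Finally, since $n\ge\eulerphi(p^r)\ge r$ the Euler factor satisfies $1-\chi_{-4}(p)\,p^n\equiv 1\pmod{p^r\ZZ_p}$, giving $\tfrac{E_n}{2}\equiv L_{p,0}(0,\chi_{-4})\pmod{p^r\ZZ_p}$; multiplying by the unit $2\in\ZZ_p^*$ produces $E_n\equiv 0$, resp.\ $2$, modulo $p^r$. For \eqref{eq:en-prop-2} the equality is already recorded at the start of this section, so only $\tfrac{E_n}{2}\equiv\tfrac12\pmod 1$ remains, which asserts exactly that $E_n$ is odd. Here I would argue from the generating function: multiplying the defining series by $\cosh z$ and comparing the coefficient of $z^{2n}$ gives $\sum_{k=0}^{n}\binom{2n}{2k}E_{2k}=0$ for $n\ge 1$; reducing modulo $2$, using Lucas' theorem in the form $\binom{2n}{2k}\equiv\binom{n}{k}\pmod 2$, and inducting on $n$ from $E_0=1$ yields $E_{2n}\equiv\sum_{k=0}^{n-1}\binom{n}{k}\equiv 2^n-1\equiv 1\pmod 2$, so every $E_n$ with $n$ even is odd; alternatively one may invoke the von Staudt--Clausen type result of Carlitz \cite{Carlitz:1959}.

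The main obstacle is not \eqref{eq:en-prop-1}, which falls out cleanly once the Euler factor is shown negligible modulo $p^r$ and the boundary value $L_{p,0}(0,\chi_{-4})$ is identified, but rather the mod-$2$ statement \eqref{eq:en-prop-2}: it cannot be reached through the $p$-adic Kummer-space framework because $2\pdiv\ff_{\chi_{-4}}$, forcing the separate elementary recurrence argument above. Care is also needed to verify the inequality $n\ge r$ that kills the Euler factor and to confirm that $l=0$ is indeed the residue class selected by $\eulerphi(p^r)\pdiv n$.
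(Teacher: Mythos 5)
The paper offers no proof of Theorem \ref{thm:en-prop}; it is imported as a classical result of Frobenius and Carlitz, so there is no internal argument to compare against. Your proposal is correct and essentially self-contained modulo the paper's other quoted inputs. For \eqref{eq:en-prop-1} you rederive the Frobenius--Carlitz congruence from the Kummer-space machinery: the identification $E_n/2=L(-n,\chi_{-4})$, the choice of residue class $l=0$ (admissible since $\delta_{\chi_{-4}}=1\neq l$ and $p^e\neq\ff_{\chi_{-4}}=4$ for odd $p$), Proposition \ref{prop:lpl-func-ks2} placing $L_{p,0}(\cdot,\chi_{-4})$ in $\KS_{p,2}\subset\KS_{p,1}$, the Kummer congruence applied at $s_{p,0}(n)\equiv 0\pmod{p^{r-1}\ZZ_p}$, the boundary value $L_{p,0}(0,\chi_{-4})=(1-\chi_{-4}(p))\cdot\tfrac12\in\{0,1\}$, and the estimate $n\geq\eulerphi(p^r)\geq r$ that makes the Euler factor $\equiv 1\pmod{p^r}$ all check out. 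Be aware only that this route rests on Theorem \ref{thm:lp-congr} (Carlitz--Fresnel), itself quoted without proof, so you have reduced one classical congruence to a stronger one rather than proved it from first principles; that is legitimate within the paper's logic and is in fact the same mechanism the paper uses immediately afterwards to sharpen \eqref{eq:en-prop-1}. Your separate treatment of \eqref{eq:en-prop-2} is forced, as you note, because $2\pdiv\ff_{\chi_{-4}}$ excludes $p=2$ from the framework; the recurrence $\sum_{k=0}^{n}\binom{2n}{2k}E_{2k}=0$, the reduction $\binom{2n}{2k}\equiv\binom{n}{k}\pmod 2$ by Lucas, and induction from $E_0=1$ correctly give $E_{2n}\equiv 2^n-1\equiv 1\pmod 2$, i.e.\ $E_n$ odd, which is exactly the content of $E_n/2\equiv\tfrac12\pmod 1$.
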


We can sharpen the result of the theorem above by determining the exact
$p$-power, that divides $E_n$ in \eqref{eq:en-prop-1}, for most cases.

\begin{prop}
Let $p \equiv 1 \pmod{4}$ and $n \in 2\NN$ where $p-1 \pdiv n$.
We have the following statements:
\begin{enumerate}
\item If $\ord_p E_{p-1} = 1$, then
  \[
     \norm{E_n}_p = \norm{pn}_p, \quad \text{resp.,} \quad
       \ord_p E_n = 1 + \ord_p n.
  \]
  Otherwise, we have
  \[
     \norm{E_n}_p < \norm{pn}_p, \quad \text{resp.,} \quad
       \ord_p E_n \geq 2 + \ord_p n.
  \]
\item If $\ord_p E_{p-1} \geq 2$ and $\ord_p (E_{2(p-1)} - 2E_{p-1}) = 2$,
  then
  \[
     \norm{E_n}_p = \norm{p^2s(s-\xi)}_p,
  \]
  where $s=n/(p-1)$ and $\xi \in \ZZ_p$ is the unique simple zero of
  $\tilde{L}_{p,0}(\cdot,\chi_{-4})$.
\end{enumerate}
\end{prop}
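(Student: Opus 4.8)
The plan is to translate everything into the language of the $p$-adic $L$-function $f = L_{p,0}(\cdot,\chi_{-4})$ and then feed the result into Theorem~\ref{thm:except-l-func}. The condition $p \equiv 1 \pmod 4$ forces $p > 3$ and $\chi_{-4}(p) = 1$, while $\chi_{-4}$ is an odd primitive quadratic character with $\delta_{\chi_{-4}} = 1$, so the hypotheses of Theorem~\ref{thm:except-l-func} are met (and $f \in \KS_{p,2}$ by Proposition~\ref{prop:lpl-func-ks2}, since $l=0 \neq \delta_{\chi_{-4}}$). First I would set up the dictionary. Using $\eulerphi(q)=p-1$, the relation $E_{m-1} = 2L(1-m,\chi_{-4})$, and Definition~\ref{def:mod-l-func}, a short computation gives
\[
   L_{p,0}(s,\chi_{-4}) = \tfrac12\,(1 - p^{(p-1)s})\,E_{(p-1)s}, \quad s \in \NN.
\]
For $s \geq 1$ the Euler factor $1-p^{(p-1)s}$ is a $p$-adic unit and $\tfrac12 \in \ZZ_p^*$, so writing $n = (p-1)s$, i.e.\ $s = n/(p-1)$, we get $\ord_p E_n = \ord_p L_{p,0}(s,\chi_{-4})$. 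Since $p \notdiv (p-1)$ we also have $\ord_p n = \ord_p s$; these two identities are the only bridge between the two settings.

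For part~(1) I would specialize the dictionary at $s=1$ to get $\ord_p L_{p,0}(1,\chi_{-4}) = \ord_p E_{p-1}$. If $\ord_p E_{p-1}=1$, then Theorem~\ref{thm:except-l-func}(1) yields $\lambda_f=1$ and Theorem~\ref{thm:except-l-func}(3) gives $\norm{L_{p,0}(s,\chi_{-4})}_p = \norm{ps}_p$; transferring through $\ord_p E_n = \ord_p L_{p,0}(s,\chi_{-4})$ and $\ord_p n = \ord_p s$ produces $\ord_p E_n = 1 + \ord_p n$, that is $\norm{E_n}_p = \norm{pn}_p$. Otherwise $\ord_p E_{p-1} \geq 2$ (it is always $\geq 1$ by Theorem~\ref{thm:en-prop}), so Theorem~\ref{thm:except-l-func}(2) gives $\lambda_f \geq 2$; then in the factorization $L_{p,0}(s,\chi_{-4}) = ps\,\tilde{L}_{p,0}(s,\chi_{-4})$ of Lemma~\ref{lem:ks2-zero0} the cofactor obeys $\norml{\tilde{L}_{p,0}(s,\chi_{-4})}_p < 1$, whence $\ord_p E_n \geq 2 + \ord_p n$, i.e.\ $\norm{E_n}_p < \norm{pn}_p$.

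For part~(2) the extra hypotheses pin $\lambda_f$ down exactly. From $\ord_p E_{p-1} \geq 2$ we already have $\lambda_f \geq 2$, so it remains to show $\Delta_f(2) \in \ZZ_p^*$. I would compute the second difference $\Dop^2 f(0) = f(2) - 2f(1) + f(0)$, using $f(0)=0$ and the explicit values of $f(1),f(2)$ from the dictionary, which gives
\[
   2\,\Dop^2 f(0) = (E_{2(p-1)} - 2E_{p-1}) - p^{2(p-1)}E_{2(p-1)} + 2p^{p-1}E_{p-1}.
\]
The two Euler-factor correction terms have $p$-order at least $(p-1)+1 \geq 5$, hence are negligible, so $\ord_p \Dop^2 f(0) = \ord_p(E_{2(p-1)} - 2E_{p-1}) = 2$ by hypothesis, and therefore $\ord_p \Delta_f(2) = \ord_p \Dop^2 f(0) - 2 = 0$. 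Thus $\lambda_f = 2$, and Theorem~\ref{thm:except-l-func}(4) (with Proposition~\ref{prop:deg-ks2-zero-0} placing $\tilde{L}_{p,0}(\cdot,\chi_{-4})$ in $\KS^{\rm d}_{p,1}$) yields $\norm{L_{p,0}(s,\chi_{-4})}_p = \norm{p^2 s(s-\xi)}_p$; transferring back gives $\norm{E_n}_p = \norm{p^2 s(s-\xi)}_p$ with $s = n/(p-1)$.

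The routine pieces are harmless, so the main point to get right is the control of the Euler factors. In the basic dictionary one must verify that $1 - p^{(p-1)s}$ is a $p$-adic unit, so that $\ord_p E_n = \ord_p L_{p,0}(s,\chi_{-4})$ exactly; and in the second-difference computation for part~(2) one must verify that the $p^{(p-1)s}$-corrections carry $p$-order strictly larger than $2$, so that $\ord_p \Dop^2 f(0)$ is governed purely by the genuine term $E_{2(p-1)} - 2E_{p-1}$. This is where the hypotheses $\ord_p E_{p-1} \geq 2$ and $\ord_p(E_{2(p-1)} - 2E_{p-1}) = 2$ do their real work, forcing $\lambda_f = 2$ rather than $\lambda_f > 2$.
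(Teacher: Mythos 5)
Your proposal is correct and follows essentially the same route as the paper: both translate $E_n$ into $L_{p,0}(\cdot,\chi_{-4})$ via the Euler-factor dictionary, apply Theorem \ref{thm:except-l-func} to read off $\lambda_f$ from $\ord_p E_{p-1}$, and in part (2) reduce the hypothesis $\ord_p(E_{2(p-1)}-2E_{p-1})=2$ to $\Delta_f(2)\in\ZZ_p^*$ using $f(0)=0$. The only difference is cosmetic: where the paper asserts that "the Euler factors have no effect," you verify explicitly that the correction terms carry $p$-order at least $p\geq 5$, which is a welcome but not essential elaboration.
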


\begin{proof}
Note that $p > 3$. We apply Theorem \ref{thm:except-l-func} to
$f = L_{p,0}(\cdot,\chi_{-4})$, where we have the connection with the
Euler numbers by
\[
   \norm{L_{p,0}(s,\chi_{-4})}_p = \norm{\tfrac12 E_{s(p-1)}}_p
     = \norm{E_{s(p-1)}}_p, \quad s \in \NN,
\]
and especially $\norm{L_{p,0}(1,\chi_{-4})}_p = \norm{E_{p-1}}_p$.
Moreover, we have
\[
   L_{p,0}(s,\chi_{-4}) = ps \, \tilde{L}_{p,0}(s,\chi_{-4}),
     \quad s \in \ZZ_p.
\]

(1): Since
\[
   \ord_p E_{p-1} = 1 \iff \ord_p L_{p,0}(1,\chi_{-4}) = 1 \iff \lambda_f = 1,
\]
we obtain $\norm{L_{p,0}(s,\chi_{-4})}_p = \norm{ps}_p$ for $s \in \ZZ_p$.
It follows for $n=s(p-1) \in \NN$ that
\[
   \norm{E_n}_p = \norm{L_{p,0}(s,\chi_{-4})}_p = \norm{ps}_p
     = \norm{pn/(p-1)}_p = \norm{pn}_p.
\]
Otherwise, we have the case that $\lambda_f > 1$,
$\ord_p E_{p-1} \geq 2$, and
$\norml{\tilde{L}_{p,0}(s,\chi_{-4})}_p < 1$ for $s \in \ZZ_p$.
This implies the inequalities given above.

(2): We show that the conditions $\ord_p E_{p-1} \geq 2$ and
$\ord_p (E_{2(p-1)} - 2E_{p-1}) = 2$ give the
necessary and sufficient conditions for $\lambda_f = 2$.
As seen above, $\ord_p E_{p-1} \geq 2$ implies that $\lambda_f > 1$.
Thus it remains the condition
$\Delta_f(2) = \Dop^2 L_{p,0}(0,\chi_{-4}) / p^2 \in \ZZ^*_p$
to ensure that $\lambda_f = 2$. Since $L_{p,0}(0,\chi_{-4}) = 0$,
we obtain the condition
$\ord_p(L_{p,0}(2,\chi_{-4}) - 2L_{p,0}(1,\chi_{-4})) = 2$,
which is equivalent to $\ord_p (E_{2(p-1)} - 2E_{p-1}) = 2$,
since the Euler factors have no effect.
Now, we have established that $\lambda_f = 2$. Hence
\[
   \norm{E_n}_p = \norm{L_{p,0}(s,\chi_{-4})}_p = \norm{p^2s(s-\xi)}_p,
\]
where $n=s(p-1) \in \NN$ and $\xi \in \ZZ_p$ is the unique simple zero of
$\tilde{L}_{p,0}(\cdot,\chi_{-4})$.
\end{proof}

\begin{remark*}
Ernvall \cite{Ernvall:1979} notices that the exception such that
$p^2 \pdiv E_{p-1}$ occurs first for $p = 29\,789$, which is the only known
example for the Euler numbers yet. Therefore $(29\,789,0) \in \EXC_{\chi_{-4}}$.
The table below represents our computations for this prime showing that
$\tilde{L}_{p,0}(\cdot,\chi_{-4})$ has a unique simple zero. Moreover, using a
congruence of \cite[p.~628]{Ernvall&Met:1979}, which also follows as a special
case of Theorem \ref{thm:tpl-gen-ks2} for the functions $T_{p,l}(\cdot,\chi)$
for $\chi = \chi_{-4}$, we have not found any further exceptional prime
$p \equiv 1 \pmod{4}$ below $10^6$. These exceptional primes seem to be very
rarely in the case of the Euler numbers. The result of \cite{Ernvall&Met:1979}
is the computation of cyclotomic invariants and $E$-irregular pairs below
$10^4$. We have extended the computations of $E$-irregular pairs up to
$p < 10^5$ to show that $L_{p,l}(\cdot,\chi_{-4}) \in \WKS^0_{p,2}$ for
$(p,l) \in \IRR_{\chi_{-4}}$ in this range. We give all details of the
computations and used methods in \cite{Kellner:2009}.
\end{remark*}

\begin{tbl}
Computed parameters of function $f = L_{p,0}(\cdot,\chi_{-4}) \in \KS_{p,2}$
for $p= 29\,789$:
\begin{center}
\begin{tabular}{|c|c|c|c|} \hline
  $p$ & $\lambda_f$ & $\ord_p f(1)$ & $\ord_p \Delta_f(2)$ \\ \hline\hline
  29\,789 & 2 & 2 & 0 \\ \hline
\end{tabular}
\end{center}
verified by
\begin{center}
\begin{tabular}{|c|c|c|c|} \hline
  & $\pmod{p^3}$ & $\pmod{p^3} / p^2$ & $\ord_p$ \\ \hline\hline
  $E_{p-1}$     & 22651377283046 & 25526 & 2 \\ \hline
  $E_{2(p-1)}$  & 20830464245954 & 23474 & 2 \\ \hline
  $E_{2(p-1)} - 2E_{p-1}$ & 1962007175931 & 2211 & 2 \\ \hline
\end{tabular}
\end{center}
\end{tbl}

\begin{prop} \label{prop:en-decomp}
We have
\[
   \mathfrak{S}(n,\chi_{-4}) = \frac12 \quad \text{and} \quad
   \mathfrak{D}(n,\chi_{-4}) = \mathfrak{D}_{0}(n,\chi_{-4}) \hspace*{-2ex}
     \prod_{\substack{p-1 \pdiv n\\ p \equiv 1 \!\!\! \pmod{4}}}
     \hspace*{-2ex} \norm{pn}_p^{-1}, \quad n \in 2\NN.
\]
\end{prop}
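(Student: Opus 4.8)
The plan is to evaluate the two factors separately, feeding the already‑established decompositions into the special case $\chi=\chi_{-4}$: Theorem~\ref{thm:decomp-l-func} controls $\mathfrak{S}$, and the refined Theorem~\ref{thm:decomp-d-prod} controls $\mathfrak{D}$, both applicable because $\chi_{-4}$ is an odd primitive quadratic character of conductor $\ff_{\chi_{-4}}=4$. The single arithmetic input I would compute first is $B_{1,\chi_{-4}}=\tfrac14\sum_{a=1}^{4}\chi_{-4}(a)\,a=\tfrac14(1-3)=-\tfrac12$, to be used together with the elementary facts $\chi_{-4}(2)=0$ and $\chi_{-4}(p)=1\iff p\equiv 1\pmod 4$.

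For $\mathfrak{S}(n,\chi_{-4})$, the only prime dividing $\ff_{\chi_{-4}}=4$ is $p=2$, and for $p=2$ the only admissible index is $l=0$ since $\eulerphi(q)=\eulerphi(4)=2$. By the norm identity used in the proof of Theorem~\ref{thm:decomp-l-func} (which holds for every prime, including those dividing the conductor), $\norm{L_{2,0}(s_{2,0},\chi_{-4})}_2=\norm{L(1-(\delta_{\chi_{-4}}+n),\chi_{-4})}_2=\norm{L(-n,\chi_{-4})}_2=\norm{E_n/2}_2$, using $E_n=2L(-n,\chi_{-4})$. From \eqref{eq:en-prop-2} one reads off that $E_n$ is an odd integer, hence a $2$-adic unit, so $\norm{E_n/2}_2=2$ and $\mathfrak{S}(n,\chi_{-4})=\norm{E_n/2}_2^{-1}=\tfrac12$.

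For $\mathfrak{D}(n,\chi_{-4})$, I would apply Theorem~\ref{thm:decomp-d-prod}, which factors $\mathfrak{D}=\mathfrak{D}_{2,3}\,\mathfrak{D}_{+}\,\mathfrak{D}_{-}\,\mathfrak{D}_{0}$, and show that the first and third factors are trivial. The index set $I_{\chi_{-4}}^{2,3}$ is empty: $p=2$ is excluded because $\chi_{-4}(2)=0$, while $p=3$ is excluded because $(1-\chi_{-4}(3))B_{1,\chi_{-4}}=2\cdot(-\tfrac12)=-1$ is not divisible by $3$. Likewise $I_{\chi_{-4}}^{-}=\{p>3:\chi_{-4}(p)=-1,\ p\pdiv B_{1,\chi_{-4}}\}$ is empty, since $B_{1,\chi_{-4}}=-\tfrac12$ is a $p$-adic unit for every odd $p$ and so admits no prime divisor $p>3$. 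Hence $\mathfrak{D}_{2,3}(n,\chi_{-4})=\mathfrak{D}_{-}(n,\chi_{-4})=1$. Finally $I_{\chi_{-4}}^{+}=\{p>3:\chi_{-4}(p)=1\}=\{p:p\equiv 1\pmod 4\}$ (every such prime exceeds $3$), so $\mathfrak{D}_{+}(n,\chi_{-4})=\prod_{\substack{p-1\pdiv n\\ p\equiv 1\!\!\!\pmod{4}}}\norm{pn}_p^{-1}$, and multiplying by the surviving factor $\mathfrak{D}_0(n,\chi_{-4})$ (which collects the contributions $\norml{\tilde{L}_{p,0}(s_{p,0},\chi_{-4})}_p^{-1}$ of the pairs in $\EXC_{\chi_{-4}}$) gives exactly the asserted formula.

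All steps are elementary once $B_{1,\chi_{-4}}$ is in hand, so I do not expect a genuine obstacle; the only place invoking an external fact is the oddness of $E_n$ in the $\mathfrak{S}$ computation, supplied by \eqref{eq:en-prop-2}. The care required is purely bookkeeping: verifying that the ``$2,3$'' and ``minus'' index sets collapse for this particular character, and matching $\mathfrak{D}_{+}$ (read from Theorem~\ref{thm:decomp-d-prod}, which already absorbs the exceptional $\norm{pn}_p^{-1}$ factors via Theorem~\ref{thm:except-l-func}) with the product in the statement.
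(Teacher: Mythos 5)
Your proposal is correct and follows essentially the same route as the paper: the $\mathfrak{S}$ factor is evaluated via \eqref{eq:en-prop-2} (oddness of $E_n$), and $\mathfrak{D}$ is reduced through Theorem \ref{thm:decomp-d-prod} using $2B_{1,\chi_{-4}}=-1$ to kill the $\mathfrak{D}_{2,3}$ and $\mathfrak{D}_{-}$ factors and $\chi_{-4}(p)=1\iff p\equiv 1\pmod 4$ to identify $\mathfrak{D}_{+}$. Your version merely spells out the bookkeeping (computing $B_{1,\chi_{-4}}=-\tfrac12$ explicitly and checking each index set) that the paper compresses into one line.
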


\begin{proof}
We apply Theorems \ref{thm:decomp-l-func}, \ref{thm:decomp-d-prod},
and \ref{thm:en-prop}.
Since $\ff_{\chi_{-4}} = 4$, we obtain by \eqref{eq:en-prop-2} that
\[
   \mathfrak{S}(n,\chi_{-4}) = \norm{L_{2,0}(s_{2,0},\chi_{-4})}_2^{-1}
     = \norm{\frac{E_n}{2}}_2^{-1} = \frac12.
\]
Observing that $\chi_{-4}$ is odd and $2B_{1,\chi_{-4}} = -E_0 = -1$,
the product of $\mathfrak{D}(n,\chi_{-4})$ reduces to
\[
  \mathfrak{D}_{0}(n,\chi_{-4}) \mathfrak{D}_+(n,\chi_{-4}) =
   \mathfrak{D}_{0}(n,\chi_{-4}) \hspace*{-2ex}
     \prod_{\substack{p-1 \pdiv n\\ p \equiv 1 \!\!\! \pmod{4}}}
     \hspace*{-2ex} \norm{pn}_p^{-1},
\]
where we have used that $\chi_{-4}(p) = 1 \iff p \equiv 1 \pmod{4}$.
\end{proof}

Note that the factor $\mathfrak{S}(n,\chi_{-4}) = \frac12$
is cancelled regarding $E_n = 2L(-n,\chi_{-4})$ for even $n$.
Combining Theorems \ref{thm:decomp-l-func} and \ref{thm:decomp-d-prod},
Proposition \ref{prop:en-decomp},
and Conjectures \ref{conj:irr-prod} and \ref{conj:except-prod},
we deduce the following.

\begin{conj}
The structure of the Euler numbers is given by
\[
   \norm{E_n}_\infty = \prod_{\substack{p-1 \pdiv n\\
     p \equiv 1 \!\!\! \pmod{4}}}
     \hspace*{-2.5ex} \norm{pn}_p^{-1}
     \prod_{\substack{(p,l) \in \IRR_{\chi_{-4}} \cup \EXC_{\chi_{-4}}\\
     l \equiv n \!\!\! \pmod{p-1}}}
     \hspace*{-4.5ex} \norm{p \, (s_{p,l} - \xi_{p,l})}_p^{-1},
     \quad n \in 2\NN,
\]
where $\xi_{p,l}$ is the zero of $L_{p,l}(\cdot,\chi_{-4})$ when $l \neq 0$ and
$\xi_{p,0}$ is the zero of $\tilde{L}_{p,0}(\cdot,\chi_{-4})$.
\end{conj}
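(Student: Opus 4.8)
The plan is to read off the claimed identity as a conditional consequence of the global product formula of Theorem \ref{thm:decomp-l-func}, the refined splitting of the non-irregular part in Theorem \ref{thm:decomp-d-prod}, the explicit evaluations of Proposition \ref{prop:en-decomp}, and the two Conjectures \ref{conj:irr-prod} and \ref{conj:except-prod}; the whole argument is bookkeeping once these are granted. First I would specialize to $\chi = \chi_{-4}$, record that $\delta_{\chi_{-4}} = 1$ because $\chi_{-4}$ is odd, and use the relation $E_n/2 = -B_{n+1,\chi_{-4}}/(n+1) = L(-n,\chi_{-4}) = L(1-(\delta_{\chi_{-4}}+n),\chi_{-4})$ valid for $n \in 2\NN$. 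Applying Theorem \ref{thm:decomp-l-func} (whose product formula is finite, since $L(-n,\chi_{-4}) \in \QQ^*$) then gives
\[
   \tfrac12 \norm{E_n}_\infty
     = \norm{L(1-(\delta_{\chi_{-4}}+n),\chi_{-4})}_\infty
     = \mathfrak{I}(n,\chi_{-4}) \, \mathfrak{S}(n,\chi_{-4}) \, \mathfrak{D}(n,\chi_{-4}),
     \quad n \in 2\NN.
\]

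Next I would substitute the two evaluations of Proposition \ref{prop:en-decomp}, namely $\mathfrak{S}(n,\chi_{-4}) = \tfrac12$ and $\mathfrak{D}(n,\chi_{-4}) = \mathfrak{D}_0(n,\chi_{-4}) \prod_{p-1 \pdiv n,\, p \equiv 1 \!\!\pmod 4} \norm{pn}_p^{-1}$. The factor $\tfrac12$ coming from $\mathfrak{S}$ cancels against the $\tfrac12$ relating $E_n$ to $L(-n,\chi_{-4})$, leaving $\norm{E_n}_\infty = \mathfrak{I}(n,\chi_{-4}) \, \mathfrak{D}(n,\chi_{-4})$. At this stage I would invoke the strong form of Conjecture \ref{conj:irr-prod} to rewrite $\mathfrak{I}(n,\chi_{-4})$ as a product of $p \, \norm{s_{p,l}-\xi_{p,l}}_p^{-1}$ over pairs $(p,l) \in \IRR_{\chi_{-4}}$ with $l \equiv n \pmod{p-1}$, and Conjecture \ref{conj:except-prod} to rewrite $\mathfrak{D}_0(n,\chi_{-4})$ as a product of $\norm{p\,(s_{p,0}-\xi_{p,0})}_p^{-1}$ over pairs $(p,0) \in \EXC_{\chi_{-4}}$ with $p-1 \pdiv n$. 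Since $\norm{p}_p^{-1} = p$, each irregular factor $p \, \norm{s_{p,l}-\xi_{p,l}}_p^{-1}$ equals $\norm{p\,(s_{p,l}-\xi_{p,l})}_p^{-1}$, so both the irregular and the exceptional contributions now have the same linear shape.

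The remaining step merges the irregular product and the exceptional product into a single product over $\IRR_{\chi_{-4}} \cup \EXC_{\chi_{-4}}$, leaving $\prod_{p-1 \pdiv n,\, p \equiv 1 \!\!\pmod 4} \norm{pn}_p^{-1}$ as the announced non-irregular prefactor. Here I would check that the two index sets are disjoint: Definition \ref{def:irr-prime} forces $l \geq 2$ for every $(p,l) \in \IRR_{\chi_{-4}}$, whereas by Definition \ref{def:except-prime} every pair in $\EXC_{\chi_{-4}}$ has $l = 0$. Moreover, for an exceptional pair the defining constraint $p-1 \pdiv n$ is exactly the condition $l \equiv n \pmod{p-1}$ with $l = 0$, so the single indexing condition $l \equiv n \pmod{p-1}$ governs both families uniformly; assembling the three products then yields the stated formula, with $\xi_{p,l}$ the zero of $L_{p,l}(\cdot,\chi_{-4})$ when $l \neq 0$ and $\xi_{p,0}$ the zero of $\tilde{L}_{p,0}(\cdot,\chi_{-4})$, exactly as in the conjectures.

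Because the statement is asserted only conditionally on Conjectures \ref{conj:irr-prod} and \ref{conj:except-prod}, the deduction itself is elementary and carries no analytic difficulty; the genuine points of care are the disjointness-and-indexing verification that allows the irregular and exceptional factors to be absorbed into one product over $\IRR_{\chi_{-4}} \cup \EXC_{\chi_{-4}}$, and the clean cancellation of the factor $\tfrac12$ from $\mathfrak{S}(n,\chi_{-4})$ against the $\tfrac12$ in $E_n = 2L(-n,\chi_{-4})$. The only substantive obstacle lies outside this proof entirely, namely in establishing the two underlying conjectures, which is precisely why the structural formula for $\norm{E_n}_\infty$ is itself stated as a conjecture.
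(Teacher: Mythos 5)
Your proposal is correct and follows essentially the same route as the paper: the text preceding the conjecture explicitly derives it by combining Theorems \ref{thm:decomp-l-func} and \ref{thm:decomp-d-prod}, Proposition \ref{prop:en-decomp}, and Conjectures \ref{conj:irr-prod} (in strong form) and \ref{conj:except-prod}, with the same cancellation of $\mathfrak{S}(n,\chi_{-4})=\tfrac12$ against the factor $2$ in $E_n = 2L(-n,\chi_{-4})$. Your explicit verification that $\IRR_{\chi_{-4}}$ and $\EXC_{\chi_{-4}}$ are disjoint and uniformly indexed by $l \equiv n \pmod{p-1}$ is a detail the paper leaves implicit, but it matches the intended argument exactly.
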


\begin{remark*}
It is quite remarkable that the Bernoulli and Euler numbers, defined by simple
generating functions and lying in $\QQ$, resp., $\ZZ$, can be completely
described only with the aid of the theory of $p$-adic functions. In other
words, the Riemann zeta function and $L$-functions at negative integer
arguments encode $p$-adic information about zeros of $p$-adic functions.
\end{remark*}
\medskip

\section{Fermat quotients}

In this section, let $p$ be an odd prime.
Let $\SU_p = 1 + p\ZZ_p$. As usual, we have the decomposition
\[
   a = \omega(a) \bracket{a}, \quad a \in \ZZ_p^*,
\]
where $\bracket{a} \in \SU_p$ and
$\omega: \ZZ_p^* \to \ZZ_p^*$ is the Teichm\"uller character,
that gives the $(p-1)$-th roots of unity in $\QQ_p$ and satisfies
$\omega(a) \equiv a \pmod{p\ZZ_p}$. Define the Fermat quotient by
\[
   q(a) = \frac{a^{p-1}-1}{p}, \quad a \in \ZZ_p^*.
\]
Further define
\[
   u(a): \ZZ_p^* \to \SU_p, \quad a \mapsto a^{p-1}.
\]
Alternatively, we also have $u(a) = \bracket{a}^{p-1} = 1 + p\,q(a)$.
Basic properties of the Fermat quotient were given by Lerch \cite{Lerch:1905};
one of these is the similar behavior like the $\log$ function:
\[
   q(ab) \equiv q(a)+q(b) \pmod{p\ZZ_p}, \quad a, b \in \ZZ^*_p.
\]

Now, we introduce the functions $T_{p,l}$ as follows.

\begin{defin}
Let $l,r$ be integers where $r \geq 1$. We define
\[
   T_{p,l}^r(s) = \sum_{a=1}^{p-1} a^l q(a)^r u(a)^s, \quad s \in \ZZ_p.
\]
We also write $T_{p,l}(s)$ for $T_{p,l}^1(s)$.
\end{defin}

\begin{prop} \label{prop:tpl-prop}
Let $\nu,l,r$ be integers where $\nu \geq 0$ and $r \geq 1$.
We have the following statements:
\begin{enumerate}
\item $T_{p,l}^r \in \KS_{p,2}$.
\item $\KSop^\nu \, T_{p,l}^r = T_{p,l}^{r+\nu}$.
\item The Mahler expansion is given by
  \[
     T_{p,l}^r(s) = \sum_{\nu \geq 0} \Delta_{T_{p,l}^r}(\nu) \,
       p^\nu \binom{s}{\nu}, \quad s \in \ZZ_p,
  \]
  where $\Delta_{T_{p,l}^r}(\nu) = T_{p,l}^{r+\nu}(0)$.
\item We have
  \[
     T_{p,l}^r(s) \equiv T_{p,l+p-1}^r(s) \equiv T_{p,l}^{r+p-1}(s)
       \pmod{p\ZZ_p}, \quad s \in \ZZ_p.
  \]
\item We have a certain recurrent behavior of the coefficients such that
  \[
     \Delta_{T_{p,l}^r}(\nu) \equiv \Delta_{T_{p,l}^r}(\nu+p-1)
       \equiv \Delta_{T_{p,l+p-1}^r}(\nu) \equiv \Delta_{T_{p,l}^{r+p-1}}(\nu)
       \pmod{p\ZZ_p}.
  \]
\item Define $S_n(m) = 1^n + \cdots + (m-1)^n$ for $m \geq 1$, $n \geq 0$.
If $l \geq 0$, then
  \[
     T_{p,l}^r(s) = p^{-r} \, \Doph{p-1}{r} S_t(p) \valueat{t=l+s(p-1)},
       \quad s \in \NN_0.
  \]
\end{enumerate}
\end{prop}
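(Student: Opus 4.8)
The plan is to prove part~(6) by expanding the $h$-difference operator in closed form directly from its definition, since everything in sight is a finite sum and no convergence or continuity issues arise. First I would unfold $\Doph{p-1}{r}$ acting on the function $t \mapsto S_t(p) = \sum_{a=1}^{p-1} a^t$, writing
\[
   \Doph{p-1}{r} S_t(p) = \sum_{\nu=0}^r \binom{r}{\nu} (-1)^{r-\nu} S_{t+\nu(p-1)}(p).
\]
Because the argument $m=p$ is fixed and only the exponent varies, each term factors as $S_{t+\nu(p-1)}(p) = \sum_{a=1}^{p-1} a^t (a^{p-1})^\nu = \sum_{a=1}^{p-1} a^t u(a)^\nu$, using $u(a) = a^{p-1}$.

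The key step is to interchange the two finite sums and collapse the inner binomial sum. This gives
\[
   \Doph{p-1}{r} S_t(p) = \sum_{a=1}^{p-1} a^t \sum_{\nu=0}^r \binom{r}{\nu} (-1)^{r-\nu} u(a)^\nu = \sum_{a=1}^{p-1} a^t (u(a)-1)^r,
\]
where the inner sum is recognized as the binomial expansion of $(u(a)-1)^r$. Invoking the defining relation $u(a) - 1 = a^{p-1} - 1 = p\,q(a)$ then yields $(u(a)-1)^r = p^r q(a)^r$, so that $\Doph{p-1}{r} S_t(p) = p^r \sum_{a=1}^{p-1} a^t q(a)^r$.

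Finally I would substitute $t = l + s(p-1)$ for $s \in \NN_0$. Writing $a^{l+s(p-1)} = a^l (a^{p-1})^s = a^l u(a)^s$ identifies the resulting sum with $T_{p,l}^r(s) = \sum_{a=1}^{p-1} a^l q(a)^r u(a)^s$, so that $\Doph{p-1}{r} S_t(p)\valueat{t=l+s(p-1)} = p^r\, T_{p,l}^r(s)$, which is the claimed formula after dividing by $p^r$. I do not expect a genuine obstacle here; the computation is entirely routine. The only points requiring care are bookkeeping ones: the hypotheses $l \geq 0$ and $s \in \NN_0$ (together with $\nu \geq 0$) guarantee that every exponent $t + \nu(p-1) = l + (s+\nu)(p-1)$ is a nonnegative integer, so each power sum $S_{\bullet}(p)$ is defined in the sense of the statement, and it is precisely the restriction $s \in \NN_0$ that lets us read $u(a)^s$ as an ordinary integral power matching the definition of $T_{p,l}^r$.
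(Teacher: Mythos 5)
Your argument for part~(6) is correct and is in substance the same computation the paper performs: the paper writes, for each fixed $a$, the identity $a^l q(a)^r u(a)^s = p^{-r} a^{l+s(p-1)}(a^{p-1}-1)^r = p^{-r}\,\Doph{p-1}{r}\, a^t \valueat{t=l+s(p-1)}$ and sums over $a$, whereas you unfold $\Doph{p-1}{r}$ acting on $S_t(p)$ and interchange the two finite sums before collapsing the binomial sum to $(u(a)-1)^r = p^r q(a)^r$. These are the same manipulation read in opposite directions, and your bookkeeping about $l \geq 0$ and $s \in \NN_0$ is sound.

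The genuine problem is that the statement has six parts and you prove only one of them. Parts (1)--(5) are left entirely unaddressed, and they are not corollaries of (6). Part~(1), the assertion $T_{p,l}^r \in \KS_{p,2}$, is the substantive claim: it requires knowing that each summand $a^l q(a)^r u(a)^s$ is, as a function of $s$, an element of $\KS_{p,2}$, which the paper gets from Proposition \ref{prop:exp-func} (the functions $s \mapsto u(a)^s$ with $u(a) \in 1+p\ZZ_p$ lie in $\KS^*_{p,2}$ via the binomial expansion $\sum_{\nu}(u(a)-1)^\nu\binom{s}{\nu}$), together with closure of $\KS_{p,2}$ under $\ZZ_p$-linear combinations. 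Part~(2) needs the operator computation $\KSop\, u(a)^s = p^{-1}\Dop\, u(a)^s = q(a)\,u(a)^s$, iterated; part~(3) then follows from part~(2) together with Lemma \ref{lem:oper-ks2} identifying $\Delta_f(\nu) = \KSop^\nu f(0)$; and parts~(4) and (5) rest on Fermat's little theorem applied to $a^{p-1}$ and $q(a)^{p-1} = (u(a)-1)^{p-1}/p^{p-1}$ reduced modulo $p$. None of this is difficult, but none of it is present, so the proposal as written does not establish the proposition.
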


\begin{proof}
Let $s \in \ZZ_p$.
(1): From Proposition \ref{prop:exp-func} we deduce that the functions
\[
   a^l q(a)^r u(a)^s \in \KS_{p,2}
\]
for all $a \in \{1,\ldots,p-1\}$. Thus the sum $T_{p,l}^r(s) \in \KS_{p,2}$.
(2): A simple calculation shows that
\[
   \KSop \, u(a)^s = \Dop u(a)^s/p = q(a) u(a)^s.
\]
So we get $\KSop \, T_{p,l}^r(s) = T_{p,l}^{r+1}(s)$
and iteratively $\KSop^\nu \, T_{p,l}^r(s) = T_{p,l}^{r+\nu}(s)$.
(3): This follows by (2) and Lemma \ref{lem:oper-ks2}.
(4),(5): This is a consequence of Fermat's little theorem.
The case (5) follows by $\Delta_{T_{p,l}^r}(\nu) = T_{p,l}^{r+\nu}(0)$.
(6): Using the binomial expansion
\[
   a^l q(a)^r u(a)^s = p^{-r} a^{l+s(p-1)} (a^{p-1}-1)^r = p^{-r} \,
     \Doph{p-1}{r} a^t \valueat{t=l+s(p-1)}
\]
and summing over $a$ yield the result.
\end{proof}

The next theorem shows a link between the behavior of the functions $T_{p,l}$
and $\zeta_{p,l}$.

\begin{theorem} \label{thm:tpl-ks2}
Let $p > 3$ and $l \in 2\NN$ where $0 < l < p-1$.
We have the relations
\begin{alignat*}{2}
   T_{p,l}(s) &\equiv \zeta_{p,l}(s) &&\pmod{p\ZZ_p}, \quad s \in \ZZ_p, \\
   \Delta_{T_{p,l}} &\equiv 2 \Delta_{\zeta_{p,l}} &&\pmod{p\ZZ_p},
     \quad l \neq 2.
\end{alignat*}
The functions $T_{p,l}$ and $\zeta_{p,l}$ have the same classification
in $\KS_{p,2}$ such that
\begin{align*}
   T_{p,l} \in \KS^*_{p,2}  \quad &\iff \quad \zeta_{p,l} \in \KS^*_{p,2}, \\
   T_{p,l} \in \WKS^0_{p,2} \quad &\iff \quad \zeta_{p,l} \in \WKS^0_{p,2}.
\end{align*}
\end{theorem}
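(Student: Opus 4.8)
The plan is to reduce every assertion to the Mahler coefficients at the origin, which by Proposition \ref{prop:tpl-prop}(3) and (6) are power sums of the Fermat quotient, and then to match these against the corresponding values of $\zeta_{p,l}$ through the classical Faulhaber expansion $S_n(p)=\tfrac{1}{n+1}\sum_{k=0}^{n}\binom{n+1}{k}B_k\,p^{\,n+1-k}$. Its lowest term gives $\tfrac1p S_n(p)\equiv B_n\pmod p$, while the $p^2$-coefficient $\tfrac n2 B_{n-1}$ vanishes for even $n>2$ because $B_{n-1}=0$; this is exactly where the hypothesis $l\neq 2$ will be needed in the second relation. Throughout I use that both $T_{p,l}$ and $\zeta_{p,l}$ lie in $\KS_{p,2}$ (Propositions \ref{prop:tpl-prop} and \ref{prop:lpl-func-ks2}), so by Corollary \ref{corl:ks2-strong-kummer-congr} each is congruent to its value at $0$ modulo $p$.

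For the first relation I would note that $T_{p,l}(s)\equiv T_{p,l}(0)$ and $\zeta_{p,l}(s)\equiv\zeta_{p,l}(0)\pmod{p\ZZ_p}$, so it suffices to compare the two constant terms. By Proposition \ref{prop:tpl-prop}(6) at $s=0$, $T_{p,l}(0)=p^{-1}(S_{l+p-1}(p)-S_l(p))\equiv B_{l+p-1}-B_l\pmod p$. The basic Kummer congruence $B_{l+p-1}/(l+p-1)\equiv B_l/l\pmod p$ (valid since $p-1\notdiv l$), together with $l+p-1\equiv l-1\pmod p$, turns this into $T_{p,l}(0)\equiv\tfrac{l-1}{l}B_l-B_l=-B_l/l\pmod p$. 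On the other side $\zeta_{p,l}(0)=(1-p^{\,l-1})(-B_l/l)\equiv -B_l/l\pmod p$ by Definition \ref{def:mod-l-func}, which matches and yields relation (1).

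For the second relation I would use $\Delta_{T_{p,l}}(\nu)=T_{p,l}^{\nu+1}(0)$ to write $\Delta_{T_{p,l}}\equiv T_{p,l}^2(0)\pmod p$, and then $T_{p,l}^2(0)=p^{-2}(S_{n_2}(p)-2S_{n_1}(p)+S_{n_0}(p))$ with $n_i=l+i(p-1)$ from Proposition \ref{prop:tpl-prop}(6). Since $l\neq2$ makes every $n_i$ even and $>2$, the $p^2$-coefficient in each $S_{n_i}(p)$ vanishes and the Faulhaber expansion gives $T_{p,l}^2(0)\equiv p^{-1}(B_{n_2}-2B_{n_1}+B_{n_0})\pmod p$. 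Writing $B_{n_i}=n_i\beta_i$ with $\beta_i=B_{n_i}/n_i$ and $n_i=(l-i)+ip$, the combination expands as $l(\beta_2-2\beta_1+\beta_0)-2(\beta_2-\beta_1)+2p(\beta_2-\beta_1)$. The key input is the second-order Kummer type congruence $\Dop^2\zeta_{p,l}(0)\equiv0\pmod{p^2}$ (Definition \ref{def:kummer-space}), which through $\zeta_{p,l}(i)\equiv-\beta_i\pmod{p^3}$ (the Euler factors $1-p^{\,n_i-1}$ being $\equiv1\pmod{p^3}$ as $n_i-1\ge l-1\ge3$) gives $\beta_2-2\beta_1+\beta_0\equiv0\pmod{p^2}$. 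This kills the first and third summands modulo $p^2$ and rewrites $\beta_1-\beta_2\equiv\beta_0-\beta_1$, so that $B_{n_2}-2B_{n_1}+B_{n_0}\equiv 2(\beta_0-\beta_1)\pmod{p^2}$. Since $\Delta_{\zeta_{p,l}}\equiv(\zeta_{p,l}(1)-\zeta_{p,l}(0))/p\equiv(\beta_0-\beta_1)/p\pmod p$, this produces $\Delta_{T_{p,l}}\equiv 2\Delta_{\zeta_{p,l}}\pmod p$.

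The classification then follows formally. From relation (1), $T_{p,l}(0)\equiv\zeta_{p,l}(0)\pmod p$, so $T_{p,l}(0)\in\ZZ_p^*$ iff $\zeta_{p,l}(0)\in\ZZ_p^*$, giving the $\KS^*_{p,2}$ equivalence, and likewise $T_{p,l}(0)\in p\ZZ_p$ iff $\zeta_{p,l}(0)\in p\ZZ_p$. For membership in $\WKS^0_{p,2}$ one needs in addition $\Delta_f\neq0$; for $l\neq2$ relation (2) transfers this since $2\in\ZZ_p^*$ (as $p>3$), while for $l=2$ both constant terms are $\equiv-B_2/2=-1/12\in\ZZ_p^*$, so neither function lies in $\KS^0_{p,2}$ and the $\WKS^0_{p,2}$ equivalence holds vacuously. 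I expect the main obstacle to be the bookkeeping in the second relation: holding the Faulhaber expansion to the correct $p$-adic precision, checking $p$-integrality of the discarded higher terms, and isolating exactly how the vanishing of the $p^2$-coefficient (hence the exclusion $l=2$) together with the second difference $\Dop^2\zeta_{p,l}(0)$ conspire to create the factor $2$.
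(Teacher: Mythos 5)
Your proof is correct and follows essentially the paper's own route: both arguments reduce everything to the two values $T_{p,l}(0)$ and $T^2_{p,l}(0)$, identify them modulo $p$ with $-B_l/l$ and $-\tfrac{2}{p}\,\Doph{p-1}{}\,(B_t/t)\valueat{t=l}$, propagate the congruences to all of $\ZZ_p$ via membership in $\KS_{p,2}$, and dispose of $l=2$ by noting $B_2/2 \in \ZZ_p^*$. The only difference is that the paper cites these two congruences from Ernvall--Mets\"ankyl\"a while you derive them from Proposition \ref{prop:tpl-prop}(6) and the Faulhaber expansion of $S_n(p)$; your derivation is sound, and the one bookkeeping point you should make explicit is that $l\neq 2$ also forces $p-1 \nmid n_i-2$, so that $B_{n_i-2}$ is $p$-integral and the $k=n_i-2$ term of $S_{n_i}(p)$ really is $O(p^3)$.
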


\begin{proof}
We use the well known congruences, giving a relation between Fermat quotients
and Bernoulli numbers, which can be deduced by Proposition \ref{prop:tpl-prop}
(6), cf.~\cite[Prop.~1,2, p.~855]{Ernvall&Met:1991}:
\begin{alignat}{2}
   T_{p,l}^1(0) &= \sum_{a=1}^{p-1} a^l q(a) &\equiv& - \frac{B_l}{l}
     \pmod{p\ZZ_p} \label{eq:loc-q-Bn-1}, \\
   T_{p,l}^2(0) &= \sum_{a=1}^{p-1} a^l q(a)^2 \,\, &\equiv&
     - \frac{2}{p} \left( \frac{B_{l+p-1}}{l+p-1} - \frac{B_l}{l} \right)
     \pmod{p\ZZ_p}, \quad l \neq 2. \label{eq:loc-q-Bn-2}
\end{alignat}
From \eqref{eq:loc-q-Bn-1} it follows that
\[
   T_{p,l}(0) = T_{p,l}^1(0) \equiv \zeta_{p,l}(0) \pmod{p\ZZ_p},
\]
since the Euler factor of $\zeta_{p,l}$ vanishes for $l \geq 2$.
This also shows that $T_{p,l}(s) \equiv \zeta_{p,l}(s) \pmod{p\ZZ_p}$
for $s \in \ZZ_p$ and consequently that
$T_{p,l} \in \KS^*_{p,2} \iff \zeta_{p,l} \in \KS^*_{p,2}$.
Because $\frac{B_2}{2} = \frac{1}{12} \not\equiv 0 \pmod{p\ZZ_p}$,
we always have $T_{p,2}, \zeta_{p,2} \in \KS^*_{p,2}$.
Similarly we deduce by \eqref{eq:loc-q-Bn-2} and
Proposition \ref{prop:tpl-prop} (2), (3) that
\[
   \Delta_{T_{p,l}} \equiv \KSop \, T_{p,l}(0) = T_{p,l}^2(0)
     \equiv 2 \KSop \, \zeta_{p,l}(0) \equiv 2 \Delta_{\zeta_{p,l}}
     \pmod{p\ZZ_p},
\]
observing that the Euler factors of $\KSop \, \zeta_{p,l}$ vanish for $l > 2$.
Now, we have $T_{p,l} \notin \KS^*_{p,2} \iff \zeta_{p,l} \notin \KS^*_{p,2}$,
which implies $l > 2$. In these cases we also have
$\Delta_{T_{p,l}} \equiv 2 \Delta_{\zeta_{p,l}} \pmod{p\ZZ_p}$
and consequently $T_{p,l} \in \WKS^0_{p,2} \iff \zeta_{p,l} \in \WKS^0_{p,2}$.
\end{proof}

We can generalize the results in the following way.

\begin{defin} \label{def:tpl-gen}
Let $\chi \in \mathfrak{X}_2$, $\chi \neq 1$, and $p \notdiv \ff_\chi$.
Let $l,r$ be integers where $r \geq 1$. We define
\[
   T_{p,l}^r(s,\chi) = \frac{1}{\ff_\chi} \sum_{\substack{a=1\\(a,p) = 1}}
     ^{p \, \ff_\chi}
     \chi(a) a^{l+\delta_\chi} q(a)^r u(a)^s, \quad s \in \ZZ_p.
\]
We write $T_{p,l}(s,\chi)$ for $T_{p,l}^1(s,\chi)$. Further define
\[
   S_{n,\chi}(m) = \sum_{a=1}^m \chi(a) a^n, \quad
   S^*_{n,\chi}(m) = \hspace*{-1ex} \sum_{\substack{a=1\\(a,p) = 1}}^m
     \hspace*{-1ex} \chi(a) a^n, \quad m \geq 1, n \geq 0.
\]
The generalized Bernoulli polynomials are given by
\[
   B_{n,\chi}(x) = \sum_{\nu = 0}^n \binom{n}{\nu} B_{\nu,\chi} \, x^{n-\nu},
     \quad n \geq 1, x \in \RR.
\]
\end{defin}

\begin{prop} \label{prop:tpl-gen-prop}
Let $\chi \in \mathfrak{X}_2$, $\chi \neq 1$, and $p \notdiv \ff_\chi$.
Let $\nu,l,r$ be integers where $\nu \geq 0$ and $r \geq 1$.
We have the following statements:
\begin{enumerate}
\item $T_{p,l}^r(\cdot,\chi) \in \KS_{p,2}$.
\item $\KSop^\nu \, T_{p,l}^r(\cdot,\chi) = T_{p,l}^{r+\nu}(\cdot,\chi)$.
\item The Mahler expansion is given by
  \[
     T_{p,l}^r(s,\chi) = \sum_{\nu \geq 0} \Delta_{T_{p,l}^r(\cdot,\chi)}
      (\nu) \, p^\nu \binom{s}{\nu}, \quad s \in \ZZ_p,
  \]
  where $\Delta_{T_{p,l}^r(\cdot,\chi)}(\nu) = T_{p,l}^{r+\nu}(0,\chi)$.
\item We have
  \[
     T_{p,l}^r(s,\chi) \equiv T_{p,l+p-1}^r(s,\chi)
       \equiv T_{p,l}^{r+p-1}(s,\chi) \pmod{p\ZZ_p}, \quad s \in \ZZ_p.
  \]
\item We have a certain recurrent behavior of the coefficients such that
  \begin{align*}
     \Delta_{T_{p,l}^r(\cdot,\chi)}(\nu)
       &\equiv \Delta_{T_{p,l}^r(\cdot,\chi)}(\nu+p-1) \\
       &\equiv \Delta_{T_{p,l+p-1}^r(\cdot,\chi)}(\nu)
       \equiv \Delta_{T_{p,l}^{r+p-1}(\cdot,\chi)}(\nu)
       \pmod{p\ZZ_p}.
  \end{align*}
\item If $l \geq 0$, then
  \[
     T_{p,l}^r(s,\chi) = \ff_\chi^{-1} p^{-r} \, \Doph{p-1}{r} S^*_{t,\chi}
       (p\,\ff_\chi) \valueat{t=l+\delta_\chi+s(p-1)}, \quad s \in \NN_0.
  \]
\end{enumerate}
\end{prop}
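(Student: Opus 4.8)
The plan is to imitate the proof of Proposition \ref{prop:tpl-prop} almost verbatim, since the only genuinely new ingredients are the character factor $\chi(a)$, the conductor normalization $\ff_\chi^{-1}$, the exponent shift $\delta_\chi$, and the restriction of the summation range to those $a$ with $(a,p)=1$. First I would establish that every building block is well defined: because $p \notdiv \ff_\chi$ and $(a,p)=1$, each $a$ occurring in the sum lies in $\ZZ_p^*$, so that $q(a)$ and $u(a)$ make sense; moreover $\chi$ is quadratic, whence $\chi(a) \in \{-1,0,1\} \subset \ZZ_p$, and $\ff_\chi \in \ZZ_p^*$. For (1), by Proposition \ref{prop:exp-func} each $u(a)^s \in \KS^*_{p,2} \subset \KS_{p,2}$ (as $u(a) = a^{p-1} \in \SU_p$), and since $\KS_{p,2}$ is a $\ZZ_p$-module, multiplying by the $s$-independent constant $\ff_\chi^{-1}\chi(a) a^{l+\delta_\chi} q(a)^r \in \ZZ_p$ and summing the finitely many terms keeps us inside $\KS_{p,2}$.

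For (2) the key is the same one-line computation as before: writing $u(a) = 1 + p\,q(a)$ gives $\Dop u(a)^s = u(a)^s(u(a)-1) = p\,q(a)\,u(a)^s$, hence $\KSop u(a)^s = q(a)\,u(a)^s$. Applying $\KSop$ termwise to the finite sum therefore replaces $q(a)^r$ by $q(a)^{r+1}$, so $\KSop T_{p,l}^r(\cdot,\chi) = T_{p,l}^{r+1}(\cdot,\chi)$, and iteration yields the general $\nu$. Statement (3) is then immediate from (2) together with Lemma \ref{lem:oper-ks2}, which identifies the Mahler coefficient as $\Delta_{T_{p,l}^r(\cdot,\chi)}(\nu) = \KSop^\nu T_{p,l}^r(0,\chi) = T_{p,l}^{r+\nu}(0,\chi)$.

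Parts (4) and (5) rest on Fermat's little theorem in the form $x^p \equiv x \pmod{p\ZZ_p}$ for $x \in \ZZ_p$. The congruence $T_{p,l}^r \equiv T_{p,l+p-1}^r$ follows termwise from $a^{l+p-1} = a^l u(a) \equiv a^l \pmod{p\ZZ_p}$, while $T_{p,l}^{r+p-1} \equiv T_{p,l}^r$ follows from $q(a)^{r+p-1} = q(a)^{r-1}q(a)^p \equiv q(a)^{r-1}q(a) = q(a)^r \pmod{p\ZZ_p}$; the coefficient version (5) is obtained by combining these with (3). Finally, for (6) I would use $q(a)^r = p^{-r}(a^{p-1}-1)^r$ and $u(a)^s = a^{(p-1)s}$ to write $a^{l+\delta_\chi}q(a)^r u(a)^s = p^{-r} a^t (a^{p-1}-1)^r$ with $t = l+\delta_\chi+s(p-1)$, and then recognize $a^t(a^{p-1}-1)^r = \Doph{p-1}{r} a^t$ directly from the definition of the increment-$(p-1)$ difference operator; multiplying by $\ff_\chi^{-1}\chi(a)$ and summing over $a$ with $(a,p)=1$, $1 \le a \le p\,\ff_\chi$, produces exactly $\ff_\chi^{-1}p^{-r}\Doph{p-1}{r} S^*_{t,\chi}(p\,\ff_\chi)$ evaluated at $t = l+\delta_\chi+s(p-1)$. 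The computation is essentially routine; the only point requiring real care is the bookkeeping of the summation range and of the shift $\delta_\chi$, which is precisely what the auxiliary sum $S^*_{n,\chi}$ of Definition \ref{def:tpl-gen} is designed to encode, so I expect no genuine obstacle beyond verifying the integrality and module membership asserted in (1).
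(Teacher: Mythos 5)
Your proposal is correct and follows essentially the same route as the paper, which itself proves this proposition simply by noting that the argument of Proposition \ref{prop:tpl-prop} carries over verbatim once one accounts for the factor $\chi(a)$, the normalization $\ff_\chi^{-1}$, the shift $\delta_\chi$, and the restriction to $(a,p)=1$. Your extra care in checking that each $a$ in the extended range lies in $\ZZ_p^*$ and that the $s$-independent prefactor lies in $\ZZ_p$ is exactly the bookkeeping the paper leaves implicit.
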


\begin{proof}
The proof is exactly derived as the proof of Proposition
\ref{prop:tpl-prop} by considering the additional factor $\chi(a)$ in the
sum of $T_{p,l}^r$ and excluding those $a$ where $p \pdiv a$.
\end{proof}

\begin{theorem} \label{thm:tpl-gen-ks2}
Let $\chi \in \mathfrak{X}_2$, $\chi \neq 1$, $p > 3$, and $p \notdiv \ff_\chi$.
Let $l \in 2\NN_0$ where $0 \leq l < p-1$. We have the relations
\begin{alignat*}{3}
   T_{p,l}(s,\chi) &\equiv L_{p,l}(s,\chi) &&\pmod{p\ZZ_p}, \quad s \in \ZZ_p, \\
   \Delta_{T_{p,l}(\cdot,\chi)} &\equiv 2 \Delta_{L_{p,l}(\cdot,\chi)}
     &&\pmod{p\ZZ_p}.
\end{alignat*}
The functions $T_{p,l}(\cdot,\chi)$ and $L_{p,l}(\cdot,\chi)$ have the same
classification in $\KS_{p,2}$ such that
\begin{align*}
   T_{p,l}(\cdot,\chi) \in \KS^*_{p,2}  \quad &\iff \quad L_{p,l}(\cdot,\chi)
     \in \KS^*_{p,2}, \\
   T_{p,l}(\cdot,\chi) \in \WKS^0_{p,2} \quad &\iff \quad L_{p,l}(\cdot,\chi)
     \in \WKS^0_{p,2}.
\end{align*}
\end{theorem}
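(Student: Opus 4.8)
The plan is to follow the proof of Theorem~\ref{thm:tpl-ks2} almost verbatim, replacing the Bernoulli numbers $B_n/n$ by the generalized Bernoulli numbers $B_{n,\chi}/n$ and the functions $\zeta_{p,l}$ by $L_{p,l}(\cdot,\chi)$. The first observation is that everything reduces to two congruences at the single point $s=0$. Indeed, both $T_{p,l}(\cdot,\chi)$ and $L_{p,l}(\cdot,\chi)$ lie in $\KS_{p,2}$ (by Proposition~\ref{prop:tpl-gen-prop}(1), resp.\ Propositions~\ref{prop:lpl-func-ks2} and~\ref{prop:lp0-func-ks2}), so by Proposition~\ref{prop:kummer-congr-value} with $n=0$ each of them is congruent to its value at $0$ modulo $p\ZZ_p$; hence the relation $T_{p,l}(s,\chi)\equiv L_{p,l}(s,\chi)\pmod{p\ZZ_p}$ for all $s\in\ZZ_p$ will follow from the single congruence $T_{p,l}(0,\chi)\equiv L_{p,l}(0,\chi)\pmod{p\ZZ_p}$. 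Likewise, by Lemma~\ref{lem:oper-ks2} and Proposition~\ref{prop:tpl-gen-prop}(2),(3) one has $\Delta_{T_{p,l}(\cdot,\chi)}\equiv\Delta_{T_{p,l}(\cdot,\chi)}(1)=T_{p,l}^2(0,\chi)\pmod{p\ZZ_p}$ and $\Delta_{L_{p,l}(\cdot,\chi)}\equiv\KSop L_{p,l}(0,\chi)\pmod{p\ZZ_p}$, so the second relation will follow from $T_{p,l}^2(0,\chi)\equiv 2\,\KSop L_{p,l}(0,\chi)\pmod{p\ZZ_p}$.

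The substance of the proof is therefore to establish these two congruences, which are the $\chi$-analogues of \eqref{eq:loc-q-Bn-1} and \eqref{eq:loc-q-Bn-2}. First I would use Proposition~\ref{prop:tpl-gen-prop}(6) to write $T_{p,l}^r(0,\chi)=\ff_\chi^{-1}p^{-r}\,\Doph{p-1}{r}S^*_{t,\chi}(p\,\ff_\chi)\valueat{t=l+\delta_\chi}$, turning the problem into congruences for the restricted power sums $S^*_{t,\chi}(p\,\ff_\chi)$. Writing $S^*_{t,\chi}(p\,\ff_\chi)=S_{t,\chi}(p\,\ff_\chi)-\chi(p)\,p^{t}\,S_{t,\chi}(\ff_\chi)$ to remove the terms with $p\pdiv a$, and evaluating $S_{t,\chi}$ through the generalized Bernoulli polynomials of Definition~\ref{def:tpl-gen}, one obtains after reduction modulo $p$ that $T_{p,l}^1(0,\chi)$ is congruent to the $p$-modified value $L_p(1-(\delta_\chi+l),\chi)=L_{p,l}(0,\chi)$, and that $T_{p,l}^2(0,\chi)$ acquires the factor $2$ from the square $q(a)^2$ and equals $2\,\KSop L_{p,l}(0,\chi)$. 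The crucial structural point—and the reason why, in contrast to Theorem~\ref{thm:tpl-ks2}, no exceptional index $l=2$ has to be excluded here—is that removing the terms with $p\pdiv a$ produces exactly the Euler factor $1-\chi(p)p^{\delta_\chi+l-1}$ built into $L_{p,l}(0,\chi)$, so the comparison is with the already $p$-modified $L$-values rather than with bare generalized Bernoulli numbers, and all Euler factors match automatically. This character-twisted power-sum computation is the main obstacle; it is the exact analogue of \cite[Prop.~1,2, p.~855]{Ernvall&Met:1991} and would be carried out along the same lines.

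One degenerate index needs separate attention: when $l=\delta_\chi=0$ the index $\delta_\chi+l$ vanishes and $B_{0,\chi}/0$ is meaningless, matching the removable discontinuity of $L_{p,0}(\cdot,\chi)$ treated in Proposition~\ref{prop:lp0-func-ks2}. Here I would invoke the period-$(p-1)$ recurrence of Proposition~\ref{prop:tpl-gen-prop}(4),(5) to replace the index $l$ by $l+(p-1)$, which is $\geq 2$ for $p\geq 3$ and falls under the generic case, noting that on the $L$-side $L_{p,0}(0,\chi)\equiv L_{p,0}(1,\chi)\pmod{p\ZZ_p}$ since $L_{p,0}(\cdot,\chi)\in\KS_{p,2}$ is constant modulo $p$. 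Once both congruences hold, the classification statements are immediate: $T_{p,l}(0,\chi)\equiv L_{p,l}(0,\chi)\pmod{p\ZZ_p}$ gives $T_{p,l}(\cdot,\chi)\in\KS^*_{p,2}\iff L_{p,l}(\cdot,\chi)\in\KS^*_{p,2}$, and because $p>3$ the factor $2$ is a unit modulo $p$, so $\Delta_{T_{p,l}(\cdot,\chi)}\equiv 2\,\Delta_{L_{p,l}(\cdot,\chi)}\pmod{p\ZZ_p}$ shows that one coefficient vanishes modulo $p$ iff the other does; together with the equivalence of the unit cases, and the fact that the extra condition $2\pdiv\Delta_f(2)$ in the definition of $\WKS^0_{p,2}$ is vacuous for $p>3$, this yields $T_{p,l}(\cdot,\chi)\in\WKS^0_{p,2}\iff L_{p,l}(\cdot,\chi)\in\WKS^0_{p,2}$.
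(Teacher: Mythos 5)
Your skeleton matches the paper's proof: reduce both relations to congruences at a single point (using that functions of $\KS_{p,2}$ are constant modulo $p$ and that $\Delta_f \equiv \KSop f(0) \pmod{p\ZZ_p}$), express $T_{p,l}^r(\cdot,\chi)$ through $\Doph{p-1}{r}$ applied to the restricted power sums $S^*_{t,\chi}(p\,\ff_\chi)$, strip off the terms with $p \pdiv a$ via Proposition \ref{prop:bn-gen-prop}(3), compare with generalized Bernoulli numbers, and cure the degenerate index by a shift of $p-1$ combined with translation invariance of the coefficients. The gap is that the heart of the argument is deferred to ``the same lines as'' the Ernvall--Mets\"ankyl\"a congruences, which cover only $\chi=1$ (and even there with the exclusion $l\neq 2$); producing their $\chi$-analogue \emph{is} the content of the theorem. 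The paper does this by first proving Lemma \ref{lem:congr-bn-gen},
\[
   \Doph{p-1}{r}\,(1-\chi(p)p^{t-1})B_{t,\chi}\valueat{t=n}
     \equiv -r\,\Doph{p-1}{r-1}\,(1-\chi(p)p^{t-1})\frac{B_{t,\chi}}{t}\valueat{t=n}
     \pmod{p^r},
\]
and the factor $2$ in $\Delta_{T_{p,l}(\cdot,\chi)}\equiv 2\Delta_{L_{p,l}(\cdot,\chi)}$ is exactly the coefficient $-r$ of this lemma at $r=2$. It does not simply ``come from the square $q(a)^2$''; your sketch supplies no mechanism that would actually produce it.

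A second, related problem is your claim that removing the $p\pdiv a$ terms reproduces the Euler factor exactly, so that ``all Euler factors match automatically.'' That is true modulo $p$ (enough for the first relation), but the second relation requires second differences modulo $p^2$, and there the discrepancy between $\chi(p)p^{t-1}S_{t,\chi}(\ff_\chi)/\ff_\chi$ and $\chi(p)p^{t-1}B_{t,\chi}$, as well as the Euler factors at small exponents $t$, are no longer negligible. The paper neutralizes both at once by shifting the index to $l'+(p-1)\geq p-1\geq 4$ \emph{uniformly} -- not only in the case $l=\delta_\chi=0$ -- so that every occurrence of $p^{t-1}$ vanishes modulo $p^3$, and then transports the congruences back with Corollary \ref{corl:ks2-transl-delta}. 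Without that uniform shift (or an equivalent device) your mod-$p^2$ step does not go through as written. Note finally that the reason no index $l=2$ must be excluded here is the $p$-integrality of $B_{n,\chi}/n$ for non-principal $\chi$ (von Staudt--Clausen obstructs only $\chi=1$), not an automatic cancellation of Euler factors.
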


The case $\chi=1$ is compatible with the former results of Proposition
\ref{prop:tpl-prop} and Theorem \ref{thm:tpl-ks2} since
$T_{p,l}^r(s,1) = T_{p,l}^r(s)$. The difference between the cases
$\chi = 1$ and $\chi \neq 1$ is only caused by the von Staudt-Clausen Theorem
\ref{thm:staudt-clausen}, while the latter case implies that we already have
$p$-integrality of the numbers $B_{n,\chi}$ in question. We need some
preparations to prove Theorem \ref{thm:tpl-gen-ks2}.

\begin{prop}[{\cite[p.~463]{Neukirch:1992}}$^1$, \cite{Carlitz:1959}$^2$]
\label{prop:bn-gen-prop}
Let $\chi \neq 1$ be a primitive non-principal character where
$p \notdiv \ff_\chi$ and $m,n \geq 1$. We have the following statements:
\begin{enumerate}
\item
  \[
     S_{n,\chi}(m) = \frac{1}{n+1} \left( B_{n+1,\chi}(m) - B_{n+1,\chi}
       \right), \quad \ff_\chi \pdiv m. \quad ^1
  \]
\item
  \[
     B_{0,\chi} = 0, \quad B_{n,\chi}/n \text{\ is $p$-integral}. \quad ^2
  \]
\item
  \[
     S^*_{n,\chi}(m) = S_{n,\chi}(m) - \chi(p)p^n \, S_{n,\chi}(m/p),
       \quad p \pdiv m.
  \]
\item If $p \, \ff_\chi \pdiv m$ and $n \equiv \delta_\chi \pmod{2}$, then
  \begin{alignat*}{2}
     S_{n,\chi}(m)/m &\equiv B_{n,\chi} && \pmod{p^2}, \quad n \geq 1, \\
     S_{n,\chi}(m)/m &\equiv B_{n,\chi} + \binom{n}{3} \frac{B_{n-2,\chi}}{n-2}
       \, m^2 && \pmod{p^4}, \quad n \geq 3.\\
  \end{alignat*}
\end{enumerate}
\end{prop}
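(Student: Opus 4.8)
The plan is to take parts (1) and (2) as the cited results of Neukirch and Carlitz and to establish (3) and (4) directly from them. Part (3) is purely combinatorial: I would split the defining sum $S_{n,\chi}(m)=\sum_{a=1}^{m}\chi(a)a^n$ into the terms with $p\notdiv a$ and the terms with $p\pdiv a$. The first group is exactly $S^*_{n,\chi}(m)$. In the second group, writing $a=pb$ with $1\le b\le m/p$ (here $p\pdiv m$ is used) and invoking the multiplicativity $\chi(pb)=\chi(p)\chi(b)$, which is legitimate since $p\notdiv\ff_\chi$ forces $\chi(p)\neq 0$, that group collapses to $\chi(p)p^n\sum_{b=1}^{m/p}\chi(b)b^n=\chi(p)p^n S_{n,\chi}(m/p)$. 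Rearranging yields (3).

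For (4) I would feed part (1) into the polynomial expansion of $B_{n+1,\chi}(m)$. Since $\ff_\chi\pdiv m$, part (1) gives $S_{n,\chi}(m)=\tfrac{1}{n+1}\bigl(B_{n+1,\chi}(m)-B_{n+1,\chi}\bigr)$, and expanding $B_{n+1,\chi}(m)=\sum_{\nu=0}^{n+1}\binom{n+1}{\nu}B_{\nu,\chi}m^{n+1-\nu}$ cancels the free term $B_{n+1,\chi}$. Using the identity $\tfrac{1}{n+1}\binom{n+1}{\nu}=\tfrac{1}{n+1-\nu}\binom{n}{\nu}$ and reindexing by $j=n-\nu$, I obtain the clean expansion
\[
   S_{n,\chi}(m)/m=\sum_{j=0}^{n}\frac{1}{j+1}\binom{n}{j}B_{n-j,\chi}\,m^{j}.
\]
The term $j=0$ is $B_{n,\chi}$, and a short binomial check shows the term $j=2$ equals $\binom{n}{3}\tfrac{B_{n-2,\chi}}{n-2}m^2$, matching the two explicit terms of the claim.

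It then remains to show that the tail is $\equiv 0\pmod{p^2}$ (respectively that everything beyond $j=0,2$ is $\equiv 0\pmod{p^4}$). Here I would argue in three steps: first, all odd $j$ drop out, because $n\equiv\delta_\chi\pmod 2$ forces $n-j\not\equiv\delta_\chi\pmod 2$ and hence $B_{n-j,\chi}=0$; second, the top term $j=n$ vanishes by $B_{0,\chi}=0$ from part (2); third, for each surviving even $j$ I rewrite $\binom{n}{j}B_{n-j,\chi}=n\binom{n-1}{j}\tfrac{B_{n-j,\chi}}{n-j}$, so that the coefficient is $p$-integral by part (2), reducing matters to the estimate $\ord_p\!\bigl(m^{j}/(j+1)\bigr)\ge j\,\ord_p m-\ord_p(j+1)\ge j-\ord_p(j+1)$, using $\ord_p m\ge 1$.

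The main obstacle is precisely this last estimate for the smallest admissible even $j$. When $p\notdiv j+1$ it is immediate ($\ge j\ge 2$, resp.\ $\ge 4$), but when $p\pdiv j+1$ one has $j+1\ge p$ and must verify $j-\ord_p(j+1)\ge 2$ (resp.\ $\ge 4$) by hand; this is where the hypothesis $p>3$ (the regime in which Theorem \ref{thm:tpl-gen-ks2} is applied) becomes essential, since the borderline case $j=2,\ p=3$ already fails. I would therefore run the tail estimate under $p\ge 5$, checking the finitely many small even $j$ with $p\pdiv j+1$ separately, and so conclude both congruences of (4).
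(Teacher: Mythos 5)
Your treatment of (1)--(3) matches the paper: (1) and (2) are taken as the cited results, and (3) is the same splitting of $S_{n,\chi}(m)$ into the terms prime to $p$ and the terms $a=pb$. The expansion you derive for (4) is also correct and is term-by-term identical to the one in the paper. The problem is in how you close the tail estimate.

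The bound $\ord_p\bigl(m^{j}/(j+1)\bigr)\ge j-\ord_p(j+1)$ is genuinely too weak, and your proposed repair (restrict to $p\ge 5$ and check the small even $j$ with $p\pdiv j+1$ by hand) does not close the gap: for $j=4$ and $p=5$ you get $j-\ord_p(j+1)=4-1=3<4$, so the $j=4$ term is only controlled modulo $p^{3}$, not $p^{4}$, and the second congruence of (4) is not proved. Moreover the proposition carries no hypothesis $p>3$, so importing one weakens the statement. The missing idea is that the denominator $j+1$ should be absorbed into the binomial coefficient rather than paired against $m^{j}$: since $\tfrac{1}{j+1}\binom{n}{j}=\tfrac{1}{n-j}\binom{n}{j+1}$ (equivalently, in the paper's indexing, $\tfrac{1}{n+1}\binom{n+1}{\nu}=\tfrac{1}{\nu}\binom{n}{\nu-1}$), each coefficient in the expansion is $\binom{n}{j+1}\,\tfrac{B_{n-j,\chi}}{n-j}$, which is an integer times a $p$-integral quantity by part (2). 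Then $m^{j}\equiv 0\pmod{p^{j}}$ alone kills every term with $j\ge 2$ (resp.\ $j\ge 4$), with no case analysis and for every prime $p$ with $p\notdiv\ff_\chi$. This is exactly the factorization
\[
   S_{n,\chi}(m)/m=\sum_{\nu=1}^{n}\binom{n}{\nu-1}\frac{B_{\nu,\chi}}{\nu}\,m^{n-\nu}
\]
used in the paper; your own factorization $\binom{n}{j}B_{n-j,\chi}=n\binom{n-1}{j}\tfrac{B_{n-j,\chi}}{n-j}$ spends the $p$-integrality of $B_{n-j,\chi}/(n-j)$ in the wrong place and leaves the $1/(j+1)$ stranded.
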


\begin{proof}
(3): This follows by
\[
   S_{n,\chi}(m) = \hspace*{-1ex} \sum_{\substack{a=1\\(a,p) = 1}}^m
     \hspace*{-1ex} \chi(a) a^n + \sum_{a=1}^{m/p} \chi(pa) (pa)^n.
\]

(4): By Definition \ref{def:tpl-gen} and using (1) and (2), we obtain
\[
   S_{n,\chi}(m)/m = \frac{1}{n+1} \sum_{\nu = 0}^n \binom{n+1}{\nu}
     B_{\nu,\chi} \, m^{n-\nu} = \sum_{\nu = 1}^n \binom{n}{\nu-1}
     \frac{B_{\nu,\chi}}{\nu} \, m^{n-\nu},
\]
where the numbers $B_{\nu,\chi}/\nu$ are $p$-integral. Since $n \equiv
\delta_\chi \pmod{2}$, we have $B_{n,\chi} \neq 0$ and $B_{n-1,\chi} = 0$;
also $B_{n-2,\chi} \neq 0$ and $B_{n-3,\chi} = 0$ if $n \geq 3$.
By assumption $p \pdiv m$ and this implies the congruences in $\QQ(\chi)$.
\end{proof}

\begin{lemma} \label{lem:congr-bn-gen}
Let $\chi \neq 1$ be a primitive non-principal character where $p > 3$ and
$p \notdiv \ff_\chi$. Let $n, r$ be integers with $n, r \geq 1$ and
$n \equiv \delta_\chi \pmod{2}$. We have
\[
   \Doph{p-1}{r} \, (1 - \chi(p) p^{t-1}) B_{t,\chi} \valueat{t=n}
     \equiv -r \, \Doph{p-1}{r-1} \, (1 - \chi(p) p^{t-1}) \frac{B_{t,\chi}}{t}
     \valueat{t=n} \pmod{p^r}.
\]
Moreover the congruence above vanishes$\pmod{p^{r-1}}$.
\end{lemma}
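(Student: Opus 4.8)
The plan is to reduce the entire statement to the Leibniz (product) rule for finite differences applied to the factorization $(1-\chi(p)p^{t-1})B_{t,\chi} = t\cdot g(t)$, where $g(t) = (1-\chi(p)p^{t-1})B_{t,\chi}/t$, combined with the Carlitz--Fresnel congruences of Theorem \ref{thm:lp-congr}. The key observation is that for integers $t\ge 1$ with $t\equiv\delta_\chi\pmod 2$ one has $L(1-t,\chi)=-B_{t,\chi}/t$ and hence $L_p(1-t,\chi) = -(1-\chi(p)p^{t-1})B_{t,\chi}/t = -g(t)$; since $\chi\neq 1$ with $p\notdiv\ff_\chi$ and $p>3$, Theorem \ref{thm:lp-congr} applies with $h=p-1=\eulerphi(p)$ (that is, $k=1$ and order $1$) and yields the Kummer type congruences $\Doph{p-1}{s}\,g(t)\valueat{t=m}\equiv 0\pmod{p^{s}}$ for every $s\ge 0$ and every admissible evaluation point $m\ge 1$, $m\equiv\delta_\chi\pmod 2$. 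All arguments $n+\nu(p-1)$ occurring in the differences below meet these parity and positivity constraints, because $p-1$ is even and $n\ge 1$, so the congruences apply throughout.

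First I would record the discrete Leibniz rule. Writing $\Doph{p-1}{}(t)=p-1$ and $\Doph{p-1}{k}(t)=0$ for $k\ge 2$ by Lemma \ref{lem:delta-poly}, the general product formula $\Doph{h}{r}(\phi\psi)(t)=\sum_{k=0}^r\binom{r}{k}\bigl(\Doph{h}{k}\phi(t)\bigr)\bigl(\Doph{h}{r-k}\psi(t+kh)\bigr)$ collapses to the two-term identity
\[
   \Doph{p-1}{r}\bigl(t\,g(t)\bigr)
     = t\,\Doph{p-1}{r} g(t) + r(p-1)\,\Doph{p-1}{r-1} g(t+p-1).
\]
Evaluating at $t=n$, the first summand is $n\,\Doph{p-1}{r}g(n)\equiv 0\pmod{p^{r}}$ directly from the congruences above, and may be discarded.

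For the second summand I would telescope the shift: since $\Doph{p-1}{r-1}g(n+p-1)-\Doph{p-1}{r-1}g(n)=\Doph{p-1}{r}g(n)\equiv 0\pmod{p^{r}}$, the value at $n+p-1$ may be replaced by the value at $n$, the difference contributing to $p^{r}\ZZ_p$ after multiplication by the integer $r(p-1)$. It then remains to convert the factor $r(p-1)$ into $-r$: writing $r(p-1)=-r+rp$ and invoking the lower-order congruence $\Doph{p-1}{r-1}g(n)\equiv 0\pmod{p^{r-1}}$ (Theorem \ref{thm:lp-congr} with order $r-1$), the correction $rp\,\Doph{p-1}{r-1}g(n)$ lies in $p^{r}\ZZ_p$ and drops out. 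Combining, $\Doph{p-1}{r}(t\,g(t))\valueat{t=n}\equiv -r\,\Doph{p-1}{r-1}g(n)\pmod{p^{r}}$, which is precisely the asserted congruence. The ``moreover'' clause is then immediate: the right-hand side equals $-r\,\Doph{p-1}{r-1}g(n)$, and $\Doph{p-1}{r-1}g(n)\equiv 0\pmod{p^{r-1}}$ forces both sides into $p^{r-1}\ZZ_p$.

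The main obstacle I anticipate is bookkeeping rather than conceptual. One must check carefully that every evaluation point entering the differences (namely $n$, $n+p-1$, and all intermediate $n+\nu(p-1)$) satisfies the hypotheses $m\ge 1$ and $m\equiv\delta_\chi\pmod 2$ required by Theorem \ref{thm:lp-congr}, and that the identification $g(t)=-L_p(1-t,\chi)$ is legitimate at each such integer; this uses that $B_{t,\chi}/t$ is $p$-integral by Proposition \ref{prop:bn-gen-prop}(2) and that $B_{t,\chi}\neq 0$ for $t\equiv\delta_\chi\pmod 2$, so the quotient is meaningful. Once admissibility is confirmed, the two estimates---vanishing of $\Doph{p-1}{r}g$ modulo $p^{r}$ and of $\Doph{p-1}{r-1}g$ modulo $p^{r-1}$---do all the work, while the Leibniz identity merely organizes them.
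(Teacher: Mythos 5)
Your proposal is correct and takes essentially the same route as the paper: both factor the quantity as $t\cdot\widetilde{B}_{t,\chi}$ with $\widetilde{B}_{t,\chi}=(1-\chi(p)p^{t-1})B_{t,\chi}/t=-L_p(1-t,\chi)$ and combine the Carlitz--Fresnel congruences of Theorem \ref{thm:lp-congr} at orders $r$ and $r-1$. Your discrete Leibniz rule (collapsing because $t$ is linear) is just a packaged form of the paper's explicit expansion via $\binom{r}{\nu}(r-\nu)=r\binom{r-1}{\nu}$, and the final reduction $r(p-1)\equiv -r$ modulo the extra factor of $p$ is identical in both arguments.
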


\begin{proof}
Note that the congruences are valid in $\QQ(\chi)$. For brevity we write
$\widetilde{B}_{t,\chi} = (1 - \chi(p) p^{t-1}) B_{t,\chi}/t$.
As a consequence of the Kummer congruences, cf. Theorem \ref{thm:lp-congr},
we have
\begin{equation} \label{eq:loc-congr-bn-gen}
   \Doph{p-1}{r} \, \widetilde{B}_{t,\chi} \valueat{t=n} \equiv 0 \pmod{p^r}.
\end{equation}
Thus we can write
\begin{align*}
  &\Doph{p-1}{r} \, (1 - \chi(p) p^{t-1}) B_{t,\chi} \valueat{t=n} \\
    &\quad\equiv \Doph{p-1}{r} \left( t \widetilde{B}_{t,\chi}
      - (n + r(p-1)) \widetilde{B}_{t,\chi} \right) \valueat{t=n} \\
    &\quad\equiv \sum_{\nu=0}^r \binom{r}{\nu} (-1)^{r-\nu} (-(r-\nu)(p-1))
      \widetilde{B}_{n+\nu(p-1),\chi} \\
    &\quad\equiv r(p-1) \sum_{\nu=0}^{r-1} \binom{r-1}{\nu} (-1)^{r-1-\nu}
      \widetilde{B}_{n+\nu(p-1),\chi} \\
    &\quad\equiv -r \, \Doph{p-1}{r-1} \, (1 - \chi(p) p^{t-1})
      \frac{B_{t,\chi}}{t} \valueat{t=n} \pmod{p^r},
\end{align*}
where we have used that $\binom{r}{\nu} = \frac{r}{r-\nu} \binom{r-1}{\nu}$
and the last part of the congruences is divisible by $p^{r-1}$ in view of
\eqref{eq:loc-congr-bn-gen}.
\end{proof}

\begin{proof}[Proof of Theorem \ref{thm:tpl-gen-ks2}]
Set $l' = l + \delta_\chi$ and $m = p \, \ff_\chi$.
Note that $\ff_\chi \in \ZZ^*_p$, $l' \geq 0$, and $p \geq 5$. Let $r \geq 1$.
We write $\EF{t} = (1 - \chi(p) p^{t-1})$ for the Euler factors.
By Propositions \ref{prop:tpl-gen-prop} (6), \ref{prop:bn-gen-prop} (3)
we obtain
\begin{equation} \label{eq:loc-tpl-gen-ks2-m}
\begin{split}
   p^{r-1} \, T_{p,l}^r(s,\chi) &= \Doph{p-1}{r} S^*_{t,\chi}(m)/m
      \valueat{t=l'+s(p-1)} \\
   &= \Doph{p-1}{r} S_{t,\chi}(m)/m \valueat{t=l'+s(p-1)} \\
      & \quad - \chi(p) \, \Doph{p-1}{r} p^{t-1} S_{t,\chi}
      (\ff_\chi)/\ff_\chi \valueat{t=l'+s(p-1)}, \quad s \in \NN_0.
\end{split}
\end{equation}
To avoid complemental Euler factors in the cases where $s=0$ and $l'$ is small,
caused by the second summand above, we shall shift the index $l'$ to
$l'_1 = l'+p-1$.
This simplifies the congruences and we can add Euler factors when needed. Recall
Corollary \ref{corl:ks2-transl-delta} which shows that the coefficients
$\Delta_f(\nu)$ of functions $f \in \KS_{p,2}$ are invariant$\pmod{p\ZZ_p}$
under translation.

Case $r=1$: Using \eqref{eq:loc-tpl-gen-ks2-m}, Propositions
\ref{prop:tpl-gen-prop}, \ref{prop:bn-gen-prop} (4), and
Lemma \ref{lem:congr-bn-gen}, we deduce that
\begin{align*}
   T_{p,l}(0,\chi) &\equiv T_{p,l}(1,\chi)
     \equiv \Doph{p-1}{} S_{t,\chi}(m)/m \valueat{t=l'_1} \\
     &\equiv \Doph{p-1}{} B_{t,\chi} \valueat{t=l'_1}
     \equiv \Doph{p-1}{} \, \EF{t} B_{t,\chi} \valueat{t=l'_1} \\
     &\equiv -\EF{l'_1} \frac{B_{l'_1,\chi}}{l'_1}
     \equiv L_{p,l}(1,\chi) \equiv L_{p,l}(0,\chi) \pmod{p\ZZ_p}.
\end{align*}
Since $T_{p,l}(\cdot,\chi), L_{p,l}(\cdot,\chi) \in \KS_{p,2}$, we also
have $T_{p,l}(s,\chi) \equiv L_{p,l}(s,\chi) \pmod{p\ZZ_p}$ for $s \in \ZZ_p$.
This shows that $T_{p,l}(\cdot,\chi) \in \KS^*_{p,2} \iff
L_{p,l}(\cdot,\chi) \in \KS^*_{p,2}$.
\smallskip

Case $r=2$: First, we have by Proposition \ref{prop:tpl-gen-prop} that
\[
   \Delta_{T_{p,l}(\cdot,\chi)} \equiv \KSop \, T_{p,l}(0,\chi)
     \equiv T_{p,l}^2(0,\chi) \equiv T_{p,l}^2(1,\chi) \pmod{p\ZZ_p}.
\]
Again, using \eqref{eq:loc-tpl-gen-ks2-m}, Proposition
\ref{prop:bn-gen-prop} (4), and Lemma \ref{lem:congr-bn-gen}, we derive that
\begin{align*}
  p \, T_{p,l}^2(1,\chi)
    &\equiv \Doph{p-1}{2} S_{t,\chi}(m)/m \valueat{t=l'_1} \\
    &\equiv \Doph{p-1}{2} B_{t,\chi} \valueat{t=l'_1}
      \equiv \Doph{p-1}{2} \, \EF{t} B_{t,\chi} \valueat{t=l'_1} \\
    &\equiv -2 \Doph{p-1}{} \, \EF{t} \frac{B_{t,\chi}}{t} \valueat{t=l'_1}
      \pmod{p^2\ZZ_p}
\end{align*}
and further that
\begin{align*}
   T_{p,l}^2(1,\chi)
     &\equiv - \frac{2}{p}
       \Doph{p-1}{} \, \EF{t} \frac{B_{t,\chi}}{t} \valueat{t=l'_1} \\
     &\equiv 2 \KSop L_{p,l}(1,\chi) \equiv 2 \KSop L_{p,l}(0,\chi)
       \equiv 2 \Delta_{L_{p,l}(\cdot,\chi)} \pmod{p\ZZ_p}.
\end{align*}
Thus $\Delta_{T_{p,l}(\cdot,\chi)} \equiv 2 \Delta_{L_{p,l}(\cdot,\chi)}
\pmod{p\ZZ_p}$. Considering case $r=1$ we deduce that
$T_{p,l}(\cdot,\chi) \in \WKS^0_{p,2} \iff
L_{p,l}(\cdot,\chi) \in \WKS^0_{p,2}$.
\end{proof}

\begin{remark*}
Let $\chi \in \mathfrak{X}_2$, $p \notdiv \ff_\chi$, $l \in 2\NN$, and
$0 < l < p-1$. The function $T_{p,l}(\cdot,\chi)$ has a unique simple zero if
and only if $L_{p,l}(\cdot,\chi)$ has a unique simple zero. This can only
happen, when $(p,l) \in \IRR_\chi$ and $\Delta_{T_{p,l}(\cdot,\chi)} \equiv
2 \Delta_{L_{p,l}(\cdot,\chi)} \not\equiv 0 \pmod{p\ZZ_p}$. Example
\ref{expl:zero-tpl} shows the analogy to Example \ref{expl:zero-bn-en}.
Moreover, we have a kind of reciprocity relation as follows.
\end{remark*}

\begin{corl}
Let $\chi \in \mathfrak{X}_2$, $p \notdiv \ff_\chi$, $l \in 2\NN$,
$0 < l < p-1$, and $(p,l) \in \IRR_\chi$.
If $T_{p,l}(\cdot,\chi) \in \WKS^0_{p,2}$ or
$L_{p,l}(\cdot,\chi) \in \WKS^0_{p,2}$, then
\[
   \frac{\tau_T}{\tau_L} \cdot \frac{\xi_L}{\xi_T} \equiv 2 \pmod{p\ZZ_p},
\]
where $\tau_T$, $\tau_L \in p\ZZ_p$ is the fixed point and
$\xi_T$, $\xi_L \in \ZZ_p$ is the zero of
$T_{p,l}(\cdot,\chi)$, $L_{p,l}(\cdot,\chi)$, respectively.
\end{corl}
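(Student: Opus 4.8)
The plan is to apply the zero–fixed point relation of Lemma~\ref{lem:rel-zero-fixpnt} to each of the two functions and then feed in the coefficient comparison from Theorem~\ref{thm:tpl-gen-ks2}. First I would invoke the equivalence in Theorem~\ref{thm:tpl-gen-ks2}: since $(p,l) \in \IRR_\chi$ and one of $T_{p,l}(\cdot,\chi)$, $L_{p,l}(\cdot,\chi)$ lies in $\WKS^0_{p,2}$, both do. Each therefore possesses a unique simple zero and a fixed point in $\ZZ_p$; moreover neither function vanishes at $0$, since the underlying $L$-value is a nonzero generalized Bernoulli number divided by an integer. Hence $\tau_T, \tau_L, \xi_T, \xi_L$ are all nonzero and the quotient in the statement is well defined.

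Next I would apply Lemma~\ref{lem:rel-zero-fixpnt} separately to $f = T_{p,l}(\cdot,\chi)$ and to $f = L_{p,l}(\cdot,\chi)$, which gives
\[
   \frac{\tau_T}{p\,\xi_T} \equiv -\Delta_{T_{p,l}(\cdot,\chi)} \pmod{p\ZZ_p}
     \quad \text{and} \quad
   \frac{\tau_L}{p\,\xi_L} \equiv -\Delta_{L_{p,l}(\cdot,\chi)} \pmod{p\ZZ_p}.
\]
Both left-hand sides are units in $\ZZ_p$: the valuation identity $\ord_p \tau = 1 + \ord_p \xi$ of Lemma~\ref{lem:rel-zero-fixpnt} forces $\tau/(p\,\xi)$ to have zero valuation, while membership in $\WKS^0_{p,2}$ guarantees $\Delta_f \in \{1,\dots,p-1\}$.

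Then I would assemble the target quotient via the purely formal identity
\[
   \frac{\tau_T}{\tau_L} \cdot \frac{\xi_L}{\xi_T}
     = \frac{\tau_T/(p\,\xi_T)}{\tau_L/(p\,\xi_L)}.
\]
Since numerator and denominator on the right are units congruent to $-\Delta_{T_{p,l}(\cdot,\chi)}$ and $-\Delta_{L_{p,l}(\cdot,\chi)}$, respectively, their quotient is congruent to $\Delta_{T_{p,l}(\cdot,\chi)}/\Delta_{L_{p,l}(\cdot,\chi)} \pmod{p\ZZ_p}$. Substituting the relation $\Delta_{T_{p,l}(\cdot,\chi)} \equiv 2\,\Delta_{L_{p,l}(\cdot,\chi)} \pmod{p\ZZ_p}$ from Theorem~\ref{thm:tpl-gen-ks2} then produces the value $2$, as required.

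The only step demanding genuine care—the main obstacle, modest as it is—is the legitimacy of dividing the two congruences: one must confirm that $\tau_T/(p\,\xi_T)$ and $\tau_L/(p\,\xi_L)$ are honest units of $\ZZ_p$, so that reciprocals exist and congruences mod $p\ZZ_p$ survive both multiplication and inversion. This rests entirely on the valuation identity $\ord_p \tau = 1 + \ord_p \xi$ together with $\Delta_f \neq 0$; once these hold, everything else is formal manipulation of unit congruences.
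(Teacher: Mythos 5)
Your argument is correct and is exactly the route the paper takes (the paper's proof is a one-line citation of Lemma \ref{lem:rel-zero-fixpnt} together with the coefficient relation $\Delta_{T}\equiv 2\Delta_{L}$); dividing the two unit congruences $\tau/(p\,\xi)\equiv-\Delta_f$ is legitimate for precisely the reason you give. The only slip is that for $\chi=1$ (which $\mathfrak{X}_2$ permits) the relation $\Delta_{T_{p,l}}\equiv 2\Delta_{\zeta_{p,l}}$ comes from Theorem \ref{thm:tpl-ks2} rather than Theorem \ref{thm:tpl-gen-ks2}, so that case should be cited separately.
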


\begin{proof}
This follows by Lemma \ref{lem:rel-zero-fixpnt}, Theorem \ref{thm:tpl-ks2}
for $\chi = 1$, and Theorem \ref{thm:tpl-gen-ks2} for $\chi \neq 1$.
\end{proof}

\begin{expl} \label{expl:zero-tpl}
Computed zeros $\xi$ and fixed points $\tau$$\pmod{p^{10}}$ of functions
of $\WKS^0_{p,2}$.
\begin{enumerate}

\item Case $(p,l)=(37,32)$ and $f = T_{p,l} \in \WKS^0_{p,2}$:
\begin{center}
\begin{tabular}{|c|l|} \hline
  $f$ & values / $p$-adic digits $s_0, \ldots, s_9$ \\ \hline\hline
  $\Delta_f, \lambda_f$ & 32, 1 \\ \hline
  $\xi$  & 19, 1, 24, 12, 16, 24, 22, 26, 12, 33 \\ \hline
  $\tau$ & 0, 21, 31, 31, 14, 25, 15, 2, 10, 27 \\ \hline
\end{tabular}
\end{center}

\item Case $(p,l)=(19,10)$ and $f = T_{p,l}(\cdot,\chi_{-4}) \in \WKS^0_{p,2}$:
\begin{center}
\begin{tabular}{|c|l|} \hline \allowdisplaybreaks
  $f$ & values / $p$-adic digits $s_0, \ldots, s_9$ \\
    \hline\hline
  $\Delta_f,\lambda_f$ & 10, 1 \\ \hline
  $\xi$  & 4, 8, 6, 1, 18, 14, 8, 3, 3, 3 \\ \hline
  $\tau$ & 0, 17, 8, 16, 0, 2, 7, 2, 10, 14 \\ \hline
\end{tabular}
\end{center}

\end{enumerate}
\end{expl}

\begin{theorem} \label{thm:tpl-lpl-d2}
Let $\chi \in \mathfrak{X}_2$, $\chi \neq 1$, $p > 3$, and $p \notdiv \ff_\chi$.
Let $l \in 2\NN_0$ where $0 \leq l < p-1$. We have the relations
\begin{alignat*}{2}
   3 \Delta_{L_{p,l}(\cdot,\chi)}(2)
     &\equiv \Delta_{T_{p,l}(\cdot,\chi)}(2)
       - \ff_\chi^2 \, \Delta_{T_{p,l-2}(\cdot,\chi)}(0) \\
     &\equiv T^3_{p,l}(0,\chi)
       - \ff_\chi^2 \, T_{p,l-2}(0,\chi) &&\quad \pmod{p\ZZ_p}, \\
   4 \Delta_{L_{p,l}(\cdot,\chi)}(3)
     &\equiv \Delta_{T_{p,l}(\cdot,\chi)}(3)
       - 2 \ff_\chi^2 \, \Delta_{T_{p,l-2}(\cdot,\chi)} \\
     &\equiv T^4_{p,l}(0,\chi)
       - 2 \ff_\chi^2 \, T^2_{p,l-2}(0,\chi) &&\quad \pmod{p\ZZ_p}.
\end{alignat*}
\end{theorem}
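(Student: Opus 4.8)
The plan is to reduce each asserted chain to its \emph{first} congruence and then evaluate both sides through the Bernoulli-number dictionary of Propositions~\ref{prop:tpl-gen-prop} and~\ref{prop:bn-gen-prop}, following the template of the proof of Theorem~\ref{thm:tpl-gen-ks2} but pushing the $p$-adic precision up to $p^3$ (for the first relation) and $p^4$ (for the second). The second $\equiv$ in each line is immediate from Proposition~\ref{prop:tpl-gen-prop}(3), which gives $\Delta_{T_{p,l}(\cdot,\chi)}(2)=T^3_{p,l}(0,\chi)$, $\Delta_{T_{p,l}(\cdot,\chi)}(3)=T^4_{p,l}(0,\chi)$, $\Delta_{T_{p,l-2}(\cdot,\chi)}(0)=T_{p,l-2}(0,\chi)$, and $\Delta_{T_{p,l-2}(\cdot,\chi)}\equiv T^2_{p,l-2}(0,\chi)\pmod{p}$; so only the first $\equiv$ needs work. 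For the $L$-side, write $\EF{t}=1-\chi(p)p^{t-1}$, set $\widetilde{B}_{t,\chi}=\EF{t}B_{t,\chi}/t$, and note $L_p(1-t,\chi)=-\widetilde{B}_{t,\chi}$, so that $\Delta_{L_{p,l}(\cdot,\chi)}(\nu)=\KSop^\nu L_{p,l}(0,\chi)=-p^{-\nu}\Doph{p-1}{\nu}\widetilde{B}_{t,\chi}\valueat{t=l+\delta_\chi}$. Since $\KSop^\nu L_{p,l}(\cdot,\chi)\in\KS_{p,2}\subset\KS_{p,1}$ satisfies the Kummer congruences (equivalently by Corollary~\ref{corl:ks2-transl-delta}), I may evaluate instead at a shifted point $t_0=l+\delta_\chi+s_0(p-1)$ without changing the value $\!\pmod{p}$, choosing $s_0$ large enough that $p^{t_0-1}$ has $p$-order $\geq 4$.

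Next I expand the $T$-side. Proposition~\ref{prop:tpl-gen-prop}(6) together with Proposition~\ref{prop:bn-gen-prop}(3) gives, with $m=p\,\ff_\chi$,
\[
   p^{r-1}T^r_{p,l}(s,\chi)=\Doph{p-1}{r}\frac{S_{t,\chi}(m)}{m}\valueat{t=l+\delta_\chi+s(p-1)}
     -\chi(p)\,\Doph{p-1}{r}\,p^{t-1}\frac{S_{t,\chi}(\ff_\chi)}{\ff_\chi}\valueat{t=l+\delta_\chi+s(p-1)}.
\]
Because $T^r_{p,l}(\cdot,\chi)\in\KS_{p,1}$, I have $T^r_{p,l}(0,\chi)\equiv T^r_{p,l}(s_0,\chi)\pmod{p}$, so I evaluate at $t=t_0$; the complemental term then carries the factor $p^{t_0-1}$ and is negligible to the required precision. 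Proposition~\ref{prop:bn-gen-prop}(4) replaces the first term by $B_{t,\chi}+p^2\ff_\chi^2\binom{t}{3}\frac{B_{t-2,\chi}}{t-2}$ modulo $p^4$, and the Euler factor may be reinserted for free since $B_{t,\chi}-\EF{t}B_{t,\chi}=\chi(p)p^{t-1}B_{t,\chi}$ also becomes negligible at $t=t_0$. This yields
\[
   p^{r-1}T^r_{p,l}(0,\chi)\equiv \Doph{p-1}{r}\EF{t}B_{t,\chi}\valueat{t=t_0}
     +p^2\ff_\chi^2\,\Doph{p-1}{r}\binom{t}{3}\frac{B_{t-2,\chi}}{t-2}\valueat{t=t_0}
\]
to precision $p^3$ for $r=3$ and $p^4$ for $r=4$.

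For the main summand, Lemma~\ref{lem:congr-bn-gen} gives $\Doph{p-1}{r}\EF{t}B_{t,\chi}\equiv -r\,\Doph{p-1}{r-1}\widetilde{B}_{t,\chi}\pmod{p^r}$; dividing by $p^{r-1}$ and comparing with the first paragraph identifies it with $r\,\Delta_{L_{p,l}(\cdot,\chi)}(r-1)$, producing exactly the factors $3$ and $4$ in front of $\Delta_L(2)$ and $\Delta_L(3)$. For the correction summand I use the finite Leibniz rule for $\Doph{p-1}{}$ on the product $\binom{t}{3}\cdot\frac{B_{t-2,\chi}}{t-2}$: any difference landing on the Bernoulli factor lowers the $p$-order (a single $\Doph{p-1}{}$ is $\equiv0\pmod{p}$ by the Kummer congruences, and $k$ of them $\equiv0\pmod{p^k}$), while $\Doph{p-1}{3}\binom{t}{3}=(p-1)^3$ and $\Doph{p-1}{k}\binom{t}{3}=0$ for $k>3$ by Lemma~\ref{lem:delta-poly}. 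For $r=3$ only the all-polynomial term survives modulo $p$, leaving $(p-1)^3\frac{B_{t_0-2,\chi}}{t_0-2}\equiv-\frac{B_{t_0-2,\chi}}{t_0-2}\equiv T_{p,l-2}(0,\chi)\pmod{p}$, so the correction contributes $+\ff_\chi^2 T_{p,l-2}(0,\chi)$; for $r=4$ the surviving term (one difference on the Bernoulli factor, three on $\binom{t}{3}$) is $4(p-1)^3\Doph{p-1}{}\frac{B_{t-2,\chi}}{t-2}$, which after dividing by $p$ and translating back through the $r=2$ step of Theorem~\ref{thm:tpl-gen-ks2} equals $+2\ff_\chi^2 T^2_{p,l-2}(0,\chi)$. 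Rearranging gives the two stated congruences.

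The hard part will be the $r=4$ correction term, where the difference $\Doph{p-1}{4}\binom{t}{3}\frac{B_{t-2,\chi}}{t-2}$ must be controlled to precision $p^2$: one must simultaneously account for the order-$2$ vanishing of second Bernoulli differences and for the fact that $(p-1)^3\equiv -1+3p\pmod{p^2}$ is not exactly $-1$. The saving observation, which I will need to verify carefully, is that every cross term is a product of two quantities of order $p$, hence vanishes $\!\pmod{p^2}$, so that the surviving constant is precisely $-4$ and matches the coefficient $2$ in $2\ff_\chi^2\Delta_{T_{p,l-2}(\cdot,\chi)}$. Pinning down every combinatorial constant (the $3$, $4$, $2$, and the sign from $\Delta_L=-p^{-\nu}\Doph{p-1}{\nu}\widetilde{B}$) is the delicate bookkeeping, but no new analytic input beyond Propositions~\ref{prop:tpl-gen-prop},~\ref{prop:bn-gen-prop} and Lemma~\ref{lem:congr-bn-gen} is required.
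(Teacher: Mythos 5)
Your proposal is correct and follows essentially the same route as the paper: shift the evaluation point by a multiple of $p-1$ to neutralize the Euler-factor terms, expand $S_{t,\chi}(m)/m$ via Proposition \ref{prop:bn-gen-prop}(4) to precision $p^r$, extract the factor $r$ from the main summand with Lemma \ref{lem:congr-bn-gen}, and isolate the surviving contribution of the correction summand $p^2\ff_\chi^2\binom{t}{3}\EF{t-2}B_{t-2,\chi}/(t-2)$ under $\Doph{p-1}{r}$. The only cosmetic difference is in the $r=4$ correction term, where you invoke the discrete Leibniz rule while the paper substitutes the linear Mahler approximation $f(s)\equiv f(0)+p\,\Delta_f\,s \pmod{p^2\ZZ_p}$ and takes the fourth difference of the resulting degree-four polynomial; both yield the same constant $-4p\,\Delta_f$.
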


\begin{proof}
Set $l' = l + \delta_\chi$ and $m = p \, \ff_\chi$.
Define $\ell_2 = p-3$ for $l = 0$ otherwise $\ell_2 = l-2$.
Note that $\ff_\chi \in \ZZ^*_p$, $l' \geq 0$, and $p \geq 5$. We write
$\EF{t} = (1 - \chi(p) p^{t-1})$. We use the same arguments given in the
proof of Theorem \ref{thm:tpl-gen-ks2}. Thus we shift the index $l'$ to
$l'_2 = l'+2(p-1)$, which is sufficient for the following congruences since
$p \geq 5$. We now consider the cases $r=3$ and $r=4$ simultaneously.
By Proposition \ref{prop:tpl-gen-prop} we have
\[
   \Delta_{T_{p,l}(\cdot,\chi)}(r-1)
     \equiv \KSop^{r-1} \, T_{p,l}(0,\chi)
     \equiv T_{p,l}^r(0,\chi)
     \equiv T_{p,l}^r(2,\chi) \pmod{p\ZZ_p}.
\]
From \eqref{eq:loc-tpl-gen-ks2-m} and Proposition
\ref{prop:bn-gen-prop} (4) we deduce that
\begin{align*}
   p^{r-1} \, T_{p,l}^r(2,\chi)
     &\equiv \Doph{p-1}{r} S_{t,\chi}(m)/m \valueat{t=l'_2} \\
     &\equiv \Doph{p-1}{r} \, \EF{t} B_{t,\chi} \valueat{t=l'_2} \\
     &\quad   + p^2 \ff_\chi^2 \, \Doph{p-1}{r} \binom{t}{3}
       \, \EF{t-2} \frac{B_{t-2,\chi}}{t-2} \valueat{t=l'_2} \pmod{p^r\ZZ_p},
\end{align*}
where we have already added Euler factors in the last part of the congruence.
Applying Lemma \ref{lem:congr-bn-gen} to the first summand provides that
\[
   \Doph{p-1}{r} \, \EF{t} B_{t,\chi} \valueat{t=l'_2}
     \equiv -r \Doph{p-1}{r-1} \, \EF{t} \frac{B_{t,\chi}}{t} \valueat{t=l'_2}
       \pmod{p^r\ZZ_p}.
\]
With the help of Corollary \ref{corl:ks2-transl-delta} we further obtain that
\begin{align*}
   -r p^{-(r-1)} \Doph{p-1}{r-1} \, \EF{t} \frac{B_{t,\chi}}{t} \valueat{t=l'_2}
     &\equiv r \, \KSop^{r-1} L_{p,l}(2,\chi) \\
     &\equiv r \, \KSop^{r-1} L_{p,l}(0,\chi)
       \equiv r \, \Delta_{L_{p,l}(\cdot,\chi)}(r-1) \pmod{p\ZZ_p}.
\end{align*}
Now we have to separate the cases. Case $r=3$:
By use of the Kummer congruences the second summand turns into
\begin{align*}
   \Doph{p-1}{3} \binom{t}{3} \EF{t-2} \frac{B_{t-2,\chi}}{t-2} \valueat{t=l'_2}
     &\equiv - \EF{l'_2-2} \frac{B_{l'_2-2,\chi}}{l'_2-2} \\
     &\equiv \left\{
       \begin{array}{ll}
         L_{p,p-3}(1,\chi), & (l' < 2),\\
         L_{p,l-2}(2,\chi), & (l' \geq 2),
       \end{array} \right. \\
     &\equiv L_{p,\ell_2}(0,\chi) \pmod{p\ZZ_p},
\end{align*}
where we have used Lemma \ref{lem:delta-poly} to derive that
\begin{equation} \label{eq:loc-tpl-lpl-d2}
   \Doph{p-1}{n} \binom{t}{n} = (p-1)^n, \quad t \in \ZZ_p, n \geq 1.
\end{equation}
Theorem \ref{thm:tpl-gen-ks2} and Proposition \ref{prop:tpl-gen-prop} (4)
show that
\[
   L_{p,\ell_2}(0,\chi) \equiv T_{p,\ell_2}(0,\chi) \equiv T_{p,l-2}(0,\chi)
     \equiv \Delta_{T_{p,l-2}(\cdot,\chi)}(0) \pmod{p\ZZ_p}.
\]
Combining the results from above we finally achieve that
\[
   \Delta_{T_{p,l}(\cdot,\chi)}(2)
     \equiv 3\Delta_{L_{p,l}(\cdot,\chi)}(2)
       + \ff_\chi^2 \Delta_{T_{p,l-2}(\cdot,\chi)}(0)
       \pmod{p\ZZ_p}.
\]
Case $r=4$: Using $L_{p,\ell_2}(\cdot,\chi)$ as above, we can write
\[
   -\Doph{p-1}{4} \binom{t}{3} \EF{t-2} \frac{B_{t-2,\chi}}{t-2} \valueat{t=l'_2}
     \equiv \Doph{p-1}{4} \binom{t}{3}
       L_{p,\ell_2}(s(t),\chi) \valueat{t=l'_2} \pmod{p^2\ZZ_p},
\]
where we use the variable substitution $s(t) = s_{l'_2} + (t-l'_2)/(p-1)$ and
$s_{l'_2}$ is a suitable constant depending on $l'_2$.
Since $f = L_{p,\ell_2}(\cdot,\chi) \in \KS_{p,2}$, we have
\[
   f(s) \equiv f(0) + p \, \Delta_f \, s \pmod{p^2\ZZ_p}, \quad s \in \ZZ_p.
\]
Thus we obtain
\begin{align*}
   \Doph{p-1}{4} \binom{t}{3} f(s(t)) \valueat{t=l'_2}
     &\equiv \Doph{p-1}{4} \binom{t}{3} \left( f(0) + p \, \Delta_f
       \left( s_{l'_2} + \frac{t-l'_2}{p-1} \right)
       \right) \valueat{t=l'_2} \\
     &\equiv \left( f(0) + p \, \Delta_f \left( s_{l'_2} + \frac{3-l'_2}{p-1}
       \right) \right) \Doph{p-1}{4} \binom{t}{3} \valueat{t=l'_2} \\
     &\quad + p \, \frac{4\Delta_f}{p-1} \Doph{p-1}{4} \binom{t}{3}
       \frac{t-3}{4} \valueat{t=l'_2} \\
     &\equiv - p \, 4\Delta_f \pmod{p^2\ZZ_p},
\end{align*}
where the first summand vanishes by Lemma \ref{lem:delta-poly} and the second
summand is reduced by \eqref{eq:loc-tpl-lpl-d2} observing that $\binom{t}{4} =
\binom{t}{3} \frac{t-3}{4}$. By Theorem \ref{thm:tpl-gen-ks2} and Proposition
\ref{prop:tpl-gen-prop} (5) we have
\[
   2 \Delta_f \equiv 2 \Delta_{L_{p,\ell_2}(\cdot,\chi)}
     \equiv \Delta_{T_{p,\ell_2}(\cdot,\chi)}
     \equiv \Delta_{T_{p,l-2}(\cdot,\chi)} \pmod{p\ZZ_p}.
\]
Putting all together, we finally get
\[
   \Delta_{T_{p,l}(\cdot,\chi)}(3)
     \equiv 4\Delta_{L_{p,l}(\cdot,\chi)}(3)
       + 2\ff_\chi^2 \Delta_{T_{p,l-2}(\cdot,\chi)} \pmod{p\ZZ_p}.
   \qedhere
\]
\end{proof}

As a result, we get criteria for exceptional primes regarding
$L_{p,0}(\cdot,\chi)$ and for the existence of unique simple zeros of the
corresponding functions $\tilde{L}_{p,0}(\cdot,\chi)$ in case $\chi$ is odd.

\begin{corl} 
Let $\chi \in \mathfrak{X}_2$ and $p > 3$ where $\chi$ is odd and
$\chi(p) = 1$. For $f = L_{p,0}(\cdot,\chi)$ we have the following
statements:
\begin{enumerate}
\item We have
  \[
     T_{p,0}^2(0,\chi) \equiv 0 \pmod{p\ZZ_p} \quad \iff \quad
       (p,0) \in \EXC_\chi.
  \]
\item If $(p,0) \in \EXC_\chi$, then
  \[
     T^3_{p,0}(0,\chi) \not\equiv \ff_\chi^2 \, T_{p,p-3}(0,\chi)
       \pmod{p\ZZ_p} \quad \iff \quad \lambda_f = 2.
  \]
  If $\lambda_f = 2$, then $\tilde{L}_{p,0}(\cdot,\chi)$ has a
  unique simple zero.
\end{enumerate}
\end{corl}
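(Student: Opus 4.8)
The plan is to translate both equivalences into statements about the Mahler coefficients $\Delta_f(\nu)$ of $f = L_{p,0}(\cdot,\chi)$ and then read them off from the dictionary already established in Theorems~\ref{thm:except-l-func}, \ref{thm:tpl-gen-ks2}, and \ref{thm:tpl-lpl-d2}. The opening observation is that $\chi$ odd together with $\chi(p)=1$ forces $L_{p,0}(0,\chi) = -(1-\chi(p))B_{1,\chi} = 0$, so $\Delta_f(0) = f(0) = 0$ and hence $\lambda_f \geq 1$; this puts us squarely in the situation of Theorem~\ref{thm:except-l-func}, whose items (1),(2) give the equivalence $\lambda_f > 1 \iff (p,0) \in \EXC_\chi$. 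Since $p>3$ we also record that $2,3 \in \ZZ_p^*$, which is what will let us cancel these integers in the congruences below.

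For part (1) I would first identify $T_{p,0}^2(0,\chi)$ as a coefficient: Proposition~\ref{prop:tpl-gen-prop}(3), applied with $l=0$, $r=1$, $\nu=1$, gives $\Delta_{T_{p,0}(\cdot,\chi)}(1) = T_{p,0}^2(0,\chi)$, so $T_{p,0}^2(0,\chi) \equiv \Delta_{T_{p,0}(\cdot,\chi)} \pmod{p\ZZ_p}$. Theorem~\ref{thm:tpl-gen-ks2} then yields $\Delta_{T_{p,0}(\cdot,\chi)} \equiv 2\Delta_f \pmod{p\ZZ_p}$, and because $2 \in \ZZ_p^*$ this vanishes modulo $p$ exactly when $\Delta_f \equiv 0$, i.e. when $\Delta_f(1) \in p\ZZ_p$, i.e. when $\lambda_f > 1$. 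Combining with $\lambda_f > 1 \iff (p,0)\in\EXC_\chi$ gives the stated equivalence $T_{p,0}^2(0,\chi) \equiv 0 \iff (p,0)\in\EXC_\chi$.

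For part (2), assume $(p,0)\in\EXC_\chi$, so $\Delta_f(0)=0$ and $\Delta_f(1) \in p\ZZ_p$; then by definition of $\lambda_f$ we have $\lambda_f = 2 \iff \Delta_f(2) \in \ZZ_p^*$. The key input is Theorem~\ref{thm:tpl-lpl-d2} with $l=0$, which reads $3\Delta_f(2) \equiv T^3_{p,0}(0,\chi) - \ff_\chi^2\,T_{p,-2}(0,\chi) \pmod{p\ZZ_p}$; the only point requiring care is the index, where I would rewrite $T_{p,-2}(0,\chi) \equiv T_{p,p-3}(0,\chi) \pmod{p\ZZ_p}$ using the period-$(p-1)$ recurrence of Proposition~\ref{prop:tpl-gen-prop}(4). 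Since $3 \in \ZZ_p^*$, the right-hand side is a unit modulo $p$ precisely when $\Delta_f(2) \in \ZZ_p^*$, which translates into $T^3_{p,0}(0,\chi) \not\equiv \ff_\chi^2\,T_{p,p-3}(0,\chi) \iff \lambda_f = 2$. Finally, once $\lambda_f = 2$ is known, the existence of a unique simple zero of $\tilde{L}_{p,0}(\cdot,\chi)$ is exactly the content of Theorem~\ref{thm:except-l-func}(4), so nothing further is needed. The whole argument is a synthesis of prior results; there is no genuine obstacle, the only delicate bookkeeping being the index shift $l-2 \mapsto p-3$ and the verification that the cancelled constants $2$ and $3$ are units, both guaranteed by $p>3$.
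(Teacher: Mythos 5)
Your proof is correct and follows exactly the route the paper intends: the paper's own proof is the one-line citation of Proposition \ref{prop:tpl-gen-prop} and Theorems \ref{thm:except-l-func}, \ref{thm:tpl-gen-ks2}, and \ref{thm:tpl-lpl-d2}, and your argument is precisely the fleshed-out synthesis of those results (including the correct handling of the index shift $l-2 \mapsto p-3$ via the period-$(p-1)$ recurrence and the unit status of $2$ and $3$).
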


\begin{proof}
This is a consequence of Proposition \ref{prop:tpl-gen-prop} and
Theorems \ref{thm:except-l-func}, \ref{thm:tpl-gen-ks2}, and
\ref{thm:tpl-lpl-d2}.
\end{proof}

\bibliographystyle{amsplain}

\end{document}